\newtheorem{theorem}{Theorem}[section]
\newtheorem{lemma}[theorem]{Lemma}
\theoremstyle{definition}
\newtheorem{definition}[theorem]{Definition}
\newtheorem{corollary}[theorem]{Corollary}
\newtheorem{conjecture}[theorem]{Conjecture}
\newtheorem{proposition}[theorem]{Proposition}
\newtheorem{question}{Question}
\theoremstyle{remark}
\newtheorem{remark}[theorem]{Remark}
\numberwithin{equation}{section}
\newcommand{\al}{\alpha}
\newcommand{\be}{\beta}
\newcommand{\V}{\mathcal{V}}
\newcommand{\RV}{\mathcal{RV}}
\newcommand{\IV}{\mathcal{IV}}
\newcommand{\R}{\mathbb{R}}
\newcommand{\N}{\mathbb{N}}
\newcommand{\mH}{\mathcal{H}}
\newcommand{\F}{\mathcal{F}}
\newcommand{\A}{\mathcal{A}}
\newcommand{\C}{\mathcal{C}}
\newcommand{\lan}{\langle}
\newcommand{\ran}{\rangle}
\newcommand{\weakto}{\rightharpoonup}
\newcommand{\la}{\lambda}
\newcommand{\lc}{\llcorner}
\newcommand{\La}{\Lambda}
\newcommand{\si}{\sigma}
\newcommand{\Si}{\Sigma}
\newcommand{\de}{\delta}
\newcommand{\ep}{\epsilon}
\newcommand{\pr}{\prime}
\newcommand{\Om}{\Omega}
\newcommand{\om}{\omega}
\newcommand{\Ga}{\Gamma}
\newcommand{\ga}{\gamma}
\newcommand{\ti}{\tilde}
\newcommand{\Z}{\mathcal{Z}}
\newcommand{\f}{\mathbf{f}}
\newcommand{\m}{\mathbf{m}}
\newcommand{\M}{\mathbf{M}}
\newcommand{\Gr}{\mathbf{G}}
\newcommand{\bL}{\mathbf{L}}
\newcommand{\bV}{\mathbf{V}}
\newcommand{\bRV}{\mathbf{RV}}
\newcommand{\bIV}{\mathbf{IV}}
\newcommand{\mf}{\mathbf{f}}
\newcommand{\D}{\mathcal{D}}
\newcommand{\mF}{\mathbf{F}}
\newcommand{\mR}{\mathcal{R}}
\newcommand{\bmu}{\boldsymbol \mu}
\newcommand{\btau}{\boldsymbol \tau}
\newcommand{\bleta}{\boldsymbol \eta}
\newcommand{\n}{\mathbf{n}}
\newcommand{\md}{\mathbf{d}}
\newcommand{\sA}{\mathscr{A}}
\newcommand{\ttimes}{\scalebox{0.7}{$\mathbb{X}$}}
\newcommand{\tM}{\widetilde{M}}
\newcommand{\tE}{\widetilde{\exp}}
\newcommand{\tBcal}{\widetilde{\mathcal{B}}}
\newcommand{\tAcal}{\widetilde{\mathcal{A}}}
\newcommand{\tScal}{\widetilde{\mathcal{S}}}
\newcommand{\tS}{\widetilde{S}}
\newcommand{\tir}{\widetilde{r}}
\newcommand{\Area}{\textrm{Area}}
\newcommand{\VarTan}{\operatorname{VarTan}}
\newcommand{\spt}{\operatorname{spt}}
\newcommand{\dist}{\operatorname{dist}}
\newcommand{\Div}{\operatorname{div}}
\newcommand{\Hess}{\operatorname{Hess}}
\newcommand{\dom}{\operatorname{dom}}
\newcommand{\interior}{\operatorname{int}}
\newcommand{\Ric}{\operatorname{Ric}}
\newcommand{\Clos}{\operatorname{Clos}}
\newcommand{\rom}[1]{\expandafter\romannumeral #1}
\begin{document}


\title[Min-max Theory for Free Boundary Minimal Hypersurfaces I]{Min-max Theory for Free Boundary Minimal Hypersurfaces I - Regularity Theory}

\author[Martin Li]{Martin Man-chun Li}
\address{Department of Mathematics, The Chinese University of Hong Kong, Shatin, N.T., Hong Kong}
\email{martinli@math.cuhk.edu.hk}

\author[Xin Zhou]{Xin Zhou}
\address{Department of Mathematics, University of California Santa Barbara, Santa Barbara, CA 93106, USA}
\email{zhou@math.ucsb.edu}

\begin{abstract}
In 1960s, Almgren \cite{Almgren62,Almgren65} initiated a program to find minimal hypersurfaces in compact manifolds using min-max method. This program was largely advanced by Pitts \cite{Pitts} and Schoen-Simon \cite{Schoen-Simon81} in 1980s when the manifold has no boundary. In this paper, we finish this program for general compact manifold with nonempty boundary. As a result, we prove the existence of a smooth embedded minimal hypersurface with free boundary in any compact smooth Euclidean domain. An application of our general existence result combined with the work of Marques and Neves \cite{Marques-Neves17} shows that for any compact Riemannian manifolds with nonnegative Ricci curvature and convex boundary, there exist infinitely many embedded minimal hypersurfaces with free boundary which are properly embedded.
\end{abstract}

\maketitle
\tableofcontents


\section{Introduction}
\label{S:intro}

\subsection{The problem and main results}

Minimal surfaces have been a central object of study in mathematics for centuries because of its natural appearance in a wide range of fields including geometry, analysis and partial differential equations. A major triumph in minimal surface theory in the twentieth century was the solution of the longstanding Plateau problem by Douglas and Rado. 
Since then, there have been immense research activities on the existence and regularity of solutions to the Plateau problem and its variants. Other than earlier works of Gergonne in 1816 and H.A. Schwarz in 1890, Courant was the first mathematician who studied systematically the (partially and totally) free boundary problems for minimal surfaces in a series of seminal papers (see \cite[Chapter VI]{Courant}), 
(see also Davids \cite{Courant-Davids40}). A typical problem they considered is the following:

\begin{question}
\label{Q1}
Given a closed surface $S$ in $\R^3$ and a Jordan curve $\Gamma$ in $\R^3 \setminus S$ which is not contractible in $\R^3 \setminus S$, does there exist a surface $\Sigma$ of minimal (or stationary) area in the class of all disk type surfaces whose boundary curve lies on $S$ and is ``linked'' with $\Gamma$ (see \cite[p.213-218]{Courant})?
\end{question}

There are a few important points to note for \textbf{Question \ref{Q1}} above:
\begin{itemize}
\item[(1)] The hypotheses imply that genus$(S)>0$ by the generalized Schoenflies theorem (assuming $S$ is \emph{smoothly} embedded). For the genus zero case, the direct method would produce a point (of zero area) as a degenerate minimizer. 
\item[(2)] The minimal surface $\Sigma$ may penetrate the support surface $S$, which is unrealistic from the physical point of view. Along the boundary $\partial \Sigma$, the minimal surface $\Sigma$ has to meet $S$ \emph{orthogonally} (the free boundary condition).
\item[(3)] In general, the minimal surface $\Sigma$ is only \emph{immersed} with possibly some branch points (in the interior or at the free boundary).
\end{itemize}

Several important results were obtained in order to address the issues above. Using a dual Plateau construction, Smyth \cite{Smyth84} showed that if $S$ is the boundary of a tetrahedron (which is \emph{non-smooth}), there exists exactly three disk-type minimal surfaces embedded inside the tetrahedron solving the free boundary problem. For a smooth surface $S$ of genus zero, Struwe \cite{Struwe84} \cite{Struwe88} established 
the existence of at least one \emph{unstable} disk-type solution to the free boundary problem. On the other hand, Fraser \cite{Fraser00} used Sacks-Uhlenbeck's perturbed energy method to prove the existence of free boundary minimal disks in any codimension. However, the solutions may only be immersed (possibly with branch points), and penetrate the support surface $S$. For applications in three-dimensional topology, Meeks and Yau \cite{Meeks-Yau80} proved the existence of \emph{embedded} minimizing disks in a compact $3$-dimensional Riemannian manifold with boundary which is (mean) \emph{convex} and not simply connected. Later on, Gr\"{u}ter and Jost \cite{Gruter-Jost86a} proved the existence of an unstable embedded minimal disk inside any \emph{convex} domain bounded by $S \subset \R^3$, based upon geometric measure theoretic techniques developed by Allard, Almgren, Pitts, Simon and Smith.

In 1960s, 
Almgren \cite{Almgren65} initiated an ambitious program to develop a variational calculus in the large for minimal submanifolds in Riemannian manifolds of any dimension and codimension, generalizing Morse' theory for geodesics. The setup in \cite{Almgren65} is completely general (i.e. without any curvature assumptions) and includes the case 
with (partially fixed or free) boundary. For the free boundary problem in particular, Almgren proposed the following question:

\begin{question}[Constrained Free Boundary Problem]
\label{Q2}
Given any compact Riemannian manifold $(M^{n+1},g)$ with boundary $\partial M \neq \emptyset$, does there exists a smooth, embedded $k$-dimensional minimal submanifold $\Sigma$ \emph{contained in $M$} with boundary $\partial \Sigma \subset \partial M$ solving the free boundary problem?
\end{question}

The free boundary problem is \emph{constrained} in the sense that the minimal submanifold $\Sigma$ we seek has to lie completely inside $M$. This imposes substantial difficulty to the problem as seen in (2) above. Moreover, as already remarked in \cite{Tomi86} (see also \cite{Li15}), the minimal submanifold $\Sigma$ will possibly touch $\partial M$ along interior portions of arbitrary size. We refer to this touching phenomenon as \emph{non-properness} (see Definition \ref{D:proper}). Historically, this issue was avoided at the expense of putting certain convexity assumptions on the boundary $\partial M$ (see for example \cite{Meeks-Yau80} \cite{Gruter-Jost86a} \cite{Fraser02}). However, \textbf{Question \ref{Q2}} in its complete generality remains unsolved for over 50 years.

In contrast, the analogous problem to \textbf{Question \ref{Q2}} where $\partial M=\emptyset$ (which we refer to as the ``\emph{closed}'' case) was well studied and substantial progress has been made. Using his foundational work on the homotopy groups of integral cycles \cite{Almgren62}, Almgren \cite[Theorem 15.1]{Almgren65} proved the existence of a non-trivial weak solution (as stationary varifolds) in any dimension and codimension, and he showed that such a solution is at least \emph{integer rectifiable}. Higher regularity was established in the codimension-one (i.e. $k=n$) case by the seminal work of Pitts \cite{Pitts} (for $2 \leq n \leq 5$) and later extended by Schoen-Simon \cite{Schoen-Simon81} (for $n \geq 6$). 
Very recently, Marques and Neves revisited the Almgren-Pitts min-max theory (see for example \cite{Marques-Neves-survey}) and gave surprising applications to solve a number of longstanding conjectures in geometry including the Willmore conjecture \cite{Marques-Neves14}, the Freedman-He-Wang conjecture \cite{Agol-Marques-Neves16} and Yau's conjecture \cite{Marques-Neves17} in the case of positive Ricci curvature. Due to these tremendous success, there have been a vast number of exciting developments on this subject (see for example \cite{Liokumovich-Marques-Neves} \cite{Marques-Neves16} \cite{Zhou15} \cite{Zhou17}). 

For the constrained free boundary problem, it was claimed in \cite[Theorem 4.1]{Jost86} that \textbf{Question \ref{Q2}} was solved for the case $k=n=2$ when $M$ is any compact domain in $\R^3$ diffeomorphic to a ball whose boundary is \emph{strictly mean convex}. In the same paper, it was also claimed that similar results hold without the mean convexity assumption \cite[Theorem 4.2]{Jost86}. However, it was not valid (in particular, Lemma 3.1 in \cite{Jost86} was incorrect) as the touching phenomenon has not been accounted for. An important attempt to address this issue was carried out by the first author in \cite{Li15}. Unfortunately, there was still an error in Lemma 3.5 of \cite{Li15} as \emph{properness} may not be preserved in the limit so the result in \cite{Li15} only covers the cases where properness is not lost throughout the constructions. Assuming boundary convexity, \textbf{Question \ref{Q2}} was solved for any dimension $k=n$ by De Lellis and Ramic \cite{DeLellis-Ramic} very recently.

In this paper, we give an affirmative answer to \textbf{Question \ref{Q2}} in the codimension-one case in complete generality. There is no assumptions at all on the topology of $M$ nor the curvatures of $\partial M$. Our key novel finding is that the minimal hypersurface $\Sigma$ produced by our method is contained inside $M$ but may not be \emph{proper}. Nonetheless, $\Sigma$ is a \emph{smooth} embedded hypersurface with boundary lying on $\partial M$, even near the touching points which make $\Sigma$ improper. A precise statement of our main result is as follows.

\begin{theorem}
\label{T:manifold}
Let $2 \leq n \leq 6$. For any smooth compact Riemannian manifold $M^{n+1}$ with boundary $\partial M \neq \emptyset$, there exists a hypersurface $\Sigma$ with (possibly empty) boundary $\partial \Sigma$ lying on $\partial M$ such that 
\begin{itemize}
\item[(i)] $\Sigma$ is smoothly embedded in $M$ up to the boundary $\partial \Sigma$,
\item[(ii)] $\Sigma$ is minimal in the interior of $\Sigma$,
\item[(iii)] $\Sigma$ meets $\partial M$ orthogonally along its boundary $\partial \Sigma$.
\end{itemize}
\end{theorem}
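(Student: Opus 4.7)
The plan is to extend the Almgren--Pitts min-max machinery from the closed case to the free boundary setting. I would work in the space of relative integral (or mod-$2$) $n$-cycles $\Z_n(M,\partial M)$ whose boundary is supported on $\partial M$, equipped with the flat topology, together with Pitts's discretized analogue. A Thom--Almgren type calculation should show that this space carries a non-trivial cohomology class in degree one, and any $1$-parameter sweepout $\{T_t\}_{t\in[0,1]}$ detecting that class has positive min-max width $L>0$. The Pitts pull-tight procedure, executed using ambient deformations by vector fields that are \emph{tangent to $\partial M$}, then yields a min-max sequence whose mass-concentration limit is a stationary integral $n$-varifold $V$ (stationary with respect to such vector fields, i.e., with free boundary on $\partial M$) which is almost minimizing in sufficiently small annuli centered at every point of $M$.

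Given such a $V$, the remaining task is regularity: showing that $\spt V$ is a smoothly embedded free boundary minimal hypersurface. In the interior of $M$, this should follow from Pitts's combinatorial argument producing stable replacements on annuli, combined with Schoen--Simon curvature estimates; the restriction $n\leq 6$ enters precisely here. Near a boundary point $p\in\partial M$ where $\spt V$ meets $\partial M$ transversally (a \emph{proper} boundary point), one develops a boundary version of the replacement procedure and combines it with an adaptation of Gr\"uter--Jost's free boundary regularity to produce smoothness up to $\partial M$ with the orthogonal contact condition.

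The main analytic obstacle, distinguishing this work from previous attempts and from the claims in \cite{Jost86,Li15}, is regularity at \emph{touching points}: points $p\in\partial M$ at which $\spt V$ lies on one side of $\partial M$ and is tangent to $\partial M$ at $p$, so that the resulting hypersurface fails to be proper near $p$. The strategy to resolve this has three ingredients: (i) a monotonicity formula for stationary free boundary varifolds with an error term controlled by the second fundamental form of $\partial M$, forcing every tangent varifold at a touching point to be a multiplicity-$k$ hyperplane tangent to $T_p\partial M$; (ii) a strong maximum principle for free boundary stationary varifolds, asserting that two connected free boundary minimal hypersurfaces lying on the same side of $\partial M$ which touch at a common point must coincide; and (iii) a free boundary $\epsilon$-regularity theorem for almost-minimizing varifolds that permits interior tangential contact with $\partial M$.

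The hardest step is (iii). The idea is a local ``doubling'' across $\partial M$ near a touching point: reflect $V$ to the outside of $M$ using a local Fermi coordinate system and show that the sum of $V$ with its reflection is almost minimizing (after a small metric perturbation) in an ambient neighborhood of $p$. Combined with the classification of tangent cones in (i) and the maximum principle in (ii), this reduces the regularity of $V$ near $p$ to Schoen--Simon's interior regularity theorem for the doubled object, from which one reads off that the original $V$ is a smooth embedded hypersurface across the touching set, meeting $\partial M$ orthogonally along its actual boundary $\partial\Sigma$, while possibly touching $\partial M$ tangentially in the interior of $\Sigma$.
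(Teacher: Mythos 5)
Your overall outline of the Almgren--Pitts framework (relative cycles, non-trivial degree-one class, pull-tight with $\partial M$-tangential vector fields, almost minimizing in annuli, replacement-based regularity) is correct and matches the paper, and you have correctly identified the heart of the matter: regularity at improper touching points. However, your proposed resolution of that hardest step contains a genuine gap, and you also miss one essential technical device in the variational setup.

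The gap is in your ingredient (iii), the ``doubling'' of $V$ across $\partial M$. Reflecting $M$ across $\partial M$ in Fermi coordinates produces a metric on the doubled manifold that is only Lipschitz unless $\partial M$ is totally geodesic, and ``a small metric perturbation'' destroys the delicate almost-minimizing property, which is defined by mass comparisons against competitors in a \emph{fixed} metric and is not stable under $C^0$-small metric changes. Even setting aside the metric regularity, doubling an almost-minimizing varifold does not obviously yield an almost-minimizing varifold: the doubled object must compete against \emph{all} comparison currents in the doubled neighborhood, not merely the symmetric ones that arise by reflecting competitors for $V$ in $M$. The paper therefore never doubles $V$ itself. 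Reflection is invoked only for tangent varifolds at a boundary point (Lemma \ref{L:reflection}), where $\partial M$ has been blown up to a flat hyperplane $T_p(\partial M)$, and for the smooth stable replacement surfaces. For the actual regularity across touching points, the paper constructs two successive replacements on overlapping Fermi annuli (Proposition \ref{P:good-replacement-property}), shows each replacement is a smooth, stable, \emph{almost properly embedded} free-boundary minimal hypersurface (Lemma \ref{L:reg-replacement}), and glues them using the compactness theorem for such stable hypersurfaces (Theorem \ref{T:freebdy-cpt}), which rests on uniform curvature estimates proved in the companion paper \cite{Li-Zhou-A}. That curvature estimate is the key external input replacing your doubling step, and without it (or something like it) the argument does not close.

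A secondary omission: you propose to work directly with the space of relative cycles $Z_n(M,\partial M)$ under the flat topology, but this space with bounded mass is \emph{not} compact under the ordinary flat norm (\cite[4.4.4]{Federer}). The paper works instead with the quotient group $\Z_n(M,\partial M) = Z_n(M,\partial M)/Z_n(\partial M,\partial M)$, with a relative flat norm and relative mass norm (Definition \ref{D:relative-cycles}); the resulting compactness (Lemma \ref{L:compactness}), isoperimetric lemmas (Lemmas \ref{L:isoperimetric-F}, \ref{L:isoperimetric-M}), and discretization/interpolation machinery are what make the almost-minimizing framework run. Your proposal needs this quotient, or some substitute for it, to even set up the min-max.
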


\begin{remark}
\begin{itemize}
\item[(a)] In the important special case that $M$ is a compact domain in $\R^{n+1}$, $\Sigma$ must have \emph{non-empty} boundary as there is no closed minimal hypersurface in $\R^{n+1}$. In general, it is possible that $\Sigma$ is a closed (i.e. compact without boundary) embedded minimal hypersurface in $M$.
\item[(b)] $\Sigma$ is smooth and 
embedded even at points on the touching portion $\Sigma \cap \partial M$ which does not lie on the boundary of $\Sigma$. Therefore it makes sense to talk about minimality at such points by vanishing of the mean curvature.
\item[(c)] There is no topological or geometric assumptions made for the ambient manifold $M$ and its boundary $\partial M$, in sharp contrast to all the previous results by Courant-Davids \cite{Courant-Davids40}, Meeks-Yau \cite{Meeks-Yau80}, Gr\"{u}ter-Jost \cite{Gruter-Jost86a} and De Lellis-Ramic \cite{DeLellis-Ramic}.
\item[(d)] The restriction on the dimension $2 \leq n \leq 6$ comes from the regularity theory for stable free boundary minimal hypersurfaces \cite{Li-Zhou-A}. However, we remark that the same result holds in higher dimensions by allowing a singular set of Hausdorff codimension at least seven as in \cite{Schoen-Simon81}. The details will appear in a forthcoming paper of the authors.
\end{itemize}
\end{remark}

The existence of a non-trivial weak solution to \textbf{Question \ref{Q2}} was already covered in the work of Almgren in \cite{Almgren62} and \cite{Almgren65} (which holds in fact in any dimension and codimension) so the key merit of Theorem \ref{T:manifold} is on the \emph{regularity} of $\Sigma$. Ultimately the regularity would follow from certain ``\emph{almost minimizing property}'' in the same spirit as in the closed case \cite{Pitts}. Nonetheless, the free boundary problem is much more subtle as illustrated below:

\begin{itemize}
\item[(1)] Unlike in the closed case, where Birkhoff \cite{Birkhoff17} showed that any Riemannian two-sphere admits a non-trivial simple closed geodesic, there are examples of (non-convex) simply connected planar domains in $\R^2$ which do not contain any free boundary geodesics (even allowing part of the geodesic to lie on the boundary of the domain). A well-known example is given by Bos \cite{Bos63}. This shows that there may not exist any \emph{smooth} solution to \textbf{Question \ref{Q2}} when $k=n=1$.
\item[(2)] The weak solutions obtained by Almgren \cite{Almgren65} are varifolds which are stationary with respect to ambient deformations in $M$ which preserve $\partial M$ (as a set but not necessarily the identity map on $\partial M$). These stationary varifolds can be very different from being a minimal hypersurface in $M$, especially when touching phenomenon happens. In the extreme case, $\Sigma=\partial M$ is stationary by definition but it is not minimal in $M$ (unless $\partial M$ is minimal). Therefore, the weak solutions of Almgren are very far from the free boundary minimal hypersurfacs $\Sigma$ we are looking for in \textbf{Question \ref{Q2}}.
\item[(3)] In similar problems where solutions are required to be confined in a given region, e.g. the obstacle problem \cite{Caffarelli98}, the optimal regularity of the (minimizing) solutions is only $C^{1,1}$.
\end{itemize}

Another important contribution of this paper is that we have developed the min-max theory for free boundary minimal hypersurfaces in the general Almgren-Pitts setting, which is most suitable for use in geometric applications (for example \cite{Marques-Neves14}). We would like to point out that there are also important geometric applications of the min-max theory in the smooth setting, which is particularly useful in low dimensions (see for example \cite{Smith82} \cite{Colding-deLellis03} \cite{Li15} \cite{DeLellis-Tasnady13} \cite{DeLellis-Ramic} \cite{Colding-Gabai-Ketover}). As remarked in \cite{Marques-Neves17}, we have the following corollary directly from our main result Theorem \ref{T:manifold}.




\begin{corollary}
\label{C:1}
Let $2 \leq n \leq 6$. For any compact Riemannian manifold $(M^{n+1},g)$ with boundary, either
\begin{itemize}
\item[(i)] there exists a disjoint collection $\{\Sigma_1,\cdots,\Sigma_{n+1}\}$ of $(n+1)$ compact, smoothly embedded, connected free boundary minimal hypersurfaces in $M$; or
\item[(ii)] there exists infinitely many compact, smoothly embedded, connected free boundary minimal hypersurfaces in $M$.
\end{itemize}
\end{corollary}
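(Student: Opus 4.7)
The plan is to apply Theorem~\ref{T:manifold} iteratively: at each step, I cut $M$ along the free boundary minimal hypersurfaces already produced and apply Theorem~\ref{T:manifold} again to the resulting manifold, obtaining a new disjoint hypersurface. Either this process never terminates, yielding infinitely many distinct such hypersurfaces and giving case (ii), or it terminates after producing at least $n+1$ disjoint ones and giving case (i).

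Concretely, first I would apply Theorem~\ref{T:manifold} directly to $(M,g)$ to obtain a smoothly embedded free boundary minimal hypersurface $\Si_1$. Inductively, given pairwise disjoint $\Si_1,\ldots,\Si_k$ with $k\le n$, I would form the metric completion $M_k$ of $M\setminus\bigcup_{i=1}^k \Si_i$. Thanks to the orthogonality in clause (iii) of Theorem~\ref{T:manifold}, the boundary of $M_k$ consists of $\partial M$ minus $\bigcup\partial\Si_i$ together with two copies of each $\Si_i$, meeting along right-angle corners. I would smooth these corners on a scale $\ep\to 0$ to get genuine smooth manifolds-with-boundary $M_k^\ep$, apply Theorem~\ref{T:manifold} to each, and extract a subsequential limit as $\ep\to 0$ to obtain a free boundary minimal hypersurface $\Si_{k+1}$ of $M_k$. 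Lifted back to $M$, either $\Si_{k+1}$ is disjoint from $\Si_1,\ldots,\Si_k$, in which case the induction continues, or it coincides with one of them, in which case I would retry with a different sweepout class to force a genuinely new critical varifold.

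To close the dichotomy at $n+1$ steps, I would invoke the Lusternik--Schnirelmann-type argument of Marques--Neves \cite{Marques-Neves17}: in the Almgren--Pitts free boundary setting, the $p$-widths $\om_p(M,\partial M)$ grow like $p^{1/(n+1)}$ by the Weyl law, and each is realized by a free boundary minimal hypersurface with integer multiplicity. The cup-length of the space of relative cycles $\mathcal{Z}_n(M,\partial M;\mZ_2)$ gives strict monotonicity $\om_1<\cdots<\om_{n+1}$ whenever the critical set at each level is discrete, which is automatic if only finitely many free boundary minimal hypersurfaces exist. Combined with the iterative cutting, this forces at least $n+1$ of their supports to be pairwise disjoint. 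The main obstacle I anticipate is the corner-smoothing limit: one must establish uniform area and index bounds on $\Si_{k+1}^\ep$ so that the subsequential limit as $\ep\to 0$ is an honest smooth free boundary minimal hypersurface in $M_k$, even along the smoothed corners where limiting stability estimates are most delicate. This will rely on the full regularity machinery developed in the body of this paper, applied uniformly across the smoothing family.
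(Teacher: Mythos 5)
Your iterative cutting scheme has a fundamental flaw that the paper's (implicit) argument avoids entirely, and in the end you invoke the correct argument anyway, rendering the cutting superfluous.

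The core problem: after forming $M_k$ by cutting $M$ along $\Si_1,\dots,\Si_k$, the boundary $\partial M_k$ contains two copies of each $\Si_i$. Applying Theorem~\ref{T:manifold} to $M_k$ (or $M_k^\ep$) produces a free boundary minimal hypersurface $\Si_{k+1}$ with $\partial\Si_{k+1}\subset\partial M_k$ — which allows $\partial\Si_{k+1}$ to lie on the doubled $\Si_i$'s rather than on $\partial M$. When lifted back to $M$, such a $\Si_{k+1}$ is \emph{not} a free boundary minimal hypersurface of $M$ (its boundary is interior to $M$ and it meets the various $\Si_i$ orthogonally), and it is not disjoint from the $\Si_i$ either. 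Nothing in the cutting procedure forces $\partial\Si_{k+1}$ to stay on the original $\partial M$. So the induction produces neither disjoint hypersurfaces nor free boundary minimal hypersurfaces of $(M,\partial M)$. The corner-smoothing limit has the further issues you acknowledge (uniform lower mass bounds, uniform curvature/index control as $\ep\to0$), and your fallback — ``retry with a different sweepout class to force a genuinely new critical varifold'' — is precisely the content of the Lusternik--Schnirelmann mechanism, not an independent step you can wave at.

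What the paper actually does is skip the cutting entirely. Corollary~\ref{C:1} is stated as a direct consequence of Theorem~\ref{T:manifold}, because once the multi-parameter free boundary Almgren--Pitts theory of Section~\ref{S:min-max} and the regularity of Theorem~\ref{T:main-regularity} are in place, the Lusternik--Schnirelmann dichotomy argument of Marques--Neves \cite{Marques-Neves17} transfers verbatim to the relative cycle space $\mathcal{Z}_n(M,\partial M;\mZ_2)$: the $p$-widths are all attained by smooth, almost properly embedded free boundary minimal hypersurfaces; equality $\omega_p=\omega_{p+1}$ yields infinitely many by the cohomological argument; and strict monotonicity $\omega_1<\cdots<\omega_{n+1}$ combined with a combinatorial counting argument forces $n+1$ pairwise disjoint ones. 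That is the whole proof — no doubling, no smoothing, no limits of cut manifolds. You should therefore drop Steps 1--2 of your proposal and instead verify that Marques--Neves' cohomological and counting lemmas apply in the relative setting, which is exactly what the paper's citation of \cite{Marques-Neves17} is pointing to.
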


Moreover, as the embedded Frankel property \cite[Definition 1.1]{Marques-Neves17} is satisfied when the ambient manifold $M$ has nonnegative Ricci curvature and strictly convex boundary \cite[Lemma 2.4]{Fraser-Li14}, (i) in Corollary \ref{C:1} never happens, so we have the following result. Note that when $\partial M$ is strictly mean-convex, the maximum principle excludes the touching phenomenon and thus the free boundary minimal hypersurface $\Sigma$ we obtained in Theorem \ref{T:manifold} is in fact \emph{properly} embedded.

\begin{corollary}
\label{C:2}
Let $2 \leq n \leq 6$ and $(M^{n+1},g)$ be a compact Riemannian manifold with non-empty boundary. If the Ricci curvature of $M$ is nonnegative and the boundary $\partial M$ is strictly convex, then $M$ contains an infinite number of distinct compact, smooth, properly embedded, free boundary minimal hypersurfaces.
\end{corollary}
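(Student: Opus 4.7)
The plan is to combine Corollary \ref{C:1} with two additional inputs: the embedded Frankel property under these curvature hypotheses, which excludes alternative (i), and the strong maximum principle, which promotes ``smoothly embedded'' to ``properly embedded'' and so matches the stated conclusion.

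First I would show that every free boundary minimal hypersurface $\Sigma$ produced by Theorem \ref{T:manifold} is properly embedded in this setting, i.e.\ $\Sigma \cap \partial M = \partial \Sigma$. Strict convexity of $\partial M$ means that its second fundamental form with respect to the inward unit normal is negative definite, so $\partial M$ has strictly positive mean curvature relative to the inward normal. Suppose, toward a contradiction, that $\Sigma$ had a touching point $p \in (\Sigma \cap \partial M) \setminus \partial \Sigma$. Near $p$, Theorem \ref{T:manifold}(i),(ii) says $\Sigma$ is a smooth embedded minimal hypersurface contained in $M$. Since $p$ is not on $\partial \Sigma$, the tangent planes of $\Sigma$ and $\partial M$ at $p$ must coincide and $\Sigma$ lies locally on one side of $\partial M$. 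The strong maximum principle for the mean curvature operator then forces $\Sigma$ to agree locally with $\partial M$, contradicting the fact that $\partial M$ has strictly positive mean curvature while $\Sigma$ has vanishing mean curvature. Hence no such $p$ exists and $\Sigma$ is properly embedded.

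Next I would apply \cite[Lemma 2.4]{Fraser-Li14}: the assumptions $\Ric_M \ge 0$ and strictly convex $\partial M$ imply the embedded Frankel property, i.e.\ any two properly embedded free boundary minimal hypersurfaces in $M$ must intersect. By the previous step, every member of a disjoint collection as in alternative (i) of Corollary \ref{C:1} is properly embedded; but then any two of them must intersect by Frankel, contradicting disjointness (note $n+1 \ge 3 \ge 2$). Hence alternative (ii) of Corollary \ref{C:1} must hold, producing infinitely many compact, smoothly embedded, connected free boundary minimal hypersurfaces, each of which is properly embedded by the maximum principle argument above. This is exactly the conclusion of Corollary \ref{C:2}.

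The main nontrivial step is the exclusion of touching via the strong maximum principle, but this is essentially immediate from Theorem \ref{T:manifold}(i) (smoothness up to $\partial M$, including at interior touching points) combined with the strict mean-convexity of $\partial M$ and the classical Hopf lemma comparing two smooth hypersurfaces tangent at a one-sided contact point. I do not foresee any real obstacle, since both ingredients—Theorem \ref{T:manifold} and the Fraser--Li Frankel lemma—are at our disposal.
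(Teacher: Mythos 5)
Your proposal is correct and takes essentially the same approach as the paper: the paragraph preceding Corollary \ref{C:2} invokes exactly the same two ingredients, namely the embedded Frankel property from \cite[Lemma 2.4]{Fraser-Li14} to rule out alternative (i) of Corollary \ref{C:1}, and the maximum principle (strict mean-convexity of $\partial M$) to upgrade the hypersurfaces of Theorem \ref{T:manifold} from almost properly embedded to properly embedded. Your write-up merely makes these two steps explicit and spells out the tangency/Hopf-lemma argument, which the paper leaves implicit.
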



Finally, we end this subsection with a note about the \emph{properness} of the free boundary minimal hypersurfaces $\Sigma$ we produced in Theorem \ref{T:manifold}. Since these hypersurfaces are produced by min-max constructions, we will call them free boundary \emph{min-max} minimal hypersurfaces in our discussions below. We conjecture that our result (Theorem \ref{T:manifold}) concerning properness is optimal in the following sense:
\begin{conjecture}
\label{Con: properness1}
There exists a compact smooth domain in $\mathbb{R}^{n+1}$ for which any (one-parameter) free boundary min-max minimal hypersurface is non-proper.
\end{conjecture}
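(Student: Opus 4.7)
The plan is to construct an explicit compact smooth domain $M \subset \mathbb{R}^{n+1}$ whose one-parameter min-max width $\omega_1(M)$ is strictly smaller than the infimum of areas of all properly embedded free boundary minimal hypersurfaces in $M$. Since Theorem \ref{T:manifold} produces a free boundary min-max minimal hypersurface $\Sigma$ realizing $\omega_1(M)$, this would force $\Sigma$ to be non-proper.

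\textbf{Step 1.} Start with a base domain $M_0$ admitting a one-parameter sweepout of small width: for instance a dumbbell with a narrow neck of cross-sectional area $\varepsilon$, so $\omega_1(M_0) \approx \varepsilon$. The natural min-max optimizer in $M_0$ is a flat slice through the neck, which is proper and short; any proper competitor away from this slice has much larger area.

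\textbf{Step 2.} Modify $M_0$ by pushing a localized inward ``dimple'' into $\partial M_0$, producing a new smooth domain $M$ whose boundary contains a concave region $D$ (concave as viewed from inside $M$). Calibrate the perturbation so that (a) the original sweepout extends to a sweepout of $M$ of width still within $2\varepsilon$; and (b) the concavity of $D$ is pronounced enough that a free boundary minimal hypersurface allowed to flatten against $D$ can strictly beat any nearby proper competitor that must avoid $D$.

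\textbf{Step 3.} Prove that no properly embedded free boundary minimal hypersurface $\Sigma' \subset M$ has area at most $\omega_1(M)$. Using the sweepout homology class together with the dumbbell topology, any such $\Sigma'$ must either cross the neck or split into components of substantial total area. The properness condition forbids the interior of $\Sigma'$ from touching $D$, while the free boundary condition forces $\Sigma'$ to meet $\partial M$ orthogonally outside $D$; a comparison argument based on a monotonicity formula adapted near concave boundary regions should then lift $|\Sigma'|$ strictly above $\omega_1(M)$.

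\textbf{Step 4.} Combining Steps 2 and 3, any free boundary min-max minimal hypersurface produced by Theorem \ref{T:manifold} has area equal to $\omega_1(M)$ and therefore cannot be proper; by necessity, its interior must touch $\partial M$ along $D$.

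The main obstacle will be Step 3: obtaining a lower area bound uniform over the entire noncompact class of proper free boundary minimal hypersurfaces in $M$, not merely ruling out a handful of candidates. This should require a sharp free boundary monotonicity formula valid at concave boundary portions, combined with the compactness theory for stable free boundary minimal hypersurfaces developed in \cite{Li-Zhou-A} to reduce to finitely many topological types. A secondary subtlety is that the min-max procedure generally selects a saddle rather than a global minimizer, so the sweepout in Step 2 must be constructed carefully enough that $\omega_1(M)$ lies \emph{strictly} below the proper critical threshold, not merely at or above it; an interpolation argument within the chosen sweepout class should provide the needed slack.
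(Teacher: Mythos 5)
The statement you are asked about is labeled a \emph{conjecture} in the paper, and the paper does not supply a proof. The authors only sketch a candidate domain (their Figure 1): a long, slowly decaying solid cylinder $\Omega_1$ capped by a half-ball, together with a small piece $\Omega_2$ whose inward-pointing finger is tuned so that its tip exactly grazes the conjectural min-max slice $\Sigma$ at a single point. They then write that ``it is conceivable that there is no other free boundary minimal hypersurface in $\Omega$ whose area can achieve $W(\Omega)$,'' explicitly leaving the competitor step as a speculation, and even stress (while discussing Conjecture \ref{Con: properness2}) that the phenomenon is a knife-edge: if the finger is slightly too long or too short, the min-max solution is a proper annulus or proper disk. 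So there is no paper proof to compare against; your proposal is at most an alternative heuristic.

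Your logical reduction in Steps~1 and 4 is sound and aligns with the paper's picture: if no properly embedded free boundary minimal hypersurface in $M$ has area $\leq \omega_1(M)$, then the min-max varifold $V=\sum n_i|\Sigma_i|$ with $\|V\|(M)=\omega_1(M)$ must have a non-proper component, so $\Sigma$ is non-proper. Your geometry (dumbbell with an inward dimple) is a variant of the paper's (decaying cylinder plus finger) with the same mechanism. The genuine gap is exactly your Step~3, and it coincides with the step the authors leave open. Two points make it worse than you acknowledge. First, in the exactly-tuned geometry the proper competitors are expected to \emph{limit onto} the non-proper $\Sigma$, so the proper infimum is most plausibly \emph{equal to} $\omega_1(M)$ and merely unattained; your opening sentence demanding a \emph{strict} inequality is therefore the wrong target, and one must instead prove non-attainment. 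Second, the tools you cite do not give a uniform lower bound over the (noncompact) class of proper competitors: Theorem \ref{T:monotonicity} controls density of a \emph{given} stationary varifold, not areas of arbitrary competitors, and Theorem \ref{T:freebdy-cpt} applies only to \emph{stable} surfaces so cannot by itself yield a finite-type classification, nor does anything here rule out small proper free boundary caps near convex portions of $\partial M$ or degenerating families. As written this is a plausible research program consistent with the authors' own speculation, not a proof.
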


A possible example for Conjecture \ref{Con: properness1} is shown in Figure \ref{nonproper minmax}. To explain why this may be a possible candidate for Conjecture \ref{Con: properness1}, we now give a brief introduction to the general setup of the min-max method modulo precise terminology in geometric measure theory.  Denote $\Om$ as a domain in $\R^{n+1}$, and $\mathcal{Z}_n(\Om, \partial \Om)$ as the space of hypersurfaces $S\subset \Om$ with $\partial S\subset\partial \Om$. A continuous map $\phi: [0, 1]\rightarrow \mathcal{Z}_n(\Om, \partial \Om)$ is called a {\em sweep-out} if the disjoint union $\sqcup\{\phi(x): x\in[0, 1]\}$ covers $\Om$ exactly once (up to cancellations counting orientation). Let $\Pi$ be the homotopy class of $\phi$, then one can define the width as: 
$$W(\Om)=\inf_{\psi\in\Pi}\max_{x\in[0, 1]} \Area(\psi(x)).$$
The width $W(\Om)$ measures the least area to sweep out $\Om$ by surfaces in $\mathcal{Z}_n(\Om, \partial \Om)$. The proof of Theorem \ref{T:manifold} asserts that $W$ is achieved by the area of a free boundary minimal hypersurface $\Si$, i.e. $W=\Area(\Si)$ (counting multiplicities). The domain $\Omega$ in Figure \ref{nonproper minmax} consists of two parts: $\Om_1$ is a long and slowly decaying solid cylinder capped off by a half ball, and $\Om_2$ is a solid half ball with a finger pushing inward on the left. When the decaying cylinder is long enough, we conjecture that the width $W(\Om_1)$ is achieved by the area of $\Si$ as in the figure. Then one can make $\Om_2$ small enough so that $W(\Om)=W(\Om_1)$ and $\interior (\Si)\cap\partial\Om=\{p\}$ is non-empty.  Finally it is conceivable that there is no other free boundary minimal hypersurface in $\Om$ whose area can achieve $W(\Om)$, and thus $\Si$ must be the unique solution produced by Theorem \ref{T:manifold}.

\begin{figure}[h]
    \centering
        \includegraphics[width=0.9\textwidth]{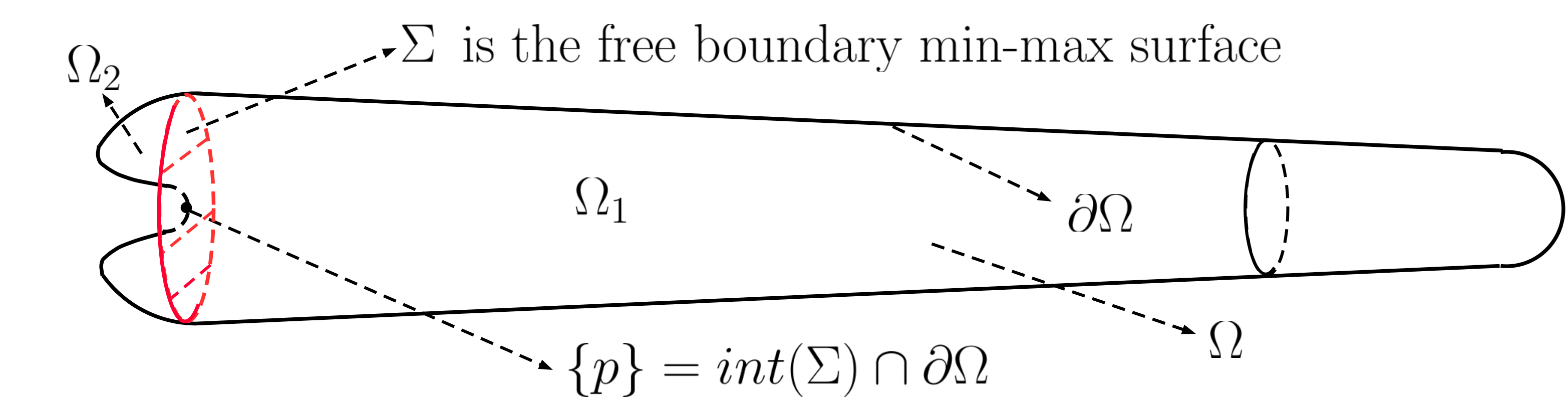}
    \caption{Conjectural domain in Conjecture \ref{Con: properness1}}
    \label{nonproper minmax}
\end{figure}

Although Conjecture \ref{Con: properness1} implies that one does not expect to find \emph{properly} embedded free boundary minimal hypersurfaces in \emph{every} compact domain $\Omega \subset \R^{n+1}$, we however conjecture that for \emph{generic} domains properness can be achieved.

\begin{conjecture}
\label{Con: properness2}
For a generic compact domain $\Om \subset \R^{n+1}$ with analytic boundary $\partial \Omega$, any free boundary min-max minimal hypersurface must be proper. 
\end{conjecture}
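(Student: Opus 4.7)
The plan is to combine the real-analyticity of free boundary minimal hypersurfaces for an analytic boundary with a Baire-category argument over the Banach manifold of analytic perturbations of $\partial\Om$, in the spirit of White's bumpy metric theorem. Given an analytic $\partial\Om$, any free boundary minimal hypersurface $\Si$ produced by Theorem \ref{T:manifold} is real-analytic in its interior (by elliptic regularity for the minimal surface equation) and real-analytic up to its free boundary $\partial\Si$ (by analytic reflection across $\partial\Om$). Writing the touching set as $T := (\Si\cap\partial\Om)\setminus\partial\Si$, one concludes that $T$ is a real-analytic subset of both $\Si$ and $\partial\Om$; by unique continuation, either $T$ lies in a proper analytic subvariety of $\Si$ of dimension at most $n-1$, or an entire connected component of $\Si$ coincides with an open subset of $\partial\Om$.

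The second alternative forces the mean curvature of $\partial\Om$ (as a hypersurface of $\R^{n+1}$) to vanish on an entire connected component of $\partial\Om$, which is a non-generic condition trivially destroyed by an arbitrarily small analytic perturbation. The first alternative is the substantive one. At any interior touching point $p\in T$, the maximum principle forces the inward mean curvature $H_{\partial\Om}(p)\le 0$. The strategy is then to perturb $\partial\Om$ outward near $p$ by an analytic bump supported in a small neighborhood of $p$, strictly enlarging $\Om$ there, so that for the perturbed domain $\Om_s$, any min-max hypersurface that remains Hausdorff-close to $\Si$ must be strictly separated from $\partial\Om_s$ near $p$. Combining this local surgery with upper semicontinuity of the width $s\mapsto W(\Om_s)$ (proved via compactness of free boundary stationary integral varifolds of bounded mass, as established in this paper), and iterating over a countable dense family of potential touching points via a Sard--Smale / Baire-category argument, one obtains a residual subset of analytic boundaries for which every min-max minimal hypersurface is proper.

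The principal obstacle is controlling \emph{all} possible min-max solutions simultaneously as $\partial\Om$ is perturbed. A priori the width $W(\Om)$ can be realized by several geometrically distinct free boundary minimal hypersurfaces, each carrying its own touching set, and these solutions may bifurcate or collide under perturbation. Addressing this requires two ingredients whose free boundary versions appear delicate: (i) a quantitative compactness theorem for free boundary minimal hypersurfaces of bounded area and Morse index at most one, ensuring that the set of min-max hypersurfaces for $\Om_s$ converges to that of $\Om_0$ in Hausdorff topology as $s\to 0$, without touching sets escaping to infinity along $\partial\Om$; and (ii) an analytic transversality lemma asserting that the local obstructions $\{H_{\partial\Om_s}\le 0\}$ and the touching sets $T_s$ sit in general position for generic $s$, so that removing one touching point by perturbation does not inadvertently create another elsewhere. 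The hardest step is (i), since the lack of a priori properness for nearby domains means the convergence must be carried out within the larger class of possibly non-proper free boundary minimal hypersurfaces constructed in Theorem \ref{T:manifold}, rather than within the classical proper category.
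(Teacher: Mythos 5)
This statement is labeled a \emph{Conjecture} in the paper, and the authors do not prove it; they only offer a short heuristic (analyticity of $\partial\Omega$ forces the touching set $\interior(\Sigma)\cap\partial\Omega$ to be ``very small,'' after which ``in principle it could be perturbed away by further generic deformations of $\partial\Omega$''). Your proposal is therefore not competing with an existing proof; rather, it fleshes out essentially the same heuristic the authors sketch, and you are candid that it is incomplete.

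Two comments on the substance of your sketch. First, your dichotomy for the touching set is correct and is exactly the point the authors gesture at: real-analyticity of $\Sigma$ (elliptic regularity plus analytic reflection across the analytic $\partial\Omega$) and of $\partial\Omega$ force either that a component of $\Sigma$ coincides with an open subset of $\partial\Omega$ (non-generic, since it forces a minimal open piece of $\partial\Omega$), or that the touching set is contained in a proper analytic subvariety of dimension at most $n-1$ and hence has $\mathcal{H}^n$-measure zero. Second, the obstacles you flag are genuine and are precisely why the statement remains a conjecture rather than a theorem: (i) a compactness/continuity theory for the full critical set of min-max minimal hypersurfaces under perturbation of the domain, carried out in the \emph{almost properly embedded} category (properness is not preserved in limits, as the paper's Theorem \ref{T:freebdy-cpt} and its remark emphasize), and in the presence of possibly higher multiplicity; and (ii) a transversality/Sard--Smale machinery adapted to the free boundary setting, where the natural nonlinear operator has a constrained boundary condition and the relevant ``degenerate'' set involves both the Jacobi operator and the geometry of the touching set. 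Neither ingredient is established in the paper, and building them would amount to a free boundary analogue of the bumpy metric theorem together with a Morse-index and multiplicity control for the min-max solution, none of which is available here. In short, your outline matches the authors' intuition and correctly identifies the missing lemmas; it does not, and does not claim to, close the conjecture.
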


Here is why we believe Conjecture \ref{Con: properness2} may be true. By genericity, we can assume that $\partial \Omega$ contains no open subset which is a minimal hypersurface in $\R^{n+1}$. Suppose $\Sigma \subset \Om$ is a free boundary min-max minimal hypersurface which is not proper, then the touching set $\interior (\Si)\cap \partial \Om$ is very small (of measure zero) by the analyticity of $\partial\Om$. Once we know that the touching set is small enough, in principle it could be perturbed away by further generic deformations of $\partial \Om$. For example, in Figure \ref{nonproper minmax} the min-max hypersurface would be non-proper only if the tip of the finger from $\Omega_2$ is exactly touching $\Sigma$. It is easy to see that if the finger is a little longer, then $\Sigma$ would be a solution of annulus type, while if the finger is a bit short, then $\Sigma$ would be still a disk, but in either case the solution would be properly embedded.

\subsection{Main ideas of the proof} 
We now describe the outline of the proof of our main theorem (Theorem \ref{T:manifold}). The proof can be roughly divided into two parts. First, we use a tightening argument to establish the existence of solutions in a weak sense called stationary varifolds with free boundary. Second, we prove the regularity of our weak solution obtained by min-max construction utilizing the almost minimizing property. Although our general philosophy follows the ideas developed by Almgren \cite{Almgren65} and Pitts \cite{Pitts}, many new insights are needed to establish the full regularity near the boundary, as we will now discuss.

Since free boundary minimal hypersurfaces are critical points to the area functional with respect to deformations in $M$ which preserves $\partial M$ as a set, in our tightening argument we have to restrict to the class of ambient vector fields on $M$ which are tangential to $\partial M$. Varifolds in $M$ which are stationary with respect to such vector fields are called {\em stationary varifolds with free boundary}. However, these stationary varifolds may not have the desired properties at $\partial M$. For example, any constant multiple of a connected component of $\partial M$ is stationary even though it can be nothing like a minimal hypersurface in $M$. Therefore, one is not expected to be able to prove strong regularity results just from these first order considerations. Nonetheless, in the codimension one case, we were able to prove a maximum principle for general stationary varifolds with free boundary (see Theorem \ref{T:max-principle}) inside a convex domain whose boundary is orthogonal to $\partial M$. Such domains can be constructed locally near a point on $\partial M$ by taking images of sufficiently small balls under the Fermi coordinate systems (Lemma \ref{L:Fermi-convex}). Note that an optimal maximum principle was recently obtained in \cite{max-principle} which holds for free boundary minimal varieties in arbitrary codimension.

A serious difficulty towards proving regularity at points lying on the boundary $\partial M$ is that it is impossible to distinguish by measure theoretic means whether such a point on the support of a stationary varifold $V$ is a ``boundary point'' or an ``interior point'' touching $\partial M$ tangentially. For example, one can consider a multiplicity $1$ line touching $\partial M$ tangentially and a multiplicity $2$ half-line hitting $\partial M$ orthogonally. Both of them have density equal to $1$ at the boundary intersection point. However, if we \emph{aprior} know that the stationary varifold is supported on a \emph{smooth} embedded hypersurface contained in $M$ with boundary lying on $\partial M$, then there is a clear distinction between a \emph{true} boundary point of the smooth hypersurface and a \emph{false} boundary point which is just an interior point touching $\partial M$ tangentially. In the smooth case we can talk about the first and second variations, hence the notion of \emph{stability} makes sense without ambiguity. A key ingredient in our proof of regularity is the compactness theorem for stable minimal hypersurfaces with free boundary (Theorem \ref{T:freebdy-cpt}) which can have interior points touching $\partial M$ tangentially. The theorem follows from the uniform curvature estimates in \cite{Li-Zhou-A} and plays a crucial role in the regularity of the replacements constructed in Proposition \ref{P:good-replacement-property}.

To obtain a weak solution satisfying the crucial almost minimizing property in the min-max construction, we need to choose the total variational space as the space of equivalence classes of relative cycles. Geometrically the use of equivalence classes is very natural in our context, as we automatically ``forget" the part of the relative cycles lying on the boundary $\partial M$. Almgren also considered the min-max construction using equivalence classes in \cite{Almgren62}; however the relative cycles in \cite{Almgren62} are chosen to be integral cycles, while our relative cycles contain all integer rectifiable ones. One reason for our choice is that the area minimizing regularity theory by Gr\"{u}ter \cite{Gr87} requires the comparison relative cycles to be merely integer rectifiable but not necessarily integral. Actually we will show that our theory developed using integer rectifiable relative cycles is equivalent to the one established in \cite{Almgren62}. The reason for using equivalence classes instead of just the space of relative cycles is that we need a well-defined ``flat norm" under which there are natural compactness theorems for relative cycles/equivalence classes with bounded mass. As shown in \cite[4.4.4]{Federer}, the space of all integer rectifiable relative cycles with bounded mass is not compact under the usual flat norm; and \cite[4.4.4]{Federer} proposed a new norm which is not geometrically easy to use for the purpose of cut-and-paste arguments. Using the space of equivalence classes, we introduced a relative flat norm and a relative mass norm (Definition \ref{D:relative-cycles}), under which we proved a natural compactness result (Lemma \ref{L:compactness}). Although the use of equivalence classes brought in new technical difficulties, we eventually obtained many useful properties of the relative norms similar to those of the usual flat and mass norms, including two important isoperimetric lemmas (Section \ref{SS:isoperimetic}). 

Similar to \cite{Pitts}, we need to use (discrete) sweepouts which are continuous in the relative mass norm topology. To connect relative flat norm topology and relative mass norm topology, we established several interpolation results. In particular, one interpolation result (Lemma \ref{L:interpolation}) was used to show that the almost minimizing properties defined using both norms are equivalent, which is essential for the existence and regularity of almost minimizing varifolds. The proof of these interpolation results is based heavily on the isoperimetric lemmas under the relative norms, and the cut-and-paste techniques under Fermi coordinates.

While any stationary varifold in a compact submanifold \emph{without boundary} embedded in $\mathbb{R}^L$ automatically has bounded first variation as a varifold in $\mathbb{R}^L$, the same does not hold for a stationary varifold with free boundary in a compact submanifold \emph{with boundary}. Hence many of the classical results for varifolds in $\mathbb{R}^L$ with locally bounded first variation do not apply. In particular, we cannot directly apply the Rectifiability Theorem \cite[Theorem 42.4]{Simon} to conclude that our almost minimizing varifold $V$ is \emph{rectifiable} simply from a uniform positive lower bound on its density. We overcome this difficult by first establishing the rectifiability of its tangent cones at a boundary point, which leads to a classification of the tangent cones. Since the tangent cones can be shown to be almost everywhere \emph{unique}, we can still prove that $V$ is rectifiable (see Proposition \ref{P:tangent-cone}).

Finally, in the proof of the main regularity theorem (Theorem \ref{T:main-regularity}), where we have to show that two successive replacements can be glued together smoothly along a Fermi half-sphere, there are many more cases we have to consider compared to the classical situation without boundary. First, we have to handle the cases separately near a true boundary point or a false boundary point. Second, in the blow-up arguments to show the replacements glue in a $C^1$ manner, there are two different convergence scenarios (that we called Type I and II). In the first scenario the scales are small enough so that the blow-ups do not see the boundary $\partial M$ in the limit, while in the second case the boundary can be seen in the limit. The type of convergence scenarios depends on the ratio between distance of the center of blow-up point (which may not lie on $\partial M$) to the boundary $\partial M$ with the radius of the ball. We have to analyze the situation independently in both cases to obtain the required regularity.

Notice that we have chosen to work with Fermi coordinate systems around points on the boundary $\partial M$ mainly for two reasons. One reason is that the Fermi half-spheres give a nice smooth foliation of a relative open neighborhood of a boundary point on $\partial M$ by hypersurfaces meeting $\partial M$ \emph{orthogonally}. This orthogonality would come in handy when we want to apply the maximum principle in Theorem \ref{T:max-principle}. Another reason, which is of a more technical nature, is that in establishing the Interpolation Lemma (Lemma \ref{L:interpolation}), one has to carry out a ``\emph{cone construction}'' which can only be done if the domain under consideration is star-shaped (c.f. Lemma \ref{L:replace-cone}). The advantage of using Fermi coordinates system over a geodesic normal coordinate on an extension $\ti{M}$ of $M$ is that the comparison cone still lies inside $M$ in a Fermi coordinate chart.

\vspace{1em}

\noindent \textbf{Organization of the paper.} The paper is organized as follows. In Section \ref{S:background}, we set some notations and collect some preliminary results for varifolds in Riemannian manifolds which will be used throughout the paper. In particular, we define almost properly embedded hypersurfaces in $M$, discuss their first and second variations, and prove a compactness theorems for stable almost properly embedded hypersurfaces (Theorem \ref{T:freebdy-cpt}).
In Section \ref{S:amvarifold}, we define the important concept of \emph{almost minimizing varifolds} using equivalence classes of relative cycles, which we give a detailed construction. Two technical isoperimetric lemmas (Lemma \ref{L:isoperimetric-F} and \ref{L:isoperimetric-M}) are given and the equivalence of different definitions of almost minimizing varifolds is stated in Theorem \ref{T:def-equiv}, whose technical proof is given in the Appendix. In Section \ref{S:min-max}, we describe the general min-max construction developed by Almgren and Pitts adapted to the free boundary setting. We prove a version of the discretization and interpolation theorems in our setting (Theorem \ref{T:discretization} and \ref{T:interpolation}). We describe the tightening process (Proposition \ref{P:tightening}) and the combinatorial argument (Theorem \ref{T:combinatorial}) which imply the existence of stationary and almost minimizing varifolds with free boundary respectively (c.f. Proposition \ref{P:tightening} and Corollary \ref{C: existence of almost minimizing varifold}). Finally, in Section \ref{S:regam}, we give a complete proof of our regularity theorem (Theorem \ref{T:main-regularity}) up to the boundary.

\vspace{1em}
\textbf{Acknowledgement.} 
Both authors would like to thank Prof. Richard Schoen for introducing the problem to them, and his continuous encouragement, insightful discussions and interest in this work. 
The authors also want to thank Prof. Shing Tung Yau, Prof. Andre Neves for many fruitful discussions, and Prof. Fernando Coda Marques, Prof. Tobias Colding and Prof. Bill Minicozzi for their interest in this work. The first author is partially supported by a research grant from the Research Grants Council of the Hong Kong Special Administrative Region, China [Project No.: CUHK 24305115] and CUHK Direct Grant [Project Code: 4053118]. The second author is partially supported by  NSF grant DMS-1406337.

\section{Definitions and preliminary results}
\label{S:background}

In this section, we present some basic definitions and notations which will be used throughout the rest of the paper. First, we recall some basic notions in Euclidean spaces. Then we define stationary varifolds with free boundary in Riemannian manifolds with boundary and state two important theorems for this paper, the monotonicity formula (Theorem \ref{T:monotonicity}) and the maximum principle (Theorem \ref{T:max-principle}). 
Finally, we define an important notion of \emph{(almost) properly embedded submanifold} inside a compact manifold with boundary. Moreover, we also define the notion of \emph{stability} in the hypersurface case (Definition \ref{D:stable}) and prove a compactness theorem (Theorem \ref{T:freebdy-cpt}) which is a crucial ingredient in the regularity theory of this paper.

\subsection{Notations in Euclidean spaces}
\label{SS:notation}

We adopt the following notations in $\R^L$:
\[ \begin{array}{cl}
B_r(p) & \text{Euclidean open ball of radius $r$ centered at $p$} \\
A_{s,r}(p) & \text{Euclidean open annulus } B_r(p) \setminus \overline{B}_s(p) \\
\bmu_r & \text{the homothety map } x \mapsto r \, x \\
\btau_p & \text{the translation map } x \mapsto x-p \\
\bleta_{p,r} & \text{the composition } \bmu_{r^{-1}} \circ \btau_p \\
\Gr(L,k) & \text{the Grassmannian of unoriented $k$-dimensional subspaces in } \mathbb{R}^L \\
\Gr_k(A) & \text{the subset } A \times \Gr(L,k) \text{ for any Borel }A \subset \mathbb{R}^L\\
\Clos(A) & \text{the closure of a subset }A \subset \mathbb{R}^L\\
\mathcal{H}^k & \text{the $k$-dimensional Hausdorff measure in } \mathbb{R}^L\\
\omega_k & \text{the volume of the $k$-dimensional unit ball } B_1(0) \subset \R^k\\
\mathfrak{X}(\mathbb{R}^L) & \text{the space of smooth vector fields on $\mathbb{R}^L$}
\end{array} \]

Unless otherwise stated, $\mathbb{R}^L$ is always equipped with the standard inner product $\cdot$ and norm $| \cdot |$, with (flat) covariant derivative denoted by $D$. We often use the Euclidean coordinates given by $(x_1,\cdots,x_n,t)$. For any subset $S \subset \mathbb{R}^{n+1}$, we will define
$ S_\pm := S \cap \{ \pm t \geq 0\}$. For example, $\mathbb{R}^{n+1}_+$ is the upper half-space with boundary $\partial \mathbb{R}^{n+1}_+ \cong \mathbb{R}^n=\{t=0\}$. The \emph{tangent cone} of any subset $S \subset \mathbb{R}^L$ at $p$ is defined as 
\begin{equation}
\label{E:tangent-cone-def}
T_pS:=\left\{v \in \mathbb{R}^L : \begin{array}{l}
\text{for every $\ep>0$, there exists $x \in S$} \\
\text{and $r >0$ such that $|x-p| < \ep$} \\
\text{and $|\bmu_r \circ \btau_p (x)-v|< \ep$}
\end{array} \right\}.
\end{equation}

We now quickly recall some basic notions of varifolds in $\R^L$ and refer the readers to the standard references \cite{Allard72} and \cite{Simon} for details. 
For any measure $\mu$ on $\mathbb{R}^L$, $\Theta^k(\mu,p)$ will denote the $k$-dimensional density of $\mu$ at a point $p \in \R^L$.
The space of \emph{$k$-varifolds} in $\mathbb{R}^L$, denoted by $\bV_k(\mathbb{R}^L)$, is the set of all Radon measures on the Grassmannian $\mathbb{R}^L\times\Gr(L,k)$ equipped with the weak topology. The \emph{weight} and \emph{mass} of a varifold $V \in \bV_k(\mathbb{R}^L)$ is denoted respectively by $\|V\|$ and $\M(V):=\|V\|(\mathbb{R}^L)$. For any Borel set $A \subset \R^L$, we denote $V \lc A$ to be the \emph{restriction of $V$ to $\Gr_k(A)=A \times \Gr(L,k)$}. The \emph{support of $V$}, $\spt \|V\|$, is the smallest closed subset $B \subset \mathbb{R}^L$ such that $V\lc(\mathbb{R}^L \setminus B)=0$. See \cite[\S 38]{Simon}. The $\mF$-metric on $\bV_k(\mathbb{R}^L)$ as defined in \cite[2.1(19)]{Pitts} induces the weak topology on the set of mass-bounded $k$-varifolds. 
We are most interested in the class of \emph{(integer) rectifiable $k$-varifolds} (see \cite[Chapter 3 and 4]{Simon}). 
The set of rectifiable and integer rectifiable $k$-varifolds in $\mathbb{R}^L$ will be denoted by $\bRV_k(\mathbb{R}^L)$ and $\bIV_k(\mathbb{R}^L)$ respectively \cite[2.1(18)]{Pitts}. 
For any $C^1$ map $f:\mathbb{R}^L \to \mathbb{R}^{L}$ be a $C^1$, we have a continuous \emph{pushforward} map $f_\sharp:\bV_k(\mathbb{R}^L) \to \bV_k(\mathbb{R}^{L})$ as defined in \cite[\S 39]{Simon}. 
As in \cite[42.3]{Simon}, we denote $\VarTan (V,p)$ to be the set of \emph{varifold tangents} of a varifold $V \in \bV_k(\R^L)$ at some $p \in \spt\|V\|$. 
By the compactness of Radon measures \cite[Theorem 4.4]{Simon}, $\VarTan(V,p)$ is compact and non-empty provided that upper density $\Theta^{*k}(\|V\|,p)$ is finite. Moreover, there exists a non-zero element $C \in \VarTan(V,p)$ if and only if $\Theta^{*k}(\|V\|,p)>0$. See \cite[3.4]{Allard72}.


\subsection{Stationary varifolds with free boundary}
\label{SS:manifolds-with-boundary}

In this paper, $M^{n+1}$ is a smooth compact connected $(n+1)$-dimensional Riemannian manifold with nonempty boundary $\partial M$. We can always extend $M$ to a closed Riemannian manifold $\tM$ of the same dimension \cite{Pigola-Veronelli16} so that $M \subset \tM$. We will denote the intrinsic Riemannian metric by $\langle \cdot, \cdot \rangle$ and the Levi-Civita connection by $\nabla$. 
We will equip $M$ with the subspace topology induced from $\tM$. 
The following notations will be used throughout the paper (see Appendix \ref{A:Fermi} for the notations involving Fermi coordinates):
\[ \begin{array}{cl}
\nu_{\partial M} & \text{the inward unit normal of $\partial M$ with respect to $M$}\\
\tBcal_r(p) & \text{the open geodesic ball in $\tM$ of radius $r$ centered at $p$} \\
\tScal_r(p) & \text{the geodesic sphere in $\tM$ of radius $r$ centered at $p$}\\
\tAcal_{s,r}(p) & \text{the open geodesic annulus $\tBcal_r(p) \setminus \Clos(\tBcal_s(p))$ in $\tM$} \\
\tBcal^+_r(p) & \text{the (relatively) open Fermi half-ball of radius $r$ centered at $p \in \partial M$} \\
\tScal^+_r(p) & \text{the Fermi half-sphere of radius $r$ centered at $p \in \partial M$}\\
\end{array} \]

We now proceed to define varifolds in a Riemannian manifold $N$ (possibly with boundary), which can be assumed to be isometrically embedded as a closed subset of some $\R^L$ by Nash isometric embedding. Here we will follow mostly the notations in \cite{Pitts} (which is slight different from \cite{Allard72}). 
For a smooth submanifold $N^k \subset \mathbb{R}^L$ (with or without boundary), we define the following spaces of vector fields 
\begin{eqnarray}
\label{E:vector-fields}
\mathfrak{X}(N) &:=& \{X \in \mathfrak{X}(\mathbb{R}^L) : X(p) \in T_pN \text{ for all $p \in N$}.\} \\
\mathfrak{X}_{tan}(N) &:=& \{X \in \mathfrak{X}(N) : X(p) \in T_p(\partial N) \text{ for all $p \in \partial N$}. \} \nonumber
\end{eqnarray}
Note that using (\ref{E:tangent-cone-def}), at each $p \in \partial N$, $T_pN$ is a $k$-dimensional half-space in $\mathbb{R}^L$ with boundary $T_p(\partial N)$. 
We define the space of (integer) rectificable $k$-varifolds in $M$, denoted by $\RV_k(N)$ (resp. $\IV_k(N)$), as the set of all (integer) rectifiable $k$-varifolds in $\R^L$ with $\spt \|V\| \subset N$. Moreover, $\V_k(N):=\Clos (\RV_k(N)) \subset \bV_k(\mathbb{R}^L)$. 
If $f:\mathbb{R}^L \to \mathbb{R}^L$ is a $C^1$ map such that $f(N_1) \subset N_2$, then by \cite[2.1(18)(h)]{Pitts}, we have $f_\sharp(\V_k(N_1)) \subset \V_k(N_2)$, $f_\sharp(\RV_k(N_1)) \subset \RV_k(N_2)$ and $f_\sharp (\IV_k(N_1)) \subset \IV_k(N_2)$. Note that by \cite[2.1(18)(i)]{Pitts}, $\VarTan(V,p) \subset \V_k(T_pN)$ for any $V \in \V_k(N)$ and $p \in \spt \|V\|$.

Let $V \in \V_k(N)$, if $X \in \mathfrak{X}_{tan}(N)$ generates a one-parameter family of diffeomorphisms $\phi_t$ of $\mathbb{R}^L$ with $\phi_t(N)=N$, then $(\phi_t)_ \sharp V \in \V_k(N)$ and one can consider its first variation along the vector field $X$ \cite[39.2]{Simon}
\begin{equation}
\label{E:1st-variation}
\delta V (X) := \left. \frac{d}{dt}\right|_{t=0} \M((\phi_t)_\sharp V)=\int \Div_S X(x) \, dV(x, S), 
\end{equation}
where $\Div_S X(x)= \langle D_{e_i} X, e_i \rangle$ where $\{e_1,\cdots,e_k\} \subset S$ is any orthonormal basis.

\begin{definition}[Stationary varifolds with free boundary]
\label{D:freebdy}
Let $U \subset N$ be a relatively open subset. A varifold $V\in\V_k(N)$ is said to be \emph{stationary in $U$ with free boundary} if $\delta V(X) =0$ for any $X \in \mathfrak{X}_{tan}(N)$ compactly supported in $U$. 
\end{definition}

From the first variation formula it is clear that the set of all $V \in \V_k(N)$ which is stationary \emph{in $N$} with free boundary is a closed subset of $\V_k(N)$ in the weak topology. 
We will be mainly interested in the case $N=M$ or $T_pM$ in Definition \ref{D:freebdy}. Recall that in the later case, $T_pM$ is an $(n+1)$-dimensional half-space in $\mathbb{R}^L$ when $p \in \partial M$. The following reflection principle will be useful when one consider the tangent varifolds of $V \in \V_k(M)$ at some $p \in \partial M$ as $\VarTan(V,p) \subset \V_k(T_pM)$.

\begin{lemma}[Reflection principle]
\label{L:reflection}
Let $v \in \mathbb{R}^L$ be a unit vector. Suppose $P \subset \mathbb{R}^L$ is an $(n+1)$-dimensional subspace with $v\in P$ and denote $P_+:=\{u \in P: u \cdot v \geq 0\}$ to be the closed half-space. Let $\theta_v: \mathbb{R}^L \to \mathbb{R}^L$ denote reflection map about $v$, i.e. $\theta_v(u)=u - 2 (u \cdot v) v$. For any $V \in \V_k(P_+)$, define the doubled varifold
\begin{equation}
\label{E:double-varifold}
\overline{V}:=V+(\theta_v)_\sharp V \in \V_k(P).
\end{equation}
If $V$ is stationary in $P_+$ with free boundary, then $\overline{V}$ is stationary in $P$.
\end{lemma}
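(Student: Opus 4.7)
\medskip

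The plan is to use a standard reflection/symmetrization trick. Given any compactly supported $Y\in\mathfrak{X}(P)=\mathfrak{X}_{tan}(P)$ (note $\partial P=\emptyset$), I will decompose it into pieces adapted to the $\mathbb{Z}/2$-action generated by $\theta_v$ and test each piece against the two summands of $\overline{V}$. Concretely, define
\[
Y^s(u):=\tfrac{1}{2}\bigl(Y(u)+\theta_v(Y(\theta_v u))\bigr),\qquad Y^a(u):=\tfrac{1}{2}\bigl(Y(u)-\theta_v(Y(\theta_v u))\bigr),
\]
so that $Y=Y^s+Y^a$ with $Y^s$ being $\theta_v$-equivariant and $Y^a$ being $\theta_v$-antiequivariant. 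Both remain smooth, compactly supported, and tangent to $P$ on $P$ since $\theta_v$ preserves $P$.

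Next I would verify the ``free-boundary'' condition for $Y^s$: at a point $u\in\partial P_+=P\cap v^\perp$ one has $\theta_v(u)=u$, and equivariance forces $\theta_v(Y^s(u))=Y^s(u)$, i.e.\ $Y^s(u)\in v^\perp=T_u(\partial P_+)$. Hence $Y^s\big|_{P_+}\in\mathfrak{X}_{tan}(P_+)$ with compact support, and the assumed free-boundary stationarity of $V$ gives $\delta V(Y^s)=0$.

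The computational core is the pushforward identity for first variation under the isometric involution $\theta_v$. If $\phi_t$ is the flow of a vector field $Z$, then $\theta_v\circ\phi_t\circ\theta_v$ is the flow of $\tilde Z(u):=\theta_v(Z(\theta_v u))$, and since $\theta_v$ preserves mass,
\[
\delta\bigl((\theta_v)_\sharp V\bigr)(Z)=\tfrac{d}{dt}\Big|_{t=0}\mathbf{M}\bigl((\phi_t\circ\theta_v)_\sharp V\bigr)=\tfrac{d}{dt}\Big|_{t=0}\mathbf{M}\bigl((\theta_v\circ\phi_t\circ\theta_v)_\sharp V\bigr)=\delta V(\tilde Z).
\]
Applied to $Z=Y^s$ and $Z=Y^a$, using $\widetilde{Y^s}=Y^s$ and $\widetilde{Y^a}=-Y^a$, this yields
\[
\delta\overline V(Y^s)=\delta V(Y^s)+\delta V(Y^s)=2\delta V(Y^s)=0,
\]
while the antisymmetric part cancels automatically:
\[
\delta\overline V(Y^a)=\delta V(Y^a)+\delta V(-Y^a)=0,
\]
with no appeal to stationarity. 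Summing gives $\delta\overline V(Y)=0$ for arbitrary compactly supported $Y\in\mathfrak{X}(P)$, which is the desired conclusion.

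There is essentially no serious obstacle here; the reflection trick is routine once the decomposition is set up. The only point requiring minor care is the pushforward computation for $\delta((\theta_v)_\sharp V)$, where one must correctly identify the generating vector field of the conjugated flow and use that $\theta_v$ is an isometry (so that mass, and hence the derivative of mass along the flow, is preserved under pre/post-composition by $\theta_v$). Everything else — smoothness, compact support, tangency to $P$ — is automatic from the linearity of $P$ and the fact that $\theta_v$ is a linear involution fixing $P$ setwise.
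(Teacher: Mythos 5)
Your proof is correct and follows essentially the same route as the paper's: the key computation $\delta((\theta_v)_\sharp V)(Z)=\delta V\bigl((\theta_v)_*Z\bigr)$, combined with the observation that the $\theta_v$-symmetrization of $Y$ is tangential along $\partial P_+$. The paper states this as $\delta\overline V(X)=\delta V\bigl(X+(\theta_v)_*X\bigr)$ and notes $X+(\theta_v)_*X\in\mathfrak{X}_{tan}(P_+)$; your symmetric/antisymmetric decomposition $Y=Y^s+Y^a$ just makes the same cancellation explicit, since $X+(\theta_v)_*X=2Y^s$.
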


\begin{proof}
By the definition of pushforward of a varifold (\cite[\S 39]{Simon}), (\ref{E:1st-variation}) and that $\theta_v$ is an isometry of $\mathbb{R}^L$, one easily sees that $\delta \overline{V} (X)=  \delta V (X + (\theta_v)_* X)$ for any $X \in \mathfrak{X}(P)$ with compact support. Here $(\theta_v)_*X$ is the pushforward of the vector field $X$ by $\theta_v$. Since $X + (\theta_v)_* X \in \mathfrak{X}_{tan}(P_+)$, the assertion follows directly.
\end{proof}

We now state the monotonicity formula for stationary varifold with free boundary which holds near the boundary $\partial M$. Note that the monotonicity formula is stated relative to Euclidean balls $B_r(p) \subset \R^L$.

\begin{theorem}[Monotonicity formula]
\label{T:monotonicity}
Let $V \in \V_k(M)$ be a stationary varifold in $M$ with free boundary. 
Then $\Theta^k(\|V\|,p)$ exists at every $p \in \partial M$ and there exists a constant $C_{mono}>1$ depending only on $r_{mono}$ and $\Lambda_0$ such that for all $0 <\si < \rho <r_{\textrm{mono}}$, we have
\[   \frac{\|V\|(B_\si(p))}{\omega_k \si^k} \leq C_{mono} \frac{\|V\|(B_\rho(p))}{\omega_k \rho^k}. \]
\end{theorem}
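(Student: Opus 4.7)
The plan is to adapt the classical monotonicity argument to the free boundary setting by working in Fermi coordinates around $p$ and using a radial-type test vector field that is automatically tangent to $\partial M$, so that the free boundary stationarity $\delta V(X)=0$ applies to it.

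First, I would set up Fermi coordinates $(x^1,\ldots,x^n,t)$ centered at $p\in\partial M$, normalized so that $\{\partial_{x^i}|_p\}$ is an orthonormal basis of $T_p(\partial M)$ and $\partial_t|_p=\nu_{\partial M}(p)$. In these coordinates $\partial M=\{t=0\}$ and $M=\{t\ge 0\}$ locally. Let $\tir(q)=\bigl(|x|^2+t^2\bigr)^{1/2}$ be the Fermi radial distance, and consider the test vector field
\[ X \;=\; \phi(\tir)\,\bigl(x^i\partial_{x^i} + t\,\partial_t\bigr), \]
where $\phi:\R_{\ge 0}\to\R$ is a smooth cutoff to be chosen. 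Because the $\partial_t$-component of $X$ vanishes whenever $t=0$, the vector field $X$ is tangent to $\partial M$ there; after extending smoothly via a tubular neighborhood of $M$ in $\R^L$, it becomes an admissible element of $\mathfrak{X}_{tan}(M)\subset\mathfrak{X}(\R^L)$ with compact support near $p$, to which the free boundary stationarity applies.

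Second, I would compute $\Div_S X$ for an arbitrary $k$-plane $S\subset T_qM$. In Euclidean coordinates the divergence of the Euler field $x^i\partial_{x^i}+t\partial_t$ along any $k$-plane equals $k$, while the Christoffel symbols of the Fermi metric vanish at $p$ and are of size $O(\tir)$ on the $r_{\mathrm{mono}}$-neighborhood, with constants controlled by $\Lambda_0$ (the bound on the curvatures of $\tM$ and on the second fundamental form of $\partial M$). This produces
\[ \Div_S X \;=\; k\,\phi(\tir)\bigl(1+O(\tir)\bigr) + \tir\,\phi'(\tir)\bigl(1+O(\tir)\bigr). \]
Plugging into $0=\delta V(X)=\int \Div_S X\,dV(x,S)$ and choosing $\phi$ to be a mollification of $\mathbf{1}_{[0,\rho]}$, a standard manipulation gives the almost-monotonicity inequality
\[ \frac{d}{d\rho}\!\left(\,e^{C\rho}\,\frac{\|V\|(\tBcal^+_\rho(p))}{\omega_k\,\rho^k}\,\right) \;\ge\; 0, \qquad 0<\rho<r_{\mathrm{mono}}, \]
with $C$ depending only on $\Lambda_0$. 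Integrating between $\sigma$ and $\rho$ yields monotonicity of the Fermi density ratio.

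Third, to pass to the Euclidean balls $B_r(p)$ in the statement, I would use that the isometric inclusion $M\hookrightarrow\R^L$ identifies $\tir$ with the ambient Euclidean distance $|\cdot-p|$ up to a factor $1+O(\tir^2)$; consequently, for $\rho<r_{\mathrm{mono}}$ one has inclusions $\tBcal^+_{(1-C\rho)\rho}(p)\subset B_\rho(p)\cap M\subset \tBcal^+_{(1+C\rho)\rho}(p)$, and absorbing all such multiplicative corrections into a single constant $C_{\mathrm{mono}}>1$ depending only on $r_{\mathrm{mono}}$ and $\Lambda_0$ produces the stated inequality. The existence of $\Theta^k(\|V\|,p)$ at every $p\in\partial M$ is then immediate: the exponentially corrected Fermi density ratio is non-decreasing in $\rho$ and bounded above by its value at $\rho=r_{\mathrm{mono}}/2$, hence has a limit as $\rho\to 0$, and the Euclidean comparison shows this limit equals $\Theta^k(\|V\|,p)$.

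The main obstacle will be the first-variation computation in Fermi coordinates and the bookkeeping of error terms so that the constant depends only on $\Lambda_0$ and $r_{\mathrm{mono}}$; a secondary technical point is to verify that the radial Fermi vector field can be extended to an element of $\mathfrak{X}_{tan}(M)\subset\mathfrak{X}(\R^L)$ without destroying tangency along $\partial M$, which holds because tangency is a linear condition on the components in the Fermi frame and survives a smooth extension along a tubular neighborhood of $M$ in $\R^L$.
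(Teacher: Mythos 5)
The paper does not contain its own proof of this theorem: it simply cites \cite{Li-Zhou-A}. Your proposal is therefore a genuinely different and self-contained route, built on the Fermi coordinate machinery the paper develops in Appendix~\ref{A:Fermi} for other purposes. The key observation -- that the Fermi radial field $x^i\partial_{x^i}+t\,\partial_t$ has no $\partial_t$-component along $\partial M=\{t=0\}$, hence lies in $\mathfrak{X}_{tan}(M)$ and the free boundary stationarity $\delta V(X)=0$ applies to it directly -- is correct and is what makes the strategy go. Where an argument using the Euclidean position field $x-p$ must control the boundary term coming from the normal component of $x-p$ along $\partial M$, your approach eliminates that term at the cost of bookkeeping the Fermi metric and Christoffel errors; both are manageable at the required order.

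Three details in your sketch are imprecise, though none seems fatal. First, the Fermi Christoffel symbols do \emph{not} all vanish at $p$ or decay like $O(\tir)$: by Lemma~\ref{L:Fermi}, $\Gamma_{it}^j$ and $\Gamma_{ij}^t$ are only $O(1)$, since they encode the second fundamental form of $\partial M$, which generically does not vanish at $p$. The error in $\Div_SX$ is nevertheless $O(\tir)$, but for a different reason than you state: the $O(1)$ Christoffel symbols are contracted against the position components $x^i,t$, which are themselves $O(\tir)$. Second, your displayed formula for $\Div_S X$ omits the factor $|\nabla^S\tir|^2$ in the $\tir\phi'(\tir)$ term. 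That factor is essential: the standard manipulation writes $\tir\phi'|\nabla^S\tir|^2 = \tir\phi' - \tir\phi'|\nabla^\perp\tir|^2$ and exploits the favorable sign of the last term when $\phi'\le 0$; as written your identity would produce an equality, not the monotonicity inequality. Third, the agreement between $\tir$ and $|\cdot-p|$ is only $1+O(\tir)$, not $1+O(\tir^2)$, precisely because $g_{ij}$ deviates from $\delta_{ij}$ at linear order in $\tir$ (Lemma~\ref{L:Fermi}); the conversion to Euclidean balls still works -- even the cruder comparison $B_{r/2}(p)\cap M\subset\tBcal^+_r(p)\subset B_{2r}(p)\cap M$ from Lemma~\ref{L:Fermi-convex} suffices -- but you should separate the regimes $\sigma\le\rho/4$ and $\sigma>\rho/4$ rather than rely on a nested inclusion $\tBcal^+_{(1+C\sigma)\sigma}\subset\tBcal^+_{(1-C\rho)\rho}$ that fails when $\sigma$ is close to $\rho$.
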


\begin{proof}
See \cite[Theorem 3.5]{Li-Zhou-A}. Note that the constants $\kappa=R_0^{-1}$, $\gamma$ and $\Lambda$ in \cite[Theorem 3.3]{Li-Zhou-A} only depend on the isometric embedding $M \hookrightarrow \mathbb{R}^L$. Moreover, $d=\infty$ since $N=\partial M$ has no boundary.
\end{proof}

We will also need a maximum principle for stationary varifolds with free boundary in the codimension one case. The proof and a more general statement (which works in any codimension) can be found in \cite{max-principle}. For any subset $A \subset M$, we define the \emph{relative interior of $A$}, denoted by $\interior_M(A)$, to be the interior of $A$ with respect to the subspace topology of $M$. 
The \emph{relative boundary of $A$}, denoted by $\partial_{rel} A$, is the set of all the points in $M$ which is neither in the relative interior of $A$ or $M \setminus A$. 

\begin{definition}[Relative convexity]
\label{D:rel-convex}
A subset $\Omega \subset M$ is said to be a \emph{relatively convex domain in $M$} if it is a relatively open connected subset in $M$ whose relative boundary $\partial_{rel} \Omega$ is a smooth convex  
hypersurface (possibly with boundary) in $M$. 
\end{definition}


\begin{theorem}[Maximum principle]
\label{T:max-principle}
Let $k=n= \dim M -1$ and $V \in \V_n(M)$ be stationary in a relatively open subset $U \subset M$ with free boundary. Suppose $K \subset \subset U$ is a smooth relatively open connected subset in $M$ such that
\begin{itemize}
\item[(i)] $\partial_{rel} K$ meets $\partial M$ orthogonally,
\item[(ii)] $K$ is relatively convex in $M$,
\item[(iii)] $\spt \|V\| \subset \overline{K}$,
\end{itemize}
then we have $\spt \|V\| \cap \partial_{rel} K = \emptyset$.
\end{theorem}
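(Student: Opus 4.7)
The plan is to argue by contradiction: assume a touching point $p \in \spt\|V\| \cap \partial_{rel} K$ exists, and split according to whether $p$ lies in $\interior(M)$ or on $\partial M$.

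\emph{Interior case.} If $p \in \interior(M)$, then on a small geodesic ball around $p$ disjoint from $\partial M$, every compactly supported vector field automatically lies in $\mathfrak{X}_{tan}(M)$, so $V$ is stationary there in the classical (no-boundary) sense. Since $\partial_{rel}K$ is smooth and convex at $p$ and $\spt\|V\| \subset \overline K$ touches $\partial_{rel}K$ at $p$, I would invoke the Solomon--White maximum principle for stationary codimension-one varifolds contained in a smooth mean-convex region. Concretely, test $\delta V$ against $X = \eta \nabla d$, where $d$ is the signed distance to $\partial_{rel}K$ (positive in $K$) and $\eta$ is a bump function around $p$: convexity of $\partial_{rel}K$ forces $d$ to be concave on $K$ near $\partial_{rel}K$, so $\tr_S \Hess d \leq 0$ and hence $\Div_S X \leq 0$ on $\spt\|V\|$ for a suitable $\eta$; combined with $\|V\|(B_r(p)) > 0$ for every $r>0$ this yields $\delta V(X) < 0$, contradicting stationarity.

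\emph{Boundary case.} If $p \in \partial M \cap \partial_{rel}K$, I would pass to Fermi coordinates $(x,t)$ around $p$ so that $\partial M$ corresponds to $\{t=0\}$ and $M$ locally to $\{t \geq 0\}$. Orthogonality condition (i) says that $\partial_{rel}K$ is represented locally by a level set $\{u(x,t) = 0\}$ with $\partial_t u|_{t=0} \equiv 0$, so its reflection $\{u(x,-t)=0\}$ glues $C^\infty$-smoothly with the original and bounds a doubled convex region $\tilde K := K \cup \rho(K)$ in the doubled ambient space, where $\rho(x,t) = (x,-t)$. Following the idea of Lemma \ref{L:reflection} I would set $\overline V := V + \rho_\sharp V$: any compactly supported vector field $Y$ on the doubled space, after symmetrization, produces $Y + \rho_* Y$ whose restriction to the upper half lies in $\mathfrak X_{tan}(M)$, so the first-variation formula yields $\delta \overline V(Y) = \delta V((Y + \rho_* Y)|_M) = 0$. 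Thus $\overline V$ is stationary across $\partial M$ with $0 \in \spt\|\overline V\| \cap \partial \tilde K$ and $\spt\|\overline V\| \subset \overline{\tilde K}$, reducing the problem to the interior case just handled.

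The hard part is the boundary case, specifically verifying that the reflection produces a genuinely smooth convex hypersurface and a genuinely stationary doubled varifold. Smoothness relies crucially on orthogonality in (i), which forces the defining function $u$ of $\partial_{rel}K$ to have only even-order Taylor terms in $t$ along $\partial M$, so the even extension across $\{t=0\}$ is $C^\infty$; convexity is preserved because only even-order derivatives of $u$ enter the second fundamental form of the doubled hypersurface at $\{t=0\}$. A secondary subtlety is that the doubled Riemannian metric is only Lipschitz at $\{t=0\}$, but since \eqref{E:1st-variation} only requires $C^1$ test vector fields this does not obstruct the reduction argument.
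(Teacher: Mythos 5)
Your proposal takes a fundamentally different route from the paper. The paper simply cites \cite{max-principle}, a separate work of the authors that establishes a maximum principle for free boundary stationary varieties in \emph{arbitrary} codimension via a direct first-variation argument, i.e.\ by constructing a suitable vector field $X \in \mathfrak{X}_{tan}(M)$ whose tangential divergence is negative on every $k$-plane near the contact point and testing stationarity against it. Your approach instead doubles the manifold and the varifold across $\partial M$ and tries to reduce to a closed maximum principle. This is a natural idea, but as written it has two genuine gaps.

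First, the claim that orthogonality ``forces the defining function $u$ of $\partial_{rel}K$ to have only even-order Taylor terms in $t$ along $\partial M$'' is false. Orthogonality is precisely the \emph{first}-order condition $\partial_t u|_{t=0}=0$; it says nothing about $\partial_t^3 u|_{t=0}$, etc. Consequently the even extension of $u$ across $\{t=0\}$ is generically only $C^{1,1}$, not $C^\infty$, and the doubled hypersurface $\partial_{rel}\widetilde K$ is only $C^{1,1}$. Your interior-case argument tests $\delta V$ against $X=\eta\,\nabla d$ with $d$ the signed distance to the barrier, and needs $\operatorname{Hess}d$ pointwise; with a $C^{1,1}$ barrier $d$ is only $C^{1,1}$ and $\operatorname{Hess}d$ is merely $L^\infty$, so the sign argument requires substantial extra work (something like Ilmanen's strong maximum principle, or a careful mollification) that you have not supplied.

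Second, and more seriously, the doubled metric is only Lipschitz at $\{t=0\}$, and this is not a ``secondary subtlety.'' In Fermi coordinates the metric takes the form $dt^2+g_{ij}(x,t)dx^i dx^j$ with $\partial_t g_{ij}(x,0)=-2h_{ij}$ (twice the second fundamental form of $\partial M$); the even extension of $g_{ij}$ is $C^1$ only when $\partial M$ is totally geodesic. With a Lipschitz metric the Christoffel symbols, and hence $\Div_S X$, are only $L^\infty$, and the convexity of the doubled barrier (which is a second-order statement about the metric) degenerates at $\{t=0\}$. Additionally, in the paper's framework the first variation (\ref{E:1st-variation}) is computed with the ambient Euclidean connection $D$ from the fixed embedding $M\hookrightarrow\R^L$; the reflection in Lemma~\ref{L:reflection} is a Euclidean isometry of the linear tangent space $T_pM$, which is why $\delta\overline V(X)=\delta V(X+(\theta_v)_*X)$ holds verbatim. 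For the curved manifold there is no Euclidean reflection of $\R^L$ fixing $\partial M$ and swapping the two halves of the double, so you would have to re-embed the doubled manifold and re-derive the first-variation identity, which you have not done. None of these obstructions is obviously fatal, but each one is a real gap; the clean route avoiding all of them is the direct one in \cite{max-principle}.
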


\begin{proof}
See \cite{max-principle}.
\end{proof}

\subsection{Almost proper embeddings and stability}
\label{SS:properness}

As in section \ref{SS:manifolds-with-boundary}, we consider a smooth compact Riemannian manfiold $M^{n+1}$ with nonempty boundary $\partial M$. As before, we assume without loss of generality that $M$ is a smooth compact subdomain of a closed Riemannian manifold $\tM$ of the same dimension. 

\begin{definition}[Almost proper embeddings]
\label{D:proper}
Let $\Sigma^n$ be a smooth $n$-dimensional manifold with boundary $\partial \Sigma$ (possibly empty). A smooth embedding $\phi:\Sigma \to \tM$ is said to be an \emph{almost proper embedding of $\Sigma$ into $M$} if
\[ \phi(\Sigma) \subset M \qquad \text{and} \qquad \phi(\partial \Sigma) \subset \partial M,\] 
we would write $\phi:(\Sigma,\partial \Sigma) \to (M,\partial M)$. For simplicity, we often take $\phi$ as the inclusion map $\iota$ and write $(\Sigma,\partial \Sigma) \subset (M,\partial M)$. Given an almost properly embedded hypersurface $(\Sigma,\partial \Sigma) \subset (M,\partial M)$, we say that $p \in \Sigma \cap \partial M$ is a 
\begin{itemize}
\item \emph{true boundary point} if $p \in \partial \Sigma$; or a
\item \emph{false boundary point} otherwise.
\end{itemize}
If every $p \in \Sigma \cap \partial M$ is a true boundary point, i.e. $\Sigma \cap \partial M=\partial \Sigma$, we say that $(\Sigma,\partial \Sigma) \subset (M,\partial M)$ is \emph{properly embedded}.
\end{definition}

\begin{remark}
\label{R:false-boundary}
For an almost properly embedded hypersurface $(\Sigma,\partial \Sigma) \subset (M,\partial M)$, since $\Sigma \subset M$, $\Sigma$ must touch $\partial M$ tangentially from inside of $M$ at any false boundary points $p$, i.e. $T_p \Si=T_p(\partial M)$. 
\end{remark}

Given an almost properly embedded hypersurface $(\Sigma,\partial \Sigma) \subset (M,\partial M)$, we define (recall (\ref{E:vector-fields}))
\[ \mathfrak{X}(M,\Sigma):=\left\{ X \in \mathfrak{X}(M): \begin{array}{c}
X(q) \in T_q(\partial M) \text{ for all $q$ in an open }\\
\text{neighborhood of $\partial \Sigma$ in $\partial M$} \end{array} \right\}.\]
Any compactly supported $X \in \mathfrak{X}(M,\Sigma)$ generates a smooth one parameter family of diffeomorphisms $\phi_t$ of $\mathbb{R}^L$ such that $\Sigma_t:=\phi_t(\Sigma)$ is a family of almost properly embedded hypersurfaces in $M$.
By the first variation formula, we have
\begin{equation}
\label{E:1st-variation-area}
\delta \Sigma(X) := \left. \frac{d}{dt} \right|_{t=0} \textrm{Area}(\Sigma_t) = -\int_\Sigma \langle H,X \rangle \; da + \int_{\partial \Sigma} \langle \eta,X \rangle \; ds,
\end{equation}
where $H$ is the mean curvature vector of $\Sigma$, $\eta$ is the outward unit co-normal of $\partial \Sigma$.

\begin{definition}
\label{D:stationary}
An almost properly embedded hypersurface $(\Sigma,\partial \Sigma) \subset (M,\partial M)$ is said to be \emph{stationary} if $\delta \Sigma(X)=0$ in (\ref{E:1st-variation-area}) for any compactly supported $X\in \mathfrak{X}(M,\Sigma)$. 
\end{definition}

\begin{remark}
If $(\Sigma,\partial \Sigma) \subset (M,\partial M)$ is \emph{properly embedded}, then it is stationary if and only if $\delta \Sigma(X) =0$ for any compactly supported $X \in \mathfrak{X}_{tan}(M)$. 
Note that any stationary almost properly embedded $(\Sigma,\partial \Sigma) \subset (M,\partial M)$ is also stationary in $M$ with free boundary in the sense of Definition \ref{D:freebdy}. However, a stationary varifold with free boundary may not be a free boundary minimal submanifold even if it is smooth. For example, when $N=M \subset \tM$, then $\partial M$ is a stationary varifold with free boundary but not necessarily minimal in $\tM$. 
\end{remark}

From the first variation formula (\ref{E:1st-variation-area}), it is clear that an almost properly embedded hypersurface $(\Sigma,\partial \Sigma) \subset (M,\partial M)$ is stationary if and only if the mean curvature of $\Sigma$ vanishes identically and $\Sigma$ meets $\partial M$ orthogonally along $\partial \Sigma$, which are known as \emph{free boundary minimal hypersurfaces}. Given such a free boundary minimal hypersurface $(\Sigma,\partial \Sigma) \subset (M,\partial M)$, assuming furthermore that $\Sigma$ is \emph{two-sided} (i.e. there exists a unit normal $\nu$ continuously defined on $\Sigma$), consider a compactly supported normal variation vector field $X=f \nu $ along $\Sigma$, which can be extended to a globally defined vector field in $\mathfrak{X}(M,\Sigma)$, the second variation formula of area gives
\begin{equation}
\label{E:2nd-variation-area}
\delta^2 \Sigma(X) := \left. \frac{d^2}{dt^2} \right|_{t=0} \textrm{Area}(\Sigma_t) 
\end{equation}
\[ =-\int_\Sigma |\nabla^\Sigma f|^2 -(\Ric_M(\nu,\nu)+|A^\Sigma|^2)f^2 \; da - \int_{\partial \Sigma} h(\nu,\nu) f^2 \; ds \]
where $\nabla^\Sigma$ is the induced connection on $\Sigma$, $\Ric_M$ is the Ricci curvature of $M$, $A^\Sigma$ and $h$ are the second fundamental forms of the hypersurfaces $\Sigma$ and $\partial M$ with respect to the normals $\nu$ and $\nu_{\partial M}$ respectively.

\begin{definition}
\label{D:stable}
An almost properly embedded hypersurface $(\Sigma,\partial \Sigma) \subset (M,\partial M)$ is said to be \emph{stable} if 
\begin{itemize}
\item $\Sigma$ is two-sided,
\item it is stationary in the sense of Definition \ref{D:stationary},
\item $\delta^2 \Sigma(X) \geq 0$ for any compactly supported $X=f\nu \in \mathfrak{X}(M,\Sigma)$.
\end{itemize}
\end{definition}

\begin{remark}
If $(\Sigma,\partial \Sigma) \subset (M,\partial M)$ is a \emph{properly embedded}, two-sided free boundary minimal hypersurface, then it is stable if and only if $\delta^2 \Sigma(X) \geq 0$ for any compactly supported $X \in \mathfrak{X}_{tan}(M)$.
\end{remark}

The definitions above can be localized as follows: 

\begin{definition}
Let $U \subset M$ be a relatively open subset. We say that an almost properly embedded hypersurface $(\Sigma,\partial \Sigma) \subset (U,U \cap \partial M)$ is \emph{stationary} (resp. \emph{stable}) \emph{in $U$} if $\delta \Sigma(X)=0$ (resp. $\delta^2 \Sigma (X) \geq 0$) for any $X \in \mathfrak{X}(M,\Sigma)$ which is compactly supported in $U$.
\end{definition}

We shall need the following smooth compactness theorem for \emph{stable} almost properly embedded hypersurfaces satisfying a uniform area bound.

\begin{theorem}[Compactness theorem for stable almost properly embedded hypersurfaces]
\label{T:freebdy-cpt}
Let $2 \leq n \leq 6$ and $U \subset M$ be a simply connected relative open subset. If $(\Sigma_k,\partial \Sigma_k) \subset (U,U \cap \partial M)$ is a sequence of almost properly embedded free boundary minimal hypersurfaces which are stable in $U$ and $\sup_{k} \Area(\Sigma_k) < \infty$, 
then after passing to a subsequence, $(\Sigma_k,\partial \Sigma_k)$ converges (possibly with multiplicities) to some almost properly embedded free boundary minimal hypersurface $(\Sigma_\infty,\partial \Sigma_\infty) \subset (U,U \cap \partial M)$ which is stable in $U$. Moreover, the convergence is uniform and smooth on compact subsets of $U$.
\end{theorem}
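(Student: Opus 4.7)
The plan is to reduce the compactness statement to an application of the uniform curvature estimates for stable free boundary minimal hypersurfaces established in the companion paper \cite{Li-Zhou-A}, together with standard varifold compactness arguments adapted to the free boundary setting. First I would exploit two-sidedness: since $U$ is simply connected and each $\Sigma_k$ is almost properly embedded with (by Definition \ref{D:stable}) a globally defined unit normal, I can talk about the second variation via \eqref{E:2nd-variation-area} uniformly in $k$. The key input is then a bound of the form
\[
\sup_{k}\sup_{K} |A^{\Sigma_k}|^2 \leq C(K)
\]
on every compact subset $K \subset U$, including boundary points of both true and false type, which is exactly the content of the curvature estimates proved in \cite{Li-Zhou-A} for stable almost properly embedded free boundary minimal hypersurfaces with uniform area bound.

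Second, I would combine this curvature bound with the area bound to run the standard compactness machinery. Away from $\partial M$, on interior compact subsets, this is the classical Schoen--Simon--Yau type statement: the hypersurfaces can be written locally as normal graphs (possibly with multiplicity) over the limit with uniform $C^{1,\alpha}$ bounds, and smooth convergence follows from elliptic PDE for the minimal surface equation. Near a point $p \in U \cap \partial M$, I would work in Fermi coordinates (see the notation set up in Appendix \ref{A:Fermi}) so that $\partial M$ becomes a coordinate hyperplane. Under the curvature bound the $\Sigma_k$ remain locally expressible as free boundary graphs over an ambient half-disk, and I can apply Arzel\`a--Ascoli and then elliptic boundary regularity (Neumann type, from the orthogonality condition) to extract a $C^\infty$ limit $\Sigma_\infty$. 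The possible concentration of area on a discrete set is controlled by the monotonicity formula (Theorem \ref{T:monotonicity}), which gives uniform positive lower density bounds at any such bad point, together with the uniform area upper bound; so after passing to a subsequence the bad set is finite and the convergence is smooth, with integer multiplicity, on compact subsets of $U$.

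Third, I would verify that the limit is almost properly embedded in the sense of Definition \ref{D:proper}, stationary, and stable. Since $\Sigma_k \subset M$ and $\partial \Sigma_k \subset \partial M$ for each $k$ and the convergence is smooth up to the boundary, one has $\Sigma_\infty \subset M$ and $\partial \Sigma_\infty \subset \partial M$ automatically. Minimality and the orthogonal free boundary condition pass to the limit from the first variation formula \eqref{E:1st-variation-area}. Two-sidedness of $\Sigma_\infty$ and the stability inequality \eqref{E:2nd-variation-area} are inherited by passing $\delta^2 \Sigma_k(f_k\nu_k)\ge 0$ to the limit along smoothly converging test sections on compact subsets, using simple-connectedness of $U$ to select a continuous unit normal on $\Sigma_\infty$.

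The main obstacle I expect is the subtle dichotomy between \emph{true} and \emph{false} boundary points (Definition \ref{D:proper}, Remark \ref{R:false-boundary}) in both the approximating sequence and the limit. False boundary points of $\Sigma_k$ are interior regular points at which $\Sigma_k$ is tangent to $\partial M$ from inside, and under convergence these could in principle accumulate, or a true boundary point could degenerate into a false boundary point (or vice versa), without the almost proper embedding structure being preserved. The resolution is precisely the reason this theorem uses the curvature estimates of \cite{Li-Zhou-A} rather than just the classical interior Schoen--Simon estimates: those estimates are formulated for \emph{almost} properly embedded stable hypersurfaces, so they control the second fundamental form \emph{uniformly across false boundary points} as well, and together with the Fermi-coordinate graphical description the tangential touching set simply passes to the limit as part of $\Sigma_\infty \cap \partial M$. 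Finally, one rules out the degenerate possibility that $\Sigma_\infty$ contains an entire component of $\partial M$ by the maximum principle (Theorem \ref{T:max-principle}) inside Fermi half-balls, which by Lemma \ref{L:Fermi-convex} satisfy the relative convexity and orthogonality hypotheses; this forces $\Sigma_\infty$ to remain an almost properly embedded hypersurface in $(U, U\cap \partial M)$ as required.
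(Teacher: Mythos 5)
Your proposal follows essentially the same route as the paper: the whole theorem is a corollary of the uniform curvature estimates for stable almost properly embedded free boundary minimal hypersurfaces in \cite{Li-Zhou-A}, combined with the area bound and a maximum principle, with simple-connectedness of $U$ serving only to guarantee two-sidedness. The paper's own proof is a two-sentence pointer to those ingredients; you have filled in the details of the graphical compactness argument, Fermi-coordinate localization, and passage of stationarity/stability to the limit, all correctly.

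One mild misdirection is worth flagging. The paper invokes the \emph{classical} maximum principle for smooth minimal hypersurfaces with free boundary, whose role in this kind of Schoen--Simon compactness statement is to ensure embeddedness of the limit: when $m$ graphical sheets converge, two distinct limit sheets that become tangent must coincide, so the limit is a smooth embedded (rather than merely immersed) hypersurface. You instead invoke Theorem \ref{T:max-principle}, the varifold maximum principle in relatively convex domains, in order to rule out $\Sigma_\infty$ swallowing an entire component of $\partial M$. That theorem is the wrong tool for that purpose (its hypotheses require $\spt\|V\|\subset\overline{K}$ and its conclusion is about the relative boundary $\partial_{rel}K$, not about $\partial M$), and in any case the scenario you are worried about is not actually pathological: if $\Sigma_\infty$ coincides with a piece of $\partial M$, that piece must itself be minimal, and $\Sigma_\infty$ is still an almost properly embedded free boundary minimal hypersurface in the sense of Definition \ref{D:proper}; nothing in the theorem statement forbids it. So replace that final step by the smooth touching lemma to settle embeddedness of the limit, and the argument matches the paper's.

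Also, the monotonicity-formula/concentration-set step is redundant here: the Li-Zhou-A estimates already give uniform second fundamental form bounds on compact subsets of $U$, including at false boundary points, so there are no bad concentration points; the area bound merely caps the number of sheets. It does no harm, but it is extra machinery the paper does not need.
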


\begin{proof}
This is a direct consequence of the uniform curvature estimates in \cite[Theorem 1.1]{Li-Zhou-A} and the classical maximum principle for minimal hypersurfaces with free boundary. Note that $U$ is assumed to be simply connected just to guarantee that all the embedded hypersurfaces are two-sided \cite{Samelson69}. See Figure \ref{stable convergence} for the two different possible convergence scenarios. In (A), we have a point on $\interior \Si_\infty \cap \partial M$ which is not the limit of any sequence of boundary points on $\Si_k$. In (B), the point on $\partial \Si_\infty$ is the limit of a sequence of boundary points on $\partial \Si_k$.
\end{proof}

\begin{figure}[h]
    \centering
    \begin{subfigure}{.47\textwidth}
        \centering
        \includegraphics[width=1\textwidth]{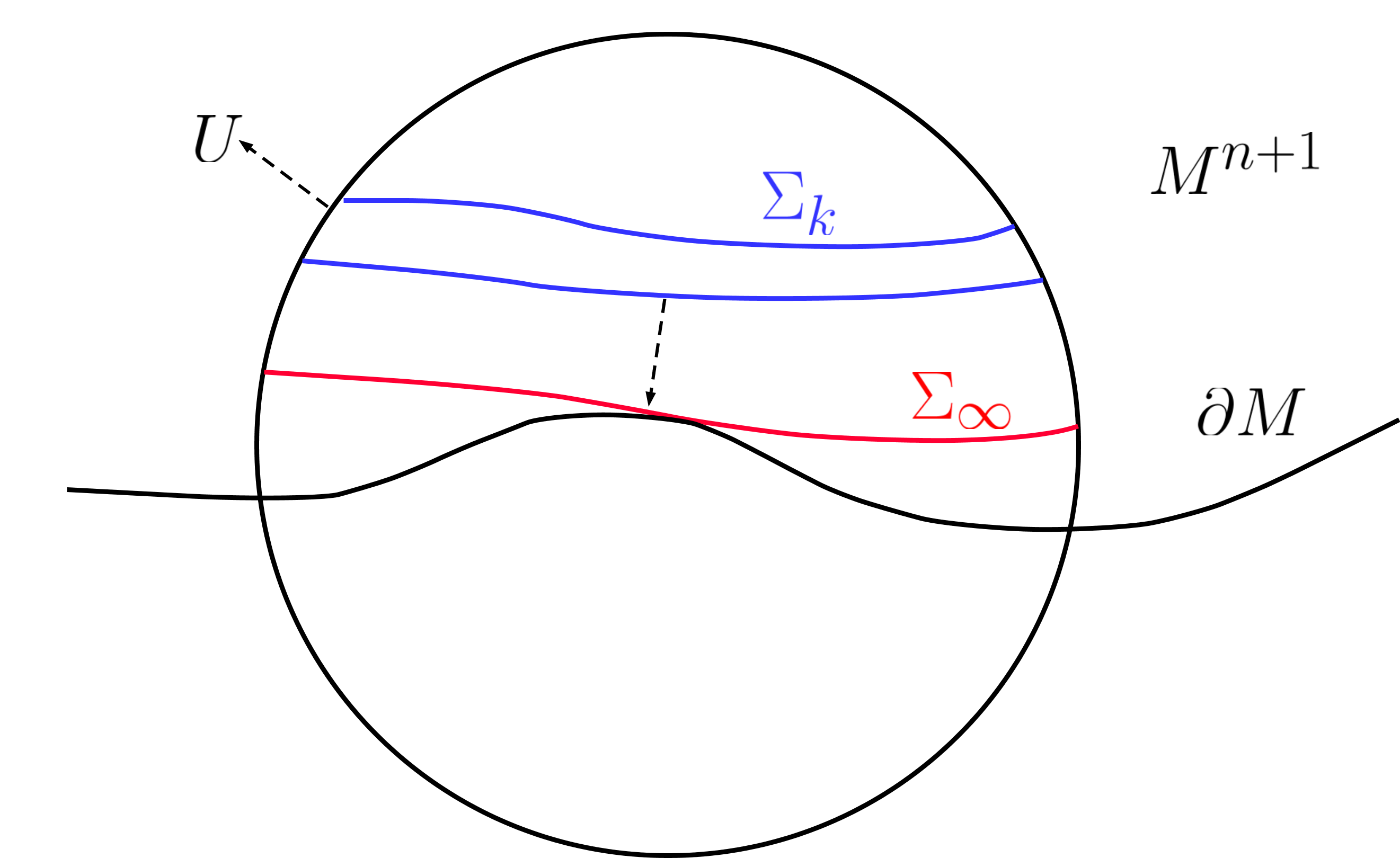}
       \caption{}
    \end{subfigure}%
    \begin{subfigure}{.43\textwidth}
        \centering
        \includegraphics[width=1\textwidth]{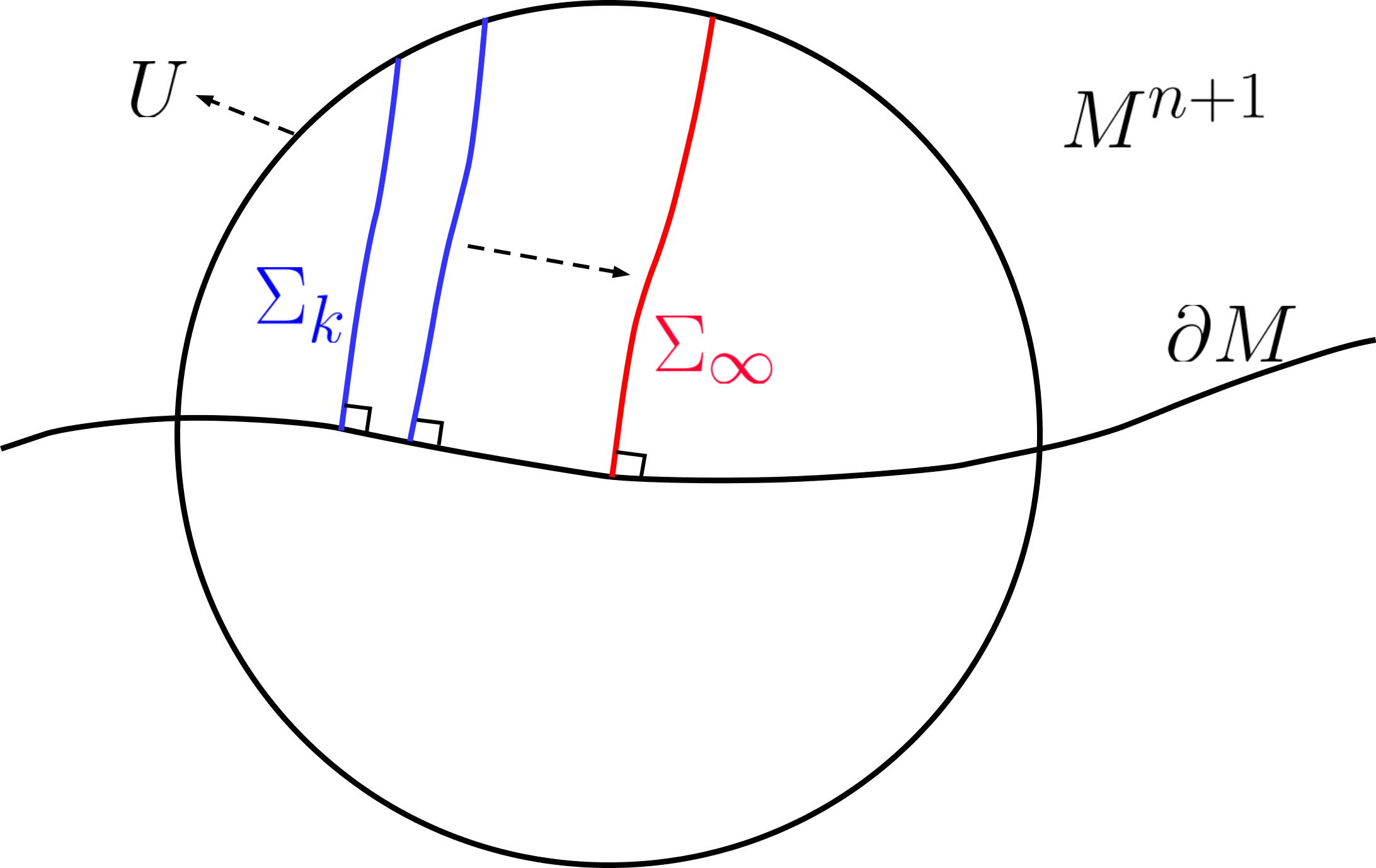}
        \caption{}
    \end{subfigure}
    \caption{Stable convergence.}
    \label{stable convergence}
\end{figure}

\begin{remark}
The compactness theorem above does not hold with \emph{almost properly embedded} replaced by \emph{properly embedded} hypersurfaces since a limit of properly embedded hypersurfaces may only be \emph{almost} properly embedded (see Figure \ref{stable convergence}(A)).
\end{remark}

\section{Almost minimizing varifolds with free boundary}
\label{S:amvarifold}

In this section, we define the important notion of \emph{almost minimizing varifolds with free boundary} (Definition \ref{D:am-varifolds}). Roughly speaking, a varifold $V \in \V_k(M)$ is almost minimizing with free boundary if $V$ can be approximated by currents with \emph{suitable} minimizing properties. One major reason that we have to work with currents is that it allows us to do cut-and-paste arguments. Unlike the closed case, since we will be looking at relative cycles with boundary lying on $\partial M$, to obtain a well-defined theory we are forced to consider equivalence classes of relative cycles (this idea already appeared in \cite{Almgren62}). We will first give a detailed description on the space of equivalence classes of relative cycles. Then, we prove two important (but technical) isoperimetric lemmas (Lemma \ref{L:isoperimetric-F} and \ref{L:isoperimetric-M}) which are crucial in the discretization and interpolation theorems in section \ref{SS:discret}. Finally, we give various definitions of almost minimizing varifolds with free boundary in Definition \ref{D:am-varifolds} and show that they are essentially equivalent (Theorem \ref{T:def-equiv}). We would like to point out that some of the results here were also independently obtained in \cite{Liokumovich-Marques-Neves}.

\subsection{Equivalence classes of relative cycles}
\label{SS:relative-cycles}

Recall that we have fixed an isometric embedding $M \subset \tM \hookrightarrow \mathbb{R}^L$. Let $\mR_k(M)$ be the space of integer rectifiable $k$-currents in $\mathbb{R}^L$ which is supported in $M$. The \emph{mass norm} and the \emph{flat semi-norm} (relative to some compact subset $K \subset M$) on $\mR_k(M)$ are denoted by $\M$ and $\F^K$ respectively (see \cite{Simon}). We fix throughout this subsection a relatively open subset $U \subset M$ which contains $K$. For simplicity of notations, we write $(A,B)=(U, U \cap \partial M)$. We define the following spaces of relative cycles:
\begin{equation}
\label{E:relative-cycles}
\begin{array}{c} 
Z_k(A, B):= \{T \in \mR_k(M) \; : \; \spt(T) \subset A, \; \spt(\partial T) \subset B\},\\
Z_k(B, B):= \{ T \in \mR_k(M) \; : \; \spt(T) \subset B\}.
\end{array}
\end{equation}
Note that $Z_k(B,B)$ is a subspace of $Z_k(A,B)$. It should be pointed out that our notation is slightly different from the one in \cite[(1.20)]{Almgren62} that $T \in Z_k(A,B)$ may not be an \emph{integral current} since $\partial T$ is not assumed to be integer rectifiable (see \cite[Remark 1.21]{Almgren62}). Nonetheless, it turns out that this formulation is equivalent to the one in \cite{Almgren62} as shwon in Lemma \ref{L:integral}. Following \cite[(1.20)]{Almgren62}, we make the following definitions.

\begin{definition}[Equivalence classes of relative cycles]
\label{D:relative-cycles}
We define the space of \emph{equivalence classes of relative cycles} as the quotient group
\[ \Z_k(A, B):=Z_k(A, B)/Z_k(B, B).\]
For any $\tau\in\Z_k(A, B)$, we define its (relative) {\em mass norm} and \emph{flat (semi)-norm} (with respect to some compact subset $K \subset A$) by
\[ \M(\tau):=\inf_{T \in \tau} \M(T), \qquad \F^K(\tau):=\inf_{T \in \tau} \F^K(T).\]
Similarly, we can define the local mass and flat norms $\M_{U'}$ and $\F^K_{U'}$ for a relatively open subset $U' \subset U$. The \emph{support of $\tau \in \Z_k(A,B)$} is defined as
\[ \spt(\tau):=\bigcap_{T \in \tau} \spt(T).\]
\end{definition}

For any $T \in Z_k(A,B)$, we have $T\lc B \in Z_k(B,B)$ and thus $T \lc (A \setminus B)=T - T \lc B$ lies in the same equivalence class as $T$. Conversely, $[T]=[T'] \in \Z_k(A,B)$ if and only if $T \lc (A \setminus B)=T' \lc (A \setminus B)$.

\begin{definition}
For any $\tau \in \Z_k(A,B)$, there exists a unique $T \in \tau$ such that $T \lc B=0$ (i.e. $T=T \lc (A \setminus B)$). We call $T$ the \emph{canonical representative of $\tau$}.
\end{definition}

The lemma below says that the mass and support of $\tau \in \Z_k(A,B)$ are the same as the usual mass and support of its canonical representative $T \in Z_k(A,B)$. A direct consequence is that $\M$ and $\F^K$ defines respectively a norm and a semi-norm on $\Z_k(A,B)$ satisfying $\F^K(\tau)\leq \M(\tau)$ for all $\tau \in \Z_k(A,B)$.

\begin{lemma}
\label{L:can-rep}
For any $\tau \in\Z_k(A, B)$, we have $\M(\tau)=\M(T)$ and $\spt(\tau)=\spt(T)$ where $T$ is the canonical representative of $\tau$.
\end{lemma}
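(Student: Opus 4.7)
The plan is to exploit a canonical decomposition. Any $T' \in \tau$ can be written uniquely as $T' = T + S$ where $T$ is the canonical representative (so $T \lc B = 0$, i.e.\ $T = T \lc (A \setminus B)$) and $S := T' - T \in Z_k(B,B)$ is supported in $B$. Since $\|T\|(B) = 0$ while $\|S\|$ is supported in $B$, the associated Radon measures $\|T\|$ and $\|S\|$ are mutually singular, so one gets the additivity $\|T'\| = \|T\| + \|S\|$ and hence $\M(T') = \M(T) + \M(S)$.

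For the mass identity, the additivity above immediately yields $\M(T') \geq \M(T)$, with equality attained by choosing $S = 0$ (which is allowed since $T \in \tau$). Taking infimum over $T' \in \tau$ gives $\M(\tau) = \M(T)$.

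For the support identity, the inclusion $\spt(\tau) \subseteq \spt(T)$ is automatic from the definition since $T \in \tau$. For the reverse inclusion, fix $p \in \spt(T)$ and any $T' = T + S \in \tau$; I want to show $p \in \spt(T')$. Suppose instead that there is a relatively open neighborhood $V$ of $p$ in $A$ with $T' \lc V = 0$. Then $T \lc V = - S \lc V$. Restricting this identity to $A \setminus B$ kills the right-hand side because $\spt(S) \subset B$, so $T \lc (V \cap (A \setminus B)) = 0$; restricting to $B$ kills the left-hand side because $T \lc B = 0$, so $T \lc (V \cap B) = 0$ as well. Adding these gives $T \lc V = 0$, contradicting $p \in \spt(T)$. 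Hence $\spt(T) \subseteq \spt(T')$ for every $T' \in \tau$, and intersecting over $T' \in \tau$ yields $\spt(T) \subseteq \spt(\tau)$.

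The argument is essentially measure-theoretic bookkeeping, and the only subtle point is making sure the decomposition $T' = T + S$ is well-defined with disjoint-support mass additivity. The main potential obstacle would be if we worked with general flat chains, where restriction to a Borel set need not preserve the class of relative cycles; however, since we are working in the category of integer rectifiable currents, the restrictions $T \lc (A \setminus B)$ and $T \lc B$ remain integer rectifiable and supported where expected, so the decomposition is legitimate.
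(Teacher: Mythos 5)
Your proof is correct and follows essentially the same route as the paper: decompose $T' \in \tau$ into the canonical representative $T = T' \lc (A\setminus B)$ plus the part $S = T' \lc B$ supported in $B$, then use that mass is additive over the disjoint supports to get $\M(T') \geq \M(T)$ and that supports union to get $\spt(T') \supset \spt(T)$. The paper writes the support inclusion directly as $\spt(T') = \spt(T'\lc(A\setminus B)) \cup \spt(T'\lc B) \supset \spt(T)$ rather than by contradiction, but this is only a presentational difference.
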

\begin{proof}
By definition $\M(\tau)\leq \M(T)$ and $\spt(\tau)\subset \spt(T)$. On the other hand, if $T^{\pr} \in \tau$ is another representative, then $T' \lc (A \setminus B)=T$ and thus
\[ \M(T')=\M(T'\lc (A \setminus B))+\M(T'\lc B) \geq \M(T),\]
\[ \spt(T')=\spt(T'\lc (A \setminus B))\cup \spt(T'\lc B) \supset \spt(T ).\]
As $T' \in \tau$ is arbitrary, this implies $\M(\tau)=\M(T)$ and $\spt(\tau)=\spt(T)$.
\end{proof}

\begin{definition}[Weak convergence]
\label{D:weak-topology}
A sequence $\tau_i \in \Z_k(A,B)$ is said to be \emph{converging weakly to } $\tau_\infty \in \Z_k(A,B)$ if for every compact $K \subset A$, $\F^K(\tau_i -\tau_\infty) \to 0$ as $i \to \infty$. In this case, we write $\tau_i \weakto \tau_\infty$.
\end{definition}

\begin{lemma}
\label{L:weak-topology}
If $\tau_i \weakto \tau_\infty$ in $\Z_k(A,B)$, then $T_i$ converges weakly to $T_\infty$ as currents in the open subset $A \setminus B$, where $T_i \in \tau_i$, $T_\infty \in \tau_\infty$ are the canonical representatives.
\end{lemma}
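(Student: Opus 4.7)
The plan is to translate convergence in relative flat norm on equivalence classes into honest weak convergence of currents by exploiting the fact that test forms supported in $A \setminus B$ cannot see any modification that is concentrated on $B$. The main point is just to reconcile the two notions of ``representative'': the canonical representatives $T_i, T_\infty$ (which vanish on $B$), and the near‑optimal representatives of the difference class $\tau_i - \tau_\infty$ (which are the ones controlled by $\F^K$). Because forms compactly supported in $A \setminus B$ annihilate anything in $Z_k(B,B)$, these two viewpoints give the same pairing.

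More concretely, I would fix an arbitrary smooth compactly supported $k$-form $\omega$ on $A \setminus B$ and let $K := \spt \omega$, which is a compact subset of $A$ satisfying $K \cap B = \emptyset$. By hypothesis $\F^K(\tau_i - \tau_\infty) \to 0$, so by the definition of the relative flat norm as an infimum over representatives, I can choose $S_i \in \tau_i - \tau_\infty$ (as elements of $Z_k(A,B)$) with $\F^K(S_i) \to 0$. Since $S_i$ and $T_i - T_\infty$ lie in the same equivalence class, their difference $C_i := S_i - (T_i - T_\infty)$ belongs to $Z_k(B,B)$, so $\spt(C_i) \subset B$ and hence $C_i(\omega) = 0$. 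Therefore
\begin{equation*}
T_i(\omega) - T_\infty(\omega) \;=\; (T_i - T_\infty)(\omega) \;=\; S_i(\omega).
\end{equation*}

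The final step is to apply the standard flat-norm duality bound: writing $S_i = R_i + \partial Q_i$ with $\M(R_i) + \M(Q_i) \leq \F^K(S_i) + 1/i$ and the supports inside (a neighborhood of) $K$, one gets
\begin{equation*}
|S_i(\omega)| \;\leq\; \M(R_i)\,\sup|\omega| + \M(Q_i)\,\sup|d\omega| \;\leq\; C(\omega)\bigl(\F^K(S_i) + 1/i\bigr) \;\longrightarrow\; 0,
\end{equation*}
where $C(\omega) = \max(\sup|\omega|, \sup|d\omega|)$. Thus $T_i(\omega) \to T_\infty(\omega)$, and since $\omega$ was an arbitrary compactly supported smooth $k$-form on $A \setminus B$, we conclude that $T_i \to T_\infty$ weakly as currents on the open set $A \setminus B$.

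There is no real obstacle here; the only thing to handle carefully is the bookkeeping between $\tau_i - \tau_\infty$ as an equivalence class (which is what the hypothesis controls) and the canonical representatives $T_i, T_\infty$ (which is what the conclusion is about). The identification $C_i(\omega) = 0$ for $\spt \omega \subset A \setminus B$ is exactly what makes this bookkeeping harmless and is the conceptual reason the lemma holds.
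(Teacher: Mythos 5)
Your proof is correct and follows essentially the same route as the paper: the paper notes, for compact $K \subset A \setminus B$, that $\F^K(\tau) = \F^K(T)$ for the canonical representative $T$ (since all representatives of a class differ by a current supported on $B$, which is invisible to forms supported in $K$) and then cites the fact that the $\F^K$ semi-norms induce the usual weak topology on currents; you make the same observation explicit by fixing a test form $\omega$ with $K = \spt\omega \subset A\setminus B$, choosing near-optimal representatives $S_i$ of $\tau_i - \tau_\infty$, noting $C_i(\omega) = 0$, and then applying the standard flat-norm estimate.
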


\begin{proof}
Whenever $K \subset A \setminus B$, we have $\F^K(\tau)=\F^K(T)$ where $T \in \tau$ is the canonical representative. The rest follows from the fact that the semi-norms $\F^K$ induces the usual weak topology on currents \cite[31.2]{Simon}.
\end{proof}

\begin{remark}
Note that one cannot prove that $T_i \weakto T_\infty$ as currents \emph{in $A$} since $T_i$ can converge to a limit which is supported (but non-zero) on $B$.
\end{remark}

The next lemma shows that the mass norm is lower semi-continuous with respect to the weak topology defined in Definition \ref{D:weak-topology}. 

\begin{lemma}[Lower semi-continuity of $\M$]
\label{L:lower-semicts}
If $\tau_i \weakto \tau_\infty$ in $\Z_k(A, B)$, then 
\[ \M(\tau_{\infty})\leq\liminf_{i\rightarrow\infty}\M(\tau_i).\]
Moreover, if $\spt(\tau_i)\subset K$ for some compact $K \subset A$ for all $i$, then $\spt(\tau_{\infty})\subset K$.
\end{lemma}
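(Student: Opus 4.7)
The plan is to reduce both assertions to classical facts about currents by passing to canonical representatives. Let $T_i \in \tau_i$ and $T_\infty \in \tau_\infty$ be the canonical representatives, so that $T_i \lc B = 0$, $T_\infty \lc B = 0$, and by Lemma \ref{L:can-rep} one has $\M(\tau_i) = \M(T_i)$, $\M(\tau_\infty) = \M(T_\infty)$, $\spt(\tau_i) = \spt(T_i)$, and $\spt(\tau_\infty) = \spt(T_\infty)$. The crucial input is Lemma \ref{L:weak-topology}: $T_i \weakto T_\infty$ weakly as currents \emph{in the relatively open subset} $A \setminus B$. This will be the only form of convergence directly available and is the source of the only real subtlety.

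For the mass inequality I would exhaust $A \setminus B$ by a nested sequence of relatively open subsets $U'_j$ with $\Clos(U'_j) \subset A \setminus B$. On each fixed $U'_j$, the classical lower semi-continuity of mass under weak convergence of currents yields
\[ \M(T_\infty \lc U'_j) \leq \liminf_{i\to\infty} \M(T_i \lc U'_j) \leq \liminf_{i\to\infty} \M(T_i) = \liminf_{i\to\infty} \M(\tau_i). \]
Letting $j \to \infty$ and using $T_\infty \lc B = 0$, which gives $T_\infty = T_\infty \lc (A \setminus B)$, produces $\M(\tau_\infty) = \M(T_\infty) \leq \liminf_i \M(\tau_i)$.

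For the support assertion I would argue by contradiction. Suppose $p \in \spt(T_\infty)$ but $p \notin K$; choose a relatively open neighborhood $V \subset A$ of $p$ with $V \cap K = \emptyset$. For any test form $\omega$ compactly supported in $V \setminus B \subset A \setminus B$, the weak convergence on $A \setminus B$ gives $T_\infty(\omega) = \lim_i T_i(\omega) = 0$, where the last equality uses $\spt(T_i) = \spt(\tau_i) \subset K$ together with $\spt(\omega) \subset V$ disjoint from $K$. Hence $T_\infty \lc (V \setminus B) = 0$, which combined with $T_\infty \lc B = 0$ forces $T_\infty \lc V = 0$, contradicting $p \in \spt(T_\infty)$.

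The only delicate point—really the whole reason this lemma needs any care—is that weak convergence of the canonical representatives is available only away from $B$, while a priori mass of the $T_i$ could accumulate into $B$ in the limit. The equivalence-class framework saves us: by construction the canonical $T_\infty$ assigns no mass to $B$, so the potentially missing piece of convergence makes no contribution to either $\M(\tau_\infty)$ or $\spt(\tau_\infty)$, and the classical arguments close up once we cut away a neighborhood of $B$ and invoke $T_\infty \lc B = 0$ at the very end.
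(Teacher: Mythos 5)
Your proof is correct and follows essentially the same strategy as the paper's: pass to canonical representatives, invoke Lemma \ref{L:weak-topology} to get weak convergence on $A \setminus B$, and reduce to the classical statements by cutting away a neighborhood of $B$ (the paper does this with smooth cutoff functions $\chi$ supported in $A \setminus B$, while you use an exhaustion by relatively open subsets $U'_j$ and, for the support claim, a direct contradiction with test forms — these are cosmetic variants of the same idea). The final step in both cases is to exploit $T_\infty \lc B = 0$ to remove the cutoff.
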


\begin{proof}
Let $T_i \in \tau_i$, $T_\infty \in \tau_\infty$ be the canonical representatives. By Lemma \ref{L:weak-topology}, $T_i$ converges weakly to $T_\infty$ as currents in the open set $A \setminus B$. For any $\ep>0$, we can take a smooth cutoff function $\chi$, $0\leq \chi\leq 1$, supported in $A \setminus B$ such that 
\[ \M(T_{\infty}\lc \chi)\geq \M(T_{\infty})-\ep=\M(\tau_{\infty})-\ep, \]
where the last equality follows from Lemma \ref{L:can-rep}. Since $T_i \weakto T_{\infty}$, we also have $T_i\lc \chi \weakto T_{\infty}\lc \chi$. Hence the lower semi-continuity of the classical mass norm \cite[26.13]{Simon} implies that 
\[ \M(T_{\infty}\lc \chi)\leq \liminf_{i\rightarrow\infty}\M(T_i\lc \chi)\leq \liminf_{i\rightarrow\infty}\M(T_i)=\liminf_{i\rightarrow\infty}\M(\tau_i)\]
where the last equality follows again from Lemma \ref{L:can-rep}. Letting $\ep \to 0$ gives the desired inequality.

Suppose, in addition, that $\spt(\tau_i)\subset K$ for some compact $K \subset A$ for all $i$. Take a sequence of smooth cutoff functions $\{\chi_m\}$, $0\leq \chi_m\leq 1$, supported in $A \setminus B$ and converging pointwise to the characteristic function of $A \setminus B$. By Lemma \ref{L:can-rep}, 
\[ \bigcup_m \spt(T_{\infty}\lc \chi_m)=\spt(T_{\infty})=\spt(\tau_{\infty}).\]
For each $m$ and $i$, we have $\spt(T_i\lc \chi_m)\subset \spt(T_i)=\spt(\tau_i) \subset K$, hence we obtain $\spt(T_{\infty}\lc \chi_m)\subset K$. Taking union gives $\spt(\tau_\infty) \subset K$.
\end{proof}

Given $\tau \in \Z_k(A, B)$, in the definition of $\M(\tau)$ the infimum is taken over all \emph{integer rectifiable currents} in $Z_k(A, B)$, the next lemma says that the mass $\M(\tau)$ can also be computed as the infimum over all \emph{integral currents} in the class $\tau$.

\begin{lemma}
\label{L:integral}
Give $\tau\in\Z_k(A, B)$, there exists a sequence $T_i\in \tau$, where each $T_i$ is an integral current with $\spt(T_i) \subset A$, such that $\lim_{i\rightarrow \infty}\M(T_i)=\M(\tau)$.
\end{lemma}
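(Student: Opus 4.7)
The plan is to take the canonical representative $T\in\tau$, which by Lemma~\ref{L:can-rep} satisfies $\M(T)=\M(\tau)$ and $\|T\|(B)=0$, and modify it to an integral current by a small correction supported in $B$. The key point will be that, although the distributional boundary $\partial T$ may have infinite mass, it is entirely supported in $B$, and so can be algebraically cancelled against the boundary of a suitable pushforward $\pi_\#(T\lc\{f\le\epsilon\})$ onto $B$, leaving only a finite-mass boundary slice.

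More concretely, I would set $f(x):=\dist(x,\partial M)$ and use the Fermi tubular neighborhood of $\partial M$ in $\tM$ to obtain a smooth nearest-point retraction $\pi$ onto $\partial M$. Since $\spt T$ is a compact subset of the relatively open set $A=U$, the set $\spt T\cap\partial M$ is a compact subset of the relatively open set $B\subset\partial M$, so for all sufficiently small $\epsilon>0$, $\pi$ is defined and Lipschitz on $\{f\le\epsilon\}\cap\spt T$ with $\pi(\{f\le\epsilon\}\cap\spt T)\subset B$. Applying the slicing theorem to $T$ along $f$, for a.e.\ such $\epsilon>0$ the slice $\langle T,f,\epsilon\rangle$ is integer rectifiable with finite mass; and since $\spt(\partial T)\subset B=\{f=0\}$, the identity $\partial(T\lc\{f\le\epsilon\})=\partial T\mp\langle T,f,\epsilon\rangle$ holds. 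For such an admissible $\epsilon$ I set
\[
S_\epsilon:=-\pi_\#\bigl(T\lc\{f\le\epsilon\}\bigr)\in Z_k(B,B),\qquad T_\epsilon:=T+S_\epsilon\in\tau.
\]

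The decisive observation will be that $\pi_\#\partial T=\partial T$ as currents, because $\pi|_B=\mathrm{id}_B$ and $\spt(\partial T)\subset B$. This yields
\[
\partial T_\epsilon \;=\; \partial T - \pi_\#\bigl(\partial T\mp\langle T,f,\epsilon\rangle\bigr) \;=\; \pm\,\pi_\#\langle T,f,\epsilon\rangle,
\]
which is an integer rectifiable current of finite mass supported in $B$, so $T_\epsilon$ is integral. Combined with the mass bound
\[
\M(\tau)\le\M(T_\epsilon)\le\M(T)+\mathrm{Lip}(\pi)^k\,\M(T\lc\{f\le\epsilon\}),
\]
and $\M(T\lc\{f\le\epsilon\})\to\|T\|(B)=0$ by monotone convergence, passing to a sequence of admissible $\epsilon_i\to0$ produces the desired $T_i$. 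The main technical subtlety I anticipate is verifying the containment $\pi(\{f\le\epsilon\}\cap\spt T)\subset B$, which is where the relative openness of $B$ in $\partial M$ is crucially used; this is also exactly what underlies the algebraic cancellation of the distributional $\partial T$ against $\pi_\#\partial T$.
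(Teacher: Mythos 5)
Your proposal follows essentially the same route as the paper's proof: take the canonical representative $T$, slice near $B$ by a distance function, push the thin collar forward onto the boundary by the nearest-point retraction, observe that this correction lies in $Z_k(B,B)$ so the class is unchanged, and show the boundary of the corrected current reduces to a rectifiable slice. The cosmetic differences (using $\dist(\cdot,\partial M)$ rather than the intrinsic $\dist_A(\cdot,B)$, closed versus open sublevel sets, and the emphasis on the compactness argument guaranteeing $\pi(\{f\le\epsilon\}\cap\spt T)\subset B$) do not change the substance.

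The one step where your reasoning is thinner than the paper's is the identity $\pi_\#\partial T=\partial T$, which you justify by ``$\pi|_B=\mathrm{id}_B$ and $\spt(\partial T)\subset B$.'' As the paper's own proof explicitly cautions, support containment alone does not yield $S-\pi_\#S=0$ for a general current $S$ supported in $B$ (a derivative of a Dirac mass in the normal direction to $B$ is the standard counterexample). What rescues the argument is that $\partial T$ is the boundary of a finite-mass rectifiable current, hence a \emph{flat chain} supported in $B$; the flat support theorem (Federer 4.1.15) then says two Lipschitz maps agreeing on $\spt(\partial T)$ have the same pushforward, giving $\pi_\#\partial T=\mathrm{id}_\#\partial T=\partial T$. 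Your conclusion is therefore correct, but the justification should invoke the flat-chain structure of $\partial T$ rather than bare support containment.
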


\begin{proof}
Let $T$ be the canonical representative of $\tau$. Consider the intrinsic distance function $\rho(\cdot) =\dist_A(\cdot, B)$, which is a Lipschitz function on $A$. Note that $\lim_{t\rightarrow 0}\M(T\lc \{x:\ \rho(x)< t\})=\M(T\lc B)=0$. By \cite[28.4]{Simon}, there exists a sequence $t_i \to 0$ such that each slice $\lan T, \rho, t_i \ran$ of $T$ by $\rho$ at $t_i$ is an integer rectifiable $(k-1)$-current in $A \setminus B$. Also by \cite[28.5(2)]{Simon}, as $\spt(\partial T) \subset B$,
\begin{equation}
\label{E:slice-boundary} 
\partial(T\lc\{\rho<t_i\})=\partial T\lc \{\rho<t_i\}+ \lan T, \rho, t_i \ran=\partial T+\lan T, \rho, t_i \ran.
\end{equation}
Denote $\pi:A \to B$ as the nearest point projection map onto $B$, which is Lipschitz, and define
\[ T_i:=T-{\pi}_{\sharp}(T\lc \{\rho<t_i\}).\]
By \cite[4.1.30]{Federer}, the pushforward ${\pi}_{\sharp}(T\lc \{\rho<t_i\})$ is an integer rectifiable $k$-current supported on $B$, so $T_i \in \tau$. Since $\pi$ is Lipschitz, using Lemma \ref{L:can-rep} we have
\[ \M(T_i)\leq \M(T)+\M({\pi}_{\sharp}(T\lc \{\rho<t_i\}))\leq \M(\tau)+C\; \M(T\lc\{\rho<t_i\})\]
for some constant $C>0$. As $\M(T\lc\{\rho<t_i\}) \to 0$ when $t_i \to 0$, we have $\lim_{i \to \infty} \M(T_i)=\M(\tau)$. Finally, to show that $T_i$ is an \emph{integral current}, i.e. $\partial T_i$ is integer rectifiable, using (\ref{E:slice-boundary}) and that $\partial \circ \pi_\sharp = \pi_\sharp \circ \partial$,
\begin{displaymath}
\begin{split}
\partial T_i & = \partial T-\partial {\pi}_{\sharp}(T\lc \{\rho<t_i\})=\partial T-{\pi}_{\sharp}(\partial(T\lc \{\rho<t_i\}))\\
                    & = \partial T-{\pi}_{\sharp}(\partial T)-{\pi}_{\sharp} \lan T, \rho, t_i \ran =-{\pi}_{\sharp} \lan T, \rho, t_i \ran,
\end{split}
\end{displaymath}
where we have used that $\partial T$ is a $(k-1)$-current in $B$ (Note that $S-{\pi}_{\sharp} S=0$ is not true in general if we only assume that $\spt S \subset B$ since $S$ may not be a current in $B$). Hence $\partial T_i$ is integer rectifiable as $\lan T, \rho, t_i \ran$ is integer rectifiable and $\pi$ is Lipschitz.
\end{proof}

\begin{remark}
This lemma implies that the quotient groups defined in \cite[(1.20)]{Almgren62} give an equivalent theory compared to our definition.
\end{remark}

Next, we show that any subset of $\Z_k(A, B)$ with uniformly compact support and bounded mass is compact under the relative flat norm (with respect to a slightly larger compact set). Recall the notion of a {\em compact Lipschitz neighborhood retract}, abbreviated as {\em CLNR}, in \cite[\S 1]{Almgren62}. Recall that $\interior_M(L)$ is the relative interior of a subset $L \subset M$ (see paragraph before Definition \ref{D:rel-convex}).

\begin{lemma}[Compactness theorem for relative cycles]
\label{L:compactness}
For any given real number $M>0$, and CLNRs $K,K' \subset A$ such that $K \subset \interior_M(K')$, 
\[ \Z_{k, K, M}(A, B):=\{\tau\in\Z_{k}(A, B) \; : \; \spt(\tau)\subset K,\ \M(\tau)\leq M\}\]
is (sequentially) compact under the $\F^{K^{\pr}}$-norm.
\end{lemma}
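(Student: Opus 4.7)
The plan is to find, inside each class $\tau_i$, an integral current representative with uniformly controlled boundary mass at scales tending to zero, then apply the classical Federer--Fleming compactness theorem via a diagonal argument over these scales, and finally verify that the flat-norm limits descend to a well-defined class in $\Z_{k,K,M}(A,B)$.

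First I would take the canonical representatives $T_i \in \tau_i$, which by Lemma \ref{L:can-rep} satisfy $\M(T_i) \leq M$ and $\spt(T_i) \subset K$, but whose boundaries $\partial T_i$ are only distributions supported in $K \cap \partial M$ with no a priori mass bound. Adapting the construction in Lemma \ref{L:integral}, let $\rho(\cdot) = \dist_A(\cdot, B)$ and let $\pi$ denote the Lipschitz nearest-point retraction onto $\partial M$, defined on a tubular neighborhood with Lipschitz constant $C_\pi$. For each $\ell \in \N$, the slicing inequality
\[
\int_0^{1/\ell} \M(\lan T_i, \rho, t\ran)\, dt \leq \M(T_i \lc \{\rho < 1/\ell\}) \leq M
\]
together with Markov's inequality implies that the set of $t \in (0, 1/\ell)$ with $\M(\lan T_i, \rho, t\ran) > 4\ell M$ has measure at most $1/(4\ell)$; combined with the a.e.\ integer rectifiability of slices, this lets me select $t_{i,\ell} \in (0, 1/\ell)$ for which $\lan T_i, \rho, t_{i,\ell}\ran$ is integer rectifiable with mass at most $4\ell M$. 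Shrinking $t_{i,\ell}$ further if necessary (possible since $K \subset \interior_M(K')$ is compact and $\pi$ restricts to the identity on $\partial M$), I can additionally ensure $\pi(K \cap \{\rho < t_{i,\ell}\}) \subset \interior_M(K')$. Setting
\[
T_i^{(\ell)} := T_i - \pi_\sharp\bigl(T_i \lc \{\rho < t_{i,\ell}\}\bigr) \in \tau_i,
\]
the computation in Lemma \ref{L:integral} shows $T_i^{(\ell)}$ is an integral current with $\spt(T_i^{(\ell)}) \subset K'$, $\M(T_i^{(\ell)}) \leq (1 + C_\pi) M$, and $\M(\partial T_i^{(\ell)}) \leq 4 C_\pi \ell M$.

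For each fixed $\ell$, I would apply the Federer--Fleming compactness theorem to the integral currents $\{T_i^{(\ell)}\}_i$ (which have uniformly bounded total mass and support in the compact set $K'$) to extract an $\F^{K'}$-convergent subsequence with integral-current limit $T_\infty^{(\ell)}$; a diagonal extraction then produces a single subsequence $\{T_{i_j}\}$ along which $T_{i_j}^{(\ell)} \to T_\infty^{(\ell)}$ in $\F^{K'}$ for every $\ell$. Since $T_{i_j}^{(\ell)} - T_{i_j}^{(\ell')}$ is supported on the closed set $B$ for every $j$, and $\F^{K'}$-convergence implies weak convergence on test forms supported in $K'$, the support of the limit $T_\infty^{(\ell)} - T_\infty^{(\ell')}$ is contained in $B$, so the classes $[T_\infty^{(\ell)}]$ all agree, defining a single $\tau_\infty \in \Z_k(A,B)$. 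Then
\[
\F^{K'}(\tau_{i_j} - \tau_\infty) \leq \F^{K'}\bigl(T_{i_j}^{(\ell)} - T_\infty^{(\ell)}\bigr) \to 0,
\]
giving the required relative flat convergence, while Lemma \ref{L:lower-semicts} yields $\spt(\tau_\infty) \subset K$ and $\M(\tau_\infty) \leq M$, so $\tau_\infty \in \Z_{k,K,M}(A,B)$.

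The main obstacle is precisely that the boundaries $\partial T_i$ of the canonical representatives carry no uniform mass bound and need not even be integer rectifiable, so Federer--Fleming cannot be applied directly to $\{T_i\}$; the slicing construction above is engineered to trade a slightly larger bulk mass for a boundary mass bound growing only linearly in $\ell$, which suffices because for each fixed $\ell$ only one compactness invocation is needed, and diagonalization over $\ell$ passes to the common limit class in the quotient $\Z_k(A,B)$.
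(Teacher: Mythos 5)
Your argument is correct and uses the same core mechanism as the paper: slice the canonical representative $T_i$ by $\rho = \dist_A(\cdot, B)$, push the thin slab $T_i \lc \{\rho < t\}$ onto $\partial M$ via the nearest-point retraction $\pi$ to produce an integral-current representative of $\tau_i$ with controlled boundary mass, and then apply Federer--Fleming compactness. What you are missing is that the entire $\ell$-parametrized family of scales and the diagonal extraction are superfluous. Since $K \subset \interior_M(K')$ is a fixed compact inclusion, one can choose a single $\ep_0 > 0$ once, independently of $i$, so that $\pi(K \cap \{0 < \rho < \ep_0\}) \subset K'$; slicing then yields, for each $i$, a single $t_i \in (0,\ep_0)$ with $\M(\lan T_i, \rho, t_i\ran) \leq 2M/\ep_0$. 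This uniform bound at a fixed scale supports a single application of Federer--Fleming, with no need to send $\ell \to \infty$. Your multi-scale construction, the $\ell$-dependent boundary bound, and the argument identifying the limits $T_\infty^{(\ell)}$ up to $Z_k(B,B)$ (which, incidentally, should invoke that $K' \cap \partial M \subset B$ is closed, rather than that $B$ itself is closed, since $B = U\cap\partial M$ is only relatively open in $\partial M$) are therefore a longer path to the same destination.
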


\begin{proof}
Let $\tau_i$ be a sequence in $\Z_{k, K, M}(A, B)$ with canonical representatives $T_i \in \tau_i$. By Lemma \ref{L:can-rep}, $\spt(T_i)=\spt(\tau_i) \subset K$ for all $i$. Let $\pi:A \to B$ be the nearest point projection as in the proof of Lemma \ref{L:integral}. As $K\subset \interior_M(K^{\pr})$, we can choose $\ep_0>0$ small enough such that $\pi(K\cap\{0<\rho<\ep_0\})\subset K^{\pr}$, where $\rho$ is the intrinsic distance to $B$ as in Lemma \ref{L:integral}. For each $i$, by Lemma \ref{L:can-rep}, 
\[ \M(T_i\lc\{0<\rho<\ep_0\})\leq \M(T_i)=\M(\tau_i)\leq M,\]
and hence \cite[28.4 and 28.5]{Simon} implies the existence of some $t_i\in (0, \ep_0)$, such that the slice $\lan T_i, \rho, t_i \ran$ is an integer rectifiable $(k-1)$-current with $\M(\lan T_i, \rho, t_i \ran)\leq 2M/\ep_0$. Define 
\[ T_i^{\pr}:=T_i-{\pi}_{\sharp}(T_i\lc\{0<\rho<t_i\}),\]
then $\spt(T_i^{\pr})\subset \spt(T_i)\cup \pi (K\cap\{0\leq\rho<\ep_0\})\subset K^{\pr}$. By the proof of Lemma \ref{L:integral}, $T_i' \in \tau_i$ are integral currents and $\partial T_i^{\pr}=-{\pi}_{\sharp} \lan T_i, \rho, t_i \ran$. This implies that $\spt(\partial T_i^{\pr})\subset K^{\pr}\cap B$, $\M(\partial T_i^{\pr})\leq 2CM/\ep_0$ and
\begin{equation}
\label{E:M(T_i)}
\M(T_i^{\pr})\leq \M(T_i)+C \; \M(T_i\lc\{0<\rho<t_i\}),
\end{equation}
where $C>0$ is a constant depending only on the Lipschitz constant of $\pi$.
As $\M(T_i^{\pr})+\M(\partial T_i^{\pr})\leq C_1 M$ and $\spt(T_i^{\pr})\subset K^{\pr}$, by the compactness and boundary rectifiability theorem \cite[27.3 and 30.3]{Simon}, a subsequence of $T_i^{\pr}$ would converge weakly to an integral current $T'_{\infty}$. Since $\spt(T_i') \subset K'$ and $\spt(\partial T_i^{\pr})\subset K^{\pr}\cap B$ for all $i$, we have $\spt (T'_{\infty}) \subset K'$ and $\spt(\partial T'_{\infty}) \subset K' \cap B$. Therefore, $T'_\infty \in Z_k(A,B)$ and thus defines a class $\tau_\infty =[T'_\infty] \in \Z_k(A,B)$. 

It remains to show that $\F^{K'}(\tau_i -\tau_\infty) \to 0$ and $\tau_\infty \in \Z_{k,K,M}(A,B)$. Let $T_\infty=T'_\infty \lc (A\setminus B)$ be the canonical representative of $\tau_\infty$. Since $\spt(T_i') \subset K'$ for all $i$, we have $\spt(T_\infty') \subset K'$. As $T'_i \weakto T'_\infty$, we have $\F^{K'}(T'_i-T'_\infty) \to 0$, which implies that $\F^{K'}(\tau_i -\tau_\infty) \to 0$. 
By lower semi-continuity of $\M$ (Lemma \ref{L:lower-semicts}), we have $\M(\tau_\infty) \leq \liminf_i  \M(\tau_i) = \liminf_i \M(T_i) \leq M$.
Finally, since $\spt(\tau_i) \subset K$ for all $i$ and $\tau_i \weakto \tau_\infty$, we have $\spt(\tau_\infty) \subset K$ by Lemma \ref{L:lower-semicts}, hence $\tau_{\infty} \in\Z_{k, K, M}(A,B)$.
\end{proof}

\begin{remark}
Note that \cite[4.4.4]{Federer} discussed the compactness for compactly supported and mass bounded subsets of $Z_k(A, B)$, where they used a notion of $\F_{K, B}$ flat norm. By using the equivalence classes, the definition of our $\F^K$ norms and the proof of Lemma \ref{L:compactness} are more geometric and much simpler than those in \cite[4.4.4]{Federer}. Our compactness should have the same spirit as \cite[4.4.4]{Federer}, while our compactness under the relative flat norm is crucial in defining \emph{almost minimizing varifolds with free boundary} in Definition \ref{D:am-varifolds}.
\end{remark}

Now take $U=M$. We also need the following $\mF$-metric on $\Z_k(M, \partial M)$. When $\partial M=\emptyset$, this is the same as the one introduced by Pitts in \cite[2.1(20)]{Pitts}. For simplicity, we write $\F$ to denote $\F^M$.

\begin{definition}[$\mF$-metric]
\label{D:F-metric}
Given $\tau, \si\in\Z_k(M, \partial M)$ with canonical representatives $T\in \tau$ and $S \in \si$, the \emph{$\mF$-distance between $\tau$ and $\si$} is defined as
\[ \mF(\tau, \si):=\F(\tau-\si)+\mF(|T|, |S|).\]
where $|T|, |S|$ denote respectively the rectifiable varifolds corresponding to $T, S$, and $\mF$ on the right hand side is the varifold distance function on $\V_k(M)$ \cite[2.1(19)]{Pitts}.
\end{definition}

\begin{lemma}
\label{L:F-metric}
Let $\tau, \tau_i \in \Z_k(M, \partial M)$, $i=1,2,3,\cdots$. Then $\mF(\tau_i,\tau) \to 0$ if and only if $\F(\tau_i-\tau) \to 0$ and $\M(\tau_i) \to \M(\tau)$.
\end{lemma}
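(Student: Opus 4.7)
My plan is to prove the two implications separately, using Lemma \ref{L:can-rep}, Lemma \ref{L:weak-topology}, and standard properties of the $\mF$-metric on varifolds with uniformly compact support. Throughout, let $T_i, T$ denote the canonical representatives of $\tau_i, \tau$, respectively, so by Lemma \ref{L:can-rep} we have $\M(\tau_i) = \M(T_i) = \M(|T_i|)$ and $\M(\tau) = \M(T) = \M(|T|)$, and the rectifiable varifolds $|T_i|, |T|$ are all supported in the fixed compact set $M$.

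For the forward direction, both $\F(\tau_i - \tau)$ and $\mF(|T_i|,|T|)$ are nonnegative and bounded by $\mF(\tau_i,\tau)$ from Definition \ref{D:F-metric}, so both tend to zero. Since the varifolds are supported in the fixed compact set $M$, testing Pitts's $\mF$-metric of \cite[2.1(19)]{Pitts} against a bounded Lipschitz function equal to $1$ on $M$ gives $|\M(|T_i|) - \M(|T|)| \leq C\,\mF(|T_i|,|T|)$, hence $\M(|T_i|) \to \M(|T|)$, which is $\M(\tau_i) \to \M(\tau)$.

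For the reverse direction, assume $\F(\tau_i - \tau) \to 0$ and $\M(\tau_i) \to \M(\tau)$; I need to show $\mF(|T_i|,|T|) \to 0$. The first step is to upgrade the weak current convergence $T_i \to T$ on $M \setminus \partial M$ given by Lemma \ref{L:weak-topology} to weak current convergence on all of $M$. The uniform mass bound from $\M(\tau_i) \to \M(\tau)$ combined with the standard compactness for integer rectifiable currents yields, along a subsequence, a weak current limit $T'$ in $M$ whose restriction to $M \setminus \partial M$ equals $T$, so $T' = T + S$ with $\spt S \subset \partial M$. Since $T$ and $S$ have disjoint supports, $\M(T') = \M(T) + \M(S)$; lower semi-continuity of mass combined with $\M(T_i) \to \M(T)$ forces $\M(S) = 0$, hence $T' = T$. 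The second step passes to varifolds: weak compactness with the uniform mass bound gives a subsequential varifold limit $V$ of $|T_i|$ with $\spt\|V\| \subset M$, and the standard GMT fact that weak convergence of integer rectifiable currents together with convergence of their masses precludes cancellation implies $V = |T|$. Since supports remain in the compact set $M$ and the total mass converges, this weak varifold convergence is equivalent to $\mF$-convergence, giving $\mF(|T_i|,|T|) \to 0$; together with $\F(\tau_i - \tau) \to 0$ this yields $\mF(\tau_i,\tau) \to 0$.

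The principal obstacle is the no-cancellation step in the reverse direction: weak current convergence alone does not pin down the varifold limit, since a pair of closely spaced oppositely oriented sheets of $T_i$ can cancel as currents but add up as varifolds. The mass-convergence hypothesis closes this gap by sandwiching any subsequential varifold limit $V$ between $\|T\| \leq \|V\|$ (obtained from weak current convergence of the integer rectifiable $T_i$) and $\M(V) \leq \liminf \M(|T_i|) = \M(T)$ (from lower semi-continuity and the mass hypothesis), forcing $\|V\| = \|T\|$ as Radon measures; then $V$ is rectifiable with the same weight as $|T|$, and its Grassmannian disintegration must match the $\|T\|$-a.e. approximate tangent plane of the rectifiable set of $T$, so $V = |T|$.
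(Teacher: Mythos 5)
Your proof is correct, and the forward direction matches the paper's, which treats it as trivial. Your reverse direction takes a genuinely different route. The paper explicitly constructs representatives $\ti{T}_i\in\tau_i$ with $\ti{T}_i\to T$ as currents (from the definition of the $\F$-norm on the quotient), then shows $\ti{T}_i\lc\partial M\to 0$ by a cutoff trick: since $\|T\|(\partial M)=0$ and $\M(T_i)\to\M(T)$, both $T_i$ and $T$ carry uniformly small mass in small collars $N_\ep$ of $\partial M$, and this controls the pairings of $\ti{T}_i\lc\partial M$ against extensions of forms from $\partial M$ into $N_\ep$. You instead run a soft compactness-and-uniqueness argument: a subsequential weak limit $T'$ of the canonical representatives $T_i$ agrees with $T$ away from $\partial M$ by Lemma \ref{L:weak-topology}, and lower semicontinuity of mass combined with $\M(T_i)\to\M(T)$ kills the boundary piece, so $T'=T$. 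Both proofs then rest on the same standard fact (which the paper cites as Pitts [2.1(18)(f)]) that weak current convergence plus mass convergence gives varifold convergence of the associated integral varifolds; your final paragraph sketches its proof rather than citing it. Two small imprecisions worth correcting: (1) the Federer--Fleming compactness theorem for integer rectifiable currents requires a boundary-mass bound that you do not have here, so you should instead invoke weak-* compactness of bounded-mass currents viewed as functionals, with rectifiability of the limit following a posteriori from $T'=T$; and (2) ``$T$ and $S$ have disjoint supports'' is not literally correct since $\spt T$ may meet $\partial M$ even though $\|T\|(\partial M)=0$ --- the precise statement is that $\|T\|$ and $\|S\|$ are mutually singular, and the mass additivity then follows from $\M(T')=\|T'\|(M\setminus\partial M)+\|T'\|(\partial M)=\M(T)+\M(S)$.
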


\begin{remark}
When $\partial M=\emptyset$, the result was observed by Pitts \cite[page 68]{Pitts} (see also \cite[Lemma 4.1]{Marques-Neves14}). In our case, the main difficulty is due to the use of the equivalence classes of relative cycles.
\end{remark}

\begin{proof}
We only need to check the ``if'' part. Let $T_i$ and $T$ be the canonical representative of $\tau_i$ and $\tau$ respectively. Assume that $\F(\tau_i-\tau) \to 0$ and $\M(T_i)\to \M(T)$, we need to show that $|T_i|\to |T|$ as varifolds. It suffices to prove that $\lim_{i\rightarrow\infty}T_i=T$ as currents, (together with $\M(T_i)\to \M(T)$, one directly derives $\mF(|T_i|, |T|)\to 0$ by \cite[2.1(18)(f)]{Pitts}). Apriori $\tau_i\to\tau$ does not imply that the canonical representatives converge; while it implies that there exist $T_i^{\pr}\in\tau_i$, $T^{\pr}\in\tau$, such that $T_i^{\pr}\to T^{\pr}$ as currents. If we let $\ti{T}_i=T_i^{\pr}-(T^{\pr}-T)$, then $\lim_{i \to \infty} \ti{T}_i=T$ as currents. We can show that $\lim_{i\rightarrow\infty}\ti{T}_i\lc (\partial M)=0$ using the following cutoff trick. For any $\ep>0$, since $\M(T_i)\to\M(T)$, we can find a small neighborhood $N_{\ep}$ of $\partial M$, such that $\M(T_i\lc N_{\ep})\leq \ep$, $\M(T\lc N_{\ep})\leq \ep$ for all $i$ sufficiently large. 
To see why $\lim_{i\rightarrow\infty}\ti{T}_i\lc (\partial M)=0$, note that each $\ti{T}_i$ is rectifiable and so $\ti{T}_i \lc (\partial M)$ is also a rectifiable current in $\partial M$. Moreover, for any $k$-form $\om$ on $\partial M$, one can extend it to a smooth $k$-form $\ti{\om}_{\ep}$ supported in $N_{\ep}$ such that $\|\ti{\om}_{\ep}\|\leq C\|\om\|$ for some $C$ independent of $\ep$. Then $|\ti{T}_i\lc(\partial M)(\om)-\ti{T}_i(\ti{\om}_{\ep})|=|T_i(\ti{\om}_{\ep})|\leq C\ep\|\om\|$, and $\lim\ti{T}_i(\ti{\om}_{\ep})=T(\ti{\om}_{\ep})$ with $|T(\ti{\om}_{\ep})|\leq C\ep\|\om\|$; so $\lim_{i \to \infty} \ti{T}_i\lc(\partial M)(\om)=0$ and this finishes the proof.
\end{proof}

\subsection{Two isoperimetric lemmas}
\label{SS:isoperimetic}

We prove below two important isoperimetric lemmas for equivalence classes of relative cycles, which will be used in many places in the subsequent sections. Note that Almgren also proved two similar isoperimetric lemmas in \cite{Almgren62} which are slightly different from ours. Recall that $\F=\F^M$ in the following discussions.

\begin{lemma}[$\F$-isoperimetric lemma]
\label{L:isoperimetric-F}
There exists $\ep_M>0$ and $C_M \geq 1$ depending only on the isometric embedding $M \hookrightarrow \R^L$ such that for any $\tau_1, \tau_2\in \Z_k(M, \partial M)$ with 
\[ \F(\tau_2-\tau_1) < \ep_M,\]
there exists an integral $(k+1)$-current $Q$, called an \emph{isoperimetric choice}, such that
\begin{itemize}
\item $\spt(Q) \subset M$,
\item $\spt(T_2-T_1-\partial Q)\subset\partial M$,
\item $\M(Q)\leq C_M \F(\tau_2-\tau_1)$,
\end{itemize}
where $T_1,T_2$ are the canonical representatives of $\tau_1,\tau_2$ respectively. 
\end{lemma}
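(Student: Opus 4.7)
The plan is to extract a near-optimal ambient decomposition from the relative flat norm, reduce to a relative isoperimetric statement for small-mass currents in $M$ with boundary on $\partial M$, and prove that relative isoperimetric via a reflection/doubling trick combined with the classical closed-manifold isoperimetric inequality.

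First, given any $\eta > 0$, by the definition of $\F(\tau_2 - \tau_1) = \inf_{T \in \tau_2 - \tau_1} \F^M(T)$ and of $\F^M$, and because real and integral flat norms agree on integer rectifiable currents \cite[4.2.24]{Federer}, I would select an integer rectifiable representative $T \in \tau_2 - \tau_1$ together with integer rectifiable $A \in \mR_k(M)$ and $B \in \mR_{k+1}(M)$ satisfying $T = A + \partial B$ and $\M(A) + \M(B) < \F(\tau_2 - \tau_1) + \eta$. Writing $S := T - (T_2 - T_1) \in Z_k(\partial M, \partial M)$, one has $\partial A = \partial T \subset \partial M$. After the harmless replacement $A \mapsto A - A\lc\partial M$ (absorbing the $\partial M$-part of $A$ into $S$, which remains in $Z_k(\partial M, \partial M)$), one may further assume $A \lc \partial M = 0$.

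Next I would seek $Q$ in the form $Q = B + R$ for an integer rectifiable $R \in \mR_{k+1}(M)$ to be built. The identity
\[ T_2 - T_1 - \partial Q \;=\; (A - \partial R) - S,\]
together with $\spt S \subset \partial M$, shows that $\spt(T_2 - T_1 - \partial Q) \subset \partial M$ is equivalent to $\spt(A - \partial R) \subset \partial M$; combined with $\M(Q) \leq \M(B) + \M(R)$, the whole lemma reduces to the following \emph{relative filling claim}: there exist constants $\ep_M' > 0$ and $C > 0$ such that, for any integer rectifiable $A \in \mR_k(M)$ with $\spt(\partial A) \subset \partial M$, $A \lc \partial M = 0$, and $\M(A) < \ep_M'$, there exists an integer rectifiable $R \in \mR_{k+1}(M)$ with $\spt(A - \partial R) \subset \partial M$ and $\M(R) \leq C \M(A)$.

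To prove this claim I would form the metric double $2M := M \cup_{\partial M} (-M)$, a closed manifold (with Lipschitz metric that can be smoothed in a Fermi-coordinate neighborhood of $\partial M$), together with the orientation-reversing involution $\iota: 2M \to 2M$ fixing $\partial M$ pointwise. The anti-symmetrized double $\bar A := A - \iota_\sharp A \in \mR_k(2M)$ is then a cycle in $2M$ of mass $\M(\bar A) \leq 2\M(A)$ (direct computation using $\iota|_{\partial M} = \mathrm{id}$ and $\partial A \subset \partial M$). Applying the classical isoperimetric inequality on the closed manifold $2M$ (for instance \cite[5.1.6]{Federer}) gives, for $\M(A)$ small, an integer rectifiable $\bar R \in \mR_{k+1}(2M)$ with $\partial \bar R = \bar A$ and $\M(\bar R) \leq C\M(\bar A)^{(k+1)/k} \leq C'\M(A)$, where the last inequality uses $\M(A) < 1$. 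Setting $R := \bar R \lc (M \setminus \partial M)$ and invoking the slicing formula for $\rho = \dist_{2M}(\cdot, \partial M)$ yields $\partial R = A - \langle \bar R, \rho, 0\rangle$, so $A - \partial R = \langle \bar R, \rho, 0\rangle$ is supported on $\partial M$ and $\M(R) \leq \M(\bar R) \leq C_M \M(A)$, proving the claim and hence the lemma.

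The most delicate point is that the slicing formula only guarantees $\langle \bar R, \rho, t\rangle$ to be well-defined for \emph{almost every} $t > 0$, not a priori at $t = 0$. I would handle this by choosing a generic level $t_0$ in a small window $(0, \delta)$ where the slice has controlled mass (by the slicing mass estimate $\int_0^\delta \M(\langle \bar R, \rho, t\rangle) dt \leq \M(\bar R)$), splitting $\bar R$ at $t_0$, and then pushing the thin collar piece $\bar R \lc \{0 \leq \rho \leq t_0\}$ onto $\partial M$ by the Fermi collar retraction $h_s(x,\rho) = (x, (1-s)\rho)$; the collar's mass contribution is controlled by $t_0 \cdot \M(\bar R)/\delta$ and vanishes as $t_0 \to 0$, while the bound $\M(R) \leq C_M \M(A)$ persists. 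A secondary technical issue is the Lipschitz singularity of the doubled metric on $2M$ along $\partial M$: this can either be smoothed out (at cost of perturbing constants), or one can replace $2M$ by the smooth extension $\widetilde M$ of $M$ equipped with the local Fermi reflection $(x,\rho) \mapsto (x,-\rho)$ near $\partial M$, projecting the resulting filling back into $M$ via the nearest-point projection $\pi_M: \widetilde M \to M$ to ensure the constructed currents remain inside $M$.
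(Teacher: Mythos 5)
Your doubling/reflection route is genuinely different from the paper's argument. The paper's proof applies Almgren's small-mass isoperimetric inequality \cite[Corollary 1.14]{Almgren62} twice, staying entirely inside the pair $(M,\partial M)$: from a near-optimal decomposition $T'_2 - T'_1 = \partial Q' + R'$ with $\spt(\partial R')\subset\partial M$, it first fills the $(k-1)$-cycle $\partial R'$ by some $\ti R$ inside $\partial M$, then fills the $k$-cycle $R'-\ti R$ by $\ti Q$ inside $M$, and outputs $Q=Q'+\ti Q$; the flat-norm comparison between $M$ and $\partial M$ from \cite[Theorem 1.19]{Almgren62} mediates between the two fillings. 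Your antisymmetrization $\bar A := A - \iota_\sharp A$ across the metric double $2M$ collapses this into a single isoperimetric filling, which is conceptually cleaner, at the cost of introducing the auxiliary geometry of $2M$ (a Lipschitz metric, smoothable in Fermi coordinates) and a collar correction to bring the filling's boundary back onto $\partial M$. Both approaches reduce to the same engine: a cycle of small mass in a compact CLNR is null-homologous and admits a controlled filling.

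There are, however, some gaps you should close. The pivotal claim that $\bar A$ is a cycle relies on $\iota_\sharp(\partial A)=\partial A$; this is immediate if $\partial A$ is a rectifiable current supported in $\partial M$ with approximate tangent $(k-1)$-planes in $T(\partial M)$, but you only took $A,B$ \emph{integer rectifiable}, in which case $\partial A$ can fail to be rectifiable and the identity $\iota_\sharp(\partial A)=\partial A$ would then need the flat support theorem. The clean fix is to first pass to an \emph{integral} representative of $\tau_2-\tau_1$ (as in Lemma \ref{L:integral}) and then take $A,B$ integral in the flat-norm decomposition. Also, \cite[5.1.6]{Federer} is the Euclidean isoperimetric inequality; the statement you actually invoke is the small-mass version on the compact CLNR $2M$, which is again \cite[Corollary 1.14]{Almgren62}. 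The collar-pushing correction does work, but needs to be run through the homotopy formula explicitly: with $W:=A\lc\{0\le\rho\le t_0\}-\lan\bar R,\rho,t_0\ran$ one computes $\partial W=\partial A$ supported on $\partial M$, so the cross term $H_\sharp([0,1]\times\partial W)$ vanishes and only the harmless piece $\pi_\sharp W$ on $\partial M$ survives. Finally, to produce a single $Q$ obeying $\M(Q)\le C_M\,\F(\tau_2-\tau_1)$ you should fix $\eta$ proportional to $\F(\tau_2-\tau_1)$ rather than sending $\eta\to 0$ (the case $\F(\tau_2-\tau_1)=0$ being trivial, since then $T_2=T_1$ and $Q=0$). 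None of these is fatal, but they are precisely the technicalities the paper's two-step argument sidesteps by never leaving $M$.
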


\begin{proof}
Let $C_0\geq 1$ be a constant (which exists by \cite[Theorem 1.19]{Almgren62}) depending only on the isometric embedding $M \hookrightarrow \R^L$ such that
\[ \F^M(T) \leq \F^{\partial M}(T) \leq C_0 \F^M(T) \]
for any integral current $T$ (of any dimension) with $\spt(T) \subset \partial M$. Let $\nu^M, \nu^{\partial M}>0$ be the constants given by \cite[Corollary 1.14]{Almgren62} with $A=M$ and $\partial M$ respectively. We define
\[ \ep_M=\frac{1}{4C_0}\min\{ \nu^M, \nu^{\partial M}\}.\]
By Definition \ref{D:relative-cycles}, the definition of $\F=\F^M$ \cite[1.7(6)]{Almgren62} and the proof of Lemma \ref{L:compactness}, for $i=1,2$, we can find an integral $k$-current $T^{\pr}_i\in\tau_i$, an integral $(k+1)$-current $Q^{\pr}$ and an integral $k$-current $R^{\pr}$ such that (see Figure \ref{isoperimetric choice with boundary})
\begin{itemize}
\item $\spt(Q') \subset M$, $\spt(R') \subset M$,
\item $T^{\pr}_2-T^{\pr}_1=\partial Q^{\pr} +R^{\pr}$,
\item $\M(Q^{\pr})+\M(R^{\pr})\leq 2\F (\tau_1- \tau_2) < 2 \ep_M$.
\end{itemize}

\begin{figure}[h]
\centering
\includegraphics[height=1.6in]{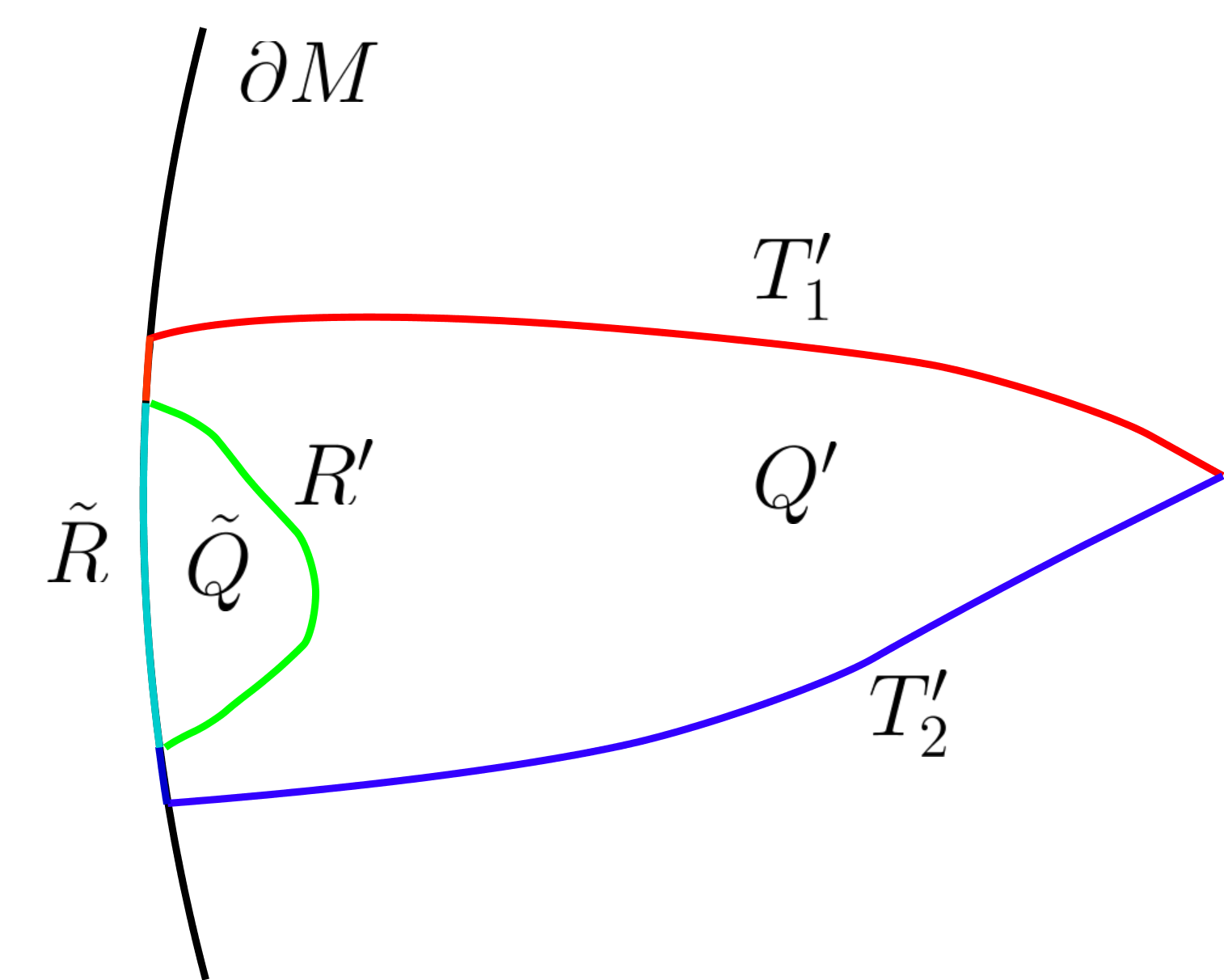}
\caption{Isoperimetric choice with boundary.}
\label{isoperimetric choice with boundary}
\end{figure}

The second point above implies that $\partial R^{\pr}=\partial T^{\pr}_2-\partial T^{\pr}_1$, hence $\spt(\partial R') \subset \partial M$.  Therefore, the integral $(k-1)$-cycle $\partial R^{\pr}$ satisfies
$$\F^{\partial M}(\partial R^{\pr})\leq C_0\F^M(\partial R^{\pr})\leq C_0 \M(R^{\pr})<\nu^{\partial M}.$$
Using \cite[Corollary 1.14]{Almgren62}, there exists an integral $k$-current $\ti{R}$ such that (see Figure \ref{isoperimetric choice with boundary})
\begin{itemize}
\item $\spt(\ti{R}) \subset \partial M$,
\item $\partial \ti{R}=\partial R^{\pr}$,
\item $\M(\ti{R})=\F^{\partial M}(\partial R^{\pr})\leq C_0\M(R^{\pr}).$
\end{itemize}
Consider the $k$-cycle $R^{\pr}-\ti{R}$ with $\spt(R^{\pr}-\ti{R}) \subset M$, note that
$$\F^M(R^{\pr}-\ti{R})\leq \M(R^{\pr})+\M(\ti{R})
\leq \nu^M.$$
Using \cite[Corollary 1.14]{Almgren62} again, there exists an integral $(k+1)$-current $\ti{Q}$ such that (see Figure \ref{isoperimetric choice with boundary})
\begin{itemize}
\item $\spt(\ti{Q}) \subset M$,
\item $\partial \ti{Q}=R^{\pr}-\ti{R}$,
\item $\M(\ti{Q})=\F^M(R^{\pr}-\ti{R})\leq \M(R^{\pr})+\M(\ti{R})\leq 2(1+C_0)\F(\tau_1- \tau_2)$.
\end{itemize}
Now, if we define our isoperimetric choice to be the integral $(k+1)$-current
\[ Q:=Q^{\pr}+\ti{Q},\]
we will check that $Q$ satisfies all the desired properties. First notice that $\spt(Q) \subset M$ and
\[ T^{\pr}_2-T^{\pr}_1-\partial (Q^{\pr}+\ti{Q})=-\ti{R}\]
which is an integral $k$-current supported on $\partial M$. 
This implies that $T_2-T_1-\partial Q$ is also supported on $\partial M$ since $\spt(T'_i-T_i) \subset \partial M$. Moreover, 
\[ \M(Q)\leq \M(Q^{\pr})+\M(\ti{Q})\leq 2(2+C_0)\F (\tau_1- \tau_2).\]
This completes the proof with $C_M=2(2+C_0)$ which depends only on the isometric embedding $M \hookrightarrow \R^L$.
\end{proof}

\begin{remark}
Notice that we do not have good control on  the mass $\M(T_2-T_1-\partial Q)$ in Lemma \ref{L:isoperimetric-F} since we do not have control of $\M(T_1-T_1')$ and $\M(T_2-T_2')$. In contrast, our next lemma has better control on $\M(T_2-T_1-\partial Q)$ but it requires a stronger assumption that $\tau_1$ and $\tau_2$ are close in the $\M$-topology.
\end{remark}

\begin{lemma}[$\M$-isoperimetric lemma]
\label{L:isoperimetric-M}
There exists $\ep_M>0$ and $C_M \geq 1$ depending only on the isometric embedding $M \hookrightarrow \R^L$ such that for any $\tau_1, \tau_2\in \Z_k(M, \partial M)$ with 
\[ \M(\tau_2-\tau_1) < \ep_M,\]
there exists an integral $(k+1)$-current $Q$ and an integer rectifiable $k$-current $R$ such that
\begin{itemize}
\item $\spt(Q) \subset M$, $\spt(R) \subset \partial M$
\item $T_2-T_1=\partial Q+R$,
\item $\M(Q)+\M(R)\leq C_M \M (\tau_2-\tau_1)$,
\end{itemize}
where $T_1,T_2$ are the canonical representatives of $\tau_1,\tau_2$ respectively.
\end{lemma}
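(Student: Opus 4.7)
The plan is to mirror the proof of the $\F$-isoperimetric Lemma \ref{L:isoperimetric-F}, but using the mass assumption throughout and, crucially, replacing the canonical representatives by nearby \emph{integral} currents before invoking Almgren's isoperimetric inequality. The main obstacle, and the only genuinely new step compared to Lemma \ref{L:isoperimetric-F}, is that the canonical representatives $T_i$ are only integer rectifiable (their boundaries need not be integer rectifiable), whereas Almgren's filling result \cite[Corollary 1.14]{Almgren62} requires an integer rectifiable cycle as input to produce an integer rectifiable filling. Once this integral approximation is carried out with small mass change, two applications of the isoperimetric inequality (first on $\partial M$, then on $M$) complete the argument.

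\textit{Setup.} Let $\rho:=\dist_{\tM}(\cdot,\partial M)$ and let $\pi$ be the nearest-point projection onto $\partial M$, Lipschitz with constant $L$ on a tubular neighborhood $\{\rho<\delta\}$. Let $C_0\geq 1$ be the constant from \cite[Theorem 1.19]{Almgren62} comparing $\F^M$ and $\F^{\partial M}$ on currents supported in $\partial M$, and let $\nu^M,\nu^{\partial M}>0$ be the thresholds from \cite[Corollary 1.14]{Almgren62}.

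\textit{Step 1 (Integral approximation).} Since each $T_i$ is canonical, $T_i\lc\partial M=0$, so for any $\eta>0$ we may pick $s_\eta\in(0,\delta)$ with $\M(T_i\lc\{\rho<s_\eta\})<\eta$ for $i=1,2$. By the slicing theorem \cite[28.4]{Simon} choose $t_i\in(0,s_\eta)$ with $\langle T_i,\rho,t_i\rangle$ integer rectifiable, and define
\[
T_i':=T_i-\pi_\sharp(T_i\lc\{\rho<t_i\})\in\tau_i.
\]
Exactly as in the proof of Lemma \ref{L:integral}, $T_i'$ is an \emph{integral} current with $\partial T_i'=-\pi_\sharp\langle T_i,\rho,t_i\rangle\subset\partial M$ integer rectifiable, and $\M(T_i-T_i')\leq L^k\eta$.

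\textit{Step 2 (Filling $\partial T'$ on $\partial M$).} Set $T':=T_2'-T_1'$. Then $\partial T'=-\pi_\sharp(\langle T_2,\rho,t_2\rangle-\langle T_1,\rho,t_1\rangle)$ is an integer rectifiable $(k-1)$-cycle supported on $\partial M$, and since $T'$ itself is a filling in $M$,
\[
\F^{\partial M}(\partial T')\leq C_0\,\F^M(\partial T')\leq C_0\,\M(T').
\]
Provided $C_0\M(T')<\nu^{\partial M}$, \cite[Corollary 1.14]{Almgren62} applied on $\partial M$ yields an integer rectifiable $k$-current $R_0$ with $\spt R_0\subset\partial M$, $\partial R_0=\partial T'$ and $\M(R_0)\leq C_0\M(T')$.

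\textit{Step 3 (Filling the cycle in $M$).} The current $T'-R_0$ is now an integer rectifiable $k$-cycle in $M$ with $\M(T'-R_0)\leq(1+C_0)\M(T')$. When $(1+C_0)\M(T')<\nu^M$, a second application of \cite[Corollary 1.14]{Almgren62}, this time on $M$, gives an integral $(k+1)$-current $Q$ with $\spt Q\subset M$, $\partial Q=T'-R_0$ and $\M(Q)\leq(1+C_0)\M(T')$.

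\textit{Step 4 (Assembly and choice of constants).} Writing $T_2-T_1=T'+(T_2-T_2')-(T_1-T_1')=\partial Q+R$ with $R:=R_0+(T_2-T_2')-(T_1-T_1')$ gives an integer rectifiable $R$ supported on $\partial M$ and $\spt Q\subset M$ as required. Choosing $\eta\leq\M(\tau_2-\tau_1)$ ensures $\M(T')\leq(1+2L^k)\M(\tau_2-\tau_1)$, whence
\[
\M(Q)+\M(R)\leq\bigl[(1+2C_0)(1+2L^k)+2L^k\bigr]\,\M(\tau_2-\tau_1).
\]
All smallness conditions are met once
\[
\ep_M:=\min\!\Bigl\{\tfrac{\nu^{\partial M}}{2C_0(1+2L^k)},\ \tfrac{\nu^M}{2(1+C_0)(1+2L^k)},\ \delta\Bigr\},
\]
and the constant $C_M:=(1+2C_0)(1+2L^k)+2L^k$ depends only on the isometric embedding $M\hookrightarrow\R^L$, as claimed. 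The only delicate point in the argument is making the integral approximation of Step 1 \emph{quantitative} (i.e.\ taking $\eta$ proportional to $\M(\tau_2-\tau_1)$ rather than merely small), but this is a free parameter at our disposal since the canonical representatives have no mass on $\partial M$.
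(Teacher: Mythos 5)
Your proof is correct and takes essentially the same route as the paper: pass from the canonical representatives $T_i$ to nearby integral currents $T_i'$ via the slicing-and-projection trick of Lemma \ref{L:integral}, apply Almgren's isoperimetric inequality \cite[Corollary 1.14]{Almgren62} first on $\partial M$ (to fill $\partial(T_2'-T_1')$) and then on $M$, and absorb the small discrepancies $T_i-T_i'$ into $R$. The only cosmetic difference is that the paper slices $T_2-T_1$ at a single common radius and bounds $\M(\partial(T_2'-T_1'))$ directly by a multiple of $\M(\tau_2-\tau_1)$, whereas you slice $T_1,T_2$ separately and control $\F^{\partial M}(\partial T')$ through the trivial filling $T'\,$; both variants yield the required universal constants.
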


\begin{proof}
The proof is an easy adaption of Lemma \ref{L:isoperimetric-F}. The only thing we should take care is that $\partial (T_2-T_1)$ may not be an integer rectifiable current. We can use the slicing trick as in the proof of Lemma \ref{L:compactness} to overcome this difficulty. In particular, we can take integral current representatives $T^{\pr}_i \in \tau_i$, $i=1, 2$, such that 
\begin{itemize}
\item $\M (\partial(T^{\pr}_2-T^{\pr}_1))\leq c_1 \M(T_2-T_1)$, 
\item $\M ((T_2^{\pr}-T_2^{\pr})-(T_2-T_1))\leq c_2 \M(T_2-T_1)$,
\end{itemize}
for some universal constants $c_1, c_2>0$. By \cite[Corollary 1.14]{Almgren62}, $R^{\pr}$ can be taken as the isoperimetric choice for $\partial(T_2^{\pr}-T_1^{\pr})$ in $\partial M$, and $Q$ the isoperimetric choice for $T_2^{\pr}-T_1^{\pr}-R^{\pr}$ in $M$. Finally if we define 
\[ R=R^{\pr}+(T_2-T_1)-(T_2^{\pr}-T_1^{\pr}),\]
then it is straightforward to check that $Q, R$ satisfy all the required properties.
\end{proof}

\subsection{Definitions of almost minimizing varifolds}
\label{SS:Def-am-varifolds}

We are now ready to define the notion of almost minimizing varifolds with free boundary. In what follows, we take $(A,B)=(U, U \cap \partial M)$ as before where $U\subset M$ is a relatively open subset. We also fix a compact subset $C \subset M$ containing $A$ (e.g. $C=\bar{A}$) and write $\F=\F^C$. Recall also the $\mF$-metric defined in Definition \ref{D:F-metric}.

\begin{definition}[Almost minimizing varifolds with free boundary]
\label{D:am-varifolds}
Let $\nu = \F$, $\M$ or $\mF$. For any given $\ep, \de>0$, we define $\sA_k(U; \ep, \de; \nu)$ to be the set of all $\tau\in\Z_k(M, \partial M)$ such that if $\tau=\tau_0, \tau_1, \tau_2, \cdots, \tau_m\in\Z_k(M, \partial M)$ is a sequence with:
\begin{itemize}
\item $\spt(\tau_i-\tau)\subset U$;
\item $\nu(\tau_{i+1}- \tau_i)\leq \de$;
\item $\M(\tau_i)\leq \M(\tau)+\de$, for $i=1, \cdots, m$,
\end{itemize}
then $\M(\tau_m)\geq \M(\tau)-\ep$.

We say that a varifold $V\in\V_k(M)$ is {\em almost minimizing in $U$ with free boundary} if there exist sequences $\ep_i \to 0$, $\de_i \to 0$, and $\tau_i\in \sA_k(U; \ep_i, \de_i; \F)$, such that if $T_i \in \tau_i$ are the canonical representatives, then $\mF(|T_i|, V)\leq \ep_i$.
\end{definition}

\begin{remark}
Roughly speaking, $\tau \in \sA_k(U;\ep,\de;\nu)$ means that if we deform $\tau$ by a discrete family, while keeping the perturbations to be supported in $U$, and with mass not increased too much (measured by the parameter $\de$), then at the end of the deformation the mass cannot be deformed down too much (measured by the parameter $\ep$).  Note that for $V \in \V_k(M)$ to be almost minimizing, the key point is that the varifold $V$ can be approximated by $|T_i|$ with no mass on $\partial M$ since $T_i$ are \emph{canonical representatives} of $\tau_i$.
\end{remark}

The following theorem plays an essential role when we prove the existence of almost minimizing varifolds using Almgren-Pitts' combinatorial argument (Theorem \ref{T:combinatorial}). Note that in our definition of almost minimizing varifolds with free boundary, the approximating sequence $\tau_i \in \sA_k(U; \ep_i, \de_i; \F)$ is close in the $\F$-norm. The following theorem says that we can actually use the $\M$-norm instead at the expense of shrinking the relatively open subset $U \subset M$.

\begin{theorem}
\label{T:def-equiv}
Given $V\in \V_k(M)$, then the following statements satisfy $(a)\Longrightarrow (b)\Longrightarrow (c)\Longrightarrow (d)$:
\begin{itemize}
\item[$(a)$]  $V$ is almost minimizing in $U$ with free boundary.
\item[$(b)$]  For any $\ep>0$, there exists $\de>0$ and $\tau \, \in \, \sA_k(U; \ep, \de; \mF)$ such that $\mF_U(V, |T|)<\ep$ where $T \in \tau$ is the canonical representative.
\item[$(c)$]  For any $\ep>0$, there exists $\de>0$ and $\tau\in \sA_k(U; \ep, \de; \M)$ such that $\mF_U(V, |T|)<\ep$ where $T \in \tau$ is the canonical representative.
\item[$(d)$] $V$ is almost minimizing in $W$ with free boundary for any relatively open subset $W \subset \subset U$.
\end{itemize}
\end{theorem}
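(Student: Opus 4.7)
The plan is to prove the three implications (a)$\Longrightarrow$(b)$\Longrightarrow$(c)$\Longrightarrow$(d) in turn, using Lemmas~\ref{L:isoperimetric-M} and \ref{L:isoperimetric-F} as the quantitative tools for interpolating discrete sequences between the three topologies $\F$, $\M$, and $\mF$ on $\Z_k$. The implication (a)$\Longrightarrow$(b) is essentially formal: since $\mF(\tau,\sigma)=\F(\tau-\sigma)+\mF(|T|,|S|)\geq \F(\tau-\sigma)$ by Definition~\ref{D:F-metric}, any $\mF$-close discrete sequence is automatically $\F$-close, so $\sA_k(U;\ep,\delta;\F)\subseteq \sA_k(U;\ep,\delta;\mF)$. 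Given $\ep>0$, pick $i$ large enough that $\ep_i<\ep$ in the sequence witnessing (a), and take $\tau:=\tau_i$, $\delta:=\delta_i$; the bound $\mF_U(V,|T|)\leq \mF(V,|T_i|)\leq \ep_i<\ep$ follows from $\mF_U\leq \mF$.

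For (b)$\Longrightarrow$(c), the same $\tau$ from (b) should witness (c), and the task reduces to verifying that any $\M$-close test sequence can be refined into an $\mF$-close one with only slightly larger parameters. Given a sequence $\tau=\tau_0,\ldots,\tau_m$ with $\M(\tau_{i+1}-\tau_i)\leq \delta'$ and $\spt(\tau_i-\tau)\subset U$, Lemma~\ref{L:isoperimetric-M} furnishes integral $(k+1)$-currents $Q_i$ of mass $\leq C_M\delta'$ filling the successive differences modulo $\partial M$. Slicing each $Q_i$ by a generic Lipschitz function into $N$ pieces $Q_i^{(1)},\ldots,Q_i^{(N)}$ of mass $\leq C_M\delta'/N$ and inserting the intermediate classes $\tau_i+\partial(Q_i^{(1)}+\cdots+Q_i^{(j)})$ produces a refined sequence whose consecutive $\M$-differences are bounded by a universal multiple of $\delta'/N$. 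A small $\M$-difference between canonical representatives gives small $\F$-distance and also forces the varifold distance $\mF(|\cdot|,|\cdot|)$ between underlying varifolds to be small, so choosing $N$ large and $\delta'$ small makes the refined sequence $\mF$-close with the (b)-parameter while preserving the mass bound up to negligible slack. Applying (b) to the refined sequence yields $\M(\tau_m)\geq \M(\tau)-\ep$.

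For (c)$\Longrightarrow$(d), fix $W\subset\subset U$ and set $r_0:=\dist_M(W,M\setminus U)>0$. Given $\ep>0$, use (c) to produce $\tau\in \sA_k(U;\ep,\delta_0;\M)$ with $\mF_U(V,|T|)<\ep$, then choose $\delta>0$ so small that $C_M\delta<\min\{\ep_M,\delta_0/N\}$ for a fixed slicing number $N$ and that any $\F$-isoperimetric filling at scale $\delta$ can be arranged to lie in the $r_0/2$-neighborhood of $W$. Given an $\F$-close test sequence $\tau=\tau_0,\ldots,\tau_m$ with $\spt(\tau_i-\tau)\subset W$, $\F(\tau_{i+1}-\tau_i)\leq \delta$, and the mass bound, Lemma~\ref{L:isoperimetric-F} yields integral $(k+1)$-currents $Q_i$ with $\M(Q_i)\leq C_M\delta$ and $\spt(T_{i+1}-T_i-\partial Q_i)\subset \partial M$. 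A localization argument---truncating $Q_i$ at a generic level set of $\dist_M(\cdot,W)$ and re-closing via a small isoperimetric cap on $\partial M$ from \cite[Corollary 1.14]{Almgren62}---uses the slack $r_0$ to ensure $\spt(Q_i)\subset U$. Slicing each $Q_i$ into $N$ pieces and interpolating as in the previous step produces an $\M$-close refined sequence supported in $U$ satisfying the hypotheses of (c), so the conclusion of (c) gives $\M(\tau_m)\geq\M(\tau)-\ep$, showing $\tau\in \sA_k(W;\ep,\delta;\F)$. Varying $\ep\to 0$ yields the sequence witnessing that $V$ is almost-minimizing in $W$.

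The principal obstacle is (c)$\Longrightarrow$(d): Lemma~\ref{L:isoperimetric-F} only locates the isoperimetric choice somewhere in $M$, whereas (c) demands deformations supported in $U$. Carrying out the support-localization in the presence of $\partial M$---and simultaneously controlling the total mass growth of the refinement so the $\delta_0$-tolerance of (c) remains in force---requires a careful parameter choice and a localized form of the $\F$-isoperimetric lemma, for which the slack $r_0$ between $W$ and $U$ is essential. The slicing construction underlying (b)$\Longrightarrow$(c) is comparatively routine once the $\M$-isoperimetric lemma is available, since the mass control on the pieces immediately controls both $\F$-distance and varifold $\mF$-distance of consecutive interpolants.
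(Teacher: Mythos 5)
The implications $(a)\Rightarrow(b)\Rightarrow(c)$ are indeed near-trivial, but your treatment of $(b)\Rightarrow(c)$ is over-engineered: the paper's proof is simply the inclusion chain
\[ \sA_k(U;\ep,\de;\F)\subset \sA_k(U;\ep,\de;\mF)\subset \sA_k(U;\ep,\de/2;\M), \]
where the second inclusion follows because any $\M$-$\de/2$ test sequence is already an $\mF$-$\de$ test sequence, by the elementary bound $\mF(|T|,|S|)\leq \M(T-S)$ for the canonical representatives. Your refinement-by-slicing step is unnecessary, and the claim that the refined consecutive $\M$-differences are "$\leq C\de'/N$" is false in general (the pieces of $\partial Q_i$ and the generic slices are only $O(\de')$, not $O(\de'/N)$), though this error is harmless since the direct inclusion makes the refinement moot.

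The implication $(c)\Rightarrow(d)$ is the genuine content of the theorem, and your approach has a real gap. Slicing $Q_i$ (which has small mass $\leq C_M\de$) into $N$ pieces controls the $\M$-\emph{differences} of the interpolants, but \emph{not their masses}, which must remain $\leq \M(\tau)+\de_0$ for $(c)$ to apply. Writing an interpolant as
\[ T_i+\partial\big(Q_i\lc\{g<t\}\big)=T_i\lc\{g\geq t\}+T_{i+1}\lc\{g<t\}-R_i\lc\{g<t\}+\langle Q_i,g,t\rangle, \]
its mass can approach $\M(T_i)+\M(T_{i+1})\approx 2L$ if $T_i$ and $T_{i+1}$ are only $\F$-close while concentrating mass on disjoint regions (precisely the situation when their varifold limit has a point mass). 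This is \emph{the} obstruction, and it is why the paper's Pre-interpolation Lemma (Lemma \ref{L:pre-interpolation-lemma}) proceeds by contradiction: one fixes $\si_0$, extracts a sequence $\si_j\to\si_0$ in $\F$ with $|S_j|\to V$ as varifolds, and then splits into two cases governed by whether $V$ has a point mass larger than $\alpha=\de/5$. In Case 1 the interpolation peels off finitely many small geodesic balls one at a time, and the mass bound on the intermediate $R^j_i$ comes from the varifold convergence $(\|S_0\|-\|S_j\|)(\Clos(\tBcal_{r^j_s}(p_s)))\to 0$, not from any bound on $\M(Q_j)$ alone. In Case 2 (a point mass at $q\in W$) one must first \emph{deflate} the concentrated mass by a definite amount via a cone construction inside a Fermi half-ball (Lemma \ref{L:replace-cone}), and only then reduce to Case 1. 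Your slicing of a single isoperimetric filling has no mechanism to detect or defuse such concentration, and the mass-bookkeeping and localization issues you describe as routine are exactly where all the content of the proof lives. (Your truncate-and-recap idea for relocating $\spt Q_i$ into $U$ is also only a sketch — one must check the truncation slice is a relative cycle fillable on $\partial M$ — but this is secondary; the paper avoids it entirely since its balls are chosen inside $U$ from the start.)
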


\begin{proof}
The proof of Theorem \ref{T:def-equiv} is rather technical and lengthy so it is postponed to the appendix.
\end{proof}

\begin{proposition}
\label{P:stability-am}
If $V \in \V_k(M)$ is almost minimizing in a relatively open set $U \subset M$ with free boundary, then $V$ is stationary in $U$ with free boundary.
\end{proposition}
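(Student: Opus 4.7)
The plan is to argue by contradiction: assume $V$ is not stationary in $U$ with free boundary, and construct a discrete deformation of $\tau_i$ violating the almost minimizing condition $\tau_i\in\sA_k(U;\ep_i,\de_i;\F)$.

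First I would pick $X\in\mathfrak{X}_{tan}(M)$ compactly supported in $U$ with $\delta V(X)=:-c_0<0$; replacing $X$ by $-X$ if necessary, this is possible whenever $V$ is not stationary, since $-X$ is also tangent to $\partial M$. Tangency of $X$ to $\partial M$ ensures that its flow $\{\phi_s\}$ preserves both $M$ and $\partial M$, so $(\phi_s)_\sharp$ maps $\Z_k(M,\partial M)$ to itself. Setting $f(s):=\M((\phi_s)_\sharp V)$, the first variation formula \eqref{E:1st-variation} gives $f'(0)=-c_0$, and a uniform second-derivative bound (controlled by $\|X\|_{C^2}$ and $\M(V)$) yields $f(s)\le f(0)-(c_0/2)s$ on some fixed interval $[0,s_*]$. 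By Lemma \ref{L:F-metric}, the given approximation $\mF(|T_i|,V)\le\ep_i$ forces $|T_i|\weakto V$ as varifolds and $\M(T_i)\to\M(V)$, whence $f_i(s):=\M((\phi_s)_\sharp T_i)$ satisfies $f_i'(0)=\delta|T_i|(X)\to-c_0$ and the same quadratic expansion gives
\[ f_i(s)\le f_i(0)-(c_0/4)\,s\qquad\text{for all }s\in[0,s_*]\text{ and all large }i,\]
with constants uniform in $i$.

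Next, for each such $i$ pick $s_i:=\min\{s_*,\,5\ep_i/c_0\}$ and a positive integer $m_i$ to be chosen, and set
\[ \tau^i_j:=\bigl[(\phi_{j s_i/m_i})_\sharp T_i\bigr]\in\Z_k(M,\partial M),\qquad j=0,1,\ldots,m_i.\]
The three requirements of Definition \ref{D:am-varifolds} become: \emph{(a)} $\spt(\tau^i_j-\tau_i)\subset\spt X\subset U$, since $\phi_s$ is the identity outside $\spt X$; \emph{(b)} by Lemma \ref{L:can-rep} and the estimate above, $\M(\tau^i_j)\le\M((\phi_{j s_i/m_i})_\sharp T_i)=f_i(j s_i/m_i)\le f_i(0)=\M(\tau_i)\le\M(\tau_i)+\de_i$; \emph{(c)} the homotopy formula applied to the isotopy $\phi_r$ between consecutive steps gives
\[ (\phi_{(j+1)s_i/m_i})_\sharp T_i-(\phi_{js_i/m_i})_\sharp T_i=\partial Q_j+R_j,\]
with $\M(Q_j)\le C(s_i/m_i)\M(T_i)$ and $R_j=\int_{j s_i/m_i}^{(j+1)s_i/m_i}(\phi_r)_\sharp(X\wedge\partial T_i)\,dr$ supported on $\partial M$ (because $X$ is tangential and $\spt\partial T_i\subset\partial M$). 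Thus $R_j\in Z_k(\partial M,\partial M)$ vanishes in $\Z_k(M,\partial M)$, giving $\F(\tau^i_{j+1}-\tau^i_j)\le\M(Q_j)\le\de_i$ once $m_i$ is taken large enough. Therefore $\tau^i_0=\tau_i,\tau^i_1,\ldots,\tau^i_{m_i}$ is an admissible test sequence for $\sA_k(U;\ep_i,\de_i;\F)$, yet
\[ \M(\tau^i_{m_i})\le f_i(s_i)\le f_i(0)-(c_0/4)s_i\le\M(\tau_i)-(5/4)\ep_i<\M(\tau_i)-\ep_i,\]
the required contradiction.

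The main obstacle is item \emph{(c)}: the pushforward $(\phi_s)_\sharp T_i$ is not in general a canonical representative, and the homotopy error $R_j$ is precisely what forces one to work with equivalence classes rather than with currents. The tangency hypothesis $X\in\mathfrak{X}_{tan}(M)$ is essential here, because it keeps $\spt(\phi_r)_\sharp\partial T_i\subset\partial M$ throughout the flow and hence places $R_j$ inside $Z_k(\partial M,\partial M)$, which is exactly the subgroup quotiented out in forming $\Z_k(M,\partial M)$. The remaining scale-matching is routine: $\ep_i$ sets the required mass drop, $\de_i$ controls both the intermediate mass budget and the flat step size, and the parameters $s_*$, $s_i$, $m_i$ can be tuned independently.
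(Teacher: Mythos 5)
Your proof reconstructs, in the free boundary setting, the standard Pitts argument (Theorem 3.3 of \cite{Pitts}) that the paper cites without detail, and this is exactly the intended approach: pick a tangential vector field $X$ with $\delta V(X)<0$, flow, discretize, and contradict $\tau_i\in\sA_k(U;\ep_i,\de_i;\F)$. The observation you single out as the ``main obstacle''---that tangency of $X$ to $\partial M$ forces the homotopy error term $R_j$ onto $\partial M$, so it disappears in the quotient $\Z_k(M,\partial M)$---is indeed the genuinely new ingredient in the free boundary case and is the right way to think about it. A small irrelevance: you do not need Lemma \ref{L:F-metric} to conclude $|T_i|\weakto V$; the definition of almost minimizing already gives $\mF(|T_i|,V)\le\ep_i$, where this $\mF$ is the varifold $\mF$-distance on $\V_k(M)$, not the cycle-space metric of Definition \ref{D:F-metric}.

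There is, however, a genuine gap in step \emph{(c)}. You apply the homotopy formula to $T_i$ and claim the resulting $R_j=h_\sharp([j s_i/m_i,(j+1)s_i/m_i]\times\partial T_i)$ lies in $Z_k(\partial M,\partial M)$. But $Z_k(B,B)$ consists of \emph{integer rectifiable} currents supported on $\partial M$, and the canonical representative $T_i\in\tau_i$ used in Definition \ref{D:am-varifolds} is only assumed integer rectifiable, not integral: as the paper stresses right after (\ref{E:relative-cycles}), $\partial T$ need not be integer rectifiable nor of finite mass. Without finite boundary mass, $T_i$ is not a (locally) normal current, so the homotopy formula is not directly applicable, and even formally $R_j$ has no mass bound and need not belong to $Z_k(B,B)$; the identity $[\partial Q_j+R_j]=[\partial Q_j]$ is then unjustified, and your estimate $\F(\tau^i_{j+1}-\tau^i_j)\le\M(Q_j)$ does not follow.

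The fix is available in the paper's toolkit. Use Lemma \ref{L:integral} to choose an \emph{integral} $T_i'\in\tau_i$ with $\M(T_i')\le\M(\tau_i)+1$ for $i$ large; apply the homotopy formula to $T_i'$ instead. Then $\partial T_i'$ is integer rectifiable of finite mass, so $R_j\in Z_k(B,B)$ and $\F(\tau^i_{j+1}-\tau^i_j)\le\M(Q_j)\le C(s_i/m_i)\M(T_i')$, which is $\le\de_i$ for $m_i$ large. Since $T_i'-T_i\in Z_k(B,B)$ and $\phi_s(\partial M)=\partial M$, one has $(\phi_s)_\sharp T_i'-(\phi_s)_\sharp T_i\in Z_k(B,B)$ as well, so $\tau^i_j:=[(\phi_s)_\sharp T_i']=[(\phi_s)_\sharp T_i]$ and all your mass estimates, computed with the canonical representative $T_i$, still apply to $\tau^i_j$. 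Equivalently, one can avoid the homotopy formula altogether by invoking the $\mF$-continuity of the pushforward map $H^X$ noted in the remark after Definition \ref{D:pushforward}, together with uniform continuity on $[0,s_i]$.
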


\begin{proof}
The proof is exactly the same as \cite[Theorem 3.3]{Pitts}.
\end{proof}


\section{The min-max construction}
\label{S:min-max}

In this section, we describe the min-max construction following most of the ideas in \cite{Pitts}. An important technical step is a bridge between a continuous and discrete family of (equivalence classes of) relative cycles. This is done in section \ref{SS:discret} by a discretization theorem (Theorem \ref{T:discretization}) and an interpolation theorem (Theorem \ref{T:interpolation}). Then we present the tightening process to any sweepout to ensure that almost maximal implies almost stationary. 
Finally, we establish through the combinatorial argument of Almgren and Pitts the existence of almost minimizing varifolds with free boundary (Theorem \ref{T:combinatorial}). We will focus on the case of codimension one relative cycles, i.e. $k=n$ in this section, although many parts still hold true for $2\leq k\leq n$.

\subsection{Homotopy relations}
\label{SS:homotopy}

We describe the homotopy relations introduced in \cite[\S 4.1]{Pitts}. 

\begin{definition}[Cell complex]
\label{D:cell}
We will adopt the following notations:
\begin{enumerate}
\item $I^m=[0, 1]^m$, $I^m_0=\partial I^m=I^m \setminus (0, 1)^m$.
\item For $j\in\N$, $I(1, j)$ is the cell complex of $I$, whose $1$-cells are all closed intervals of the form $[\frac{i}{3^{j}}, \frac{i+1}{3^{j}}]$, and $0$-cells are all points $[\frac{i}{3^{j}}]$. Similarly, 
\[ I(m, j)=I(1,j)^{\otimes m}=I(1, j)\otimes \cdots \otimes I(1, j)\]
defines a cell complex on $I^m$.
\item For $p\in\N$, $p\leq m$, $\al=\al_1\otimes\cdots\otimes\al_m$ is a \emph{$p$-cell} if for each $i$, $\al_i$ is a cell of $I(1, j)$, and $\sum_{i=1}^m \dim(\al_i)=p$. A \emph{$0$-cell} is called a \emph{vertex}.
\item $I(m, j)_p$ denotes the set of all $p$-cells in $I(m, j)$, and $I_0(m, j)_p$ denotes the set of $p$-cells of $I(m, j)$ supported on $I^m_0$.
\item Given a $p$-cell $\al\in I(m, j)_p$, and $k\in\N$, $\al(k)$ denotes the $p$-dimensional sub-complex of $I(m, j+k)$ formed by all cells contained in $\al$. For $q\in\N$, $q\leq p$, $\al(k)_q$ and $\al_0(k)_q$ denote respectively the set of all $q$-cells of $I(m, j+k)$ contained in $\al$, or in the boundary of $\al$.
\item The boundary homeomorphism $\partial: I(m, j)\rightarrow I(m, j)$ is given by
$$\partial(\al_1\otimes\cdots\otimes\al_m):=\sum_{i=1}^m(-1)^{\si(i)}\al_1\otimes\cdots\otimes \partial \al_i\otimes\cdots\otimes\al_m,$$
where $\si(i)=\sum_{l<i}\dim(\al_l)$, $\partial[a, b]=[b]-[a]$ if $[a, b]\in I(1, j)_1$, and $\partial[a]=0$ if $[a]\in I(1, j)_0$.
\item The distance function $\md: I(m, j)_0\times I(m, j)_0\rightarrow\N$ on vertices is defined as
$$\md(x, y):=3^{j}\sum_{i=1}^m|x_i-y_i|.$$
\item The map $\n(i, j): I(m, i)_{0}\rightarrow I(m, j)_{0}$ is defined as follows: $\n(i, j)(x)\in I(m, j)_{0}$ is the unique element of $I(m, j)_0$ such that 
$$\md\big(x, \n(i, j)(x)\big)=\inf\big\{\md(x, y): y\in I(m, j)_{0}\big\}.$$
\end{enumerate}
\end{definition}

\begin{definition}[Fineness]
\label{D:fineness}
For any $\phi: I(m, j)_{0}\rightarrow\Z_{n}(M,\partial M)$, we define the \emph{$\M$-fineness of $\phi$} to be
\begin{equation*}
\mf_\M(\phi):=\sup \left\{\frac{\M\big(\phi(x)-\phi(y)\big)}{\md(x, y)}:\ x, y\in I(m, j)_{0}, x\neq y \right\}.
\end{equation*}
\end{definition}

Note that $\mf_\M(\phi) \leq \de$ if and only if $\M(\phi(x)-\phi(y)) \leq \de$ for any $x,y \in I(m,j)_0$ with $\md(x,y)=1$. From now on, $\phi: I(m, j)_{0}\rightarrow\big(\Z_{n}(M, \partial M), \{0\}\big)$ denotes a mapping such that $\phi\big(I(m, j)_{0}\big)\subset\Z_{n}(M, \partial M)$ and $\phi|_{I_{0}(m, j)_{0}}=0$. Moreover, we use $\dom(\phi)$ to denote the domain of definition of $\phi$.

\begin{definition}[Homotopy for mappings]
\label{D:homotopy-maps}
Given $\phi_{i}: I(m, j_{i})_{0}\rightarrow\big(\Z_{n}(M, \partial M), \{0\}\big)$, $i=1, 2$, and $\de>0$, we say \emph{$\phi_{1}$ is $m$-homotopic to $\phi_{2}$ in $\big(\Z_{k}(M, \partial M), \{0\}\big)$ with $\M$-fineness $\de$}, if there exists $ j_{3}\in\N$ where $j_{3}\geq\max\{j_{1}, j_{2}\}$, and
$$\psi: I(1, j_{3})_{0}\times I(m, j_{3})_{0}\rightarrow \Z_{n}(M, \partial M),$$
such that
\begin{itemize}
\item $\mf_\M(\psi)\leq \de$;
\item $\psi([i-1], x)=\phi_{i}\big(\n(j_{3}, j_{i})(x)\big)$, $i=1, 2$;
\item $\psi\big(I(1, j_{3})_{0}\times I_{0}(m, j_{3})_{0}\big)=0$.
\end{itemize}
\end{definition}

\begin{definition}
\label{D:(m, M)-homotopy-seq}
A sequence of mappings 
$$\phi_{i}: I(m, j_{i})_{0}\rightarrow\big(\Z_{n}(M, \partial M), \{0\}\big)$$
is said be an \emph{$(m, \M)$-homotopy sequence of mappings into $\big(\Z_{n}(M, \partial M), \{0\}\big)$} if each $\phi_{i}$ is $m$-homotopic to $\phi_{i+1}$ in $\big(\Z_{n}(M, \partial M), \{0\}\big)$ with $\M$-fineness $\de_{i}>0$ such that $\de_i \to 0$ and
\[ \sup_{i}\big\{\M(\phi_{i}(x)):\ x\in \dom(\phi_i) \big\}<+\infty. \]
\end{definition}

\begin{definition}[Homotopy for sequences of mappings]
\label{D:homotopy-seq}
Let $S_{1}=\{\phi^{1}_{i}\}_{i\in\N}$ and $S_{2}=\{\phi^{2}_{i}\}_{i\in\N}$ be two $(m, \M)$-homotopy sequences of mappings into $\big(\Z_{n}(M, \partial M), \{0\}\big)$. We say that \emph{$S_{1}$ is homotopic to  $S_{2}$} if there exists a sequence $\de_{i} \to 0$ such that each $\phi^{1}_{i}$ is $m$-homotopic to $\phi^{2}_{i}$ in $\big(\Z_{n}(M, \partial M), \{0\}\big)$ with $\M$-fineness $\de_{i}$.
\end{definition}

It follows from \cite[\S 4.1.2]{Pitts} that the homotopy relation above is an equivalence relation on the space of $(m, \M)$-homotopy sequences of mapping into $\big(\Z_{n}(M, \partial M), \{0\}\big)$.

\begin{definition} 
\label{D:m-M-homotopy-class}
Denote $\pi^{\sharp}_{m}\big(\Z_n(M, \partial M, \M), \{0\}\big)$ to be the set of all equivalence classes of $(m, \M)$-homotopy sequences of mappings into $\big(\Z_n(M, \partial M), \{0\}\big)$. An equivalence class $\Pi \in \pi^{\sharp}_{m}\big(\Z_n(M, \partial M, \M), \{0\}\big)$ is called an \emph{$(m,\M)$-homotopy class of mappings into $\big(\Z_n(M, \partial M), \{0\}\big)$}.
\end{definition}

We can define similarly all the notions with $\M$ replaced by $\F$ in Definitions \ref{D:fineness} - \ref{D:m-M-homotopy-class}. Hence, we can define an $(m, \F)$-homotopy sequence of mappings into $\big(\Z_n(M, \partial M), \{0\}\big)$. In this case, we denote the set of all equivalence classes by $\pi^{\sharp}_m\big(\Z_n(M, \partial M, \F), \{0\}\big)$.

\begin{definition}[Width and critical sequence]
Let $\Pi\in\pi^{\sharp}_{m}\big(\Z_n(M, \partial M, \M), \{0\}\big)$. For any $S=\{\phi_i\}_{i \in \N} \in \Pi$, we define 
\[ \bL(S):=\limsup_{i\rightarrow\infty} \max_{x \in \dom(\phi_i)} \M(\phi_{i}(x)).\]
The \emph{width of $\Pi$} is defined as
\begin{equation}
\label{E:width}
\bL(\Pi):=\inf\{\bL(S):\ S\in\Pi\}.
\end{equation}
An $(m,\M)$-homotopy sequence $S\in\Pi$ is called a \emph{critical sequence for $\Pi$} provided that $\bL(S)=\bL(\Pi)$.
\end{definition}

\begin{lemma}
Every $\Pi\in\pi^{\sharp}_{m}\big(\Z_n(M, \partial M, \M), \{0\}\big)$ contains a critical sequence.
\end{lemma}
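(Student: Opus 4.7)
The plan is a standard diagonal argument in the Almgren--Pitts framework, adapted to our setting of equivalence classes of relative cycles.

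First, by the definition of $\bL(\Pi)$ as an infimum, I would select a minimizing sequence $S^k = \{\phi_i^k\}_{i \in \mathbb{N}} \in \Pi$ of $(m,\M)$-homotopy sequences with $\bL(S^k) \to \bL(\Pi)$ as $k \to \infty$. For each fixed $k$, since $S^k$ is an $(m,\M)$-homotopy sequence, consecutive maps $\phi_i^k$ and $\phi_{i+1}^k$ are $m$-homotopic with $\M$-fineness $\de_i^k \to 0$ as $i \to \infty$; moreover $\limsup_i \max_x \M(\phi_i^k(x)) = \bL(S^k)$. Since $S^k$ and $S^{k+1}$ both lie in $\Pi$, by Definition \ref{D:homotopy-seq} there exists a sequence $\eta_i^{k,k+1} \to 0$ as $i \to \infty$ such that $\phi_i^k$ is $m$-homotopic to $\phi_i^{k+1}$ with $\M$-fineness $\eta_i^{k,k+1}$.

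Next, I would carry out the diagonal selection. Inductively choose strictly increasing indices $i_k \in \mathbb{N}$ such that for every $k$:
\begin{itemize}
\item $\max_{x} \M(\phi_i^k(x)) \leq \bL(S^k) + 2^{-k}$ for all $i \geq i_k$ (possible since $\bL(S^k)$ is a $\limsup$);
\item $\de_i^k \leq 2^{-k}$ for all $i \geq i_k$;
\item $\eta_{i_k}^{k,k+1} \leq 2^{-k}$, and more generally the finenesses witnessing homotopies between $S^1$ and $S^k$ at index $i_k$ are all $\leq 2^{-k}$.
\end{itemize}
Define the diagonal sequence $\psi_k := \phi_{i_k}^k$. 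To show $\{\psi_k\}$ is itself an $(m,\M)$-homotopy sequence, I would construct a homotopy from $\psi_k$ to $\psi_{k+1}$ by concatenating (a) the homotopy from $\phi_{i_k}^k$ to $\phi_{i_k}^{k+1}$ of fineness $\eta_{i_k}^{k,k+1}$, with (b) the string of consecutive homotopies inside $S^{k+1}$ from $\phi_{i_k}^{k+1}$ to $\phi_{i_{k+1}}^{k+1}$, each of fineness $\leq 2^{-(k+1)}$. Since concatenation of homotopies (after common refinement of their time grids, which is harmless as $I(1,j)$ refines $I(1,j')$ for $j \geq j'$) produces a homotopy whose fineness is the maximum of the pieces, the resulting $\M$-fineness is $\leq 2^{-k}$, tending to zero.

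Finally I would verify the two remaining conditions. For membership in $\Pi$, to homotope $\psi_k$ to $\phi_k^1$ I would chain the homotopies $\phi_{i_k}^k \to \phi_{i_k}^{k-1} \to \cdots \to \phi_{i_k}^1$ (each of fineness $\leq 2^{-j}$ for $j \leq k-1$, by the selection of $i_k$) and then run through $S^1$ from $\phi_{i_k}^1$ to $\phi_k^1$ using consecutive homotopies of fineness $\leq 2^{-k}$ (requiring that $i_k$ was also chosen large enough relative to $S^1$); the total fineness tends to zero as $k \to \infty$, so $\{\psi_k\}$ is homotopic to $S^1$ and hence represents $\Pi$. For criticality, the first bullet gives
\[
\bL(\{\psi_k\}) = \limsup_{k \to \infty} \max_{x} \M(\psi_k(x)) \leq \limsup_{k \to \infty} \bigl( \bL(S^k) + 2^{-k} \bigr) = \bL(\Pi),
\]
while $\bL(\{\psi_k\}) \geq \bL(\Pi)$ by the definition of the width as an infimum. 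Hence $\bL(\{\psi_k\}) = \bL(\Pi)$ and $\{\psi_k\}$ is a critical sequence for $\Pi$.

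The only subtle point is the bookkeeping in the inductive choice of $i_k$: one must simultaneously control the fineness of consecutive homotopies in $S^k$, the cross-homotopies $S^k \leftrightarrow S^{k+1}$ at index $i_k$, and the mass bound at index $i_k$. Since each of these conditions is satisfied for all sufficiently large $i$ (for fixed $k$), a single $i_k$ achieving all of them exists, and the argument goes through without further difficulty.
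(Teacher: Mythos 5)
Your proof is correct and is essentially the standard diagonal argument underlying Pitts' Lemma 4.1(4), which is exactly what the paper cites for this statement; the concatenation/refinement bookkeeping you flag is the content of Pitts' observation that $m$-homotopy is an equivalence relation.
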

\begin{proof}
See \cite[4.1(4)]{Pitts}.
\end{proof}

\begin{definition}[Critical set]
Let $\Pi\in\pi^{\sharp}_{m}\big(\Z_n(M, \partial M, \M), \{0\}\big)$. For each $S \in \Pi$, we define $K(S) \subset \V_n(M)$ to be the set of all $n$-varifolds $V \in \V_n(M)$ such that
\[ V=\lim_{j \to \infty} |T_j| \qquad \text{(as varifolds)} \]
for some subsequence $\{\phi_{i_j}\} \subset \{\phi_i\}_{i \in \N}=S$, $x_j \in \dom (\phi_{i_j})$, and $T_j \in \phi_{i_j}(x_j)$ is the canonical representative.
The \emph{critical set of $S$} is then defined as
\[ C(S):=\{V \in K(S) \; : \; \|V\|(M)=\bL(S)\}.\]
\end{definition}

\begin{lemma}
\label{L:critical-set}
$C(S)$ is compact and non-empty.
\end{lemma}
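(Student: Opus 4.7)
The plan is to establish non-emptiness via a direct compactness argument on canonical representatives, and then deduce compactness by showing $C(S)$ is a closed subset of a mass-bounded (hence weakly compact) subset of $\V_n(M)$, using a diagonal argument to verify closedness inside $K(S)$.

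For non-emptiness, I would unpack the definition $\bL(S) = \limsup_{i\to\infty} \max_{x\in\dom(\phi_i)} \M(\phi_i(x))$ to extract a subsequence $\{\phi_{i_j}\}$ and vertices $x_j \in \dom(\phi_{i_j})$ such that $\M(\phi_{i_j}(x_j)) \to \bL(S)$. Let $T_j \in \phi_{i_j}(x_j)$ denote the canonical representatives; by Lemma \ref{L:can-rep} we have $\M(T_j) = \M(\phi_{i_j}(x_j))$, so the varifolds $|T_j|$ have uniformly bounded mass with support in the compact set $M \subset \mathbb{R}^L$. Weak compactness of Radon measures on $\Gr_n(M)$ yields a further subsequence converging to some $V \in \V_n(M)$. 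Because $M$ is compact, the constant function $1$ is a valid test function on $\Gr_n(M)$, so $\|V\|(M) = \lim_j \M(T_j) = \bL(S)$. By construction $V \in K(S)$, so $V \in C(S)$.

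For compactness, I would first observe that $C(S) \subset \V_n(M)$ is contained in $\{V : \|V\|(M) = \bL(S)\}$, which is weakly compact (and metrizable by the $\mF$-metric when restricted to mass-bounded sets). It therefore suffices to show $C(S)$ is closed. Suppose $V_k \in C(S)$ and $V_k \to V_\infty$ in $\V_n(M)$. For each $k$, by the definition of $K(S)$ there exists a subsequence $\{\phi_{i_j^k}\}_j \subset S$, vertices $x_j^k \in \dom(\phi_{i_j^k})$, and canonical representatives $T_j^k \in \phi_{i_j^k}(x_j^k)$ with $|T_j^k| \to V_k$ as $j \to \infty$. I would then run a diagonal selection: inductively choose $j(k)$ so that simultaneously $\mF(|T_{j(k)}^k|, V_k) < 1/k$ and $i_{j(k)}^k > i_{j(k-1)}^{k-1}$ (the latter being possible since $i_j^k \to \infty$ in $j$). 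This produces a new subsequence of the original $\{\phi_i\}$, together with vertices and canonical representatives, whose varifolds converge to $V_\infty$. Hence $V_\infty \in K(S)$, and continuity of mass under varifold convergence on the compact $M$ gives $\|V_\infty\|(M) = \lim_k \|V_k\|(M) = \bL(S)$, so $V_\infty \in C(S)$.

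The only minor subtlety I anticipate is bookkeeping in the diagonal argument to guarantee that the resulting indices $\{i_{j(k)}^k\}_k$ are strictly increasing so that the new sequence is genuinely a subsequence of the defining $\{\phi_i\}_{i\in\mathbb{N}}$; this is why the auxiliary condition $i_{j(k)}^k > i_{j(k-1)}^{k-1}$ is imposed above. Everything else is essentially a direct application of Lemma \ref{L:can-rep} together with the weak compactness of mass-bounded varifolds in the compact ambient space $M$, so no serious obstacle should arise.
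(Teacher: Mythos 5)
Your argument is correct. The paper itself gives no proof of this lemma, simply citing \cite[4.2]{Pitts}, and your write-up is precisely the expected adaptation of Pitts' argument to the relative-cycle setting: extraction of a mass-maximizing subsequence plus weak compactness of mass-bounded varifolds on a compact ambient space for non-emptiness, and a diagonal argument to show $C(S)$ is closed in the (compact, metrizable) set of varifolds with mass equal to $\bL(S)$. The only point genuinely specific to this paper's framework is the identification $\M(T_j)=\M(\phi_{i_j}(x_j))$ via the canonical representative, and you correctly invoke Lemma \ref{L:can-rep} for it; your care with strict monotonicity of the diagonal indices is also exactly the bookkeeping needed for the resulting sequence to be a legitimate subsequence of $\{\phi_i\}$.
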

\begin{proof}
See \cite[4.2]{Pitts}.
\end{proof}

\subsection{Discretization and interpolation}
\label{SS:discret}

We first prove a discretization theorem which says that a map $I^m \to \Z_{n}(M, \partial M)$ which is continuous under the $\F$-topology can be discretized into an $(m, \M)$-homotopy sequence of mappings without too much increase on the mass. The discretized mapping will be close to the original map under the $\F$-topology. Marques and Neves proved the first discretization theorem when $\partial M=\emptyset$ in \cite[Theorem 13.1]{Marques-Neves14}\cite[Theorem 3.9]{Marques-Neves17} under a technical condition called the no mass concentration assumption, i.e. condition (b) in Theorem \ref{T:discretization}. A general version of the discretization theorem without the no mass concentration assumption was proved by the second author in \cite[Theorem 5.1]{Zhou15}. In the current context, we will present a result under the no mass concentration assumption generalizing Marque-Neves' version to manifolds with boundary.

We first make a definition (c.f. \cite[\S 4.2]{Marques-Neves14} for the case of closed manifolds).

\begin{definition}
Given a map
$$\Phi: I^m\rightarrow \Z_{n}(M, \partial M),$$
continuous in the $\F$-topology, one can define a quantity
\begin{equation}
\m(\Phi, r)=\sup\big\{\| |T_x| \| (\tBcal_r(p)):\ p\in M, x\in I^m\big\},
\end{equation}
where $T_x$ is the canonical representative of $\Phi(x) \in \Z_n(M, \partial M)$, and $\tBcal_r(p)$ is the open geodesic ball in $\tM$ with radius $r>0$ centered at $p$.
\end{definition}

\begin{theorem}[Discretization Theorem]
\label{T:discretization}
Given a map 
\[ \Phi: I^m\rightarrow \Z_{n}(M, \partial M)\]
which is continuous in the $\F$-topology satisfying the following
\begin{itemize}
\item[$(a)$] $\sup_{x\in I^m}\M (\Phi(x))<\infty$,
\item[$(b)$] $\lim_{r\rightarrow 0}\m(\Phi, r)=0$,
\item[$(c)$] $\Phi|_{I^m_0}$ is continuous in the $\mF$-metric defined in Definition \ref{D:F-metric},
\end{itemize}
there exists a sequence of mappings
$$\phi_i: I(m, j_i)_0 \rightarrow \Z_n(M, \partial M),$$
with $j_i<j_{i+1}$ and a sequence of positive numbers $\de_i \to 0$  
such that 
\begin{itemize}
\item[(i)] $S=\{\phi_i\}_{i\in\N}$ is an $(m, \M)$-homotopy sequence into $\Z_{n}(M, \partial M)$ with $\M$-fineness $\mf_\M (\phi_i)<\de_i$;
\item[(ii)] There exists some sequence $k_i \to +\infty$ such that for all $x\in I(m, j_i)_0$,
\[ \M (\phi_i(x)) \leq \sup \{ \M (\Phi(y)) : \al\in I(m, k_i)_m, x, y \in\al\}+\de_i.\]
In particular, we have $\bL(S)\leq\sup_{x\in I^m}\M(\Phi(x))$;
\item[(iii)] $\sup\{\F(\phi_i(x)-\Phi(x)):\ x\in I(m, j_i)_{0}\}<\de_i$;
\item[(iv)] $\M(\phi_i(x))\leq \M(\Phi(x))+\de_i$ for all $x\in I_0(m, j_i)_0$.
\end{itemize}
\end{theorem}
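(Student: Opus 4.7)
The plan is to adapt the Marques--Neves discretization strategy from \cite{Marques-Neves14} to the setting of relative cycles in a manifold with boundary, using the two isoperimetric lemmas developed in Section \ref{SS:isoperimetic} (Lemmas \ref{L:isoperimetric-F} and \ref{L:isoperimetric-M}) and Fermi coordinates as the main technical tools.

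First I would establish uniform $\F$-continuity of $\Phi$ on $I^m$. Since $I^m$ is compact and $\Phi$ is continuous in the $\F$-topology, for every $\eta > 0$ there exists $j_0$ such that for all $j \geq j_0$ and all adjacent vertices $x, y \in I(m, j)_0$ (i.e. $\md(x, y) = 1$) we have $\F(\Phi(x) - \Phi(y)) < \eta$. The no mass concentration hypothesis (b) gives, for every $\varepsilon > 0$, some $r_0 = r_0(\varepsilon) > 0$ such that every canonical representative $T_x$ of $\Phi(x)$ satisfies $\|T_x\|(\tBcal_r(p)) < \varepsilon$ for all $p \in M$, all $r < r_0$, and all $x \in I^m$. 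Both parameters $\eta, r_0$ will be tuned to $0$ along the sequence $i \to \infty$.

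Next I would construct $\phi_i : I(m, j_i)_0 \to \Z_n(M, \partial M)$ by local polyhedral modification. For each adjacent pair $x, y$ we have $\F(\Phi(x) - \Phi(y)) < \eta$, and by the $\F$-isoperimetric lemma (Lemma \ref{L:isoperimetric-F}) there is an integral current $Q_{xy}$ with $\spt(Q_{xy}) \subset M$, $\M(Q_{xy}) \leq C_M \eta$, and $\spt(T_x - T_y - \partial Q_{xy}) \subset \partial M$. Because $T_x - T_y$ could still have large mass, I would cover $M$ by a finite family of geodesic balls (and Fermi half-balls near $\partial M$) of radius $r < r_0$, then cut each $T_x$ and $Q_{xy}$ along slice spheres of $r$ chosen so that, by \cite[28.5]{Simon} and (b), all slices have mass $\leq C\varepsilon/r$; inside each small ball the current is replaced by a cone over its slice (a geodesic cone for interior balls, a Fermi cone to a point on $\partial M$ for boundary balls). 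The resulting $\phi_i(x)$ agrees with $T_x$ outside the covering balls, lies in $\tau_x = [T_x]$ modulo a piece on $\partial M$, and satisfies $\M(\phi_i(x) - \phi_i(y)) \leq C(\eta + \varepsilon)$ for adjacent vertices. Taking $\eta_i, \varepsilon_i \to 0$ (and $j_i$ correspondingly large) yields $\mf_\M(\phi_i) < \delta_i \to 0$, which is (i).

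The remaining properties follow from tracking the construction. For (ii), each $\phi_i(x)$ is built from a canonical representative $T_y$ with $y$ in a fine cell containing $x$, plus the cone modifications whose total mass is controlled by the no-mass-concentration bound; so $\M(\phi_i(x))$ can be bounded by $\sup \M(\Phi(y))$ over that cell plus $\delta_i$. Property (iii) is immediate because the modifications are of small mass, hence small $\F$. Property (iv) is the most delicate: on the boundary cube $I^m_0$, hypothesis (c) together with Lemma \ref{L:F-metric} lets us convert $\mF$-closeness into simultaneous closeness in $\F$ and $\M$, and the continuity-to-discrete bridge there uses the $\M$-isoperimetric lemma (Lemma \ref{L:isoperimetric-M}) rather than the $\F$-one, giving the sharper comparison $\M(\phi_i(x)) \leq \M(\Phi(x)) + \delta_i$ at boundary vertices. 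The homotopy between $\phi_i$ and $\phi_{i+1}$ needed to organize $S = \{\phi_i\}$ as an $(m, \M)$-homotopy sequence is obtained by applying the same procedure to the $(m+1)$-parameter family $\Phi$ reindexed over $I \times I^m$, with $j_{i+1}$ chosen large enough that the fineness of the composite homotopy is still $\delta_i$.

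The principal obstacle is the simultaneous control of $\M$-fineness and mass-non-increase: making adjacent discrete values $\M$-close forces us to locally replace currents by cones, which adds mass, while staying below $\sup_x \M(\Phi(x)) + \delta_i$ requires these cones to have arbitrarily small mass. This is exactly what the no-mass-concentration hypothesis (b) provides, through the slicing estimate $\M(\langle T_x, \rho_p, r\rangle) \leq C \varepsilon / r$. The boundary creates the additional difficulty that every cone construction must be carried out in Fermi coordinates so that the modified currents remain inside $M$ with their boundaries on $\partial M$; the $\F$-isoperimetric lemma (Lemma \ref{L:isoperimetric-F}) is designed precisely to deliver isoperimetric choices supported in $M$ with boundary defect on $\partial M$, so that the equivalence class in $\Z_n(M, \partial M)$ is preserved.
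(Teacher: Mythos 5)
Your high-level strategy matches the paper's: the authors explicitly frame their proof as an adaptation of \cite[Theorem 13.1]{Marques-Neves14}, with the only non-combinatorial modification being the analogue of \cite[Lemma 13.4]{Marques-Neves14}, where the relative-cycle $\F$-isoperimetric lemma (Lemma \ref{L:isoperimetric-F}) replaces Almgren's isoperimetric choice and the slicing-plus-cut-and-paste machinery from Case~1 of Lemma \ref{L:pre-interpolation-lemma} is imported. You have correctly identified all the right tools (Lemmas \ref{L:isoperimetric-F}, \ref{L:isoperimetric-M}, Fermi coordinates, slicing, no mass concentration), and your observation that hypothesis (c) plus Lemma \ref{L:F-metric} handles property (iv) on $I^m_0$ is sound.

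However, there is a genuine gap in the central construction step, and it is not a cosmetic one. You propose to cover $M$ by small balls, cut $T_x$ along slice spheres of small mass, and then replace the current \emph{inside each ball} by a cone over its slice. If the balls cover $M$ and you replace $T_x$ by cones in all of them, the resulting current has total mass of order (number of balls)$\times\varepsilon$, so it is nothing like $\Phi(x)$ and property (iii) fails badly. If instead you only cone off some balls, then $\phi_i(x)-\phi_i(y)$ still contains $(T_x-T_y)\lc(\text{unmodified region})$, which need not have small mass even though $\F(\tau_x-\tau_y)$ is small, and the claimed bound $\M(\phi_i(x)-\phi_i(y))\leq C(\eta+\varepsilon)$ is unjustified. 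The actual mechanism in the paper (and in Marques--Neves) is not a single cone replacement but a \emph{chain} of intermediate currents, modifying one ball at a time by the cut-and-paste
\[
R^j_i=S_j-\sum_{s=1}^{i}\Big(\partial\big[Q_j\lc\Clos(\tBcal_{r_s^j}(p_s))\big]-(\partial Q_j)\lc\big[\Clos(\tBcal_{r_s^j}(p_s))\cap\partial M\big]\Big),
\]
which is equation~(\ref{currents connecting S_j to S_0}) in Case~1 of Lemma \ref{L:pre-interpolation-lemma}; each single step has $\M$-cost $\leq \|S_j\|(\text{one ball})+\|S_0\|(\text{one ball})+\M(\text{slice of }Q_j)$, which hypothesis (b) makes small. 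The chain, not a single modification, is what produces $\M$-fineness while keeping each intermediate term $\F$-close to $\Phi$ and mass-bounded. The Fermi cone construction you invoke (Lemma \ref{L:replace-cone}) does play a role in the appendix, but only in Case~2 of Lemma \ref{L:pre-interpolation-lemma}, to dissolve a concentrated point mass---a situation excluded here by hypothesis~(b). Finally, note that the discrete map $\phi_i$ is not obtained by modifying $T_x$ independently at each vertex; Marques--Neves's proof builds $\phi_i$ recursively by refining the grid and splicing in these chains, a combinatorial structure your sketch does not recover.
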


\begin{remark}
If we assume that $\Phi|_{I^m_0}=0$, then $S$ can be taken to be an $(m, \M)$-homotopy sequence into $\big(\Z_{n}(M, \partial M), \{0\}\big)$.
\end{remark}

\begin{proof}
The proof is parallel to the one in \cite[Theorem 13.1]{Marques-Neves14}, and we will only point out the necessary modifications. In fact, the only place where Marques-Neves did explicit operations on integral cycles appeared in \cite[Lemma 13.4]{Marques-Neves14}. 
The proof of \cite[Lemma 13.4]{Marques-Neves14} can be straightforwardly adapted to our case using relative cycles and relative flat and mass norms by arguments similar to those in the proof of Case 1 in Lemma \ref{L:pre-interpolation-lemma}. In particular, we can use Lemma \ref{L:isoperimetric-F} in place of \cite[Corollary 1.14]{Almgren62} used on \cite[page 748]{Marques-Neves14}, and adapt the defining equation for $\psi_j$ on \cite[page 749]{Marques-Neves14} according to equation (\ref{currents connecting S_j to S_0}), then all the remaining arguments in the proof of \cite[Lemma 13.4]{Marques-Neves14} can be adapted identically to our case. Besides \cite[Lemma 13.4]{Marques-Neves14}, all other arguments in the proof of \cite[Theorem 13.1]{Marques-Neves14} are purely combinatorial, and can be identically adapted to our case by simply changing integral cycles, the usual flat norm, mass norm and $\mF$-metric to relative cycles, relative flat norm, mass norm and $\mF$-metric.
\end{proof}

The second theorem is an interpolation result which says that a discrete map from $I(m, k)_0$ into $\Z_n(M, \partial M)$ with small enough $\M$-fineness can be approximated by a continuous map in the $\M$-topology. The first interpolation result when $\partial M=\emptyset$ was proved by Marques and Neves in \cite[Theorem 14.1]{Marques-Neves14} \cite[Theorem 3.10]{Marques-Neves17} based on the work of Almgren \cite{Almgren62} and Pitts \cite{Pitts}. Here we present a result generalizing Marques-Neves' theorem to manifolds with boundary.

\begin{theorem}[Interpolation Theorem]
\label{T:interpolation}
There exists $C_0>0$ and $\de_0>0$, depending only on $m$ and the isometric embedding $M \hookrightarrow \R^L$, such that for every map
$$\psi: I(m, k)_0\rightarrow \Z_n(M, \partial M)$$
with $\f_\M(\psi)<\de_0$, there exists a map
$$\Psi: I^m\rightarrow \Z_n(M, \partial M)$$
which is continuous in the $\M$-topology such that
\begin{itemize}
\item[$(\rom{1})$] $\Psi(x)=\psi(x)$ for all $x\in I(m, k)_0$;
\item[$(\rom{2})$] for every $\al\in I(m, k)_p$, $\Psi|_{\al}$ depends only on the restriction of $\psi$ on the vertices of $\alpha$, and
\[ \max\{\M(\Psi(x)-\Psi(y)): x, y\in\al\}\leq C_0\f_\M(\psi).\]
\end{itemize}
\end{theorem}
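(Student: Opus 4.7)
The proof follows the inductive skeletal extension scheme of Almgren-Pitts \cite{Pitts} and Marques-Neves \cite[Theorem 14.1]{Marques-Neves14}, with two key adaptations for the free boundary setting: we replace Almgren's classical isoperimetric inequality by the $\M$-isoperimetric lemma for relative cycles (Lemma \ref{L:isoperimetric-M}), and the cone constructions near $\partial M$ are carried out in Fermi coordinates. The Fermi half-ball is star-shaped about a boundary point and is foliated by half-spheres meeting $\partial M$ orthogonally, so that radial cones built inside it remain supported in $M$ with boundaries on $\partial M$, precisely as emphasized in the introduction.

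The plan is to build $\Psi$ inductively on the $p$-skeleton of $I(m,k)$ for $p=0,1,\dots,m$, preserving $\M$-continuity and the face-wise mass bound
\[ \max\{\M(\Psi(x)-\Psi(y)) : x,y \in \beta\} \leq C_p \, \f_\M(\psi) \]
for every $p$-cell $\beta$. The base step $p=0$ is $\Psi := \psi$ on vertices. For the inductive step, fix a $p$-cell $\alpha$ with a distinguished vertex $v_\alpha$. Assuming $\Psi|_{\partial \alpha}$ has been constructed, if we choose $\de_0$ small enough (in terms of $\ep_M$ from Lemma \ref{L:isoperimetric-M} and $m$), then for every $y \in \partial \alpha$ we have $\M(\Psi(y)-\Psi(v_\alpha))<\ep_M$, so Lemma \ref{L:isoperimetric-M} furnishes an integral filling $Q^y$ supported in $M$ and a piece $R^y$ supported in $\partial M$ with $T^y-T^{v_\alpha} = \partial Q^y + R^y$ and $\M(Q^y)+\M(R^y) \leq C_M \M(\Psi(y)-\Psi(v_\alpha))$.

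The extension to the interior of $\alpha$ is then defined by a cone construction: parameterize $\alpha$ radially about $v_\alpha$ via $x = v_\alpha + r\omega$ with $\omega \in \partial\alpha$ and $r \in [0,1]$, and use a chart (a Fermi chart if $v_\alpha$ is close to $\partial M$, a geodesic normal chart otherwise) to radially scale $Q^\omega$ by $r$, producing an $\M$-continuous family $Q_r^\omega$ of integer rectifiable $(n+1)$-chains. Setting $\Psi(x) := [T^{v_\alpha} + \partial Q_r^\omega]$ modulo pieces supported on $\partial M$ gives an $\M$-continuous extension whose value depends only on the values of $\psi$ at the vertices of $\alpha$, verifying (I) and the dependence part of (II). The mass bound in (II) follows from $\M(\partial Q_r^\omega) \leq C\, \M(Q^\omega) \leq C\, C_M \, \f_\M(\psi)$ at each step; iterating through the $m$ skeletal stages yields a constant $C_0 = C_0(m, C_M)$ depending only on the isometric embedding.

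The main technical obstacle will be ensuring that the cone construction in Fermi coordinates yields a genuinely $\M$-continuous (not merely $\F$-continuous) family while keeping the output inside $\Z_n(M,\partial M)$. This is the free boundary analog of Pitts's cone lemma \cite[4.5]{Pitts} and of the corresponding step in \cite[Lemma 13.4]{Marques-Neves14}; it requires slicing $Q^\omega$ by Fermi level sets to absorb the $R^\omega$-component lying on $\partial M$ without losing mass control — which is precisely why the isoperimetric lemma was formulated for equivalence classes of relative cycles rather than for plain integral currents. Once this ingredient is in place, the remaining combinatorics are identical to the closed case.
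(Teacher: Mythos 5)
Your identification of Lemma \ref{L:isoperimetric-M} as the new ingredient replacing Almgren's classical isoperimetric inequality is correct, and the overall skeletal framing matches the Almgren--Pitts--Marques--Neves template. However, the central mechanism you propose for the inductive extension step is not what the paper does, and as written it does not work.

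You propose to extend $\Psi$ into a $p$-cell $\al$ by ``radially scaling $Q^\omega$ by $r$'' in a single chart (Fermi or normal). This is not coherent: $Q^\omega$ is a global $(n+1)$-current in $M^{n+1}$ produced by Lemma \ref{L:isoperimetric-M} -- it is top-dimensional and supported throughout $M$, so it cannot be scaled by a local coordinate chart at one point. The actual proof (following Almgren's Interpolation Formula 6.3 and \cite[Theorem~14.1, eq.~(86)]{Marques-Neves14}) first chops $Q(\al)$ into pieces supported in simplices of a fixed differentiable triangulation $\triangle$ of $(M,\partial M)$ using the cutting functions $C_{\La}(s)$, and only then applies the local deformation maps $\D(s,x)$ that radially retract each piece within its simplex. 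The boundary-setting modification is not a Fermi cone; it is to carry the extra term $R(\al)$ supported on $\partial M$ through the same cutting and deformation operations, so that the formula $\tilde h_\al$ acquires an additional summand $C_{\La(\ga_p)}(s_p)\circ\cdots\circ C_{\La(\ga_1)}(s_1)(R(\ga_1))$ alongside $\partial\circ C_{\La(\ga_p)}(s_p)\circ\cdots\circ C_{\La(\ga_1)}(s_1)(Q(\ga_1))$, and one then passes to equivalence classes.

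You have conflated this with the Fermi-coordinate cone construction (Lemma \ref{L:replace-cone}), but that lemma plays no role here: it is the free-boundary analogue of Pitts's Lemma~3.5 and appears in the appendix proof of Theorem \ref{T:def-equiv} (specifically Lemma \ref{L:pre-interpolation-lemma}), where a single current concentrated near a point of $\partial M$ is replaced by its cone from that point. That operation is genuinely local and is precisely why Fermi half-balls, being star-shaped and orthogonal to $\partial M$, are used there. Here, by contrast, the interpolation must deform a global filling, so a global triangulation and the combinatorial machinery of cutting functions are unavoidable. Your proposal also omits any device to make the output $\M$-continuous across cell boundaries of $I(m,k)$ -- the entire content of Almgren's formula is that the nested compositions of $\D$ and $C_\La$ indexed by chains $\ga\in\Ga_\al$ of faces make $\tilde h_\al$ agree on shared faces; a bare cone-from-a-vertex construction gives no such matching.
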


\begin{proof}
As in \cite[Theorem 14.1]{Marques-Neves14}, we only need to consider the extension for
$$\psi: I(m, 0)_0\rightarrow \Z_n(M, \partial M)$$
with $\f_\M(\psi) < \de_0$. If $\de_0 < \ep_M$ in Lemma \ref{L:isoperimetric-M}, for every $\al\in I(m, 0)_1$ with $\partial \al=[b]-[a]$, $\psi([a])=\tau_1$, $\psi([b])=\tau_2$, we can find an integral $(n+1)$-current $Q(\al)$ supported in $M$ and an integer rectifiable $n$-current $R(\al)$ supported in $\partial M$ with
\begin{itemize}
\item $T_2-T_1=\partial Q(\al)+R(\al)$, 
\item $\M(Q(\al))+\M(R(\al))\leq C_M \M(\tau_2-\tau_1)$,
\end{itemize}
where $T_1,T_2$ are the canonical representatives of $\tau_1, \tau_2$ respectively.
The only place in \cite[Theorem 14.1]{Marques-Neves14} we need to modify to adapt to our case is the formula \cite[(86)]{Marques-Neves14}. In fact, \cite[(86)]{Marques-Neves14} is a special case of \cite[Interpolation formula 6.3]{Almgren62}. In our case, \cite[Interpolation formula 6.3]{Almgren62} can be written in the following form: given a $p$-cell $\al\in I(m, 0)_p$, for every $(x_1, \cdots, x_p)\in I^p$
\begin{eqnarray*}
 \ti{h}_{\al}(x_1, \cdots, x_p)&=& \sum_{\ga\in\Ga_{\al}}\textrm{sign}(\ga) \circ \sum_{s_1,\cdots, s_p\in \triangle} \D(s_1, x_1)\circ\cdots\circ\D(s_p, x_p)\circ\\
&& \{\partial\circ C_{\La(\ga_p)}(s_p)\circ\cdots\circ C_{\La(\ga_1)}(s_1)(Q(\ga_1)) \\
&& \phantom{aaaaaaaaa} +C_{\La(\ga_p)}(s_p)\circ\cdots\circ C_{\La(\ga_1)}(s_1)(R(\ga_1))\}.
\end{eqnarray*}
Here $\Ga_{\al}$ is the set of all sequences $\{\ga_i\}_{i=1}^p$, with $\ga_p=\al$ and $\ga_i$ is a face of $\ga_{i+1}$ with dimension $\dim(\ga_{i+1})-1$ for $1\leq i\leq q-1$; $\triangle$ is a differentiable triangulation of $(M, \partial M)$; $\D(s_i, x_i)$ is the deformation map in \cite[(82)]{Marques-Neves14}\cite[\S 4.5]{Pitts}; $C_{\La(\ga_i)}(s_i)$ is the cutting function in \cite[Theorem 5.8]{Almgren62}\cite[page 762]{Marques-Neves14}. By construction $\ti{h}_\al(x_1, \cdots, x_p)$ is an integer rectifiable $n$-current but not necessarily an integral current. Note that there is an essential typo in \cite[Definition (6.2)(3)(c)]{Almgren62}, where the formula should read 
\[ j-1\neq \sum_{q=1}^{i-1}(1-[\dim(\ga_{q+1})-\dim(\ga_q)]).\]
Note that $\ti{h}_{\al}$ is continuous in the usual mass norm. Therefore we can then define for every $p$-cell $\al\in I(m, 0)_p$ the function $h_a: I^p\rightarrow \Z_n(M, \partial M)$
\[ h_a(x_1, \cdots, x_n)=[\ti{h}_{\al}(x_1, \cdots, x_p)] \]
which is continuous in the $\M$-topology. Then we can follow \cite[(6.5)]{Almgren62} to construct $\Psi$, and follow identically as \cite[page 763]{Marques-Neves14} to finish the proof.
\end{proof}

\subsection{Tightening}
\label{SS:tightening}

In this section, we carry out a \emph{tightening process} to a critical sequence $S \in \Pi$ so that \emph{every} $V \in C(S)$ is a stationary varifold with free boundary (recall Definition \ref{D:freebdy}). We will use the ambient isotopies of $M$ to deform the relative cycles. Any such isotopy $\{f_t\}_{t \in [0,1]}$ is generated by a unique vector field $X \in \mathfrak{X}_{tan}(M)$. For each $X \in \mathfrak{X}_{tan}(M)$, we have a continuous map (with respect to the weak topology on $\V_n(M)$)
\[  H^X: [0, 1] \times \V_n(M) \rightarrow \V_n(M) \]
defined by $H^X(t, V)= (f_t)_\sharp V$ where $\{f_t\}_{t \in [0,1]}$ is the flow generated by $X$.

\begin{definition}[Pushforward map]
\label{D:pushforward}
For each $X \in \mathfrak{X}_{tan}(M)$, we define a map
\[ H^X : [0,1] \times \Z_n(M,\partial M) \to \Z_n(M,\partial M) \]
by $H^X(t,\tau)=[(f_t)_\sharp T]$ where $T \in \tau$ is the canonical representative.
\end{definition}

\begin{remark}
Since $f_t(\partial M)=\partial M$ for all $t$, it is easy to see that one can use any $T \in \tau$ (not necessarily the canonical representative) in the definition of $H^X$, and that $H^X$ is continuous with respect to the $\mF$-metric (Definition \ref{D:F-metric}).
\end{remark}

We now proceed to the main result of this subsection. For $\partial M=\emptyset$, it was first proved in \cite[Theorem 4.3]{Pitts} and later on by Marques-Neves in \cite[Proposition 8.5]{Marques-Neves14}, where they filled in a minor gap left in \cite{Pitts} using their discretization and interpolation theorems. For $\partial M\neq\emptyset$, the first author proved a similar tightening result in \cite[Proposition 5.1]{Li15} for smooth sweepouts when $n=2$. Here, we present a general tightening result for $\partial M \neq \emptyset$. 

\begin{proposition}[Tightening]
\label{P:tightening}
Let $\Pi \in \pi^{\sharp}_{m}\big(\Z_{n}(M, \partial M, \M), \{0\}\big)$. For any critical sequence $S^*$ for $\Pi$, there exists another critical sequence $S$ for $\Pi$ such that $C(S) \subset C(S^*)$ and each $V\in C(S)$ is stationary in $M$ with free boundary.
\end{proposition}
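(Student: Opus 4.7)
The plan is to follow the Almgren–Pitts tightening scheme \cite{Pitts}, as refined via the interpolation/discretization machinery of Marques–Neves \cite{Marques-Neves14}, adapted to the free boundary setting. The key is to construct a continuous deformation on the space of mass-bounded varifolds in $M$ that strictly decreases mass away from the stationary varifolds with free boundary, using only vector fields in $\mathfrak{X}_{tan}(M)$, and then apply it to each vertex of $S^*$.

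Set $L := \bL(\Pi)+1$ and $\V^L := \{V \in \V_n(M) : \|V\|(M) \leq L\}$, which is compact in the $\mF$-metric. For $V \in \V^L$ define
\[ \ell(V) := \sup\{-\delta V(X) : X \in \mathfrak{X}_{tan}(M),\ \|X\|_{C^1} \leq 1\}, \]
so that $\ell(V)=0$ iff $V$ is stationary with free boundary, by Definition~\ref{D:freebdy}. Since $\delta V(X)$ is linear in $V$ for each $X$, the functional $\ell$ is continuous on $\V^L$ in the $\mF$-metric. A Lipschitz partition of unity argument on $\V^L$, as in \cite[\S4.3]{Pitts}, produces a continuous selection $V \mapsto X_V \in \mathfrak{X}_{tan}(M)$ with $\|X_V\|_{C^1}\leq 1$ and $-\delta V(X_V) \geq \tfrac{1}{2}\ell(V)$, together with a continuous time function $t(\cdot)\geq 0$ vanishing exactly on the stationary set, such that the resulting map
\[ \Psi : [0,1]\times\V^L \to \V_n(M),\qquad \Psi(s,V) := (f^V_{s\,t(V)})_\sharp V,\]
where $f^V_\tau$ is the flow of $X_V$, satisfies $\|\Psi(1,V)\|(M) \leq \|V\|(M) - g(\ell(V))$ for some continuous $g:[0,\infty)\to[0,\infty)$ with $g(r)>0$ iff $r>0$. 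Note $\Psi$ fixes stationary varifolds and is continuous in both variables, and $\Psi(s,0) \equiv 0$ since $t(0)=0$.

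Given the critical sequence $S^* = \{\phi_i^*\}$, apply Theorem~\ref{T:interpolation} to interpolate each $\phi_i^*$ to a map $\Phi_i^* : I^m \to \Z_n(M,\partial M)$ continuous in the $\M$-topology, equal to $\phi_i^*$ on vertices and vanishing on $I^m_0$. Using Definition~\ref{D:pushforward}, define
\[ \Phi_i(x) := H^{X_{V_x}}\!\bigl(t(V_x),\,\Phi_i^*(x)\bigr),\qquad V_x := |T_x^*|,\]
with $T_x^*$ the canonical representative of $\Phi_i^*(x)$; continuity of $\Phi_i$ in the $\mF$-topology follows from continuity of $\Psi$ and of $\Phi_i^*$, and $\Phi_i|_{I^m_0}=0$ since $V_x=0$ forces $t(V_x)=0$. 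Verifying the no mass concentration hypothesis for $\Phi_i$ from that of $\Phi_i^*$, Theorem~\ref{T:discretization} then produces a discrete map $\phi_i$ with fineness $\mf_\M(\phi_i)\to 0$ that $\F$-approximates $\Phi_i$. The one-parameter family $s \mapsto H^{X_{V_x}}(s\,t(V_x),\,\Phi_i^*(x))$, again discretized by Theorem~\ref{T:discretization}, furnishes an $m$-homotopy from $\phi_i^*$ to $\phi_i$ with fineness tending to zero, so $S := \{\phi_i\}$ lies in $\Pi$.

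Finally, the mass-decreasing property of $\Psi$ gives $\sup_x \M(\phi_i(x)) \leq \sup_x \M(\phi_i^*(x)) + o(1)$, hence $\bL(S) \leq \bL(S^*) = \bL(\Pi)$ and $S$ is critical. For any $V \in C(S)$ there exist vertices $x_j$ with $|T_{x_j}| \to V$ and, after passing to a subsequence, $|T_{x_j}^*| \to V^*$ for some $V^* \in C(S^*)$; continuity of $\Psi(1,\cdot)$ yields $V = \Psi(1,V^*)$, so
\[ \bL(\Pi) = \|V\|(M) \leq \|V^*\|(M) - g(\ell(V^*)) = \bL(\Pi) - g(\ell(V^*)),\]
forcing $\ell(V^*)=0$ and $V=V^*$; thus $V \in C(S^*)$ and is stationary with free boundary. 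The main technical obstacle is the continuous selection $V \mapsto X_V$ subject to the linear constraint $X_V \in \mathfrak{X}_{tan}(M)$: since $\mathfrak{X}_{tan}(M)$ is a closed convex linear subspace of $\mathfrak{X}(M)$, the Pitts partition of unity construction using a countable dense family within $\mathfrak{X}_{tan}(M)$ and finite convex combinations automatically produces selections in $\mathfrak{X}_{tan}(M)$. A secondary point, controlling mass concentration of the intermediate $\Phi_i$ uniformly in $i$ to apply Theorem~\ref{T:discretization}, follows from the equicontinuity of $\Psi$ on $\V^L$ together with the no mass concentration of $\Phi_i^*$.
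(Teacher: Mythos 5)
Your proposal follows essentially the same route as the paper's proof: construct a continuous map $V \mapsto X_V \in \mathfrak{X}_{tan}(M)$ via Pitts' partition-of-unity scheme (the key free boundary modification being the restriction to tangential vector fields), flow by $X_V$ to strictly decrease mass away from the stationary set, and use the Interpolation/Discretization theorems (Theorems~\ref{T:interpolation} and~\ref{T:discretization}) together with Lemma~\ref{L:F-metric} to pass between discrete $\M$-fine sweepouts and continuous deformations. The paper defers the same details to Pitts \cite[Theorem 4.3]{Pitts} and Marques--Neves \cite[Proposition 8.5]{Marques-Neves14}; your write-up just makes a few of them more explicit.
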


\begin{proof}
Let $S^*=\{\phi_i^*\}_{i \in \N} \in \Pi$ and $C=\sup_{x \in \dom(\phi^*_i), i \in \N} \M(\phi_i^*(x))$. Define
\[ \V^C(M):=\{V\in\V_n(M) \; : \;  \|V\|(M)\leq C\} \]
and let $\V^C_{\infty}(M)=\{V\in \V^C (M): V \text{ is stationary in $M$ with free boundary}\}$. Note that both $\V^C(M)$ and $\V^C_{\infty}(M)$ are compact under the weak topology induced by the varifold distance function $\mF$ (see \cite[Lemma 3.4]{Li15}). 

By considering only vector fields $X \in \mathfrak{X}_{tan}(M)$, we can follow \cite[Theorem 4.3]{Pitts}\cite[Proposition 8.5]{Marques-Neves14} (see also \cite[Proposition 5.1]{Li15}) to define a map $\Psi : \V^C(M)\rightarrow \mathfrak{X}_{tan}(M)$ (which is continuous with respect to the $\mF$-metric topology on $\V^C(M)$ and the $C^1$-topology on $\mathfrak{X}_{tan}(M)$) such that
\begin{itemize}
\item $\de V\big(\Psi(V)\big)=0$ if $V\in\V^C_{\infty}(M)$;
\item $\de V\big(\Psi(V)\big)<0$ if $V\in\V^C(M) \setminus \V^C_{\infty}(M)$.
\end{itemize}
Hence we can find a continuous function $h: \V^C(M)\rightarrow [0, 1]$ with $h(V)=0$ if $V\in \V^C_{\infty}(M)$, and $h(V)>0$ otherwise, such that $\|(f_s)_\sharp V \|(M)< \| (f_t)_\sharp V \|(M)$ whenever $0 \leq t<s \leq h(V)$. Here $\{f_s\}_{x \in [0,1]}$ is the flow generated by $\Psi(V) \in \mathfrak{X}_{tan}(M)$.

Furthermore we can follow exactly as \cite[pages 765-768]{Marques-Neves14} using Theorem \ref{T:discretization} and Theorem \ref{T:interpolation} in place of \cite[Theorem 13.1, Theorem 14.1]{Marques-Neves14} to construct $S=\{\phi_i\}$ out of $S^*=\{\phi^*_i\}$. In particular, if $\tau =\phi^*_i(x) \in \Z_n(M, \partial M)$ with canonical representative $T$, then $\phi_i(x)$ is roughly $H^{\Psi(|T|)}\big(h(|T|), \tau \big)$ (recall Definition \ref{D:pushforward}) after an interpolation and discretization process. Here the only difference is that we need to use Lemma \ref{L:F-metric} in place of \cite[Lemma 4.1]{Marques-Neves14}. One can straightforwardly follow \cite{Marques-Neves14} to check that $\{\phi_i\}$ satisfies the required properties.
\end{proof}

\subsection{Existence of almost minimizing varifold}
\label{SS:existence-am} 

We will need the following notation for suitable annular neighborhoods centered at a point $p \in M$. Recall the various notations of balls in section \ref{S:background}.

\begin{definition}[Annulus neighborhood]
\label{D:annulus}
Let $p \in M$ and $r>0$. Assume in addition $r < \dist_M(p,\partial M)$ if $p \notin \partial M$. We define for each $0<s<r$, the (relatively) open annular neighborhood
\[ \A_{s,r}(p)= \left\{ \begin{array}{cl}
\tBcal_r(p) \setminus \tBcal_s(p) & \text{ if } p \in M \setminus \partial M, \\
\tBcal^+_r(p) \setminus \tBcal^+_s(p) & \text{ if } p \in \partial M. 
\end{array} \right. \]
\end{definition}

\begin{definition}
\label{D:am-annuli}
A varifold $V \in \V_n(M)$ is said to be \emph{almost minimizing in small annuli with free boundary} if for each $p\in M$, there exists $r_{am}(p) >0$ such that $V$ is almost minimizing in $\mathcal{A}_{s,r}(p)$ (or $A_{s, r}(p)$) for all $0<s<r\leq r_{am}(p)$. If $p \notin \partial M$, we further require that $r_{am}(p) < \dist_M(p,\partial M)$ (or $r_{am}(p)<\dist_{\R^L}(p, \partial M)$).
\end{definition}
\begin{remark}
Note that by Lemma \ref{L:Fermi-convex}, the above definitions with respect to $\mathcal{A}_{s,r}(p)$ or $A_{s, r}(p)$ are equivalent after possibly shrinking $r_{am}(p)$.
\end{remark}

\begin{theorem}
\label{T:combinatorial}
For any $\Pi  \in \pi^{\sharp}_{m}\big(\Z_{n}(M, \partial M, \M), \{0\}\big)$, there exists $V\in\V_n(M)$ such that
\begin{itemize}
\item[(i)] $\|V\|(M)=\bL(\Pi)$;
\item[(ii)] $V$ is stationary in $M$ with free boundary;
\item[(iii)] $V$ is almost minimizing in small annuli with free boundary.
\end{itemize}
\end{theorem}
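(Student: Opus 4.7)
The proof follows the strategy of Pitts \cite[Theorem 4.10]{Pitts} adapted to the free boundary setting, splitting naturally into a tightening part (yielding (i), (ii)) and a combinatorial part (yielding (iii)).

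\emph{Step 1 (Tightening).} Start with any critical sequence $S^{*} \in \Pi$, whose existence is guaranteed by the earlier lemma, and apply Proposition \ref{P:tightening} to produce a new critical sequence $S \in \Pi$ with $C(S) \subset C(S^{*})$ such that every $V \in C(S)$ is stationary in $M$ with free boundary. Since $S$ is critical, by definition every $V \in C(S)$ satisfies $\|V\|(M) = \bL(S) = \bL(\Pi)$, and $C(S)$ is non-empty by Lemma \ref{L:critical-set}. Hence any element of $C(S)$ satisfies (i) and (ii).

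\emph{Step 2 (Reduction of (iii) to a contradiction).} It suffices to prove that $C(S)$ contains an element which is almost minimizing in small annuli with free boundary. Suppose not. Then for every $V \in C(S)$ there is a point $p_{V} \in M$ at which $V$ fails the almost minimizing property at all scales: for every $r > 0$ (small) one can find $0 < s < r$ and $\varepsilon > 0$ such that $V \notin \mathcal{A}_{n}(\mathcal{A}_{s,r}(p_{V}); \varepsilon, \delta; \mF)$ for all $\delta > 0$. Using the compactness of $C(S)$ (Lemma \ref{L:critical-set}), the monotonicity formula (Theorem \ref{T:monotonicity}) which gives uniform positive lower density bounds, together with the Fermi coordinate convexity (Lemma \ref{L:Fermi-convex}) at boundary points, a standard Vitali/pigeonhole argument produces, for each $V \in C(S)$, two disjoint concentric annular pairs at two distinct base points where $V$ simultaneously fails to be almost minimizing, with the base radii bounded uniformly away from zero and infinity on $C(S)$.

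\emph{Step 3 (Combinatorial contradiction).} By compactness of $C(S)$, finitely many such annular configurations suffice to cover $C(S)$. For each annulus in the list, the failure of the almost minimizing property yields a finite sequence $\tau_{0}, \tau_{1}, \ldots, \tau_{m}$ in $\Z_{n}(M, \partial M)$ supported in that annulus, with small $\F$-fineness, such that $\M(\tau_{m}) \leq \M(\tau_{0}) - \varepsilon$; by the equivalence statement Theorem \ref{T:def-equiv} (and in particular the implication $(a) \Rightarrow (c)$ which allows one to pass between $\F$- and $\M$-type almost minimizing), we can take these deformations to be controlled in the $\M$-norm as well. One then pastes these deformations into the discrete sweepout $\phi_{i} \in S$ at sufficiently fine scale $j_{i}$, taking advantage of the disjointness of the two annular pairs at each varifold and the fact that the supports avoid a neighborhood of $\partial M \setminus $(annuli), so that at every vertex at least one deformation strictly decreases mass. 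Reassembling via the interpolation and discretization theorems (Theorems \ref{T:discretization} and \ref{T:interpolation}) and the isoperimetric lemmas (Lemmas \ref{L:isoperimetric-F} and \ref{L:isoperimetric-M}), one obtains a new $(m, \M)$-homotopy sequence $\tilde{S}$ homotopic to $S$ (hence in $\Pi$) with $\bL(\tilde{S}) < \bL(S) = \bL(\Pi)$, contradicting the definition of $\bL(\Pi)$.

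\emph{Main obstacle.} The bulk of the work is the cut-and-paste in Step 3 when one (or both) of the base points lies on $\partial M$, where the annuli are Fermi half-annuli that meet $\partial M$ transversally. The subtlety is to carry out the deformations inside $M$ while keeping the resulting families continuous in the relative $\M$-topology and homotopic to the original sweepout through maps still vanishing on $I^{m}_{0}$. This is exactly the purpose of formulating everything in terms of equivalence classes of relative cycles in Section \ref{SS:relative-cycles} and of establishing the two isoperimetric lemmas of Section \ref{SS:isoperimetic}: the $\F$-isoperimetric lemma provides the comparison currents needed during the pasting, while Theorem \ref{T:def-equiv} ensures the deformations from the $\F$-definition of almost minimizing failure can be converted into $\M$-controlled discrete homotopies usable in the Pitts-style combinatorial construction. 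Once these ingredients are in place, the combinatorial bookkeeping proceeds exactly as in \cite[Chapter 4]{Pitts}.
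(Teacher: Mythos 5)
Your proposal is structurally identical to the paper's proof: apply the tightening procedure (Proposition \ref{P:tightening}) to a critical sequence to secure (i)--(ii) via $C(S)$, then run Pitts' Theorem~4.10 combinatorial contradiction argument, adapted at $\partial M$, to obtain (iii). A couple of the tools you name in Step~3 differ from what the paper actually substitutes into Pitts' argument: Theorem \ref{T:def-equiv} replaces Pitts' Theorem~3.9 in Part~2 (you have this); the $\M$-isoperimetric Lemma \ref{L:isoperimetric-M} replaces the isoperimetric choice in Part~5(c); and the cut-and-paste method of Case~1 in Lemma \ref{L:pre-interpolation-lemma} replaces Pitts' cut-and-paste in Part~9 --- whereas Theorems \ref{T:discretization} and \ref{T:interpolation}, which you cite here, actually enter in the proof of Proposition \ref{P:tightening} rather than in this combinatorial step. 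Two further small inaccuracies: in Pitts' Part~2 one constructs, for each $V\in C(S)$ failing the almost minimizing property, finitely many pairwise disjoint \emph{concentric} annuli around a \emph{single} point $p_V$ (with the count tied to the combinatorics of $I(m,j)$), not annular pairs at two distinct base points; and the relevant use of Theorem \ref{T:def-equiv} is the contrapositive of $(c)\Rightarrow(d)$ on a small annulus, which converts $\F$-failure of the almost minimizing property into the $\M$-controlled deformations the cut-and-paste needs, rather than $(a)\Rightarrow(c)$. None of these affect the overall outline, which matches the paper's.
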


\begin{proof}
The case when $\bL(\Pi)=0$ is trivial by taking $V=0$, so we assume that $\bL(\Pi)>0$. Take any critical sequence $S^*\in\Pi$, and let $S$ be the other critical sequence obtained from $S^*$ by Proposition \ref{P:tightening}. We can follow the same procedure in the proof of \cite[Theorem 4.10]{Pitts} to prove that at least one element in $C(S)$ satisfies (i)-(iii). The proof proceeds with a  contradiction argument. If the results were not true, then we can find for each $V\in C(S)$ a point $p_V \in M$ such that $V$ is not almost minimizing in some small annulus $A_{s,r}(p_V)$. If $p_V \notin \partial M$, then we can deform $S$ homotopically to $\ti{S}$ following the same procedures as in \cite[Theorem 4.10]{Pitts}, so that $\bL(\ti{S})<\bL(S)$, which is a contradiction. Therefore, we can assume $p_V\in\partial M$. In this case, we can also adapt the proof of \cite[Theorem 4.10]{Pitts} almost identically besides the following three simple modifications. First we should use Theorem \ref{T:def-equiv} in place of \cite[Theorem 3.9]{Pitts} in \cite[page 164, Part 2]{Pitts}. (Note that \cite[Theorem 3.9]{Pitts} was essentially used but not explicitly mentioned in \cite[page 164, Part 2]{Pitts}.) Second we should use Lemma \ref{L:isoperimetric-M} in place of the use of isoperimetric choice (e.g. \cite[Corollary 1.14]{Almgren62}) in \cite[page 165, Part 5(c)]{Pitts}. Finally we should use the cut-and-paste method in the proof of Case 1 in Lemma \ref{L:pre-interpolation-lemma} in place of the cut-and-paste trick used in \cite[page 166, Part 9]{Pitts}. Then we can also deform $S$ homotopically to $\ti{S}$ with $\bL(\ti{S})<\bL(S)$ to obtain a contradiction.
\end{proof}

Now by \cite[Theorem 7.5]{Almgren62} and the simple modification used in \cite[Theorem 4.6]{Pitts}, we know that $\pi^{\sharp}_1\big(\Z_{n}(M, \partial M, \M), \{0\}\big)$ is isomorphic to the top relative integral homology group $H_{n+1}(M, \partial M) \cong \mathbb{Z}$. If $\Pi_M$ corresponds to the fundamental class $[M]$ in $H_{n+1}(M, \partial M)$, then $\bL(\Pi_M)>0$. So we have the following corollary.

\begin{corollary}
\label{C: existence of almost minimizing varifold}
There always exists a nontrivial varifold $V\in\V_n(M)$, $V\neq 0$, which satisfies properties (i)-(iii) in Theorem \ref{T:combinatorial}.
\end{corollary}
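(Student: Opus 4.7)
The plan is to apply Theorem \ref{T:combinatorial} to a homotopy class whose width is strictly positive. By the cited isomorphism
\[ \Theta : \pi^{\sharp}_1\big(\Z_{n}(M, \partial M, \M), \{0\}\big) \xrightarrow{\cong} H_{n+1}(M, \partial M) \cong \mZ, \]
I set $\Pi_M := \Theta^{-1}([M])$, the preimage of the relative fundamental class. Theorem \ref{T:combinatorial} applied to $\Pi_M$ then produces a varifold $V \in \V_n(M)$ satisfying (i)--(iii) with $\|V\|(M) = \bL(\Pi_M)$, so everything reduces to proving that $\bL(\Pi_M) > 0$, which forces $V \neq 0$.

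I would prove $\bL(\Pi_M) > 0$ by contradiction. Suppose $\bL(\Pi_M) = 0$; then there is a critical sequence $S = \{\phi_i\} \in \Pi_M$ with both $\max_{x} \M(\phi_i(x)) \to 0$ and $\mf_\M(\phi_i) \to 0$. For $i$ large enough that $\mf_\M(\phi_i) < \de_0$, Theorem \ref{T:interpolation} produces continuous (in the $\M$-topology) maps $\Phi_i : I \to \Z_n(M,\partial M)$ with $\Phi_i(0) = \Phi_i(1) = 0$ and
\[ \eta_i \; :=\; \sup_{t \in I} \M(\Phi_i(t)) \;\leq\; \max_{x} \M(\phi_i(x)) + C_0\, \mf_\M(\phi_i) \;\longrightarrow\; 0. \]

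The core is a filling-uniqueness argument based on Lemma \ref{L:isoperimetric-F}. For $i$ sufficiently large, $\eta_i < \min\{\ep_M,\, \text{Vol}(M)/(2C_M)\}$. Applying Lemma \ref{L:isoperimetric-F} to $\tau_1 = 0$ and $\tau_2 = \Phi_i(t)$, using $\F(\Phi_i(t)) \leq \M(\Phi_i(t)) < \ep_M$, yields an integral $(n+1)$-current $V^i_t$ with $\spt V^i_t \subset M$, $\spt(T^i_t - \partial V^i_t) \subset \partial M$, and $\M(V^i_t) \leq C_M \, \M(\Phi_i(t)) < \text{Vol}(M)/2$, where $T^i_t$ is the canonical representative of $\Phi_i(t)$. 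Since any two such fillings differ by a relative integral $(n+1)$-cycle in $(M,\partial M)$ and $H_{n+1}(M,\partial M) \cong \mZ\cdot [M]$ with $\M([M])=\text{Vol}(M)$, the mass bound $\M(V^i_t - \widetilde V^i_t) < \text{Vol}(M)$ forces this difference to vanish; hence $V^i_t$ is \emph{unique}. By this uniqueness and a further application of Lemma \ref{L:isoperimetric-F} to consecutive values $\Phi_i(t), \Phi_i(t_0)$ close in $\M$-norm, the family $t \mapsto V^i_t$ is continuous in $\M$-norm, and in particular $V^i_0 = V^i_1 = 0$.

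Finally, under the explicit description of the Almgren--Pitts isomorphism $\Theta$, the homotopy class of the loop $\Phi_i$ corresponds to the homology class of the swept relative cycle $V^i_1 - V^i_0$, which is zero. Since the discretization/interpolation procedures preserve the $1$-homotopy class, this also equals $\Theta([\phi_i]) = \Theta(\Pi_M) = [M] \neq 0$, the desired contradiction. Hence $\bL(\Pi_M) > 0$, completing the proof. The principal obstacle in executing this plan rigorously is matching the discrete homotopy class $[\phi_i]$ in $\pi^{\sharp}_1$ with the continuous filling description of $\Theta$ at the level of $\Phi_i$, which requires verifying that the interpolation in Theorem \ref{T:interpolation} and the cycle-level construction of $\Theta$ in \cite[Theorem 7.5]{Almgren62} intertwine correctly --- a step that is essentially standard but requires careful bookkeeping.
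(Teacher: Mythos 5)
The paper's own ``proof'' of this corollary consists of two citations: \cite[Theorem 7.5]{Almgren62} (the Almgren isomorphism $\pi^\sharp_1(\Z_n(M,\partial M,\M),\{0\}) \cong H_{n+1}(M,\partial M)$) and the ``simple modification used in \cite[Theorem 4.6]{Pitts}'', from which positivity of $\bL(\Pi_M)$ is imported wholesale. You have instead reconstructed the nontriviality argument from scratch, which is a genuinely different (and more self-contained) route. Your key observation -- that fillings produced by Lemma \ref{L:isoperimetric-F} are unique below the mass threshold $\operatorname{Vol}(M)$ because any two differ by a relative integral $(n+1)$-cycle, and $H_{n+1}(M,\partial M) \cong \mathbb{Z}\cdot[M]$ -- is exactly the crux of the Almgren nontriviality argument, and your mass-bookkeeping is sound (including the observation that $\mf_\M(\phi_i)\to 0$ for a critical sequence, which follows by examining the connecting homotopies). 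Two remarks. First, the passage through Theorem \ref{T:interpolation} is dispensable: since a critical sequence already has $\max_x \M(\phi_i(x)) \to 0$ and $\mf_\M(\phi_i)\to 0$, you can run the whole filling-uniqueness argument directly on the discrete maps $\phi_i$ without ever constructing $\Phi_i$. Second, the step you flag yourself as the ``principal obstacle'' is indeed where the argument is incomplete as written: the Almgren map $\Theta([\phi_i])$ is not the class of a ``swept relative cycle $V^i_1 - V^i_0$'' per se, but rather the class of $\sum_j Q_j$ where $Q_j$ fills the increment $\phi_i(x_{j+1}) - \phi_i(x_j)$; to conclude, you must invoke uniqueness once more to identify $Q_j = V^i_{x_{j+1}} - V^i_{x_j}$ (which requires the combined mass bound $\M(Q_j) + \M(V^i_{x_{j+1}}) + \M(V^i_{x_j}) < \operatorname{Vol}(M)$, available for $i$ large) and then telescope to get $\sum_j Q_j = V^i_1 - V^i_0 = 0$. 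With this small repair, your argument is a valid and more explicit alternative to the paper's citation.
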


\section{Regularity of almost minimizing varifolds}
\label{S:regam}

In this section we prove the major result of this paper about the regularity of the almost-minimizing varifold $V$ from the min-max construction in Theorem \ref{T:combinatorial}. We only need to consider the regularity at the \emph{free boundary} (i.e. points in $\spt \|V\| \cap \partial M$) as the interior regularity was already proved in the seminal work of \cite{Pitts} and \cite{Schoen-Simon81} (when $n=6$). Recall that $M$ has dimension $(n+1)$. We have the following:

\begin{theorem}[Interior regularity]
\label{T:interior-regularity}
Let $2 \leq n \leq 6$. Suppose $V \in \V_n(M)$ is a varifold which is 
\begin{itemize}
\item stationary in $M$ with free boundary and 
\item almost minimizing in small annuli with free boundary,
\end{itemize}
then $\spt \|V\| \cap M \setminus \partial M$ is a smooth embedded minimal hypersurface $\Sigma$ in $M \setminus \partial M$. Furthermore, there exists $n_i \in \mathbb{N}$ such that
\[ V \lc \Gr_k(M \setminus \partial M)= \sum_{i=1}^N n_i |\Sigma_i|, \]
where $\{\Sigma_i\}_{i=1}^N$ are the connected components of $\Sigma$. (Here $N$ could be $+\infty$.)
\end{theorem}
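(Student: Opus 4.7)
The plan is to reduce Theorem \ref{T:interior-regularity} essentially to the classical interior regularity results of Pitts \cite{Pitts} (for $2 \leq n \leq 5$) and Schoen-Simon \cite{Schoen-Simon81} (for $n = 6$), by showing that at any interior point the free-boundary almost minimizing hypothesis collapses to the ordinary almost minimizing condition used in the closed case.

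First, I would fix any point $p \in \spt\|V\| \cap (M \setminus \partial M)$. By Definition \ref{D:am-annuli}, because $p \notin \partial M$, we have $r_{am}(p) < \dist_M(p,\partial M)$, so for all $0 < s < r \leq r_{am}(p)$ the annulus $\A_{s,r}(p) = \tBcal_r(p) \setminus \tBcal_s(p)$ is a relatively open subset of $M \setminus \partial M$. Consequently, with $U = \A_{s,r}(p)$ and $B = U \cap \partial M = \emptyset$, the subspace $Z_k(B,B)$ of Definition \ref{D:relative-cycles} is trivial and $\Z_n(U, U \cap \partial M) = Z_n(U, \emptyset)$ is literally the space of integer rectifiable cycles supported in $U$. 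The relative mass and flat norms of Definition \ref{D:relative-cycles} coincide with the usual mass and flat norms, and every representative is canonical. Moreover, since $U$ lies strictly in the interior, stationarity with free boundary (Definition \ref{D:freebdy}) is equivalent to ordinary stationarity, because $\mathfrak{X}_{tan}(M)$ contains every vector field compactly supported in $U$.

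Second, I would translate the hypothesis that $V$ is almost minimizing in $\A_{s,r}(p)$ with free boundary (Definition \ref{D:am-varifolds}) into the classical Pitts almost minimizing condition. By the reduction above, the approximating sequence $\tau_i \in \sA_n(\A_{s,r}(p); \ep_i, \de_i; \F)$ consists of honest integer rectifiable cycles, and the requirement $\mF(|T_i|, V) \leq \ep_i$ (with $T_i$ the canonical representative) is just the classical approximation requirement. Thus $V$, when restricted near $p$, satisfies all the hypotheses of Pitts' notion of an almost minimizing stationary integral varifold in the open Riemannian manifold $M \setminus \partial M$.

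Third, I would invoke the interior regularity package: since $V$ is stationary and almost minimizing in small annuli around every interior point $p$, Pitts' regularity theorem (supplemented by the Schoen-Simon extension to $n=6$) implies that in a neighborhood of $p$ the support $\spt\|V\|$ is a smooth embedded minimal hypersurface. Doing this at every $p \in (M \setminus \partial M) \cap \spt\|V\|$ yields a smooth embedded minimal hypersurface $\Sigma \subset M \setminus \partial M$.

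Finally, to obtain the structural decomposition $V \lc \Gr_k(M \setminus \partial M) = \sum_i n_i |\Sigma_i|$, I would note that integer rectifiability of $V$ in the interior is already part of Pitts' output, and on each connected component $\Sigma_i$ of $\Sigma$ the density $\Theta^n(\|V\|,\cdot)$ is a locally constant positive integer by the constancy theorem applied to the associated integral varifold (combined with stationarity). This gives the required multiplicities $n_i \in \mathbb{N}$. No genuine obstacle is expected here beyond carefully unwinding the equivalence-class formalism of Section \ref{S:amvarifold} to verify that the interior-version of almost minimizing matches Pitts' definition verbatim; the main content of the theorem is not at the interior but at the boundary, which is handled in the subsequent free boundary regularity results.
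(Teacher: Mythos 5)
Your proposal is correct and matches the paper's approach, which simply cites Theorems 3.13 and 7.12 of Pitts together with the constancy theorem \cite[2.4(5)]{Pitts}. You usefully spell out why the free-boundary almost minimizing condition reduces to Pitts' classical one at interior points (the annuli miss $\partial M$, so the equivalence-class formalism, relative norms, and tangency conditions become vacuous), which is exactly the implicit content behind the paper's ``direct consequence.''
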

\begin{proof}
This is a direct consequence of Theorem 3.13 and 7.12 in \cite{Pitts} and the constancy theorem \cite[2.4(5)]{Pitts}.
\end{proof}


Note that $\Sigma_i$ are hypersurfaces in $M$ \emph{without boundary}. We will show that in fact $N < \infty$ and each $\Sigma_i$ can be smoothly extended up to $\partial M$ as a hypersurface $\widetilde{\Sigma}_i$ which may now possess a free boundary lying on $\partial M$. However, as explained in section \ref{S:intro}, the extension $\widetilde{\Sigma}_i$ may not be \emph{proper} and thus could have interior points touching $\partial M$. The precise statement of our regularity result is the following:

\begin{theorem}[Main regularity]
\label{T:main-regularity}
Let $2 \leq n \leq 6$. Suppose $V \in \V_n(M)$ is a varifold which is 
\begin{itemize}
\item stationary in $M$ with free boundary and 
\item almost minimizing in small annuli with free boundary,
\end{itemize}
then there exists $N \in \N$ and $n_i \in \N$, $i=1,\cdots,N$, such that
\[ V= \sum_{i=1}^N n_i |\Sigma_i| \]
where each $(\Sigma_i,\partial \Sigma_i) \subset (M,\partial M)$ is a smooth, compact, connected, almost properly embedded free boundary minimal hypersurface.
\end{theorem}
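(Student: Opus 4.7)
The plan is to extend the interior regularity from Theorem \ref{T:interior-regularity} up to the boundary $\partial M$ by a local argument at each point $p \in \spt\|V\| \cap \partial M$. I would work throughout in a Fermi coordinate chart centered at $p$, so that a neighborhood of $p$ in $M$ is foliated by Fermi half-spheres $\tScal^+_\rho(p)$ meeting $\partial M$ orthogonally. By Lemma \ref{L:Fermi-convex} the Fermi half-balls are relatively convex with boundary orthogonal to $\partial M$, which is precisely the geometric setup needed to apply the maximum principle (Theorem \ref{T:max-principle}) to rule out spurious pieces of $\spt\|V\|$ lying on the half-spheres.

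The first main step is the construction of \emph{replacements}. Since $V$ is almost minimizing in small annuli with free boundary, for each sufficiently small half-annulus $\tAcal^+_{s,r}(p)$ one obtains by Proposition \ref{P:good-replacement-property} a replacement $V^*\in\V_n(M)$ that coincides with $V$ outside the half-annulus, remains stationary with free boundary, has the same mass as $V$, and inside the half-annulus is supported on a smooth stable almost properly embedded free boundary minimal hypersurface. The key technical input here is Theorem \ref{T:freebdy-cpt}, whose stable compactness (including the false-boundary scenario in Figure \ref{stable convergence}(A)) ensures that the infimum defining the replacement is realised by such a smooth stable object.

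The second step is to classify the tangent cones $\VarTan(V,p)$. Using Proposition \ref{P:tangent-cone} together with the monotonicity formula (Theorem \ref{T:monotonicity}), I would first show that every tangent cone is rectifiable, that tangent cones are unique almost everywhere, and hence that $V$ itself is rectifiable in a neighbourhood of $p$. Applying the reflection principle (Lemma \ref{L:reflection}) to a tangent cone and combining it with the replacement construction (which can be transferred to the cone level), the reflected varifold becomes a stationary integral cone in a full Euclidean space, and the Schoen--Simon interior theory applies. The maximum principle then forces the cone to be either a union of half-hyperplanes meeting $\partial T_p M$ orthogonally with integer multiplicity (the \emph{true boundary} case $p\in\partial\Sigma$) or an integer multiple of the hyperplane $T_p(\partial M)$ (the \emph{false boundary} case, where $\Sigma$ touches $\partial M$ tangentially).

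The final and hardest step is to show that two successive replacements glue smoothly along a Fermi half-sphere $\tScal^+_{\rho}(p)$. Taking $V^*$ on $\tAcal^+_{s_1,r}(p)$ and then $V^{**}$ replacing $V^*$ on $\tAcal^+_{s_2,r}(p)$ with $s_2<s_1$, the smooth hypersurfaces $\Sigma^*$ and $\Sigma^{**}$ agree in the overlap; I would show that their union extends as a $C^1$ almost properly embedded free boundary minimal hypersurface across $\tScal^+_{s_1}(p)$, and then upgrade to smoothness using Schauder theory for the minimal surface system with the orthogonal free boundary condition (with the classical removable singularity theorem handling the interior portion of the half-sphere). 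The $C^1$ gluing is proved by a blow-up argument at each point of $\tScal^+_{s_1}(p)\cap\spt\|V\|$. Here lies the main obstacle: the blow-up scale must be compared with the distance of the centre to $\partial M$, producing two distinct scenarios. In \textbf{Type I}, the centre lies at a distance much greater than the blow-up radius from $\partial M$, so the limits are interior stable minimal hypersurfaces and the argument reduces to the classical Pitts--Schoen--Simon situation. In \textbf{Type II}, the boundary survives in the limit, the blow-ups are almost properly embedded stable free boundary minimal hypersurfaces converging by Theorem \ref{T:freebdy-cpt}, and one must analyse the true and false boundary cases separately: in the true boundary case the limits meet $\partial T_p M$ orthogonally, while in the false boundary case they touch $\partial T_p M$ tangentially and the maximum principle is used to force tangential agreement across $\tScal^+_{s_1}(p)$. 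Combining these with the tangent cone classification and the decomposition in Theorem \ref{T:interior-regularity} yields the integer multiplicity representation $V=\sum_{i=1}^N n_i |\Sigma_i|$ with each $(\Sigma_i,\partial\Sigma_i)\subset(M,\partial M)$ smooth, compact, connected, and almost properly embedded; compactness of $\spt\|V\|$ together with smoothness up to $\partial M$ forces $N<\infty$.
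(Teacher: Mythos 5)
Your outline captures the broad architecture of the paper's proof — Fermi coordinates, overlapping replacements, tangent cone classification via reflection, and the $C^1$ gluing with Type I/Type II blow-up scenarios and the true/false boundary dichotomy — so the first two steps of the actual argument are well represented. However, two substantial pieces are missing, and they are precisely the parts where the paper has to work hardest.

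First, after gluing successive replacements along a Fermi half-sphere, you still need to show that $\spt\|V\|$ (not just $\spt\|V^{**}\|$) actually coincides with the resulting hypersurface $\Sigma$ in a punctured half-ball around $p$. Your proposal jumps directly from ``two replacements glue'' to ``$V=\sum n_i|\Sigma_i|$'' without this unique continuation step. The paper proves this by introducing the set $T^V_p$ of points where the tangent cone is a plane or half-plane transversal to the foliating Fermi half-spheres, showing $T^V_p$ is dense in $\spt\|V\|$ via a first-variation argument together with the convexity of Fermi half-balls, and then using a second replacement to deduce $T^V_p \subset \Sigma$; the reverse inclusion uses the constancy theorem, with a separate treatment of the boundary cases. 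None of this is automatic.

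Second, you do not address the removable singularity at the center point $p$ itself, which is a distinct problem from the smoothing across the half-sphere. The paper's Step~4 requires: showing (Claim 4) that $V$ is stationary in $\tM$ (not merely with free boundary) in a neighborhood of any false boundary point of $\Sigma$, which takes real work since there is no constancy theorem for stationary varifolds with free boundary; and then (Claim 5) proving a strong tangent-cone dichotomy at $p$ — either every tangent cone is a multiple of $T_p(\partial M)$, or every tangent cone is a half-plane, never a mix — via a delicate interlocking-annulus argument using Theorem~\ref{T:freebdy-cpt}. Only after this dichotomy can one handle the two cases: the hyperplane case reduces to the interior removable singularity theorem of Pitts after an extension argument, while the half-plane case uses a graphical decomposition near $p$, density bookkeeping to force multiplicity one on each sheet, and the Allard-type boundary regularity theorem of Gr\"uter--Jost. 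Your sketch mentions ``the classical removable singularity theorem handling the interior portion of the half-sphere,'' but that addresses a different, much easier issue; the singularity at $p$ and the false-boundary constancy problem are where the genuine difficulty lies and neither appears in your proposal.
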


Our proof of Theorem \ref{T:main-regularity} consists roughly of three parts. First, we show that almost minimizing varifolds have certain \emph{good replacement property} (Proposition \ref{P:good-replacement-property}), which allows the regularity theory for stable free boundary minimal hypersurfaces to be carried over to our case at hand. Second, we show that the tangent cones of an almost minimizing varifold at a point on $\partial M$ are either hyperplanes or half-hyperplanes meeting $\partial M$ orthogonally (Proposition \ref{P:tangent-cone}). Finally, we prove Theorem \ref{T:main-regularity} by establishing the regularity up to the boundary $\partial M$.

\subsection{Good replacement property}
\label{SS:replacement}

We now establish the most important property of almost minimizing varifolds which is crucial in establishing their regularity. Roughly speaking, we show that an almost minimizing varifold $V$ can be replaced by another almost minimizing varifold $V^*$ which possess better a-priori regularity properties than $V$. This idea of using replacements to establish the regularity of $V$ goes back to the seminal work of Pitts \cite[Theorem 3.11]{Pitts} (which he referred to as ``\emph{comparison surfaces}''). This part holds true for all $k$-dimensional almost minimizing varifolds. As the regularity of $V$ in $M \setminus \partial M$ is covered in Theorem \ref{T:interior-regularity}, we only need to focus on the regularity of $V$ on neighborhoods $U$ of $M$ such that $U \cap \partial M \neq \emptyset$.

\begin{proposition}[Existence of replacements]
\label{P:good-replacement-property}
Let $V\in\V_k(M)$ be almost minimizing in a relatively open set $U \subset M$ with free boundary and $K \subset U$ be a compact subset, then there exists $V^{*}\in \V_k(M)$, called \emph{a replacement of $V$ in $K$} such that
\begin{enumerate}
\item[(i)] $V\lc (M\setminus K) =V^{*}\lc (M\setminus K)$;
\item[(ii)] $\|V\|(M)=\|V^{*}\|(M)$;
\item[(iii)] $V^{*}$ is almost minimizing in $U$ with free boundary;
\item[(iv)] $V^{*} \lc U =\lim_{i \to \infty} |T_i|$ as varifolds in $U$ for some $T_i\in Z_k(M,(M \setminus U) \cup \partial M)$ such that $T_i \lc Z$ is locally mass minimizing with respect to $\interior_M(K)$ (relative to $\partial M$);
\end{enumerate}
\end{proposition}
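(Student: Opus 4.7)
The plan is to adapt Pitts' classical replacement construction \cite[Theorem 3.11]{Pitts} to the free boundary setting using the framework of equivalence classes of relative cycles developed in Section \ref{SS:relative-cycles}. Since $V$ is almost minimizing in $U$ with free boundary, fix $\ep_i \to 0$, $\de_i \to 0$, and $\tau_i \in \sA_k(U; \ep_i, \de_i; \F)$ whose canonical representatives $T_i$ satisfy $\mF(|T_i|, V) \leq \ep_i$. For each $i$ introduce the competitor class $\mathcal{C}_i$ of all $\sigma \in \Z_k(M, \partial M)$ reachable from $\tau_i$ by a finite sequence $\tau_i = \sigma_0, \sigma_1, \ldots, \sigma_m = \sigma$ satisfying $\spt(\sigma_j - \tau_i) \subset K$, $\F(\sigma_{j+1} - \sigma_j) \leq \de_i$ and $\M(\sigma_j) \leq \M(\tau_i) + \de_i$. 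The defining property of $\sA_k(U; \ep_i, \de_i; \F)$ forces $\M(\sigma) \geq \M(\tau_i) - \ep_i$ for every $\sigma \in \mathcal{C}_i$, so the infimum $L_i := \inf_{\sigma \in \mathcal{C}_i} \M(\sigma)$ is finite. Select $\tau_i^* \in \mathcal{C}_i$ with $\M(\tau_i^*) \leq L_i + 1/i$, let $T_i^*$ denote its canonical representative, and let $V^* := \lim_{i \to \infty} |T_i^*|$ be a subsequential varifold limit guaranteed by the compactness Lemma \ref{L:compactness}.

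Properties (i) and (ii) follow quickly. Since all deformations are supported in $K$, each $T_i^*$ agrees with $T_i$ outside $K$, so $V^* \lc (M \setminus K) = V \lc (M \setminus K)$. The chain $\M(\tau_i) - \ep_i \leq \M(\tau_i^*) \leq \M(\tau_i)$ (upper bound from the trivial competitor $\sigma_m = \tau_i$) together with $\M(\tau_i) \to \|V\|(M)$ and the mass continuity of Lemma \ref{L:F-metric} yield $\|V^*\|(M) = \|V\|(M)$. For (iii), given any discrete deformation of $\tau_i^*$ satisfying the $\sA$-conditions with small enough parameters $\ep_i', \de_i'$, concatenation with the fixed homotopy from $\tau_i$ to $\tau_i^*$ produces a valid competitor for $\tau_i$; tracking the cumulative mass drop across the concatenation shows $\tau_i^* \in \sA_k(U; \ep_i^*, \de_i^*; \F)$ for appropriate $\ep_i^*, \de_i^* \to 0$, so $V^*$ is almost minimizing in $U$ with free boundary.

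For (iv), the key claim is that $T_i^*$ is an approximate local mass minimizer in $\interior_M(K)$ relative to $\partial M$ with error $1/i$: if some $S \in Z_k(M, (M \setminus U) \cup \partial M)$ with $\spt(S - T_i^*) \subset\subset \interior_M(K)$ satisfies $\M(S) < \M(T_i^*) - 1/i$, then the $\M$-isoperimetric Lemma \ref{L:isoperimetric-M} supplies an isoperimetric choice between $\tau_i^*$ and $[S]$ of controlled mass, which we subdivide into finitely many $\F$-close steps of fineness $\leq \de_i$, extending the homotopy that defines $\tau_i^* \in \mathcal{C}_i$ so as to reach $[S]$. The extended sequence lies in $\mathcal{C}_i$ with strictly smaller end-mass, contradicting the near-optimal choice of $\tau_i^*$. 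Passing $i \to \infty$ and using lower semi-continuity of $\M$ (Lemma \ref{L:lower-semicts}) promotes this near-minimality to genuine local mass minimality of the limit $V^* \lc U$ relative to $\partial M$.

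The principal obstacle is carrying out the subdivision in step (iv): one must interpolate from $\tau_i^*$ to $[S]$ by finitely many intermediate equivalence classes, each pair within $\F$-distance $\de_i$, with every intermediate mass below $\M(\tau_i) + \de_i$, and with all supports respecting $K$ and $\partial M$. In Pitts' closed-manifold setting this is handled via standard cone constructions over small geodesic balls and the Euclidean isoperimetric inequality. In our free boundary setting, the Fermi coordinate cut-and-paste machinery developed in Section \ref{SS:discret} (in particular the cone construction of Lemma \ref{L:replace-cone} and the relative mass control provided by Lemma \ref{L:isoperimetric-M}) keeps the interpolating cycles inside $M$ and allows the boundary contributions on $\partial M$ to be discarded through the passage to equivalence classes, yielding the required mass control.
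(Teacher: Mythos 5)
Your overall architecture (competitor class $\mathcal{C}_\tau$, constrained minimization, compactness to pass to a limit, and concatenation of homotopies) follows the paper's, but there are two substantive problems, both centered on property (iv).

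First, you select near-minimizers $\tau_i^*$ with $\M(\tau_i^*)\leq L_i + 1/i$ instead of genuine minimizers. But the paper's Lemma \ref{L:compactness} together with lower semicontinuity (Lemma \ref{L:lower-semicts}) shows the infimum over $\C_\tau$ is \emph{attained}: take any minimizing sequence $\si_j\in\C_\tau$, pass to a limit $\tau^*$, and check $\tau^*\in\C_\tau$ by appending it as an extra step to the defining chain of $\si_j$ once $\F(\si_j-\tau^*)<\de$. Working with exact minimizers is essential here, because (iv) literally requires each approximating current $T_i$ to be locally mass minimizing — not approximately so with error $1/i$, and not only in the limit. A $1/i$-near-minimizer is not established by your argument to be locally mass minimizing, so (iv) as stated is not obtained.

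Second, your mechanism for establishing local minimality is wrong in spirit and has a technical hole. You frame it as: if some $S$ with $\spt(S-T_i^*)\subset\subset\interior_M(K)$ has mass more than $1/i$ less than $T_i^*$, apply the $\M$-isoperimetric lemma and subdivide into $\F$-fine steps. But Lemma \ref{L:isoperimetric-M} requires $\M(\tau_2-\tau_1)<\ep_M$, a fixed small constant, and you have no control ensuring $\M(\tau_i^*-[S])$ is small — it could be of order $2L$. Moreover, the ``subdivision into $\F$-close steps of fineness $\le\de_i$ with controlled intermediate masses'' is precisely the content of the nontrivial Interpolation Lemma \ref{L:interpolation}; invoking it here would be circular (indeed Theorem \ref{T:def-equiv}, proved via Lemma \ref{L:interpolation}, is exactly what connects the $\F$- and $\M$-versions of almost minimizing). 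The paper's argument avoids all this: local mass minimality only needs to be verified in balls $\tBcal_r^+(p)$ chosen small enough that $\M(T^*\lc\tBcal_r^+(p))<\de/2$, so that replacing $T^*$ by $T^*+S$ (with $\spt(S)\subset\tBcal_r^+(p)$) is a \emph{single} admissible step of $\F$-fineness $\le\M(\tau'-\tau^*)<\de$, and appending it to the defining chain for $\tau^*\in\C_\tau$ contradicts minimality directly. No isoperimetric lemma and no subdivision are needed. You should rework (iv) along these lines, using exact minimizers from Claim 1 of the paper's proof.

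Your treatments of (i), (ii), and (iii) are essentially correct and match the paper's, modulo the $1/i$ bookkeeping you would no longer need once you use exact minimizers.
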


\begin{remark}
\label{remark:replacement}
Note that by Proposition \ref{P:stability-am}, (iii) implies that $V^*$ is stationary in $U$ with free boundary. This is an important yet subtle point. Although it is clear from (iv) that $V^*$ is stationary in $\interior_M(K) \subset U$ but the stationarity holds even across $\partial_{rel}K$ with $V=V^*$ outside of $K$ by (i). 
\end{remark}

\begin{proof}
The proof consists of two parts. Throughout the proof, we denote $\F=\F^K$.

\vspace{0.3cm}
\noindent \textbf{Step 1:} \textit{A constrained minimization problem.}
\vspace{0.3cm}

Let $\ep, \de>0$ be given and fix any $\tau \in \sA_k(U;\ep,\de;\F)$. Let $\C_{\tau}$ be the set of all $\si \in \Z_k(M, \partial M)$ such that there exists a sequence $\tau=\tau_0, \tau_1, \cdots, \tau_m=\si$ in $\Z_k(M, \partial M)$ satisfying:
 \begin{itemize}
\item $\spt(\tau_i-\tau)\subset K$;
\item $\F(\tau_i - \tau_{i+1})\leq \de$;
\item $\M(\tau_i)\leq \M(\tau)+\de$, for $i=1, \cdots, m$.
\end{itemize}

\textit{Claim 1: There exists $\tau^* \in \C_{\tau}$ such that }
\[ \M(\tau^*)=\inf\{\M(\si):\ \si\in\C_{\tau}\}.\]

\vspace{0.3cm}
\textit{Proof of Claim 1:} Take any minimizing sequence $\{\si_j\}\subset\C_{\tau}$, i.e.
\[ \lim_{j \to \infty} \M(\si_j) = \inf\{\M(\si):\ \si\in\C_{\tau}\}.\]
Notice that $\spt(\si_j-\tau) \subset K$ and $\M(\si_j-\tau) \leq 2 \M(\tau) + \de$ for all $j$. By compactness theorem (Lemma \ref{L:compactness}), after passing to a subsequence, $\si_j-\tau$ converges weakly to some $\al \in\Z_k(M, \partial M)$ with $\spt(\al) \subset K$ and $\M(\al) \leq 2 \M(\tau) + \de$. We will show that $\tau^*:=\al+\tau$ is our desired minimizer. Since $\sigma_j$ converges weakly to $\tau^*$, by Lemma \ref{L:lower-semicts}, 
\begin{equation}
\label{E:tau^*}
\M(\tau^*) \leq  \inf\{\M(\si):\ \si\in\C_{\tau}\}.
\end{equation}
It remains to show that $\tau^*\in \C_{\tau}$. For $j$ sufficiently large, we have $\F(\si_j-\tau^*)<\de$. Since $\sigma_j \in \C_{\tau}$, there exists a sequence $\tau=\tau_0, \tau_1, \cdots, \tau_m=\si_j$ in $\Z_k(M, \partial M)$ satisfying the defining conditions for $\C_{\tau}$. Consider now the sequence $\tau=\tau_0, \tau_1, \cdots, \tau_m=\si_j, \tau_{m+1}=\tau^*$ in $\Z_k(M, \partial M)$, the first two conditions in the definition of $\C_{\tau}$ are trivially satisfied. Moreover, using (\ref{E:tau^*}), we also have
\[ \M(\tau^*)\leq\M(\si_j)\leq\M(\tau)+\de.\]
Therefore, $\tau^*\in\C_{\tau}$ and the claim is proved.

\vspace{0.3cm}
\textit{Claim 2: The canonical representative $T^* \in \tau^*$ is locally mass minimizing in $int_M(K)$.}
\vspace{0.3cm}

\textit{Proof of Claim 2:} For $p \in \interior_M(K) \setminus \partial M$, the proof that $T^*$ is mass minimizing in a small ball around $p$ is given in \cite[3.10]{Pitts}. For $p \in \interior_M(K) \cap \partial M$, we claim that there exists a small Fermi half-ball $\tBcal^+_r(p) \subset \interior_M(K)$ such that
\begin{equation}
\label{E:T^*}
\M(T^*)\leq \M(T^*+S),
\end{equation}
for any $S\in Z_k(A, B)$ with $\spt(S)\subset \tBcal^+_r(p)$, where $(A,B)=(\tBcal^+_r(p),\tBcal^+_r(p) \cap \partial M)$. To establish (\ref{E:T^*}), first choose $r>0$ small so that $\M(T^* \lc \tBcal^+_r(p)) < \de/2$ (this is possible since $T^*$ is rectifiable and has finite density). Suppose (\ref{E:T^*}) is false, then there exists $S \in Z_k(A, B)$ with support in $\tBcal^+_r(p)$ such that $\M(T^*+S)<\M(T^*)$. Let $\tau^{\pr}=[T^*+S] \in\Z_k(M, \partial M)$. We will show that $\tau^{\pr}\in\C_{\tau}$, which contradicts that $\tau^*$ is a minimizer in Claim 1 as 
\begin{equation}
\label{E:tau'}
\M(\tau^{\pr})\leq\M(T^*+S)<\M(T^*)=\M(\tau^*). 
\end{equation}
To see why $\tau' \in \C_\tau$, take a sequence $\tau=\tau_0, \tau_1, \cdots, \tau_m=\tau^*$ in $\Z_k(M, \partial M)$ by the definition of $\C_{\tau}$ and append $\tau_{m+1}=\tau'$ to the sequence. Since $\spt(\tau^*-\tau) \subset K$ and $\spt(\tau^{\pr}-\tau^*) \subset \spt(S)\subset K$, we have $\spt(\tau^{\pr}-\tau)\subset K$. Moreover $\F(\tau^{\pr}-\tau^*) \leq \M(\tau^{\pr}-\tau^*)$ and 
\[ \M(\tau^{\pr}-\tau^*) \leq \M((T^*+S)-T^*) < 2 \M(T^* \lc \tBcal^+_r(p)) < \de\]
Finally, by (\ref{E:tau'}), $\M(\tau^{\pr})<\M(\tau^*)\leq \M(\tau)+\de$. Therefore $\tau^{\pr}\in\C_{\tau}$ and this proves Claim 2.

\vspace{0.3cm}
\textit{Claim 3: $\tau^*\in \sA_k(U;\ep,\de;\F)$.}
\vspace{0.3cm}

\textit{Proof of Claim 3:} Suppose the claim is false. Then by Definition \ref{D:am-varifolds} there exists a sequence $\tau^*=\tau^*_0, \tau^*_1, \cdots, \tau^*_\ell $ in $\Z_k(M, \partial M)$ satisfying
\begin{itemize}
\item $\spt(\tau^*_i-\tau^*)\subset U$;
\item $\F(\tau^*_i - \tau^*_{i+1})\leq \de$;
\item $\M(\tau^*_i)\leq \M(\tau^*)+\de$, for $i=1, \cdots, \ell$,
\end{itemize}
but $\M(\tau^*_\ell) < \M(\tau^*)-\ep$. Since $\tau^* \in \C_\tau$ by Claim 1, there exists a sequence $\tau=\tau_0, \tau_1, \cdots, \tau_m=\tau^*$ satisfying the defining conditions for $\C_\tau$. Then the sequence $\tau=\tau_0,\tau_1,\cdots,\tau_m, \tau^*_1, \cdots, \tau^*_\ell$ in $\Z_k(M, \partial M)$ still satisfies the three conditions above as $\M(\tau^*)\leq \M(\tau)$. Therefore $\tau\in \sA_k(U;\ep,\de;\F)$ implies that we have $\M(\tau^*_\ell)\geq \M(\tau)-\ep\geq \M(\tau^*)-\ep$, which is a contradiction. This proves Claim 3.

\vspace{0.3cm}
\noindent \textbf{Step 2:} \textit{Construction of the replacement $V^*$.}

Let $V\in \V_k(M)$ be almost minimizing in $U$ with free boundary. By definition there exists a sequence $\tau_i\in \sA_k(U; \ep_i, \de_i; \F)$ with $\ep_i, \de_i \to 0$ such that $V$ is the varifold limit of $|T_i|$, where $T_i \in \tau_i$ is the canonical representative. By Step 1 we can construct a minimizer $\tau_i^* \in \C_{\tau_i}$ for each $i$ with canonical representative $T_i^* \in \tau_i^*$. Since $\M(T_i^*)=\M(\tau_i^*)$ is uniformly bounded, by compactness there exists a subsequence $|T_i^*|$ converging as varifolds to some $V^* \in \V_k(M)$. We claim that $V^*$ satisfies (i)-(iv) in Proposition \ref{P:good-replacement-property} and thus is our desired replacement. 

First, by Claim 1 of Step 1, $\tau_i^* \in \C_{\tau_i}$ and thus $\spt(\tau_i^*- \tau_i) \subset K$. Since $T_i^*-T_i \in \tau_i^*-\tau_i$ is the canonical representative, we have $\spt(T_i^* -T_i) \subset K$, i.e. $T_i^*\lc (M \setminus  K)=T_i\lc (M \setminus K)$. Hence the varifold limits satisfy $V^* \lc (M \setminus K) = V \lc (M \setminus K)$. Second, as $\tau_i \in G_k(U,\ep_i,\de_i,\F)$ and $\tau_i^* \in \C_{\tau_i}$, we have 
\[  \M(\tau_i)-\ep_i\leq \M(\tau_i^*)\leq \M(\tau_i),  \]
thus $\M(T_i)-\ep_i\leq \M(T_i^*)\leq \M(T_i)$. Taking $i \to \infty$, we have $\|V\|(M)=\|V^{*}\|(M)$. Since each $\tau_i^* \in \sA_k(U; \ep_i, \de_i; \F)$ by Claim 3 above, by definition $V^*$ is almost minimizing in $U$ with free boundary. Finally, (iv) follows from Claim 2 above. 
\end{proof}

Property (iv) in Proposition \ref{P:good-replacement-property} says that the replacement $V^*$ is a varifold limit of locally area minimizing integral currents in $\interior_M(K)$, which possess nice regularity properties when $k=n$. We will make use of this and the compactness of stable minimal hypersurfaces with free boundary in Theorem \ref{T:freebdy-cpt} to prove the regularity of $V^*$.

\begin{lemma}[Regularity of replacement]
\label{L:reg-replacement}
Let $2 \leq k=n \leq 6$. Under the same hypotheses of Proposition \ref{P:good-replacement-property} and assume further that $\interior_M(K)$ is simply connected, then the restriction of the replacement $V^* \lc \interior_M(K)$ is integer rectifiable and if $\Sigma:=\spt \|V^*\| \cap \interior_M(K)$, then $(\Sigma,\partial \Sigma) \subset (\interior_M(K),\interior_M(K) \cap \partial M)$ is a smooth, stable, almost properly embedded, free boundary minimal hypersurface.
\end{lemma}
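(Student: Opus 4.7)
The plan is to extract from property (iv) of Proposition~\ref{P:good-replacement-property} a sequence $T_i \in Z_n(M,(M\setminus U)\cup\partial M)$ with $|T_i| \to V^* \lc U$ as varifolds and with $T_i \lc \interior_M(K)$ locally mass-minimizing in $\interior_M(K)$ relative to $\partial M$. The strategy is then to prove smooth regularity of each $T_i$ inside $\interior_M(K)$ (up to and along the free boundary), observe that the corresponding hypersurfaces are stable with a uniform area bound, and finally pass to the limit using the compactness theorem for stable almost properly embedded free boundary minimal hypersurfaces (Theorem~\ref{T:freebdy-cpt}).

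For the regularity of the individual $T_i$, I would split into two cases. At points $p \in \interior_M(K) \setminus \partial M$, the current $T_i$ is locally mass-minimizing in the classical sense, so by the Federer--Fleming/De~Giorgi interior regularity together with the dimension restriction $n \leq 6$, $T_i$ corresponds to a smooth embedded minimal hypersurface in a neighborhood of $p$. At points $p \in \interior_M(K) \cap \partial M$, the key input is Gr\"uter's boundary regularity theorem for free boundary area-minimizing integer rectifiable currents: this is precisely where the choice to work with equivalence classes of (integer rectifiable, not necessarily integral) relative cycles pays off, since Gr\"uter's comparison class consists of integer rectifiable relative cycles. The conclusion is that the support of $T_i$ near $p$ is a smooth almost properly embedded hypersurface meeting $\partial M$ orthogonally along its boundary. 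Since mass-minimizers are automatically stable, each $\Sigma_i := \spt(T_i) \cap \interior_M(K)$ is a smooth stable almost properly embedded free boundary minimal hypersurface in $\interior_M(K)$; two-sidedness follows from the orientation of $T_i$ (and is also guaranteed by the hypothesis that $\interior_M(K)$ is simply connected via \cite{Samelson69}).

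Next I would establish the uniform area bound $\sup_i \Area(\Sigma_i \cap W) < \infty$ on any compact $W \subset \interior_M(K)$. This follows from property~(ii) of Proposition~\ref{P:good-replacement-property}, which gives $\|V^*\|(M) = \|V\|(M) < \infty$, combined with the fact that $\M(T_i) = \M(|T_i|) \to \|V^*\|(M)$. With stability, two-sidedness, and the uniform area bound in hand, Theorem~\ref{T:freebdy-cpt} applies and produces a subsequential smooth limit $(\Sigma_\infty,\partial \Sigma_\infty) \subset (\interior_M(K), \interior_M(K) \cap \partial M)$ which is almost properly embedded, stable, and free boundary minimal. The varifold convergence $|T_i| \to V^* \lc U$ together with the smooth (multiplicity-allowed) convergence $\Sigma_i \to \Sigma_\infty$ on compact subsets of $\interior_M(K)$ forces
\[ V^* \lc \interior_M(K) = \sum_j m_j |\Sigma_\infty^{(j)}|, \]
where $\Sigma_\infty^{(j)}$ are the connected components of $\Sigma_\infty$ and $m_j \in \mathbb{N}$ are the local multiplicities of the smooth convergence. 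In particular $V^* \lc \interior_M(K)$ is integer rectifiable and $\spt\|V^*\| \cap \interior_M(K) = \Sigma_\infty$ is the desired smooth almost properly embedded stable free boundary minimal hypersurface.

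The main obstacle I anticipate is the boundary regularity for the mass-minimizing $T_i$ near the free boundary $\partial M$, since standard interior area-minimizer regularity does not apply and one must invoke the more delicate Gr\"uter-type free boundary regularity; this is exactly the reason the authors had to set up the theory using equivalence classes of integer rectifiable (not integral) relative cycles. A secondary, more technical point is to verify carefully that the hypotheses of Theorem~\ref{T:freebdy-cpt} are met uniformly on exhaustions of $\interior_M(K)$, in particular that the limit hypersurface enjoys the almost proper (as opposed to merely proper) embeddedness which accommodates false boundary points arising from tangential touching of sheets to $\partial M$ in the limit.
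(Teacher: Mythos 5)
Your overall structure mirrors the paper's proof: extract the currents $T_i$ from property (iv) of Proposition \ref{P:good-replacement-property}, establish their regularity up to $\partial M$ via Gr\"uter's free boundary regularity theorem (and you correctly pinpoint that this is where the choice of integer rectifiable rather than integral relative cycles pays off), conclude that they are stable free boundary minimal hypersurfaces, and then pass to the limit via Theorem \ref{T:freebdy-cpt}. This is indeed the paper's argument, so the plan is sound.

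However, there is a genuine gap in the stability step. You assert that ``mass-minimizers are automatically stable,'' but property (iv) only records that $T_i$ is \emph{locally} mass minimizing in $\interior_M(K)$: for each $p$ there exists a small ball (whose radius in fact depends on $p$ and on the parameter $\de$ from Step 1 of the proof of Proposition \ref{P:good-replacement-property}) on which $T_i$ minimizes. Local mass minimization does not by itself imply stability in $\interior_M(K)$. A familiar low-dimensional example: an arc $\gamma$ of a great circle on the round two-sphere whose length $L$ satisfies $\pi < L < 2\pi$ is locally length minimizing (each sufficiently short subarc is the unique minimizing geodesic between its endpoints), yet it is unstable under compactly supported variations, since the index form $\int_0^L (f')^2 - f^2\,ds$ is indefinite on functions vanishing at the endpoints because the first Dirichlet eigenvalue of $-d^2/ds^2$ on $[0,L]$ is $(\pi/L)^2 < 1$. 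So the conclusion you want does not follow from (iv) alone.

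The paper proves stability by a different and stronger route, going back to the constrained minimization problem in Step 1 of Proposition \ref{P:good-replacement-property}. There $\tau^* = [T_i]$ minimizes mass in the class $\C_\tau$ of relative cycles reachable from $\tau$ by $\F$-small steps supported in $K$ with mass not exceeding $\M(\tau)+\de$. If $T_i$ were unstable in $\interior_M(K)$, one could produce a smooth one-parameter family $\{T_s\}$ of properly embedded hypersurfaces, equal to $T_i$ outside $\interior_M(K)$, with $\M(T_s) < \M(T_i)$ for small $s \neq 0$. Since $T_s \to T_i$ in the flat norm, for $s$ small $\tau_s := [T_s]$ satisfies $\spt(\tau_s-\tau) \subset K$, $\F(\tau_s - \tau^*) \leq \de$, and $\M(\tau_s) < \M(\tau^*) \leq \M(\tau)+\de$, so appending $\tau_s$ to the chain witnessing $\tau^*\in\C_\tau$ shows $\tau_s\in\C_\tau$ with $\M(\tau_s)<\M(\tau^*)$, contradicting the minimality of $\tau^*$. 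This is the missing ingredient in your argument, and without it the hypotheses of Theorem \ref{T:freebdy-cpt} are not actually verified.
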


\begin{proof}
By the regularity of locally area minimizing currents in codimension one \cite[Theorem 4.7]{Gr87} (although the proofs by Gr\"uter were only written when the ambient space is the Euclidean space, it is straightforward to adapt to the case of a Riemannian manifold), the $T_i$'s in (iv) of Proposition \ref{P:good-replacement-property} are (integer multiples of) smooth, properly embedded, free boundary minimal hypersurfaces in $\interior_M(K)$. 

\vspace{0.3cm}
\textit{Claim: $T_i$ is stable in $\interior_M(K)$.}

\textit{Proof of Claim:} Write $T=T_i$ and suppose on the contrary that $T$ is not stable. Since $T$ is properly embedded in $\interior_M(K)$, there exists a smooth one parameter family $\{T_s\}_{s \in (-s_0,s_0)}$ of properly embedded hypersurfaces such that $T_0=T$ and $T_s=T$ outside $\interior_M(K)$ and that $\M(T_s) < \M(T)$ for all $s \in (-s_0,s_0)$. We would show that this contradicts the fact that $[T]=\tau^*$ is a minimizer for the constrained minimization problem in Step 1 of Proposition \ref{P:good-replacement-property}. As each $T_s$ is properly embedded, they are the canonical representative for the equivalence classes $\tau_s:=[T_s]$ with $\M(\tau_s)=\M(T_s)<\M(T)$. It suffices to show that $\tau_s \in \C_\tau$ for $s$ sufficiently close to $0$. Since $T_s=T$ outside $\interior_M(K)$ and $\M(T_s)<\M(T)$, we clearly have $
\spt(\tau_s-\tau) \subset K$ and $\M(\tau_s) \leq \M(\tau)+\delta$. On the other hand, as $T_s \to T$ in the classical flat norm, by definition of the relative flat norm we have $\F(\tau_s -\tau) \to 0$ as $s \to 0$. Therefore, $\F(\tau_s -\tau) \leq \delta$ and thus $\tau_s \in \C_\tau$ as long as $s$ is sufficiently small, which gives the desired contradiction.

\vspace{0.3cm}
Finally the regularity of $V^*\lc \interior_M(K)$ follows from the compactness theorem (Theorem \ref{T:freebdy-cpt}). Note that even though each $T_i$ is properly embedded, the properness may be lost after taking the limit as in Theorem \ref{T:freebdy-cpt}. Therefore, the final replacement $V^*$ may not be \emph{properly} embedded anymore.
\end{proof}

\subsection{Tangent cones and rectifiability}
\label{SS:tangent-cone}

Now we go back to the case $k=n$. We make use of the good replacement property, Proposition \ref{P:good-replacement-property} and Lemma \ref{L:reg-replacement}, to show that $V$ is rectifiable. Furthermore, we classify the tangent cones of $V$ at \emph{every} $p \in \spt \|V\| \cap \partial M$.

\begin{lemma}[Uniform volume ratio bound]
\label{L:uniform-density-bdd}
Let $2 \leq n \leq 6$. Suppose $V \in \V_n(M)$ is almost minimizing in small annuli and stationary in $M$ with free boundary. There exists a constant $c_1>1$ depending only on the isometric embedding $M \hookrightarrow \R^L$ such that
\begin{equation}
\label{E:density-bdd}
c_1^{-1} \leq \frac{\|V\|(B_\rho(p))}{\omega_n \rho^n} \leq c_1  \frac{\|V\|(B_{\rho_0}(p))}{\omega_n \rho_0^n}
\end{equation}
for all $p \in \spt\|V\| \cap \partial M$, $\rho \in (0,\rho_0)$ where $\rho_0 = \frac{1}{4}\min\{r_{am}(p),r_{mono},r_{Fermi}\}$. Here, $r_{am}(p),r_{mono},r_{Fermi}>0$ are as in Definition \ref{D:am-annuli}, Theorem \ref{T:monotonicity} and Lemma \ref{L:Fermi-convex} respectively. In particular, $\Theta^n(\|V\|,p) \geq \theta_0>0$ for some constant $\theta_0>0$ at all $p \in \spt\|V\| \cap \partial M$.
\end{lemma}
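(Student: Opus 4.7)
The upper bound in (\ref{E:density-bdd}) is immediate from the monotonicity formula of Theorem~\ref{T:monotonicity}: taking inner and outer radii $\rho<\rho_0<r_{mono}$ there gives $\|V\|(B_\rho(p))/(\omega_n\rho^n)\leq C_{mono}\|V\|(B_{\rho_0}(p))/(\omega_n\rho_0^n)$. Letting in addition $\sigma\to 0$ in that same monotonicity inequality yields $\Theta^n(\|V\|,p)\leq C_{mono}\|V\|(B_\rho(p))/(\omega_n\rho^n)$, so once a uniform density lower bound $\Theta^n(\|V\|,p)\geq\theta_0>0$ is established for every $p\in\spt\|V\|\cap\partial M$ (with $\theta_0$ depending only on the isometric embedding), the lower bound in (\ref{E:density-bdd}) follows with $c_1=C_{mono}/\min\{1,\theta_0\}$. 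The substance of the lemma therefore reduces entirely to this density lower bound.

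To prove it, the plan is to exploit the good replacement property combined with the regularity of the replacement. Fix $p\in\spt\|V\|\cap\partial M$ and $r\in(0,r_{am}(p))$; by Lemma~\ref{L:Fermi-convex} it suffices to work with Fermi half-balls, since $B_\rho(p)\cap M$ and $\tBcal^+_\rho(p)$ are bi-Lipschitz equivalent near $p$ with universal constants. For each $s\in(0,r)$, apply Proposition~\ref{P:good-replacement-property} with $K=\overline{\A_{s,r}(p)}$ to produce a replacement $V^*_s\in\V_n(M)$ satisfying: (a) $V^*_s=V$ on $\tBcal^+_s(p)$; (b) $\|V^*_s\|(\tBcal^+_r(p))=\|V\|(\tBcal^+_r(p))$; (c) $V^*_s$ is a smooth stable almost properly embedded free boundary minimal hypersurface on $\A_{s,r}(p)$ by Lemma~\ref{L:reg-replacement}; and (d) $V^*_s$ is stationary in $M$ with free boundary by Remark~\ref{remark:replacement}. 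Passing to a diagonal subsequence $s_i\to 0$, weak varifold compactness together with Theorem~\ref{T:freebdy-cpt} applied to a compact exhaustion of $\tBcal^+_r(p)\setminus\{p\}$ yields a varifold limit $V^*$ that is a smooth stable almost properly embedded free boundary minimal hypersurface on the punctured Fermi half-ball; a standard removable singularity argument, using the uniform mass bound and that $\{p\}$ is an isolated possibly-singular point, extends $V^*$ smoothly across $p$.

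The key remaining step is to show $p\in\spt\|V^*\|$. Granting this, classical monotonicity for a smooth free boundary minimal hypersurface at a boundary point (seen by doubling across $\partial M$) gives $\Theta^n(\|V^*\|,p)\geq 1/2$ and $\|V^*\|(\tBcal^+_\rho(p))/(\omega_n\rho^n)\geq 1/2$ for all small $\rho$; combining with (b) at $\rho=r$ (for a generic choice of $r$ avoiding mass concentration on $\partial\tBcal^+_r(p)$, which fails only for countably many radii) yields $\|V\|(\tBcal^+_r(p))\geq\tfrac{1}{2}\omega_n r^n$ at every small scale, so $\Theta^n(\|V\|,p)\geq 1/2$ up to the bi-Lipschitz constant. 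To show $p\in\spt\|V^*\|$, I would argue by contradiction: if $\spt\|V^*\|$ were to stay uniformly at distance $\rho_0>0$ from $p$, then the entire mass $\|V\|(\tBcal^+_r(p))>0$ would be concentrated on a smooth free boundary minimal surface confined to the outer shell $\A_{\rho_0,r}(p)$, while by (a) the inner content $V^*_{s_i}=V$ on $\tBcal^+_{s_i}(p)$ carries only the vanishing mass $\|V\|(\tBcal^+_{s_i}(p))$. One then constructs a strictly mass-decreasing competitor in the admissible class $\C_{\tau_i}$ from the constrained minimization of Step~1 in Proposition~\ref{P:good-replacement-property}, by coning $V\lc\tBcal^+_{\rho_0/4}(p)$ off onto $\partial M$ through a Fermi-coordinate cone construction with mass and flat-norm estimates provided by the $\M$-isoperimetric Lemma~\ref{L:isoperimetric-M}; the resulting competitor contradicts the minimality of $\tau_i^*$ and forces $p\in\spt\|V^*\|$. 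The main technical obstacle is this cut-and-paste construction: it must be carried out in Fermi coordinates so that the competitor lies inside $M$ and respects the free boundary, and the quantitative mass decrease must be shown to exceed the vanishing almost-minimizing tolerances $\epsilon_i,\delta_i$ defining the replacement.
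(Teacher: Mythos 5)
Your reduction of the lemma to a uniform density lower bound $\Theta^n(\|V\|,p)\geq\theta_0$ is exactly right, and your instinct to use a replacement to get a smooth comparison object is also correct. However, your route to the density bound diverges from the paper's and has a genuine gap at precisely the point you flag.

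The paper takes a \emph{single} replacement $V^*$ on the compact annulus $K=\Clos(A_{r,2r}(p))\cap M$, not a diagonal limit of replacements $V^*_{s_i}$ with $s_i\to 0$. The crucial step is to show $\|V^*\|\lc A_{r,2r}(p)\neq 0$, and this is done via the maximum principle (Theorem~\ref{T:max-principle}): if the replacement vanished in the annulus, one could find a smallest Fermi half-ball $\Clos(\tBcal^+_s(p))$ containing $\spt\|V^*\|\cap B_{2r}(p)$, and since $\tBcal^+_s(p)$ is relatively convex with boundary meeting $\partial M$ orthogonally (Lemma~\ref{L:Fermi-convex}), the maximum principle forbids $\spt\|V^*\|$ from touching $\tScal^+_s(p)$, a contradiction. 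Having a nonempty smooth stable replacement $\Sigma$ in the annulus, the paper then picks a point $q\in\partial\Sigma$ (where $\Theta^n(\|V^*\|,q)\geq 1/2$ by smoothness), uses the monotonicity formula (Theorem~\ref{T:monotonicity}) \emph{centered at $q$}, and transfers this to a lower bound on $\|V\|(B_{4r}(p))/(4r)^n$ via Proposition~\ref{P:good-replacement-property}(i)(ii); the case $\partial\Sigma=\emptyset$ reduces to the interior argument of Pitts. Letting $r\to 0$ gives $\theta_0 = 2^{-(n+1)}C_{mono}^{-1}$.

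Your proposal instead constructs a limit replacement $V^*$ on the punctured half-ball and tries to show $p\in\spt\|V^*\|$ and to remove the singularity at $p$. Both of these steps are problematic and, indeed, nearly circular: a removable-singularity argument at $p$ typically \emph{presupposes} a density (or mass-ratio) bound at $p$, which is what you are trying to prove. Likewise, as you observe, $\|V\|(\tBcal^+_{s_i}(p))$ may a priori vanish much faster than $s_i^n$, so the limit $V^*$ need not see $p$. Your proposed remedy via a cone-off competitor and the $\M$-isoperimetric lemma is unsubstantiated --- you acknowledge it as the ``main technical obstacle'' without resolving it, and it is not clear how to make the quantitative mass decrease exceed the almost-minimizing tolerances $\ep_i,\de_i$ in a scale-independent way. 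The paper's argument is genuinely different and sidesteps all of this: by working on a fixed annulus and invoking the maximum principle, it never needs $V^*$ to be smooth (or even to charge) at $p$ itself. This is the key idea missing from your proposal, and is precisely why Theorem~\ref{T:max-principle} and the relative convexity of Fermi half-balls were developed in the paper.
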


\begin{proof}
We will first prove the second assertion that $\Theta^k(\|V\|,p) \geq \theta_0>0$ for some constant $\theta_0>0$ (depending only on the isometric embedding $M \hookrightarrow \R^L$) at all $p \in \spt\|V\| \cap \partial M$. The uniform volume ratio bound (\ref{E:density-bdd}) then follows easily from the monotonicity formula in Theorem \ref{T:monotonicity}, where the constant $c_1>0$ depends only on the isometric embedding $M \hookrightarrow \mathbb{R}^L$. 

Fix any $p \in \spt \|V\| \cap \partial M$, let $\rho_0=\frac{1}{4}\min\{r_{am}(p),r_{mono},r_{Fermi}\}$ and $r \in (0,\rho_0)$. Since $V$ is almost minimizing in $A_{r/2,4r}(p) \cap M$ with free boundary, we can apply Proposition \ref{P:good-replacement-property} to obtain a replacement $V^*$ of $V$ in $K=\Clos(A_{r,2r}(p)) \cap M$. First of all, notice that 
\begin{equation}
\label{E:nonzero-in-annulus}
\|V^*\| \lc A_{r,2r}(p) \neq 0.
\end{equation}
Otherwise, there exists a smallest number $s \in (0,r)$ such that $\spt \|V^*\| \cap B_{2r}(p)$ is contained in a closed Fermi half-ball $\Clos(\tBcal^+_s(p))$, contradicting the maximum principle (Theorem \ref{T:max-principle}) since $\tBcal^+_s(p)$ is relatively convex by Lemma \ref{L:Fermi-convex}. Note that $V^* \lc B_r(p)=V \lc B_r(p) \neq 0$ by Proposition \ref{P:good-replacement-property} (i) since $p \in \spt \|V\|$. By the regularity of replacements (Lemma \ref{L:reg-replacement}), if we let $\Si:=\spt \|V^*\| \cap A_{r,2r}(p)$, then 
\[ (\Si, \partial \Si) \subset (A_{r,2r}(p) \cap M, A_{r,2r} \cap \partial M) \]
is a smooth, stable almost properly embedded free boundary minimal hypersurface. Moreover, $\Sigma \neq \emptyset$ by (\ref{E:nonzero-in-annulus}). If $\partial \Sigma =\emptyset$, then $\Sigma$ is a stationary varifold in $A_{r,2r}(p) \cap \tM$ and the assertion follows from the arguments for the interior case in \cite[3.13]{Pitts}. If $\partial \Sigma \neq \emptyset$, there exists $q \in \partial \Sigma$ with $\Theta^n(\|V^*\|,q) \geq 1/2$ by Lemma \ref{L:reg-replacement}. Note that $B_{2r}(q) \subset B_{4r}(p)$. By the monotonicity formula (Theorem \ref{T:monotonicity}) and Proposition \ref{P:good-replacement-property} (i) and (ii), we have
\begin{displaymath}
\begin{split}
\frac{\|V\|(B_{4r}(p))}{\omega_n (4r)^n} & = \frac{\|V^*\|(B_{4r}(p))}{\omega_n (4r)^n} \geq \frac{\|V^*\|(B_{2r}(q))}{\omega_n (4r)^n} \geq \frac{1}{2^nC_{mono}} \lim_{s\rightarrow 0}\frac{\|V^*\|(B_{s}(q))}{\omega_n s^n}\\
                                             & = \frac{1}{2^nC_{mono}} \Theta^n(\|V^*\|,q) \geq \frac{1}{2^{n+1}C_{mono}}>0.
\end{split}
\end{displaymath}
Since the above inequality holds for any $0<r < \rho_0$, by letting $r \to 0$, we get the uniform lower bound on the density $\Theta^n(\|V\|,p)$ by taking $\theta_0=2^{-(n+1)}C^{-1}_{mono}$.
\end{proof}

\begin{remark}
We cannot deduce from Lemma \ref{L:uniform-density-bdd} yet that $V$ is rectifiable as we do not know whether $V$ has locally bounded first variation as a varifold in $\mathbb{R}^L$. This is an additional difficulty which does not appear in the interior regularity theory since any stationary varifold $V \in \V_k(M)$ has locally bounded first varifold as a varifold in $\R^L$ when $M$ is a \emph{closed} submanifold in $\R^L$ by \cite[Remark 4.4]{Allard72}. We will deal with this extra difficulty later in Proposition \ref{P:tangent-cone}.
\end{remark}

\begin{lemma}[Rectifiability of tangent cones]
\label{L:tangent-cone}
Let $2 \leq n \leq 6$. Suppose $V \in \V_n(M)$ is almost minimizing in small annuli and stationary in $M$ with free boundary. For any $C \in \VarTan(V,p)$ where $p \in \spt \|V\| \cap \partial M$, we have
\begin{itemize}
\item[(a)] $C \in \RV_n(T_pM)$,
\item[(b)] $C$ is stationary in $T_pM$ with free boundary,
\item[(c)] $\bmu_{r \sharp} C=C$ for all $r>0$.
\end{itemize}
In other words, $C$ is a stationary rectifiable cone in $T_pM$ with free boundary.
\end{lemma}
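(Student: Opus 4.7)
Choose a sequence $r_i \downarrow 0$ such that $V_i := (\bleta_{p,r_i})_\sharp V$ converges as varifolds to $C$; this is possible by the mass-ratio bound in Lemma \ref{L:uniform-density-bdd} and Radon measure compactness. I establish parts (b), (c), and (a) in this order, with (a) being the main obstacle. For part (b), given a test field $X \in \mathfrak{X}_{tan}(T_pM)$ with compact support, I use the Fermi coordinate chart at $p$ (which identifies a neighborhood of $p$ in $M$ with one of $0$ in $T_pM$ and sends $\partial M$ to $\partial T_pM$) to lift $X$ to a sequence $X_i \in \mathfrak{X}_{tan}(M)$ defined in Fermi coordinates by $X_i(q) := r_i \cdot X(\bleta_{p,r_i}(q))$ and cut off in a fixed small neighborhood. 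The tangency of $X$ to $\partial T_pM$ translates to tangency of $X_i$ to $\partial M$, and the scaling ensures that the pushforwards $(\bleta_{p,r_i})_* X_i$ converge in $C^1$ to $X$ on compact subsets of $T_pM$. Passing to the limit in $\delta V(X_i) = 0$, using $V_i \weakto C$ and the fact that the ambient Christoffel symbols vanish at $p$ so that the Riemannian and Euclidean divergences coincide in the limit, yields $\delta C(X) = 0$.

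For part (c), Theorem \ref{T:monotonicity} ensures the existence of $\Theta := \Theta^n(\|V\|,p)$. A direct rescaling gives
\[
\frac{\|V_i\|(B_\rho(0))}{\omega_n \rho^n} = \frac{\|V\|(B_{\rho r_i}(p))}{\omega_n (\rho r_i)^n} \xrightarrow{i \to \infty} \Theta,
\]
and combining with $V_i \weakto C$ (extended to every $\rho$ via the monotonicity of the mass ratio of $C$) yields $\|C\|(B_\rho(0))/(\omega_n \rho^n) \equiv \Theta$. Let $v \in \R^L$ be the inward unit normal to $\partial M$ at $p$, $P \subset \R^L$ the $(n+1)$-dimensional linear subspace with $T_pM = \{u \in P : u \cdot v \geq 0\}$, and set $\bar C := C + (\theta_v)_\sharp C \in \V_n(P)$. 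By Lemma \ref{L:reflection} and part (b), $\bar C$ is stationary in $P$; since $B_\rho(0)$ is $\theta_v$-invariant, $\|\bar C\|(B_\rho(0)) = 2\|C\|(B_\rho(0))$, so its mass ratio at $0$ is also constant in $\rho$. The equality case in the Euclidean monotonicity formula for the stationary varifold $\bar C$ in $P \cong \R^{n+1}$ then forces $\bar C$ to be a cone centered at $0$, and $\theta_v$-equivariance of $\bmu_r$ gives $(\bmu_r)_\sharp C = C$ for every $r>0$.

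Part (a) is the main obstacle: $V$ (and therefore $C$) does not have locally bounded first variation as a varifold in $\R^L$, since the first variation picks up boundary contributions on $\partial M$, so Allard's Rectifiability Theorem cannot be applied to $C$ directly. The doubled varifold $\bar C$ circumvents this obstruction: as $P$ is a linear subspace of $\R^L$, hence totally geodesic, the stationarity of $\bar C$ in $P$ promotes to stationarity in the ambient $\R^L$ by \cite[Remark 4.4]{Allard72}. The classical monotonicity formula for stationary varifolds in Euclidean space then yields $\Theta^n(\|\bar C\|, q) > 0$ at every $q \in \spt\|\bar C\|$, and in particular $\|\bar C\|$-almost everywhere. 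The Rectifiability Theorem \cite[Theorem 42.4]{Simon} gives $\bar C \in \RV_n(P)$. To conclude $C \in \RV_n(T_pM)$, I decompose $C = \bar C \lc \{u \cdot v > 0\} + C \lc \partial T_pM$: the first term is rectifiable as a restriction of $\bar C$, and the second is rectifiable because the $\theta_v$-symmetry of $\bar C$ gives $C \lc \partial T_pM = \tfrac{1}{2}\bar C \lc \partial T_pM$.
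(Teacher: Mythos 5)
Your part (b) is fine (essentially the paper's argument, just spelled out in more detail), but part (a) has a genuine gap, and the reordering (c) before (a) is problematic.

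The key error is the claim that ``the classical monotonicity formula for stationary varifolds in Euclidean space then yields $\Theta^n(\|\bar C\|, q) > 0$ at every $q \in \spt\|\bar C\|$.'' The monotonicity formula only gives that $\rho \mapsto \|\bar C\|(B_\rho(q))/(\omega_n\rho^n)$ is non-decreasing, hence that the density \emph{exists} as the infimum of this ratio; it gives an \emph{upper} bound for the density in terms of the mass ratio at any positive scale, but no lower bound. Positive density a.e.\ is precisely the hypothesis one must supply to invoke \cite[Theorem 42.4]{Simon}, and it does not come for free from stationarity. The paper obtains it from two ingredients you never use: the uniform lower density bound of Lemma \ref{L:uniform-density-bdd} (which comes from the almost minimizing hypothesis and the replacement theory, not from stationarity), and the interior regularity Theorem \ref{T:interior-regularity} (which gives integer rectifiability, hence density $\geq 1$, for $C$ away from $T_p(\partial M)$). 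To transfer the lower density bound from $V$ to the tangent varifold $C$, the paper first proves Hausdorff convergence $\spt\|V_i\| \to \spt\|C\|$ (Claim~1 of the paper's proof), using the lower volume ratio bound; only then can it conclude $\Theta^n(\|C\|,q) \geq \theta_0$ at points $q \in T_p(\partial M)$. Your proposal contains none of this machinery.

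There is a second, related issue with doing (c) before (a). The fact that a constant mass ratio forces a stationary varifold to be a cone, i.e.\ \cite[Theorem 19.3]{Simon}, is stated and proved for \emph{rectifiable} varifolds: the argument uses the first variation with vector fields of the form $\gamma(|x|)x$ to conclude $P^{\perp}(x)=0$ for $V$-a.e.\ $(x,P)$, but to pass from this to dilation invariance of the full varifold (as a measure on $\R^L \times \Gr(L,n)$) one needs to test against functions $\beta(x,P)$ depending on $P$, which is not a first-variation object; rectifiability resolves this by determining $P$ from $x$ on the support. This is why the paper establishes (a) before invoking \cite[Theorem 19.3]{Simon} for (c). You should keep the paper's order.
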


\begin{proof}
Let $V_i=(\bleta_{p,r_i})_\sharp V$ for some $r_i \to 0$ such that $C=\lim V_i$ as varifolds. Conclusion (b) is trivial as the notion of stationary with free boundary is invariant under translation and scaling in $\mathbb{R}^L$ and that $\bleta_{p,r_i}(M)$ converges smoothly to $T_pM$. 

\vspace{0.3cm}

\textit{Claim 1: $\spt \|V_i\|$ converges to $\spt \|C\|$ in the Hausdorff topology.}

\vspace{0.3cm}

\textit{Proof of Claim 1:} 
If $q_i \in \spt \|V_i\|$ is a sequence converging to some $q$, we show that $q \in \spt \|C\|$. In this situation, either we can take all $q_i \in \bleta_{p,r_i}(\partial M)$ or else $\spt \|V_i\| \cap \bleta_{p,r_i}(\partial M) =\emptyset$ in some fixed neighborhood of $q$ for all $i$ sufficiently large. In the first case, by the lower bound on volume ratio (\ref{E:density-bdd}) we have: for each fixed $\si>0$, as long as $q_i \in B_{\si/2}(q)$ (which holds for $i$ sufficiently large),
\[ c_1^{-1} \leq \frac{\|V_i\|(B_{\si/2}(q_i))}{\omega_n (\si/2)^n} \leq \frac{\|V_i\|(B_\si(q))}{\omega_n (\si/2)^n}.\]
Letting $i \to \infty$, we have $\|C\|(B_\si(q)) \geq c_1^{-1} \omega_n (\si/2)^n >0$ for all $\si>0$. Hence, $q \in \spt \|C\|$. In the second case each $V_i$ is indeed stationary in $B_\sigma(q) \cap \bleta_{p,r_i}(\tM)$ for which the argument is standard \cite[(5.8)]{Schoen-Simon81}. 

\vspace{0.3cm}

\textit{Claim 2: $\Theta^n(\|C\|,q) \geq \theta_0>0$ for all $q \in \spt \|C\|$.}

\vspace{0.3cm}

\textit{Proof of Claim 2:} It suffices to consider $q \in T_p(\partial M)$ since $C$ is integer rectifiable in $T_pM \setminus T_p(\partial M)$ by Theorem \ref{T:interior-regularity} and hence $\Theta^n(\|C\|,q) \geq 1$ at every $q \in \spt \|C\| \setminus T_p(\partial M)$. By claim 1, there exists $q_i \in \spt \|V_i\|$ such that $q_i \to q$ and we can divide into two cases as in claim 1 where each $q_i \in \bleta_{p,r_i}(\partial M)$ or $\spt \|V_i\| \cap \bleta_{p,r_i}(\partial M) =\emptyset$ in some fixed neighborhood of $q$. The first case follows from the uniform lower bound (\ref{E:density-bdd}) and the second case is standard \cite[Theorem 40.6]{Simon}.

\vspace{0.3cm}
To prove (a) and (c), we consider the doubled varifold $\overline{C}$ which is stationary in $T_p\tM$ by (b) and Lemma \ref{L:reflection}. We now argue that $\Theta^n(\|\overline{C}\|,q)>0$ for $\|\overline{C}\|$-a.e. $q$, which by the standard Rectifiability Theorem \cite[42.4]{Simon} would imply that $\overline{C}$ is rectifiable. Notice that 
\[ \Theta^n(\|\overline{C}\|,q)=\Theta^n(\|\overline{C}\|,\theta_{\nu_{\partial M}(p)}(q))=\Theta^n(\|C\|,q) \text{ for all $q \in T_pM \setminus T_p(\partial M)$,} \]
\[ \Theta^n(\|\overline{C}\|,q)=2 \Theta^n(\|C\|,q) \text{ for all $q \in T_p(\partial M)$.} \]
Hence $\overline{C}$ is rectifiable by claim 2 and thus $C$ is also rectifiable. Finally, (c) then follows from \cite[Theorem 19.3]{Simon}.




\end{proof}

\begin{remark}
\label{R:tangent-cone}
By the same argument, one can show that (a) and (b) still hold if $C$ is the varifold limit of $(\bleta_{p,r_i})_\sharp V_i$ for a sequence $r_i \to 0$ and $V_i \in \V_n(M)$ which is almost minimizing in small annuli and stationary in $M$ with free boundary, satisfying the uniform density bound (\ref{E:density-bdd}).
\end{remark}

We now prove the main result of this subsection on the classification of tangent cones and rectifiability of almost minimizing varifolds with free boundary.

\begin{proposition}[Classification of tangent cones]
\label{P:tangent-cone}
Let $2 \leq n \leq 6$. Suppose $V \in \V_n(M)$ is almost minimizing in small annuli and stationary in $M$ with free boundary. Then for any $C \in \VarTan(V,p)$ with $p \in \spt\|V\| \cap \partial M$, either 
\begin{itemize}
\item[(i)] $C=\Theta^n(\|V\|,p) \; |T_p(\partial M)|$ where $\Theta^n(\|V\|,p) \in \mathbb{N}$, or 
\item[(ii)] $C=2 \Theta^n (\|V\|,p) \; |S \cap T_pM|$ for some $S \in \Gr(L,n)$ such that $S \subset T_p \tM$ and $S \perp T_p (\partial M)\}$, and $2 \Theta^n(\|V\|,p) \in \mathbb{N}$.
\end{itemize}
Moreover, for $\|V\|$-a.e. $p \in \partial M$, the tangent varifold of $V$ at $p$ is unique, and the set of $p \in \partial M$ in which case (ii) occurs as its unique tangent cone has $\|V\|$-measure zero; hence $V$ is rectifiable.
\end{proposition}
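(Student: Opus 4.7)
The plan is to use the good replacement property to show that every tangent cone of $V$ at a boundary point is smooth in a punctured neighborhood, classify such cones via a doubling trick together with Simons' theorem, and then deduce rectifiability.

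\textbf{Smoothness of the tangent cones.} Fix $p \in \spt\|V\| \cap \partial M$ and $C \in \VarTan(V,p)$, realised as $C = \lim_i (\bleta_{p,r_i})_\sharp V$ for some sequence $r_i \to 0$. By Lemma \ref{L:tangent-cone}, $C$ is a rectifiable stationary cone in $T_pM$ with free boundary. For each fixed $0 < s < r < \infty$, the almost minimizing property of $V$ in small annuli allows us to apply Proposition \ref{P:good-replacement-property} on $\bleta_{p,r_i}^{-1}(\overline{A}_{s,r}(0))$ for $i$ large, producing a replacement $V^*_i$ of $V$; by Lemma \ref{L:reg-replacement} combined with Theorem \ref{T:freebdy-cpt}, the rescaled $V^*_i$ converge subsequentially to a varifold $C^*$ which is a replacement of $C$ in $\overline{A}_{s,r}(0)$ and which is smooth, stable, almost properly embedded, and free boundary minimal inside $A_{s,r}(0)$. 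Performing a second round of replacement on an overlapping annulus and gluing the two smooth pieces together in the spirit of \cite[Theorem 3.11]{Pitts}, followed by a diagonal limit as $s \to 0$ and $r \to \infty$, shows that $C$ itself is smooth, stable, and free boundary minimal in $T_pM \setminus \{0\}$.

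\textbf{Classification via doubling.} By Lemma \ref{L:reflection}, the doubled varifold $\overline{C} := C + (\theta_{\nu_{\partial M}(p)})_\sharp C$ is stationary in $T_p\tM \cong \R^{n+1}$. Since $C$ is smooth on $T_pM \setminus \{0\}$ and meets $T_p(\partial M)$ orthogonally along its free boundary, the reflected copy glues smoothly across $T_p(\partial M)$; together with the stability of $C$, $\overline{C}$ is a smooth stable minimal cone in $\R^{n+1}$ away from the origin. The classical Simons theorem (valid for $2 \leq n \leq 6$) forces $\overline{C} = m\,|S|$ for some hyperplane $S \subset T_p\tM$, with $m$ a positive integer (integrality passing to the blow-down limit from the integer-rectifiable replacement interiors given by Lemma \ref{L:reg-replacement}). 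Reflection invariance of $\overline{C}$ leaves only two possibilities: $S = T_p(\partial M)$, whence $\overline{C} = 2C$ forces $m$ to be even and $C = (m/2)|T_p(\partial M)|$ with $\Theta^n(\|V\|,p) = m/2 \in \N$ (case (i)); or $S \perp T_p(\partial M)$, whence $C = m|S \cap T_pM|$ with $2\Theta^n(\|V\|,p) = m \in \N$ (case (ii)).

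\textbf{Almost-everywhere uniqueness and rectifiability.} Let $F \subset \spt\|V\| \cap \partial M$ be the set of points admitting some tangent cone of case (ii). At any such $p$, the corresponding tangent cone $C$ satisfies $\|C\|(T_p(\partial M)) = 0$; a standard upper-semicontinuity-of-measure argument applied to the closed set $T_p(\partial M) \cap \overline{B}_1(0)$ at the blow-down then gives $\Theta^{*n}(\|V\| \lc \partial M, p) = 0$ for every $p \in F$, and the Federer density lemma yields $\|V\|(F) = 0$. For every $p$ in the complement, every tangent cone is of case (i) and hence is uniquely equal to $\Theta^n(\|V\|,p)|T_p(\partial M)|$, the density existing by Theorem \ref{T:monotonicity}. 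Thus tangent varifolds are unique $\|V\|$-a.e.\ on $\partial M$, and $\|V\|$-a.e.\ on $\partial M$ the tangent measure is a constant multiple of $\mathcal{H}^n \lc T_p(\partial M)$. Combining this with Theorem \ref{T:interior-regularity} on $M \setminus \partial M$ and a Preiss/Marstrand-type rectifiability criterion, $V$ is rectifiable.

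\textbf{Main obstacle.} The most delicate step is the first stage: turning the smooth stable replacement of $C$ into a smooth hypersurface equal to $C$ itself while preserving the conical structure under the blow-down. The double-replacement-and-paste construction must be executed so that the two smooth pieces match unambiguously along the overlap annulus, which is where the maximum principle (Theorem \ref{T:max-principle}) applied in Fermi half-balls (Lemma \ref{L:Fermi-convex}) is needed to rule out spurious false-boundary touching. An additional subtlety is the $p$-dependence of the almost-minimizing scale $r_{am}(p)$ from Definition \ref{D:am-annuli}, which must be controlled uniformly along the blow-down; this is handled by the uniform volume ratio bound of Lemma \ref{L:uniform-density-bdd}.
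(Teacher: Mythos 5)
Your strategy for the first stage --- showing that $C$ itself is smooth by performing two overlapping replacements and gluing them ``in the spirit of \cite[Theorem 3.11]{Pitts}'' --- is both heavier than necessary and circular. The gluing of two successive replacements across a Fermi half-sphere is precisely the content of Step 2 in the paper's proof of Theorem \ref{T:main-regularity}, and that argument explicitly invokes Proposition \ref{P:tangent-cone} (for instance to identify $\Clos(\Sigma_2) \cap \tScal^+_{s_2}(p)$ with the intersection set $\Gamma$, and again inside the proof of Claim 1(A)). The paper avoids this by constructing a \emph{single} replacement $V_i^*$ on the fixed rescaled annulus $A_{\alpha r_i, r_i}(p)$, taking the blow-up limit $D$, and then comparing the \emph{doubled} varifolds. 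Since $\overline{C}$ is a stationary cone, its volume ratio is constant; $\overline{D}$ agrees with $\overline{C}$ outside $\Clos(A_{\alpha,1}(0))$ and has the same mass in $B_2(0)$, so the equality case of the classical monotonicity formula forces $\overline{D}$ to be a cone, and a cone agreeing with $\overline{C}$ outside the annulus must equal $\overline{C}$. There is no need to show $C$ is smooth --- only smoothness of the replacement on the fixed annulus is used, and after $\overline{C}=\overline{D}$ the Simons argument applies directly. Even granting your gluing, you would still have to argue that the diagonal-limit hypersurface actually \emph{equals} $\spt\|C\|$ with the correct multiplicities, which your proposal leaves unaddressed; the monotonicity trick disposes of this in one stroke.

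The second gap is in the a.e.-uniqueness and rectifiability step. You take $F$ to be the set of $p$ admitting \emph{some} tangent cone of type (ii) and claim that upper semicontinuity at the blow-down gives $\Theta^{*n}(\|V\|\lc\partial M,p)=0$ for every $p\in F$. This only controls the density ratio along the particular subsequence of scales producing that type (ii) cone; in other words it yields $\liminf_{r\to 0}\|V\|(B_r(p)\cap\partial M)/(\omega_n r^n)=0$, not the $\limsup$. Without first knowing that the tangent cone at $p$ is unique, a different subsequence of scales could yield a type (i) cone, making the upper density strictly positive, and Federer's density lemma does not apply. The paper fixes this ordering issue by first establishing Claim 3 (a.e.\ uniqueness) via \cite[Lemma 38.4]{Simon}: for $\|V\|$-a.e.\ $p$ there is a Radon measure $\eta_V^p$ on $\Gr(L,n)$ reproducing the Grassmannian averages of $V$ near $p$, and a type (i) cone forces $\eta_V^p$ to be a point mass at $T_p(\partial M)$ while a type (ii) cone forces it to be a point mass at a half-plane $P$, so the two types cannot coexist at such $p$ and the tangent cone is unique. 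Only then is the density argument applied to the set $A$ of points with unique type (ii) tangent cone, where the limit genuinely exists and vanishes. You should adopt this ordering, establishing uniqueness before invoking the density lemma.
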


\begin{remark}
It is important to note that up to now we do not know that the tangent cone is unique at \emph{every} $p \in \spt \|V\| \cap \partial M$. In fact we do not even know whether both (i) and (ii) could occur at such a point. This is a technical difficulty which will eventually be overcome until much of the regularity results have been established in the next subsection.
\end{remark}

\begin{proof}
Let $p \in \spt\|V\| \cap \partial M$ and $r_i \to 0$ such that $C \in \VarTan(V,p)$ is the varifold limit
\[ C= \lim_i (\bleta_{p,r_i})_\sharp V \in \V_n(T_pM).\]
Assume WLOG that $r_i < \frac{1}{4}\min\{r_{am}(p),r_{mono},r_{Fermi}\}$ for all $i$. 
By Lemma \ref{L:tangent-cone}, $C$ is a rectifiable cone which is stationary in $T_pM$ with free boundary. 

Let $\al\in(0, 1/4)$. For each $i$, we can apply Proposition \ref{P:good-replacement-property} with the compact set $K_i=\Clos(A_{\al r_i, r_i}(p)) \cap M$ to obtain a replacement $V^*_i \in \V_n(M)$ of $V$ in $K_i$. As in (\ref{E:nonzero-in-annulus}), $\|V^*_i\| \lc A_{\al r_i,r_i}(p) \neq 0$. By Proposition \ref{P:good-replacement-property} (iii) and Remark \ref{remark:replacement}, each $V^*_i$ is almost minimizing in small annuli and stationary in $M$ with free boundary. By Proposition \ref{P:good-replacement-property} (ii) and the compactness of Radon measures, after passing to a subsequence, we obtain a limit as varifolds
\[ D:=\lim_{i\to \infty} (\bleta_{p,r_i})_\sharp V^*_i   \in \V_n(T_pM). \]
By Remark \ref{R:tangent-cone}, $D$ is rectifiable and stationary in $T_pM$ with free boundary. 

\vspace{.3cm}

\textit{Claim 1:  Let $\Si_\infty=\spt\|D\| \cap A_{\al, 1}(0)$, then 
\[ (\Sigma_\infty,\partial \Sigma_\infty) \subset ( T_p M,T_p (\partial M))\]
is an almost properly embedded hypersurface which is smooth and stable in $A_{\al,1}(0)$.} 

\vspace{.3cm}

\textit{Proof of Claim 1: } Let $\Si_i^*:=\spt \|V^*_i\| \cap A_{\al r_i, r_i}(p)$. By Lemma \ref{L:reg-replacement}, 
\[ (\Si_i^*,\partial \Si_i^*) \subset (M, \partial M)\]
is an almost properly embedded hypersurface which is smooth and stable inside $A_{\al r_i, r_i}(p) \cap M$.
Consider the blow-ups $\Si_i:=\bleta_{p,r_i}(\Si_i^*)$ such that 
\[ (\Si_i,\partial \Si_i) \subset ( \bleta_{p,r_i}(M),\bleta_{p,r_i}(\partial M))\]
is an almost properly embedded hypersurface which is smooth and stable inside $A_{\al,1}(0) \cap \bleta_{p,r_i}(M)$. By the monotonicity formula (Theorem \ref{T:monotonicity}), Proposition \ref{P:good-replacement-property} (i) and (ii), Lemma \ref{L:reg-replacement}, we have that $\Sigma_i$ has uniformly bounded mass:
\[ \mathcal{H}^n(\Si_i)= \frac{\mathcal{H}^n(\Si^*_i)}{r_i^n}\leq \frac{\|V^*_i\|(B_{r_i}(p))}{r_i^n}=  \frac{\|V\|(B_{r_i}(p))}{r_i^n}\leq C_{mono} \frac{\|V\|(B_{r_1}(p))}{r_1^n}.\]
By Theorem \ref{T:freebdy-cpt} (which still holds for a sequence of converging metrics), after passing to a subsequence, 
$(\Si_i,\partial \Si_i)$ converges (with multiplicity) to some almost properly embedded 
\[ (\Si_\infty,\partial \Si_\infty) \subset (T_p M,T_p(\partial M))\]
which is smooth and stable in $A_{\al, 1}(0)\cap T_p M$. 
By construction, it is clear that $\spt \|D\| \cap A_{\al, 1}(0)=\Si_\infty$ and hence \emph{Claim 1} is proved. 

Now consider the doubled varifolds $\overline{C}, \overline{D} \in \V_n(T_p\tM)$ 
of $C,D \in \V_n(T_pM)$ as in Lemma \ref{L:reflection}. We want to show that $\overline{C}$ and $\overline{D}$ coincide.

\vspace{.3cm}

\textit{Claim 2: $\overline{C}=\overline{D}$.}

\vspace{.3cm}

\textit{Proof of Claim 2:} 
By Lemma \ref{L:reflection}, $\overline{C}$ is stationary cone in $T_p \tM$ and hence
\[ \frac{\|\overline{C}\|(B_r(0))}{\omega_n r^n} \equiv \text{ constant}\]
By Lemma \ref{L:reflection} again, $\overline{D}$ is also stationary in $T_p \tM$. Using Proposition \ref{P:good-replacement-property} (i) and (ii), we have $\|\overline{D}\|(B_2(0))=\|\overline{C}\|(B_2(0))$ and 
\[ \overline{D}\lc (\mathbb{R}^L \setminus \Clos(A_{\al, 1}(0)))=\overline{C}\lc (\R^{L}\setminus \Clos(A_{\al, 1}(0))).\]
The classical monotonicity formula then implies that 
\[ \frac{\|\overline{D}\|(B_r(0))}{\omega_n r^n} \equiv \text{ constant}\]
and thus $\overline{D}$ is also a cone as it is rectifiable (see the proof of \cite[Theorem 19.3]{Simon}). Since both $\overline{C}$ and $\overline{D}$ are cones and they agree outside $A_{\al,1}(0)$, we must have $\overline{C}=\overline{D}$. This completes the prove of \emph{Claim 2}.

By Claim 2 we are left to prove that $\overline{D}$ is a hyperplane in $T_p\tM$. Since the double of a stable minimal hypersurface with free boundary is a stable minimal hypersurface \emph{without boundary}, we have that $\overline{\Si}_\infty:=\spt \|\overline{D}\| \cap A_{\al,1}(0)$ is a smooth and stable hypersurface without boundary in $A_{\al, 1}(0)$ (even if there exists false boundary points). Therefore, $\spt \| \overline{D}\|$ is a stable properly embedded minimal cone in $T_p \tM \subset \R^L$ which is smooth except possibly at the vertex. By the non-existence of stable cones in dimensions $2 \leq n \leq 6$ (\cite{Simons} or \cite[Theorem 7.6]{Pitts}), $\spt \| \overline{D}\|$ must be a hyperplane $P \subset T_p \tM$. Since $P$ is invariant under the refection $\theta_{\nu_{\partial M}(p)}$ by construction, we conclude that $C$ must be one of the two types in (i) or (ii).

\vspace{.3cm}

\textit{Claim 3: The tangent cone of $V$ is unique at $\|V\|$-a.e. $p$.}

\vspace{.3cm}

\textit{Proof of Claim 3:} Assume $p \in \spt\|V\| \cap \partial M$. By \cite[Lemma 38.4]{Simon} for $\|V\|$-a.e. $x \in M$, there exists a Radon measure $\eta_V^x$ on $\Gr (L, n)$ such that for any continuous function $\beta$ on $\Gr (L, n)$,
\begin{equation}
\label{E: eta measure}
\int_{\Gr(L, n)}\beta(S)\; d\eta_V^x(S)=\lim_{\rho\to 0}\frac{\int_{\Gr_n(B_\rho(x))}\beta(S) \; dV(y, S)}{\|V\|(B_\rho(x))}.
\end{equation}
Fix any such $p \in \spt \|V\| \cap \partial M$ such that $\eta_V^p$ exists. If $C\in \VarTan(V,p)$ belongs to case (i), then (\ref{E: eta measure}) implies that 
\[ \int_{\Gr(L, n)}\beta(S) \; d\eta_V^p(S)=\beta(T_p(\partial M)).\]
If $C\in \VarTan(V,p)$ belongs to case (ii), then we have instead 
\[ \int_{\Gr(L, n)}\beta(S) \; d\eta_V^p(S)=\beta(P).\]
Since the right hand side of the two equations above agree for all $\beta$. It then follows that the tangent cones are unique at such $p$. 

\vspace{.3cm}

\textit{Claim 4: $V$ is rectifiable.}

\vspace{.3cm}

\textit{Proof of Claim 4:} Let $A$ be the set of $p \in \spt \|V\| \cap \partial M$ at which the tangent cone of $V$ is unique and belongs to case (ii). We claim that $\|V\|(A)=0$. Suppose not, by \cite[Theorem 3.2 (2)]{Simon}, $\Theta^{*n}(\|V\|\lc A, \cdot)>0$ on a subset of $A$ with positive $\|V\|$-measure. Choose $x\in A$, such that $\Theta^{*n}(\|V\|\lc A, x)>0$ and the tangent varifold of $V$ at $x$ is unique (which is possible by Claim 3) and given by $C=2\Theta^n(\|V\|, x) |S \cap T_x M|$ as in (ii). Since $A \subset \partial M$, we have
\begin{displaymath}
\begin{split}
\Theta^{*n}(\|V\|\lc A, x) & =\lim_{r\rightarrow 0}\frac{\|V\|(B_r(x)\cap A)}{\omega_n r^n}\leq \lim_{r\rightarrow 0}\frac{\|V\|(B_r(x)\cap \partial M)}{\omega_n r^n}\\
                                      & = \|C\| (B_1(0)\cap T_x (\partial M))=0,
\end{split}
\end{displaymath}
contradicting $\Theta^{*n}(\|V\|\lc A, x)>0$. Hence $\|V\|(A)=0$. As $V$ has a tangent space with positive multiplicity (Lemma \ref{L:uniform-density-bdd} and Theorem \ref{T:interior-regularity}) $\|V\|$-a.e., $V$ is rectifiable by \cite[Theorem 38.3]{Simon}.
\end{proof}

\subsection{Regularity of almost minimizing varifolds}
\label{SS:regularity-am}

We now prove our main regularity theorem (Theorem \ref{T:main-regularity}) for an almost minimizing varifold $V$ at points on $\partial M$.

\begin{proof}[Proof of Theorem \ref{T:main-regularity}]
Let $p \in \spt \|V\| \cap \partial M$. Fix $r>0$ small so that
\[ r < \frac{1}{4}\min\{r_{am}(p),r_{mono},r_{Fermi}\}. \]
By the same argument as in (\ref{E:nonzero-in-annulus}), we have the following: if $W \in \V_n(M)$ is stationary in $\tBcal^+_r(p)$ with free boundary and $W \neq 0$ in $\tBcal^+_r(p)$, then (recall Definition \ref{D:Fermi-balls} and \ref{D:am-annuli})
\begin{equation}
\label{E:max-principle}
\emptyset \neq \spt \|W\| \cap \tScal^+_t(p) = \textrm{Clos}\big[ \spt \|W\| \setminus \Clos(\tBcal^+_t(p)) \big] \cap \tScal^+_t(p).
\end{equation}

\vspace{.3cm}

\noindent \textbf{Step 1:} \textit{Constructing successive replacements $V^*$ and $V^{**}$ on two overlapping annuli. (See Figure \ref{consecutive replacement})}

\begin{figure}[h]
    \centering
        \includegraphics[width=0.4\textwidth]{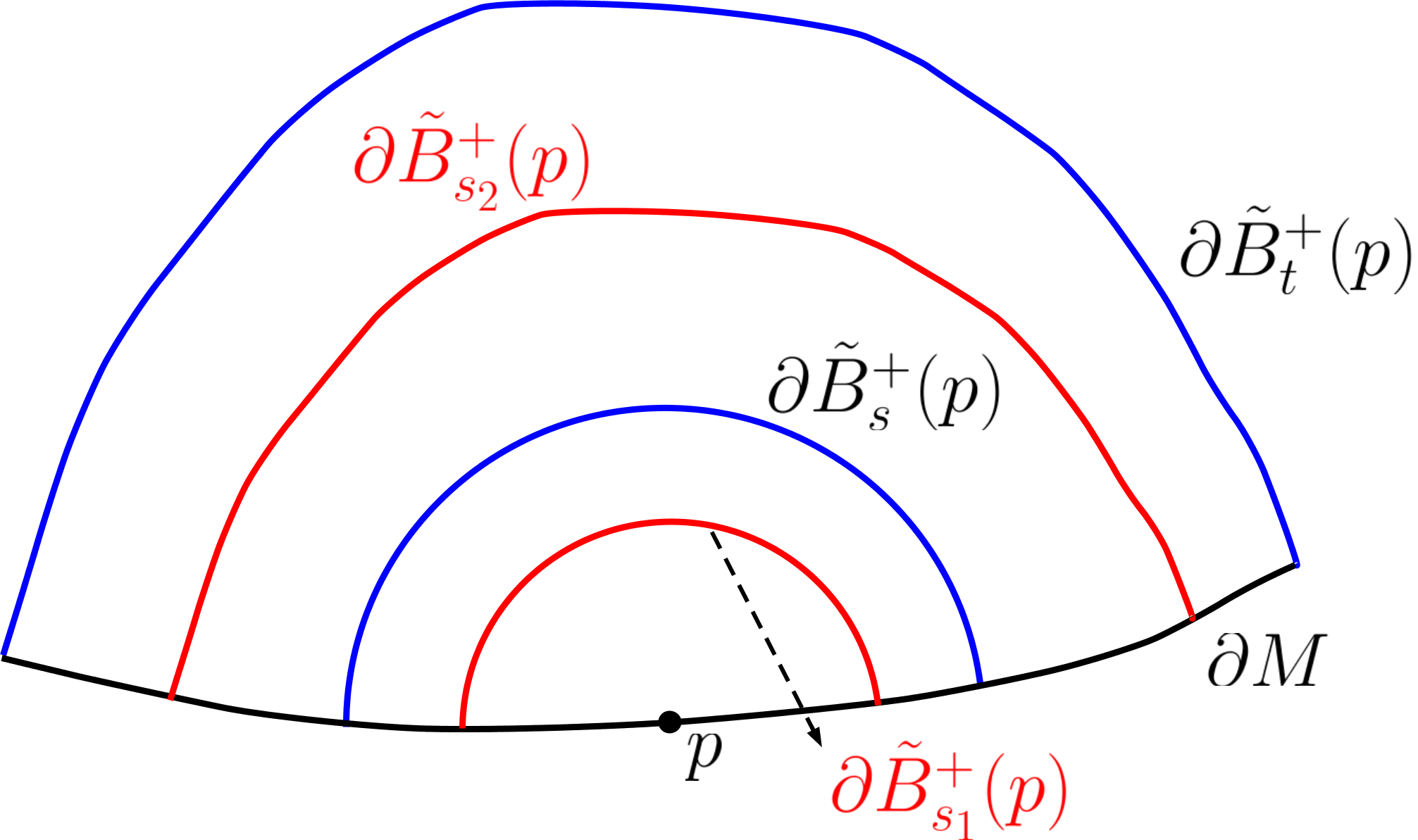}
    \caption{Successive replacements on overlapping Fermi annuli}
    \label{consecutive replacement}
\end{figure}

Fix any $0<s<t<r$. Since $V$ is almost minimizing on small annuli, we can apply Proposition \ref{P:good-replacement-property} to obtain a first replacement $V^*$ of $V$ on $K=\Clos(\A_{s,t}(p))$. By Lemma \ref{L:reg-replacement}, if 
\[ \Sigma_1:=\spt \|V^*\| \cap \A_{s,t}(p),\]
then $(\Sigma_1,\partial \Sigma_1) \subset (M, \partial M)$ is an almost properly embedded hypersurface which is smooth and stable in $\A_{s,t}(p)$. By Sard's theorem we can choose $s_2 \in (s,t)$ such that $\tScal^+_{s_2}(p)$ intersects $\Sigma_1$ transversally (even at $\partial \Sigma_1$). Fix any $s_1 \in (0,s)$. By Proposition \ref{P:good-replacement-property} (iii), one can construct another replacement $V^{**}$ of $V^*$ on $K=\Clos(\A_{s_1,s_2}(p))$, which remains almost minimizing in small annuli and stationary in $M$ with free boundary. By Lemma \ref{L:reg-replacement} again, if 
\[ \Sigma_2:=\spt \|V^{**}\| \cap \A_{s_1,s_2}(p),\]
then $(\Sigma_2,\partial \Sigma_2) \subset (M, \partial M)$ is a smooth, stable, almost properly embedded hypersurface which is smooth and stable in $\A_{s_1,s_2}(p)$.

\vspace{.3cm}

\noindent \textbf{Step 2:} \textit{Gluing $\Sigma_1$ and $\Sigma_2$ smoothly across $\tScal^+_{s_2}(p)$.}

\vspace{.3cm}

Take any $q \in \Sigma_1 \cap \tScal^+_{s_2}(p)$. If $q \in M \setminus \partial M$, the arguments in the proof of \cite[Theorem 4]{Schoen-Simon81} together with the convexity of $\tScal^+_{s_2}(p)$ (Lemma \ref{L:Fermi-convex}) and the maximum principle (Theorem \ref{T:max-principle}) implies that $\Sigma_1$ and $\Sigma_2$ glue together smoothly near $q$. Therefore, we focus on the case $q \in \Sigma_1 \cap \partial M$, which we further divide into two sub-cases.

\begin{figure}[h]
    \centering
    \begin{subfigure}{.50\textwidth}
        \centering
        \includegraphics[width=1\textwidth]{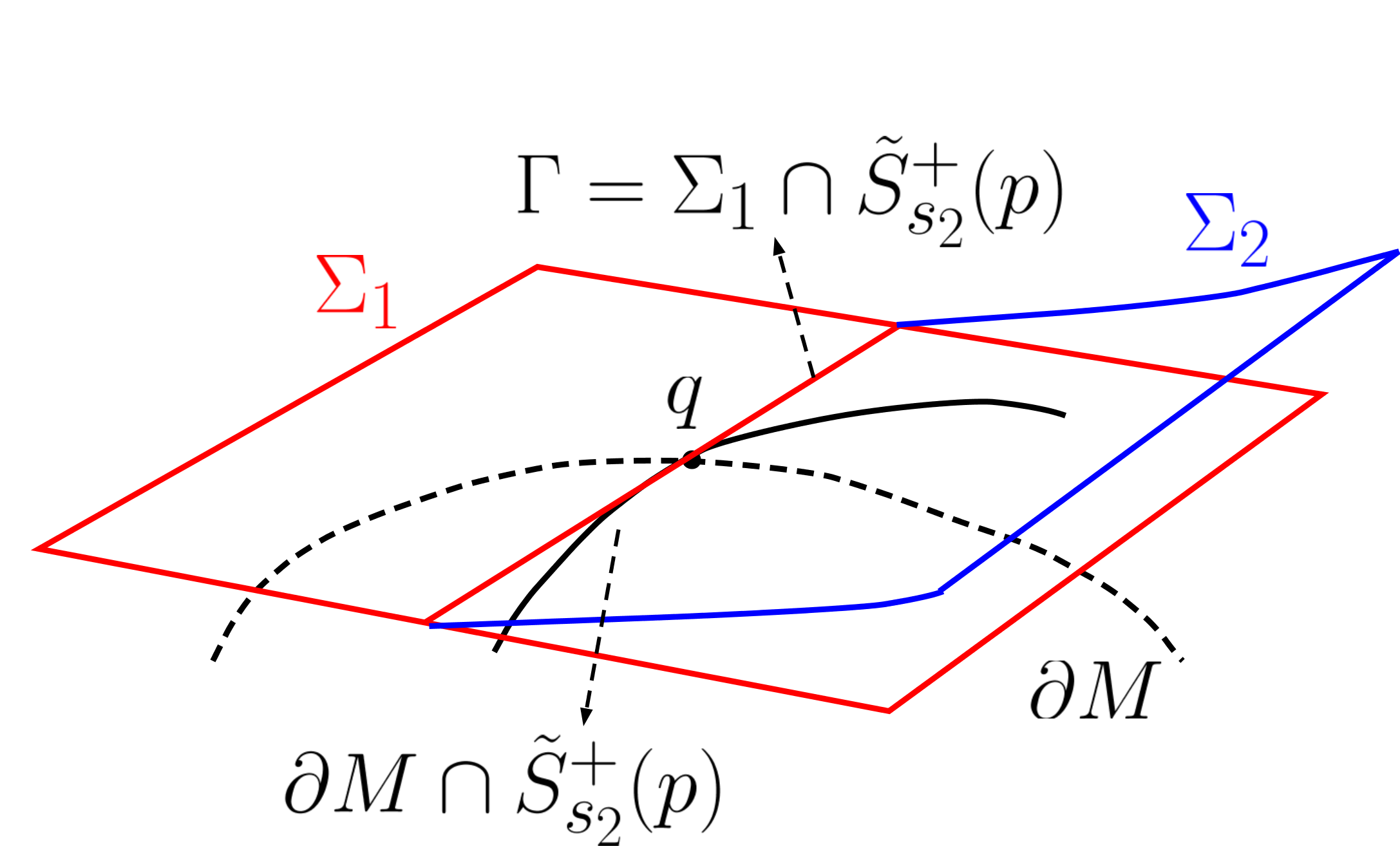}
        \caption{$q$ is a false boundary point.}
        \label{sub case A}
    \end{subfigure}%
    \begin{subfigure}{.50\textwidth}
        \centering
        \includegraphics[width=1\textwidth]{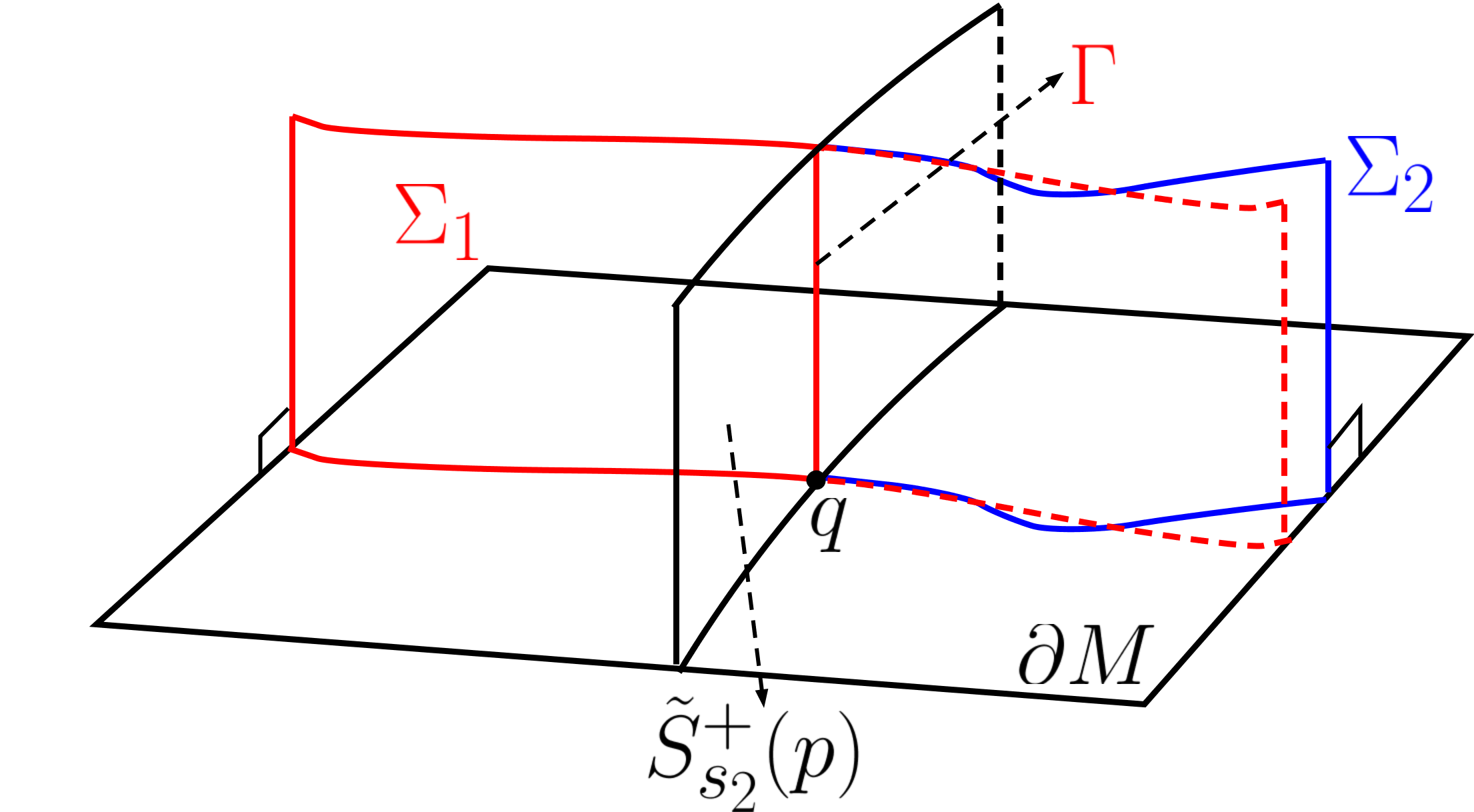}
        \caption{$q$ is a true boundary point}
        \label{sub case B}
    \end{subfigure}
    \caption{Two different cases of gluing $\Sigma_1$ and $\Sigma_2$ near a boundary point.}
\end{figure}

\vspace{.3cm}

\textbf{Sub-case (A)}: $q$ is a false boundary point of $\Sigma_1$ (See Figure \ref{sub case A}).

\vspace{.3cm}

By Remark \ref{R:false-boundary}, $T_q \Sigma_1=T_q(\partial M)$. Moreover, $\Sigma_1$ is a smooth, connected, embedded minimal hypersurface (without boundary) in a neighorhood of $q$ in $\tM$. Define the intersection set
\begin{equation}
\label{E:Gamma-definition}
\Gamma:=\Sigma_1 \cap \tScal^+_{s_2}(p)
\end{equation}
which is a smooth hypersurface (without boundary) in a neighborhood of $q$ in $\Sigma_1$ by the choice of $s_2$. By the maximum principle (\ref{E:max-principle}), we have 
\[ \Clos(\Sigma_2) \cap \tScal^+_{s_2}(p)  \subset \Gamma \]
in a neighborhood of $q$. In fact with a bit more effort we can show that equality holds using transversality and the classification of tangent cones (Proposition \ref{P:tangent-cone}). This implies that we can glue $\Sigma_2$ to $\Sigma_1$ along $\Gamma$ \emph{continuously}. We want to show that the tangent planes of $\Sigma_1$ and $\Sigma_2$ agree as well, i.e. the gluing is $C^1$. Fix any $x \in \Gamma$, we will further assume that $x \in \partial M$ as the case for $x\in M \setminus \partial M$ follows from the interior argument \cite[(7.33)]{Schoen-Simon81}.

\vspace{.3cm}

\textit{Claim 1(A): For any sequence $x_i \to x$ with $x_i \in \Gamma$ and $x \in \Gamma\cap\partial M$, and $r_i\to 0$, we have}
\[ \lim_{i \to \infty} (\bleta_{x_i,r_i})_\sharp V^{**} = \Theta^n(\|V^*\|,x) \; |T_x \Sigma_1|.\]

\textit{Proof of Claim 1(A):}  
By Proposition \ref{P:good-replacement-property} (i), we have
\begin{equation}
\label{E:agreement-with-sigma1}
\spt \|V^{**}\| = \spt \|V^*\| =  \Si_1 \qquad \text{ in } \A_{s_2, t}(p).
\end{equation}
Given $y\in \Gamma$, using the above together with the transversality of $\Sigma_1$ with $\tScal^+_{s_2}(p)$, Proposition \ref{P:tangent-cone} (when $y\in\partial M$) and \cite[Theorem 7.8]{Pitts} (when $y\notin \partial M$), 
we know that 
\begin{equation}
\label{E: convergence to tangent cones for the second replacement}
\VarTan(V^{**}, y) = \{ \Theta^n(\|V^*\|,y ) \; |T_y \Sigma_1| \}.
\end{equation}
Notice that by (\ref{E: convergence to tangent cones for the second replacement}) and the smoothness of $\Sigma_1=\spt \|V^*\| \cap \A_{s,t}(p)$, there exists a positive integer $l$ such that
\begin{equation}
\label{E: density of V* and V**1}
\Theta^n(\|V^{**}\|, y)= \Theta^n(\|V^{*}\|, y)=l 
\end{equation}
for all $y\in\Gamma$ near $x$. 
Given the hypothesis in Claim 1(A), after passing to a subsequence, we have
\begin{equation}
\label{E: limit varifold of V** at xi}
\lim_{i \to \infty} (\bleta_{x_i,r_i})_\sharp V^{**}=C \in\V_n(T_x \tM).
\end{equation}
To prove the claim, we have to show that $C=\Theta^n(\|V^*\|,x) \; |T_x \Sigma_1|$.

Since the blow-up sequence $(\bleta_{x_i,r_i})_\sharp V^{**}$ is taken at a sequence of \emph{varying base points}. There will be two different types of convergence scenario:
\begin{itemize}
\item Type I: $\liminf_{i \to \infty} \dist_{\R^L}(x_i, \Gamma\cap \partial M)/r_i=\infty$,
\item Type II: $\liminf_{i \to \infty} \dist_{\R^L}(x_i, \Gamma\cap \partial M)/r_i < \infty$.
\end{itemize}
In case of Type I, for any $R>0$, $\Gamma\cap B_{r_i R}(x_i) \subset M \setminus \partial M$ for $i$ large enough and thus the blow up sequence does not see the boundary. This case is already covered by the interior regularity theorem (Theorem \ref{T:interior-regularity}) so we are mainly interested in Type II.

For Type II scenario we will carry out the following ``\emph{projection trick}'' to make sure that $x_i \in \partial M$ so that we can apply the monotonicity formula in Theorem \ref{T:monotonicity}. After passing to a subsequence, we can assume that there exists a constant $C>0$ such that
\[ \dist_{\R^L}(x_i, \Gamma\cap \partial M) \leq C r_i \]
for all $i$. Let $x_i' \in \Gamma \cap \partial M$ be the nearest point to $x_i$ so $\dist_{\R^L}(x_i,x_i') \leq C r_i$. Consider the limit
\[ \lim_{i \to \infty} (\bleta_{x_i',r_i})_\sharp V^{**}=C' \in \V_n(T_x \tM). \]
Then $C'$ differs from the limit $C$ in (\ref{E: limit varifold of V** at xi}) by a translation along $T_x \Gamma \subset T_x\Sigma_1$. More precisely, there exists $v \in T_x \Gamma$ such that $(\btau_v)_\sharp C=C'$. Therefore, to prove Claim 1(A) we can assume that $x_i \in \Gamma \cap \partial M$ for all $i$. 

As in Remark \ref{R:tangent-cone}, $C$ is rectifiable and stationary in $T_x M$ with free boundary. We will now show that $C$ is also a \emph{cone}. Note that $x_i \in \Gamma \cap \partial M$. By monotonicity formula (Theorem \ref{T:monotonicity}) and (\ref{E: density of V* and V**1}), we have for all $\tau>0$ sufficiently small (but independent of $i$)
\begin{equation}
\label{E: volume ratio bound for V**}
l=\Theta^n(\|V^{**}\|, x_i)\leq e^{n\La_0 \tau}\frac{\|V^{**}\|(B_\tau(x_i))}{\om_n \tau^n}.  
\end{equation}
On the other hand, using the monotonicity formula with a standard upper semi-continuity argument (c.f. \cite[40.6]{Simon}) we have 
\begin{equation}
\label{E: uniform convergence of volume ratio for V**}
\lim_{\rho\to 0} \frac{\|V^{**}\|(B_\rho(y))}{\om_n \rho^n}=l,
\end{equation}
uniformly in $y$ on compact subsets of $\Gamma\cap \partial M$ near $x$.
Combining (\ref{E: volume ratio bound for V**}) and (\ref{E: uniform convergence of volume ratio for V**}), we deduce that for all $r>0$
\[ l\leq \Theta^n(\|C\|, 0)\leq \frac{\|C\|(B_r(0))}{\om_n r^n}=l, \]
so $C$ is a \emph{cone}, i.e. $(\bmu_{\la})_{\#} C= C$ for all $\la>0$. Therefore, the double $\overline{C}$ of $C$ (relative to the halfspace $T_xM \subset T_x \tM$) is a rectifiable cone which is stationary in $T_x \tM$. As half of $\spt \|\overline{C}\|$ overlaps with a halfspace of $T_x \Sigma_1$ by (\ref{E:agreement-with-sigma1}), using \cite[Lemma 7.9]{Pitts}, $\overline{C}=2 l |T_x\Si_1|$, and hence $C=l  |T_x\Si_1|=\Theta^n(\|V^*\|, x)|T_x\Si_1|$, which established \emph{Claim 1(A)}.

By the same argument as in the proof of claim 1 in Lemma \ref{L:tangent-cone}, we have
\begin{equation}
\label{E:Hausdorff-convergence}
\spt \|(\bleta_{x_i,r_i})_\sharp V^{**}\| \to T_x \Sigma_1 \text{ in the Hausdorff topology} 
\end{equation}
which implies (7.26) in \cite{Schoen-Simon81}. As a direct consequence of Claim 1(A) and Theorem \ref{T:freebdy-cpt}, we have
\begin{equation}
\label{E:no-boundary}
\partial \Sigma_2 = \emptyset \text{ in a neighborhood of $q$.}
\end{equation}

Let $\nu_1$ and $\nu_2$ be the unit normal of $\Sigma_1$ and $\Sigma_2$ respectively which is continuously defined in a neighborhood of $q$. To show that $\Sigma_1$ and $\Sigma_2$ glue together along $\Gamma$ in a $C^1$ manner near $q$. We need to show that:

\vspace{.3cm}

\textit{Claim 2(A): For each $x \in \Gamma\cap \partial M$, we have}
\[ \lim_{\substack{z \to x\\
z \in \Sigma_2}} \nu_2(z) = \nu_1(x).\]
\textit{Moreover, the convergence is uniform in $x$ on compact subsets of $\Gamma$ near $q$.}

\vspace{.3cm}

\begin{figure}[h]
    \centering
        \includegraphics[width=0.6\textwidth]{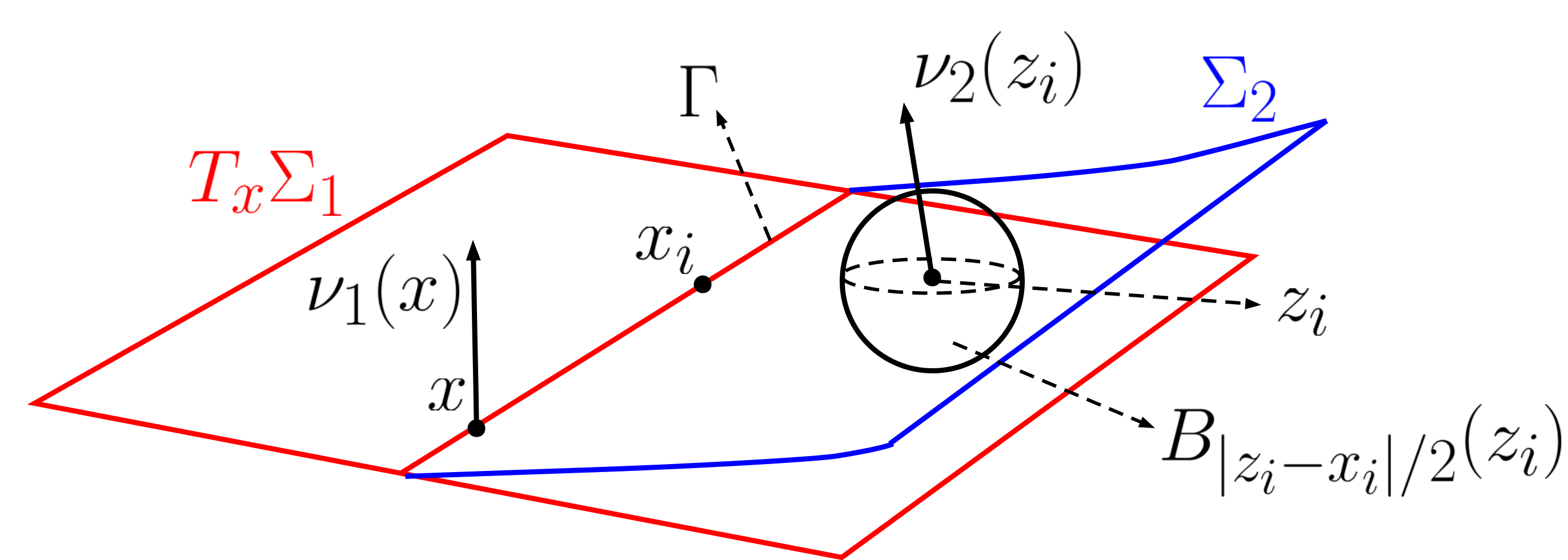}
    \caption{Matching the unit normals along $\Gamma$}
    \label{claim3}
\end{figure}

\textit{Proof of Claim 2(A):} Suppose not, then we can find a sequence $z_i\in \Sigma_2$ converging to some $x\in \Ga$ but no subsequence of $\nu_2(z_i)$ converge to $\nu_1(x)$. Take $x_i \in \Ga$ to be the nearest point projection (in $\R^L$) of $z_i$ to $\Ga$ and $r_i=|z_i-x_i|$. (See Figure \ref{claim3} for an illustration of the following notations.) Note that $x_i \to x \in \Ga \cap \partial M$ and $r_i \to 0$ so we are in the situation of Claim 1(A). By (\ref{E:no-boundary}), for $i$ sufficiently large, $\Si_2 \cap B_{r_i/2}(z_i)$ is a stable minimal hypersurface in $\tM$ without boundary. By the classical curvature estimates in \cite{Schoen-Simon-Yau75} (see also \cite[Theorem 7.5]{Pitts}), a subsequence of the blow-ups $\bleta_{x_i,r_i} (\Sigma_2 \cap B_{r_i/2}(z_i))$ converges smoothly to a smooth stable minimal hypersurface $\Sigma_\infty$ (without boundary) contained in a half-space of $T_x\tM$. On the other hand, (\ref{E:Hausdorff-convergence}) implies that $\bleta_{x_i,r_i} (\Sigma_2 \cap B_{r_i/2}(z_i))$ converges in the Hausdorff topology to a domain in $T_x \Sigma_1$. Therefore, we have $\Sigma_\infty \subset T_x \Sigma_1$. The smooth convergence then implies that $\nu_2(z_i)$ converges to the unit normal $\nu_1(x)$ of $T_x \Sigma_1$. This is a contradiction so \emph{Claim 2(A)} is proved.

\vspace{.3cm}

Now we have $\Sigma_1$ and $\Sigma_2$ glue together as a $C^1$ hypersurface along $\Gamma$. The higher regularity follows from a standard elliptic PDE argument as in \cite{Schoen-Simon81}. This finishes Step 2 for \textbf{Sub-case (A)}.

\vspace{.3cm}

\textbf{Sub-case (B)}: $q$ is a true boundary point of $\Sigma_1$ (See Figure \ref{sub case B}).

\vspace{.3cm}

In this case, $(\Sigma_1,\partial \Sigma_1) \subset (M,\partial M)$ is a smooth properly embedded hypersurface in a neighborhood of $q$. The intersection set $\Gamma$ as defined in (\ref{E:Gamma-definition}) then has boundary lying on $\partial \Sigma_1$. We will show that $\Sigma_1$ and $\Sigma_2$ glue together smoothly at each $x \in \Gamma$. As before, we just have to focus on the case $x \in \Ga \cap \partial M$. By Proposition \ref{P:tangent-cone} and the arguments which lead to (\ref{E: convergence to tangent cones for the second replacement}), we have
\[ \VarTan(V^{**},x)=\{2 \Theta^n(V^*,x) |T_x \Sigma_1|\}. \]
We have the following claim similar to Claim 1(A). Here $\overline{T_x\Si_1}$ is the unique $n$-dimensional subspace in $\R^L$ containing the half-space $T_x\Si_1$.

\vspace{.3cm}

\textit{Claim 1(B): For any sequence $x_i \to x$ with $x_i \in \Gamma$ and $x \in \Gamma\cap\partial M$, and $r_i\to 0$, we have}
\[ \lim_{i \to \infty} (\bleta_{x_i,r_i})_\sharp V^{**} =\left\{ \begin{array}{cl}
2\Theta^n(\|V^*\|,x) |\overline{T_x \Sigma_1}| & \text{ for Type I convergence} \\
2\Theta^n(\|V^*\|,x) |\btau_v(T_x \Sigma_1)| & \text{ for Type II convergence} \end{array} \right. \]
where $v \in T_x \Gamma$ is the limit of the sequence of vectors $(x_i-x)/r_i \in \R^L$.

\textit{Proof of Claim 1(B):} It is analogous to the proof of Claim 1(A). The only difference is that in the scenario of Type II convergence, when we apply the ``\emph{projection trick}'' to make sure that the base points lie on $\partial M$, the new limit $C'$ differs from $C$ by a translation by some $v \in T_x \Ga$, which may no longer preserve the half-space $T_x \Si_1$. This proves the claim.

From the above claim together with Theorem \ref{T:freebdy-cpt}, one sees immediately that $q \in \Clos(\partial \Sigma_2)$. The same argument as in Claim 2(A), using the Compactness Theorem \ref{T:freebdy-cpt}, then implies that $\Sigma_1$ and $\Sigma_2$ glue together as $C^1$ hypersurface with boundary. The higher regularity then follows from the standard PDE (with boundary conditions) by writing the hypersurfaces $\Sigma_1$ and $\Sigma_2$ as graphs over domains of $\overline{T_x \Si_1}$. This finishes Step 2 for \textbf{Sub-case (B)}.

\vspace{.3cm}

\noindent \textbf{Step 3:} \textit{Unique continuation up to the point $p$.}

\vspace{.3cm}

Using Step 2, (\ref{E:max-principle}) and that $\Clos(\Si_2) \cap \tScal^+_{s_2}(p)=\Gamma$, by varying $s_1$ we obtain a smooth almost properly embedded hypersurface $(\Sigma_{s_1},\partial \Sigma_{s_1}) \subset (M,\partial M)$ which is stable in $\A_{s_1,s_2}(p)$. Suppose $s_1'<s_1<s$, then we have $\Sigma_{s_1'}=\Sigma_{s_1}$ on $\A_{s_1,t}(p)$ by the unique continuation of minimal hypersurfaces. Hence, if
\[ \Sigma:=\bigcup_{0<s_1<s} \Sigma_{s_1},\]
then $(\Sigma,\partial \Sigma) \subset (M,\partial M)$ is a smooth almost properly embedded hypersurface which is stable in $\tBcal^+_{s_2}(p) \setminus \{p\}$.

\vspace{0.3cm}

\textit{Claim 3: $\spt \|V\|=\Sigma$ in the punctured ball $\tBcal^+_s(p) \setminus \{p\}$.}

\vspace{0.3cm}

\textit{Proof of Claim 3:} Consider the set
\[ T^V_p=\left\{ y \in \spt\|V\| : \begin{array}{l}
\VarTan(V, y) \text{ consists of an $n$-plane or}\\
\text{ a half $n$-plane transversal to } \tScal^+_{\tir_p(y)}(p)
\end{array} \right\};\]
using a first variation argument as in \cite[Lemma B.2]{Colding-deLellis03} and the convexity of small Fermi half-balls (Lemma \ref{L:Fermi-convex}), it can be easily shown that $T^V_p$ is a dense subset of $\spt \|V\|\cap \tBcal^+_s(p)$. We now show that $T^V_p \cap (\tBcal^+_s(p) \setminus \{p\}) \subset\Sigma$, and hence $\spt\|V\|\cap (\tBcal^+_s(p) \setminus \{p\})\subset\Sigma$. 

Fix $y \in T^V_p \cap (\tBcal^+_s(p) \setminus \{p\})$, and let $\rho=\tir_p(y)$. Let $V^*$ be the replacement of $V$ in $\A_{s,t}(p)$ and $V^{**}$ be the replacement of $V^*$ in $\A_{\rho,s_2}(p)$ where $s_2 \in (s,t)$ is chosen as in Step 1. Since $V^{**}=V^*=V$ inside $\tBcal^+_\rho(p)$ and $y \in T^V_p$, 
\[ y  \in \Clos(\spt\|V\|\cap \tBcal^+_\rho(p))=\Clos(\spt\|V^{**}\|\cap \tBcal^+_\rho(p)).\]
Since $\spt \|V^{**}\|= \Sigma$ in $\A_{\rho,t}(p)$, by (\ref{E:max-principle}) and above, we have $y \in \Sigma$. 

To show the reverse inclusion $\Si\subset \spt\|V\|$, first observe that at every $y\in \Sigma$ where $T_y\Sigma$ is transversal to $\tScal^+_{\tir_p(y)}(p)$, we must have $y\in \spt\|V\|$ by the same argument above.
By the Constancy Theorem \cite[41.1]{Simon}, $\spt\|V\|\cap (\tBcal^+_s(p) \setminus \{p\})$ is identical to $\Si$ in $M \setminus \partial M$. For $y\in \Sigma\cap \partial M$, $T_y\Sigma$ is either a half $n$-plane perpendicular to $T_y(\partial M)$ or $T_y(\partial M)$. In the first case $y$ is a limit point of $\Si \cap int(M)$ and thus $y\in \spt\|V\|$. In the second case $T_y\Sigma=T_y(\partial M)$ is transversal to $\tScal^+_{\tir_p(y)}(p)$ and hence $y\in \spt\|V\|$ as well. This proves our claim.

\vspace{.3cm}

\noindent \textbf{Step 4:} \textit{Removable singularity at $p$.}

\vspace{.3cm}

From Step 3 we know that $\spt\|V\|$ coincides with a smooth hypersurface $\Sigma$ in the punctured Fermi half-ball $\tBcal^+_t(p) \setminus \{p\}$. Finally we want to prove that $p$ is in fact a removable singularity. We first need to establish the following claim: (note that this claim is non-trivial as we do not have a constancy theorem for stationary varifolds with free boundary due to the presence of false boundary point.)

\vspace{.3cm}

\textit{Claim 4: At each false boundary point $q$ of $(\Sigma,\partial \Sigma) \subset (M,\partial M)$, $V$ is stationary in a neighborhood of $q$ as a varifold in $\tM$.}

\vspace{.3cm}

\textit{Proof of Claim 4:} 
As in Step 3 we know that if we choose different $s_1$ in the construction of the second replacement $V^{**}$, then the replacements actually agree on the overlapping annulus. Hence, we can uniquely continue $V^{**}$ up to the point $p$ as in Step 3. Denote the unique continuation of $V^{**}$ by $\tilde{V}$. By Theorem \ref{T:freebdy-cpt}, it is clear that $\Theta^n(\|\tilde{V}\|,\cdot)$ is constant on each connected component of $\interior(\Sigma)$. By similar argument as in (\ref{E: density of V* and V**1}) we have
\[ \Theta^n(\|V\|,x)=\Theta^n(\|\tilde{V}\|,x) \quad \text{ for all $x \in T^V_p$}.\]
Let $q$ be any false boundary point $\Sigma$, we must have $q \in T^V_p$ since $T_q \Sigma=T_q (\partial M)$. By smoothness of $\Sigma$, there exists a connected neighborhood of $q$ in $\Sigma$ such that all the point $x$ in this neighborhood belongs to $T_p^V$. Therefore, the equation above implies that $\Theta^n(\|V\|,\cdot)$ must also be constant on this neighborhood, hence $V$ is stationary in this neighborhood as a varifold in $\tM$.
This proves \emph{Claim 4}.

Therefore, we can apply the classical constancy theorem outside the set of true boundary points of $(\Sigma,\partial \Sigma) \subset (M,\partial M)$ to obtain
\[ V \lc (\tBcal^+_t(p) \setminus \{p\}) = \sum_{i=1}^N n_i |\Sigma_i|\]
where each $(\Sigma_i,\partial \Sigma_i) \subset (M,\partial M)$ is a smooth, connected, almost properly embedded hypersurface which is stable in $\tBcal^+_{s_2}(p) \setminus \{p\}$. We now prove the following claim, which is a strengthened version of Proposition \ref{P:tangent-cone} at $p$:

\vspace{.3cm}

\textit{Claim 5: One and only one of the following cases could happen:}
\[ \VarTan(V, p)=\{\Theta^n(\|V\|, p) |T_p(\partial M)|\}; \text{ or}\]
\[ \VarTan(V,p) \subset \left\{ 2\Theta^n(\|V\|,p) |S \cap T_pM| \left| \begin{array}{c} S \in \Gr(L,n), S \subset T_p \tM, \\
S \perp T_p(\partial M) \end{array} \right.\right\}. \]

\vspace{.3cm}

\textit{Proof of Claim 5:} Suppose not, then there exist sequences $r_i \to 0$, $r_i'\to 0$ such that
\begin{equation}
\label{E:first-case} \lim_{i \to \infty} (\bleta_{p,r_i})_\sharp V = \Theta^n(\|V\|, p) |T_p(\partial M)|, 
\end{equation}
\[ \lim_{i \to \infty} (\bleta_{p,r_i'})_\sharp V = 2\Theta^n(\|V\|,p) |S \cap T_pM| \]
for some $n$-dimensional subspace $S \subset T_p \tM$ which is perpendicular to $T_p(\partial M)$. By passing to a subsequence if necessary, we can assume that 
\begin{equation}
\label{E:ri}
  2 r'_{i+1} < r_i < \frac{1}{2}  r_i'
 \end{equation}
for all $i$. Recall that $(\Sigma,\partial \Sigma) \subset (M,\partial M)$ is stable in $\tBcal^+_{s_2}(p) \setminus \{p\}$. By Theorem \ref{T:freebdy-cpt} we know that for all $i$ sufficiently large
\begin{itemize}
\item $\partial \Si = \emptyset$ in $A_{r_i, 2r_i}(p)\cap \tM$, and
\item $\partial \Si \neq \emptyset$ in $A_{r^\pr_i, 2r^\pr_i}(p)\cap \tM$.
\end{itemize}
Note that $A_{r_i, 2r_i}(p)$ and $A_{r^\pr_i, 2r^\pr_i}(p)$ are mutually disjoint annuli by (\ref{E:ri}). Therefore if we let 
\[ R_i:=\sup\{R>0 \; : \; \partial \Si=\emptyset \text{ in }  A_{r_i, R}(p)\cap \tM\}.\]
Clearly, $2r_i \leq R_i \leq 2r_{i-1}'$. Furthermore, we have $\lim_{i\rightarrow \infty}R_i/r_i=\infty$. Otherwise, the sequence $(\bleta_{p,r_i})_\sharp \Sigma$ will converge on a compact subset of $\mathbb{R}^L \setminus\{0\}$ to a stable minimal hypersurface with non-empty boundary, contradicting (\ref{E:first-case}). Consider the blow-up limit (after passing to a subsequence)
\[ C:=\lim_{i \to \infty} (\bleta_{p,R_i})_\sharp V,\]
by Theorem \ref{T:freebdy-cpt} and the definition of $R_i$ we have
\begin{itemize}
\item $\partial (\spt \|C\|) =\emptyset$ in $A_{1/2,1}(0)$, and
\item $\partial (\spt \|C\|) \neq \emptyset$ in $A_{1,2}(0)$.
\end{itemize}
This contradicts Proposition \ref{P:tangent-cone} as $C \in \VarTan(V,p)$ must have support either as $T_p (\partial M)$ (which has no boundary) or a half $n$-plane in $T_p \tM$ perpendicular to $T_p(\partial M)$ (which has non-empty boundary). This completes the proof of \emph{Claim 5}.

\vspace{0.3cm}

If the first case of Claim 5 occurs, then by Theorem \ref{T:freebdy-cpt} again, $\partial \Si = \emptyset$ in a punctured neighborhood of $p$, and hence $V$ is stationary in that neighborhood as a varifold \emph{in $\tM$}. By the uniform volume ratio bound at $p$ (Lemma \ref{L:uniform-density-bdd}) and a standard extension argument (c.f. the proof in \cite[Theorem 4.1]{Harvey-Lawson}), we can extend the stationarity across $p$, and show that $V$ is stationary in a neighborhood of $p$ in $\tM$. Then the removable singularity argument reduces to that in \cite[Theorem 7.12]{Pitts}. In fact, our situation is easier as $V$ has a unique tangent cone (an integer multiple of $T_p(\partial M)$) at $p$.

Now suppose the second case of Claim 5 occurs, then we have $p \in \Clos(\partial \Sigma)$. By Proposition \ref{P:tangent-cone} we know that
\[2 \Theta^n (\|V\|,p)=m \]
for some $m \in \mathbb{N}$. Since $\Sigma$ is stable in a punctured ball of $p$, by the proof of Theorem \ref{T:freebdy-cpt} and that $p \in \Clos(\partial \Sigma)$, for any sequence $r_i \to 0$, 
\[ (\bleta_{p,r_i})_\sharp \Sigma \to m|S \cap T_p M| \]
locally smoothly in $\mathbb{R}^L \setminus \{0\}$ for some $n$-plane $S \subset T_p \tM$ which is perpendicular to $T_p(\partial M)$. However, $S$ may depend on the sequence $r_i$. This implies that $\Sigma$ has no false boundary point in a neighborhood of $p$.
Moreover, there exists $\si_0>0$ small enough, such that for any $0<\si\leq \si_0$, $V$ has an $m$-sheeted, pairwise disjoint, graphical decomposition in $A_{\si/2, \si}(p)$:
\begin{equation}
\label{E: annuli-decomposition}
V\lc A_{\si/2, \si}(p)=\sum_{i=1}^{l(\si)}m_i(\si) |\Si_i(\si)|.
\end{equation}
Here $\Si_i(\si)$ is a graph over $A_{\si/2, \si}(p)\cap S$ for some $m$-plane $S\subset T_p\tM$ with $S\perp T_p(\partial M)$; $m_i(\si), l(\si)$ are positive integers such that
\begin{equation}
\label{E: sum of density}
\sum_{i=1}^{l(\si)}m_i(\si)=m.
\end{equation}
Since (\ref{E: annuli-decomposition}) is true for all $\si$, by continuity of $\Si$, $m_i(\si), l(\si)$ are independent of $\si$, so we can continue each $\Si_i(\si_0)$ to $(B_{\si_0}(p)\setminus\{p\})\cap M$ and get $\Si_i$. Since the properly embedded minimal hypersurfaces $(\Si_i,\partial \Si_i) \subset (M,\partial M)$ are pairwise disjoint and stable in $(B_{\si_0}(p)\setminus\{p\})\cap M$ with $\partial \Sigma_i \neq \emptyset$, by the standard extension argument as before, each $\Si_i$ can be extended as a stationary varifold in $B_{\si_0}(p)\cap M$ with free boundary. Since we are in the second case of Claim 5, every $C_i\in \VarTan(\Si_i, p)$ is supported on a half $n$-plane in $T_p \tM$ perpendicular to $T_p(\partial M)$. To see that $C_i$ has multiplicity one, first notice that
\begin{equation}
\label{E:density=1}
2  \Theta^n(\|C_i\|,p) \geq 1, 
\end{equation}
since each $\Si_i$ is stable and thus its rescalings converge with multiplicity to a smooth stable hypersurface by Theorem \ref{T:freebdy-cpt}. If equality does not hold for some $i$ in (\ref{E:density=1}), this will contradict (\ref{E: sum of density}) as 
\[ V\lc B_{\si_0}(p)= \sum_{i=1}^l m_i |\Si_i|.\]
Therefore, each $\Si_i$ is stationary in $B_{\si_0}(p) \cap M$ with free boundary and $\Theta^n(\||\Sigma_i|\|,p)=1/2$; by the Allard type regularity theorem for stationary rectifiable varifolds with free boundary \cite[Theorem 4.13]{Gruter-Jost86}, $\Si_i$ extends as a smooth hypersurface across $p$. Finally, by the maximum principle for free boundary minimal hypersurfaces, we have $l=1$ and this finishes the proof of Theorem \ref{T:main-regularity}.
\end{proof}

\appendix


\section{Fermi coordinates}
\label{A:Fermi}

In this appendix, we give a rather self-contained exposition of \emph{Fermi coordinates} near a boundary point of a Riemannian manifold with boundary that are used in this paper.

\begin{definition}[Fermi coordinates]
\label{D:Fermi}
Let $p \in \partial M$. Suppose $(x_1,x_2,\cdots,x_n)$ is the geodesic normal coordinates of $\partial M$ centered at $p$, which is defined in a small neighborhood of $p$ in $\partial M$. Let $t=\dist_M(\cdot,\partial M)$, which is well-defined and smooth in a small relatively open neighborhood of $p$ in $M$. Then $(x_1,x_2,\cdots,x_n,t)$ is said to be the Fermi coordinate system of $(M,\partial M)$ centered at $p \in \partial M$. Moreover we define the \emph{Fermi distance function from $p$} on a relatively open neighborhood of $p$ in $M$ by
\begin{equation}
\label{E:rtilde}
\tir=\tir_p(q):=|(x,t)|=\sqrt{x_1^2+\cdots+x_n^2+t^2}.
\end{equation}
The \emph{Fermi exponential map at $p$}, $\tE_p$, which gives the Fermi coordinate system is defined on a half-ball of $\R^{n+1}_+ \cong T_pM$ centered at the origin. In particular, $\tE_p(x_1,\cdots,x_n,t)$ is the point on $M$ whose Fermi coordinates as defined above is given by $(x_1,\cdots,x_n,t)$.
\end{definition}

\begin{lemma}[Metric and connection in Fermi coordinates]
\label{L:Fermi}
Let $g_{ab}$ and $\Gamma_{ab}^c$ be the components of the metric and connection in Fermi coordinates $(x,t)=(x_1,\cdots,x_n,t)$ centered at some $p \in \partial M$. Then, we have for $i=1,\cdots,n$,
\begin{equation}
\label{E:g-Gamma} 
g_{tt} \equiv 1, \; g_{it} \equiv 0, \; \Gamma_{it}^t=\Gamma_{tt}^i=\Gamma_{tt}^t \equiv 0.
\end{equation}
Moreover, there exist constants $r_1,C>0$ depending only on the isometric embedding $M \hookrightarrow \mathbb{R}^L$ such that for any $\tir \in [0,r_1)$,
\begin{displaymath}
\begin{split}
 |g_{ij}(x,t)-\de_{ij}| & \leq C \tir, \\
|\Gamma_{ij}^k(x,t)| \leq C \tir,  \quad | \Gamma_{it}^j (x,t)| &+|\Gamma_{ij}^t(x,t)| \leq C.
\end{split}
\end{displaymath}
\end{lemma}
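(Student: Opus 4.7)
The plan is to exploit the geometric meaning of Fermi coordinates: the curves $t \mapsto \tE_p(x_1, \dots, x_n, t)$ are unit-speed geodesics in $M$ emanating orthogonally from $\partial M$, and along $\partial M$ (i.e.\ at $t = 0$), the $(x_1, \dots, x_n)$-coordinates are geodesic normal coordinates for the induced metric on $\partial M$ based at $p$.

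First I would establish the identities in (\ref{E:g-Gamma}). The identity $g_{tt} \equiv 1$ is immediate because $\partial/\partial t$ is the velocity of a unit-speed geodesic. For $g_{it} \equiv 0$, a Gauss-lemma type argument is used: since $[\partial_t, \partial_i] = 0$ and $\nabla_{\partial_t} \partial_t = 0$ along the $t$-geodesics,
\[
\partial_t g_{it} \;=\; g(\nabla_{\partial_t} \partial_i, \partial_t) \;=\; g(\nabla_{\partial_i} \partial_t, \partial_t) \;=\; \tfrac{1}{2} \partial_i g_{tt} \;=\; 0,
\]
and the boundary value $g_{it}|_{t=0} = 0$ follows from orthogonality of the $t$-geodesics to $\partial M$. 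Thus $g_{it} \equiv 0$ throughout. Plugging these into the standard formula $\Gamma_{ab}^c = \tfrac{1}{2} g^{cd}(\partial_a g_{bd} + \partial_b g_{ad} - \partial_d g_{ab})$, together with $g^{tt} = 1$ and $g^{it} = 0$, yields $\Gamma_{it}^t = \Gamma_{tt}^t = \Gamma_{tt}^i = 0$.

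Next, for the pointwise bounds, I use that $(x_1, \dots, x_n)$ are geodesic normal coordinates on $\partial M$ based at $p$, so at the origin $g_{ij}(0,0) = \delta_{ij}$ and the tangential derivatives $\partial_k g_{ij}(0,0) = 0$ for $k = 1, \dots, n$. Smoothness of $g$ and Taylor expansion in $(x, t)$ then give $|g_{ij}(x,t) - \delta_{ij}| \le C \tir$ for $\tir < r_1$ small. The tangential Christoffel symbols satisfy $\Gamma_{ij}^k = \tfrac{1}{2} g^{k\ell}(\partial_i g_{j\ell} + \partial_j g_{i\ell} - \partial_\ell g_{ij})$ (summed over $\ell = 1, \dots, n$ only, since $g^{kt} = 0$); by the above this vanishes at the origin, so Taylor expansion again gives $|\Gamma_{ij}^k(x,t)| \le C \tir$. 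The remaining Christoffel symbols reduce to $\Gamma_{it}^j = \tfrac{1}{2} g^{jk} \partial_t g_{ik}$ and $\Gamma_{ij}^t = -\tfrac{1}{2} \partial_t g_{ij}$, whose boundary values at $t = 0$ are (up to sign) the second fundamental form of $\partial M$; these are smooth and therefore uniformly bounded without any extra factor of $\tir$.

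Finally, the uniformity of $r_1$ and $C$ across base points $p \in \partial M$ follows from compactness of $\partial M$ and the smooth dependence of the Fermi exponential map $\tE_p$ on $p$; since the metric and $\partial M$ are determined by the isometric embedding $M \hookrightarrow \mathbb{R}^L$, the constants can be taken to depend only on this embedding. The main place requiring care is the Gauss-lemma computation for $g_{it} \equiv 0$; everything else is a routine combination of Taylor expansion and compactness.
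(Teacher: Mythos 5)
Your proof is correct, but you take a genuinely different route to the quantitative estimates than the paper does. For the identities in~(\ref{E:g-Gamma}) the arguments coincide: you give a slightly more spelled-out Gauss-lemma computation for $g_{it} \equiv 0$ (the paper simply invokes $t = \dist_M(\cdot,\partial M)$), and the Christoffel identities follow algebraically in both. The divergence is in how the bounds on $g_{ij}$ and $\Gamma_{ab}^c$ are obtained. You observe that $g_{ij}(0,0) = \delta_{ij}$ (and, for the tangential Christoffel symbols, that $\partial_k g_{ij}(0,0) = 0$ so $\Gamma_{ij}^k(0,0) = 0$), and then invoke smoothness of the Fermi metric coefficients in $(x,t,p)$ plus compactness of $\partial M$ to deduce Lipschitz bounds --- a clean, elementary Taylor-expansion argument. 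The paper instead treats $\partial/\partial x_i$ as Jacobi fields along the $t$-geodesics and derives the Riccati-type equation $\partial_t^2 g_{ij} = -2R_{titj} + 2g^{rs}(\partial_t g_{ir})(\partial_t g_{sj})$, controlling $|g_{ij}(x,t) - g_{ij}(x,0)|$ via the curvature bound, and separately invokes standard normal-coordinate estimates on the slice $t = 0$. Your approach is shorter and avoids the ODE machinery; the paper's approach makes the constants' dependence on curvature (and its derivatives, for the Christoffel estimates) explicit rather than hidden behind a compactness argument, which is a standard preference when one wants constants that transfer to noncompact or curvature-controlled settings. Both yield constants depending only on the isometric embedding, which is all the lemma requires. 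One small note: for the $|g_{ij}(x,t) - \delta_{ij}| \le C\tir$ bound the vanishing of the tangential first derivatives $\partial_k g_{ij}(0,0) = 0$ is not actually needed (it would be needed for a quadratic bound); it is needed, however, for $\Gamma_{ij}^k(0,0) = 0$, so it is fine that you record it.
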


\begin{proof}
Throughout this proof, we will denote $r_1$ and $C$ to be any constants depending only on the embedding $M \hookrightarrow \mathbb{R}^L$. Since $t=\dist_M(\cdot,\partial M)$, we have $g_{tt} \equiv 1$ and $g_{it} \equiv 0$, from which it follows that $\Gamma_{it}^t=\Gamma_{tt}^i=\Gamma_{tt}^t \equiv 0$. Moreover, since $\frac{\partial}{\partial x_i}$ are Jacobi fields along the geodesics $\gamma_x(t)=(x,t)$, it satisfies the Jacobi equation $\nabla_{\frac{\partial}{\partial t}} \nabla_{\frac{\partial}{\partial t}} \frac{\partial}{\partial x_i} = -R\left(\frac{\partial}{\partial t},\frac{\partial}{\partial x_i}\right) \frac{\partial}{\partial t}$, where $R$ is the Riemann curvature tensor of $M$. By standard estimates for geodesic normal coordinates, for $|x|<r_1$ we have
\begin{equation}
\label{E:g-ij-a}
|g_{ij}(x,0)-\de_{ij}| \leq C |x| 
\end{equation}
as $\partial M$ has bounded sectional curvature. On the other hand, $\partial_t g_{ij}(x,0)=h_{ij}(x)$, where $h_{ij}$ is the components of the second fundamental form of $\partial M$ under the coordinates $x=(x_1,\cdots,x_n)$. Using the Jacobi equation, one can derive as in \cite[Lemma 2.2]{Marques05} the Riccati-type equation
\[ \partial_t^2 g_{ij}=-2 R_{titj} +2g^{rs} (\partial_t g_{ir}) (\partial_t g_{sj}).\]
Since the curvature is bounded, we have for all $t < r_1$,
\begin{equation}
\label{E:g-ij-b}
|g_{ij}(x,t) - g_{ij}(x,0)| \leq C t.
\end{equation}
Combining (\ref{E:g-ij-a}) and (\ref{E:g-ij-b}) gives the estimate for $g_{ij}$. A similar argument gives the estimates for the Christoffel symbols using the bounds on the derivatives of the curvatures.
\end{proof}

\begin{lemma}[Gradient and Hessian estimates for $\tir$]
\label{L:r-tilde}
The Fermi distance function $\tir$ is smooth (in where it is defined) and there exist constants $r_1,C>0$ depending only on the isometric embedding $M \hookrightarrow \mathbb{R}^L$ such that for any $\tir \in [0,r_1)$,
\begin{itemize}
\item[(i)] $\|\nabla \tir - \frac{\partial}{\partial \tir}\|_g \leq C \tir$. 
\item[(ii)] $\| \Hess \tir - \frac{1}{\tir} g_{\tir} \|_g \leq C$,
\end{itemize}
where $g_{\tir}$ is the round metric on a Euclidean sphere of radius $\tir$.
\end{lemma}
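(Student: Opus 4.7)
The plan is to work directly in Fermi coordinates $(x_1,\dots,x_n,t)$ centered at a boundary point, setting $y=(x,t)\in\R^{n+1}$ so that $\tir=|y|$ in the coordinate chart. Smoothness of $\tir$ away from $\tir=0$ is immediate from the smoothness of the Fermi exponential map $\tE_p$ guaranteed by Lemma \ref{L:Fermi}, since $\tir$ is the pullback of the Euclidean norm under $\tE_p^{-1}$ on a small relatively open neighborhood.

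For the gradient estimate (i), I would expand
\[ \nabla\tir - \frac{\partial}{\partial \tir} = (g^{ab}-\delta^{ab})\, (\partial_a \tir)\, \partial_b , \]
using $\partial_a \tir = y_a/\tir$ and the identification $\partial/\partial\tir = \tir^{-1} y^a\, \partial_a$. Lemma \ref{L:Fermi} gives $g_{tt}\equiv 1$, $g_{it}\equiv 0$ and $|g_{ij}-\delta_{ij}|\leq C\tir$, hence by matrix inversion $|g^{ab}-\delta^{ab}|\leq C\tir$ for all $a,b$ (with $g^{tt}=1$, $g^{it}=0$). Since $|\partial_a \tir|\leq 1$, the components of $\nabla\tir - \partial/\partial \tir$ in the coordinate basis are $O(\tir)$, and passing to the $g$-norm costs only a uniformly bounded constant because $g$ is $C$-close to Euclidean for $\tir<r_1$. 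This yields (i).

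For the Hessian estimate (ii), the key identity is
\[ \Hess_{ab}\tir = \partial_a\partial_b \tir - \Gamma_{ab}^{c}\, \partial_c \tir , \qquad \partial_a\partial_b \tir = \frac{1}{\tir}\Bigl(\delta_{ab} - \frac{y_a y_b}{\tir^2}\Bigr), \]
and the right-hand side of the second equation is, by definition, $\tir^{-1}g_{\tir,ab}$ (the Euclidean round metric on the sphere of radius $\tir$ written in ambient coordinates). Therefore
\[ \Hess_{ab}\tir - \frac{1}{\tir}\, g_{\tir,ab} = -\Gamma_{ab}^{c}\, \partial_c\tir, \]
so it suffices to bound $|\Gamma_{ab}^c\,\partial_c\tir|$ uniformly. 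Here I exploit the vanishing Christoffels $\Gamma_{tt}^t=\Gamma_{tt}^i=\Gamma_{it}^t\equiv 0$ from Lemma \ref{L:Fermi}, so the only contributions come from $\Gamma_{ij}^k$, $\Gamma_{it}^j$ and $\Gamma_{ij}^t$, of sizes $O(\tir)$, $O(1)$ and $O(1)$ respectively. Combined with $|\partial_c\tir|\leq 1$, each of these contributions is $O(1)$, giving a coordinate-component bound, which translates to $\|\Hess\tir-\tir^{-1}g_{\tir}\|_g\leq C$ after absorbing the $O(\tir)$ discrepancy between $g$ and the Euclidean metric into the constant.

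The only subtle step is the Hessian bound, because the Christoffel symbols $\Gamma_{it}^j$ and $\Gamma_{ij}^t$ are merely bounded (not $O(\tir)$); however, the vanishing of $\Gamma_{it}^t$, $\Gamma_{tt}^i$, $\Gamma_{tt}^t$ forces the worst-case combinations never to appear, so the overall bound is $C$ rather than $C/\tir$. The gradient estimate is purely a consequence of the $g=\delta+O(\tir)$ expansion, and poses no difficulty beyond bookkeeping.
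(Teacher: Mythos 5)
Your proof is correct and uses essentially the same approach as the paper: both compute in Fermi coordinates using the identity $\Hess_{ab}\tir = \partial_a\partial_b\tir - \Gamma_{ab}^c\partial_c\tir$ together with the estimates of Lemma \ref{L:Fermi} (the paper simply writes out the $tt$, $it$, $ij$ components separately rather than in unified form). One small inaccuracy in your closing remarks: the vanishing of $\Gamma_{tt}^t$, $\Gamma_{tt}^i$, $\Gamma_{it}^t$ is not what prevents a $C/\tir$ bound---since $|\partial_c\tir|\le 1$ always, any $O(1)$ bound on the Christoffel symbols already yields $|\Gamma_{ab}^c\partial_c\tir|\le C$; the vanishing merely makes the $\Hess_{tt}$ component exactly $\tir^{-1}g_{\tir,tt}$ rather than approximately so.
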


\begin{proof}
Since 
\begin{equation}
\label{E:grad-rtilde}
\nabla \tir = \sum_{i,j=1}^n g^{ij}(x,t) \frac{x_i}{\tir} \frac{\partial}{\partial x_j} +\frac{t}{\tir} \frac{\partial}{\partial t},
\end{equation}
(i) follows directly from Lemma \ref{L:Fermi}. For (ii), a direct calculation gives
\[ \Hess \tir \left(\frac{\partial}{\partial t}, \frac{\partial}{\partial t}\right) = \frac{\tir^2-t^2}{\tir^3}, \qquad \left| \Hess \tir \left(\frac{\partial}{\partial x_i}, \frac{\partial}{\partial t}\right) + \frac{x_i t}{\tir^3} \right| \leq C, \]
\[ \left| \Hess \tir \left(\frac{\partial}{\partial x_i}, \frac{\partial}{\partial x_j}\right)- \frac{\delta_{ij} \tir^2-x_i x_j}{\tir^3}\right| \leq C, \]
which easily implies (ii).
\end{proof}

\begin{definition}
\label{D:Fermi-balls}
For each $p \in \partial M$, we define the \emph{Fermi half-ball and half-sphere of radius $r$ centered at $p$} respectively by
\[ \tBcal^+_r(p):= \{ q \in M \; : \; \tir_p(q) < r \}, \qquad \tScal^+_r(p):=\{ q \in M \; : \; \tir_p(q) = r \}.\]
\end{definition}

We summarize their geometric properties which are most relevant to our application in the lemma below. 


\begin{lemma}
\label{L:Fermi-convex}
There exists a small constant $r_{\textrm{Fermi}}>0$, depending only on the isometric embedding $M \subset \mathbb{R}^L$, such that for all $0<r<r_{\textrm{Fermi}}$
\begin{itemize}
\item $\tScal^+_r(p)$ is a smooth hypersurface meeting $\partial M$ orthogonally,
\item $\tBcal^+_r(p)$ is a relatively convex domain in $M$,
\item $B_{r/2}(p) \cap M \subset \tBcal^+_r(p) \subset B_{2r}(p) \cap M$.
\end{itemize}
\end{lemma}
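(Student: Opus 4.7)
The three statements are all encoded in the behavior of $\tir_p$ near $p$ and reduce to routine computations once one invokes Lemma \ref{L:Fermi} and Lemma \ref{L:r-tilde}. I would treat each bullet separately, using the Fermi exponential map $\tE_p$ as a model diffeomorphism from a half-ball $\{\tir<r\}\subset \mathbb{R}^{n+1}_+$ onto a relative neighborhood of $p$ in $M$; under $\tE_p$ the Fermi distance $\tir_p$ pulls back to the Euclidean distance from the origin, so $\tBcal^+_r(p)$ and $\tScal^+_r(p)$ are diffeomorphic images of the standard half-ball and half-sphere. In particular, smoothness of $\tScal^+_r(p)$ for every $r$ in the range of validity of the Fermi chart is immediate.

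For the orthogonality in bullet (1), I would work in Fermi coordinates and appeal to (\ref{E:grad-rtilde}). At a point $q=(x,0)\in \tScal^+_r(p)\cap \partial M$, the $\partial/\partial t$-component of $\nabla\tir$ is $t/\tir=0$, so $\partial/\partial t\in \ker d\tir|_q$, i.e.\ $\nu_{\partial M}=\partial/\partial t$ lies in $T_q\tScal^+_r(p)$. Equivalently, the outward unit normal $\nabla\tir/|\nabla\tir|$ to $\tScal^+_r(p)$ is tangent to $\partial M$ at $q$, which is exactly the orthogonality condition.

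For bullet (2), I would compute the second fundamental form $A$ of $\tScal^+_r(p)$ with respect to the outward unit normal $\nu=\nabla\tir/|\nabla\tir|$. For $X,Y$ tangent to $\tScal^+_r(p)$, the standard identity gives
\[
A(X,Y)=\frac{\Hess\tir(X,Y)}{|\nabla\tir|},
\]
the radial term dropping because $Y\perp\nabla\tir$. By Lemma \ref{L:r-tilde}(ii), on such $X,Y$ one has $\Hess\tir(X,X)\geq (1/\tir - C)|X|^2$, while Lemma \ref{L:r-tilde}(i) gives $|\nabla\tir|=1+O(\tir)$. Choosing $r_{\textrm{Fermi}}< 1/(2C)$ (and small enough so that the Fermi chart is valid and $|\nabla\tir|\geq 1/2$) yields $A\geq 0$; hence $\tScal^+_r(p)=\partial_{\mathrm{rel}}\tBcal^+_r(p)$ is a smooth convex hypersurface in $M$, so $\tBcal^+_r(p)$ is relatively convex.

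For bullet (3), I would compare the ambient Euclidean distance $|q-p|_{\mathbb{R}^L}$ with the Fermi distance $\tir_p(q)$ for $q\in M$ near $p$. The differential $D\tE_p(0)$ is a linear isometry from $\mathbb{R}^{n+1}_+$ onto $T_pM\subset T_p\mathbb{R}^L$, so a Taylor expansion of the embedding $\tE_p:\mathbb{R}^{n+1}_+\to \mathbb{R}^L$ at $0$ gives $|\tE_p(y)-p|_{\mathbb{R}^L}=|y|(1+O(|y|))$, with the error controlled by a constant depending only on $M\hookrightarrow \mathbb{R}^L$. Shrinking $r_{\textrm{Fermi}}$ one more time yields both $\{\tir_p<r\}\subset B_{2r}(p)\cap M$ and $B_{r/2}(p)\cap M\subset \{\tir_p<r\}$. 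The main subtlety, and the only place where some care is needed, is interpreting $\frac{1}{\tir}g_{\tir}$ in Lemma \ref{L:r-tilde}(ii) correctly when restricted to tangent vectors of the Fermi sphere; Lemma \ref{L:Fermi} shows that $g_{ab}=\delta_{ab}+O(\tir)$, so the leading-order comparison used above is legitimate to zeroth order and the bounded error term $C$ is genuinely dominated by $1/\tir$ for small $r$.
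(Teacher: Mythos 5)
Your proof is correct and fills in exactly the computations the paper's one-line proof (``It follows directly from Lemma \ref{L:Fermi} and \ref{L:r-tilde}'') is referring to: orthogonality from the vanishing $t$-component of $\nabla\tir$ together with $g_{it}\equiv 0$, convexity from the level-set identity $A=\Hess\tir/|\nabla\tir|$ combined with Lemma \ref{L:r-tilde}(ii), and the ball inclusions from the fact that $D\tE_p(0)$ is a linear isometry with controlled Taylor remainder. The only quirk is organizational: the ``main subtlety'' you flag about interpreting $\tfrac{1}{\tir}g_{\tir}$ on tangent vectors to the Fermi sphere actually belongs to the convexity argument in bullet (2), not to the ball-inclusion argument in bullet (3) where you placed it; that said, the substance of the remark --- that vectors tangent to $\tScal^+_r(p)$ are exactly the Euclidean-orthogonal vectors to the radial direction in Fermi coordinates, so $g_\tir$ reduces to the Euclidean metric there, and Lemma \ref{L:Fermi} gives $g_{ab}=\delta_{ab}+O(\tir)$ to compare with the Riemannian $|X|_g$ --- is exactly right.
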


\begin{proof}
It follows directly from Lemma \ref{L:Fermi} and \ref{L:r-tilde}.
\end{proof}


\section{Proof of Theorem \ref{T:def-equiv}}
\label{A}

From the definitions, it is easy to see that $(a) \Longrightarrow (b) \Longrightarrow (c)$ using
\[ \sA_k(U; \ep,\de';\nu) \subset \sA_k(U; \ep',\de;\nu) \quad \text{for any $\ep \leq \ep', \de \leq \de', \nu=\F,\M,\mF$,} \]
\[ \sA_k(U; \ep, \de; \F) \subset \sA_k(U; \ep, \de; \mF)  \subset \sA_k(U; \ep, \de/2; \M).\]
Therefore, it remains to show $(c)\Longrightarrow (d)$. Fix any relatively open subset $W \subset \subset U$. If $\Clos(W) \cap \partial M=\emptyset$, then the theorem reduces to the case of \cite[Theorem 3.9]{Pitts}. Therefore, we would assume $\Clos(W) \cap \partial M \neq \emptyset$. We will be following the proof of \cite[Theorem 3.9]{Pitts} rather closely except for some new modifications to adapt to our case with nonempty boundary. In particular, we have to generalize \cite[Lemma 3.5, Lemma 3.7, Lemma 3.8]{Pitts} to the case with boundary.  As many of the key ingredients are rather delicate near the boundary, we will give a relatively self-contained proof.

Assume the contrary that $(c)$ holds but $(d)$ does not, then there exist $W \subset \subset U$ and $\zeta>0$ such that for any $\de>0$ and $\tau\in \Z_k(M, \partial M)$ with
\[\mF(|T|, V)<\zeta,\]
where $T \in \tau$ is the canonical representative, we have $\tau\notin \sA_k(W; \zeta, \de; \F)$. In other words, there exists a sequence $\tau=\tau_0, \tau_1, \tau_2, \cdots, \tau_m\in\Z_k(M, \partial M)$ such that for each $i=1, \cdots, m$,
\begin{itemize}
\item $\spt(\tau_i-\tau)\subset W$,
\item $\F(\tau_i - \tau_{i+1})\leq \de$,
\item $\M(\tau_i)\leq \M(\tau)+\de$, 
\end{itemize}
but $\M(\tau_m)< \M(\tau)-\zeta$. 
We will carry out an interpolation between each pair $\tau_i, \tau_{i+1}$ to get a new sequence $\tau=\ti{\tau}_0, \ti{\tau}_1, \cdots, \ti{\tau}_l=\tau_m\in\Z_k(M, \partial M)$ satisfying all the above properties of $\tau_i$ and in addition, for each $j=1,\cdots,l$,
\begin{itemize}
\item $\spt(\ti{\tau}_j-\tau)\subset U$,
\item $\M(\ti{\tau}_j- \ti{\tau}_{j+1})\leq \de$,
\end{itemize}
which would give a contradiction to $(c)$.
The required interpolation sequence $\ti{\tau}_j$ will be constructed using the Interpolation Lemma below (Lemma \ref{L:interpolation}). Assume the lemma holds for the moment and fix an
\[ 0< \ep < \min \left\{ \ep \left(\M(\tau)+ \frac{\de}{2}, \frac{\de}{2}, W,\tau\right), \frac{\de}{2} \right\}.\]
Since $\tau \notin \sA_k(W;\zeta, \ep; \F)$, we can find a sequence $\tau_i$ as above with $\de$ replaced by $\ep$.  Then we can apply Lemma \ref{L:interpolation} to each pair $\tau_i, \tau_{i+1}$ to get an interpolation sequence between them as $\F(\tau_i- \tau_{i+1})\leq \ep$.  Combining these interpolation sequences we get the desired sequence $\ti{\tau}_j$. This then completes the proof of Theorem \ref{T:def-equiv}.

\begin{lemma}[Interpolation Lemma]
\label{L:interpolation}
Given $L>0$, $\de>0$, a relatively open subset $W\subset \subset U$ with $W \cap \partial M\neq \emptyset$, and $\tau\in\Z_k(M, \partial M)$, there exists 
\[ \ep=\ep(L, \de, W, \tau)>0,\]
such that for any $\si_1, \si_2\in \Z_k(M, \partial M)$ satisfying
\begin{itemize}
\item $\spt(\si_i-\tau)\subset W$, $i=1, 2$,
\item $\M(\si_i)\leq L$, $i=1, 2$,
\item $\F(\si_1-\si_2)\leq \ep$,
\end{itemize}
there exist a sequence $\si_1=\tau_0, \tau_1, \cdots, \tau_m=\si_2\in \Z_k(M, \partial M)$ such that for each $j=0, \cdots, m-1$,
\begin{itemize}
\item[(i)] $\spt(\tau_j-\tau)\subset U$,
\item[(ii)] $\M(\tau_j)\leq L+\de$,
\item[(iii)] $\M(\tau_j-\tau_{j+1})\leq \de$.
\end{itemize}
\end{lemma}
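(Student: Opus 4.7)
The strategy is to adapt Pitts' interpolation argument \cite[\S 3]{Pitts} to the free boundary setting by combining the $\F$- and $\M$-isoperimetric lemmas of Section~\ref{SS:isoperimetic} with a cone construction carried out in Fermi coordinates near $\partial M$. Let $S_1, S_2$ be the canonical representatives of $\sigma_1, \sigma_2$. First, I would choose $\ep < \ep_M$ and apply Lemma~\ref{L:isoperimetric-F} to $\sigma_1, \sigma_2$ to produce an integral $(k{+}1)$-current $Q$ with $\spt Q \subset M$, $\spt(S_2 - S_1 - \partial Q) \subset \partial M$ and $\M(Q) \leq C_M \ep$. Because $\M(Q)$ is very small, a subsequent Fermi cone construction (designed so that cones centered at boundary points stay inside $M$, which is precisely the role of Fermi coordinates highlighted in the introduction) then localizes $Q$ to a slightly enlarged neighborhood $W \subset W' \subset\subset U$ without increasing its mass by more than a fixed constant factor, and without spoiling $\spt(S_2 - S_1 - \partial Q) \subset \partial M$.

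Next, I would discretize the natural deformation $t \mapsto [S_1 + \partial(Q \lc \{f \leq t\})]$, where $f$ is a Lipschitz function built from the Fermi distance near $\partial M$ and the Euclidean distance in the interior, with $\{f \leq 0\} \supset M \setminus W'$ and $\{f \leq 1\} \supset \spt Q$. Choosing levels $0 = t_0 < t_1 < \cdots < t_m = 1$ by the coarea formula so that both the slice masses $\M(\langle Q, f, t_j\rangle)$ and the slab masses $\M\bigl((\partial Q) \lc \{t_{j-1} < f < t_j\}\bigr)$ are small enough, I would define
\[
\tau_j := \bigl[\, S_1 + \partial(Q \lc \{f \leq t_j\})\,\bigr], \qquad j = 0, 1, \dots, m.
\]
Then $\tau_0 = \sigma_1$ and $\tau_m = \sigma_2$ (the possible extra piece on $\partial M$ is absorbed by the equivalence class), condition (i) follows from $\spt Q \subset W' \subset U$, and condition (iii) follows from
\[
\tau_{j+1} - \tau_j = \bigl[\, \partial(Q \lc \{t_j < f \leq t_{j+1}\})\,\bigr],
\]
whose $\M$-norm is bounded by $\M((\partial Q) \lc \{t_j < f \leq t_{j+1}\}) + \M(\langle Q,f,t_j\rangle) + \M(\langle Q,f,t_{j+1}\rangle) \leq \de$ for suitable slicing.

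The principal obstacle is the uniform mass bound~(ii), $\M(\tau_j) \leq L + \de$. The naive estimate $\M(\tau_j) \leq \M(S_1) + \M(\partial(Q \lc \{f \leq t_j\}))$ is insufficient since $\M(\partial Q)$ is only bounded by $\M(S_1) + \M(S_2) + C_M \ep \lesssim 2L$. To overcome this I would refine each slab-step into a cell-by-cell interpolation: cover $W'$ by a grid of small relatively convex cells (Euclidean cubes in the interior, Fermi half-cubes near $\partial M$, using Lemma~\ref{L:Fermi-convex}) of diameter $\eta \ll 1$, and in each cell $C_\al$ apply the $\M$-isoperimetric lemma (Lemma~\ref{L:isoperimetric-M}) to produce a local modification, supported in a slight enlargement $\tilde C_\al \subset U$, whose $\M$-cost is bounded by $C \,\M(Q \lc C_\al)$. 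The effect of applying these modifications sequentially is to transport the canonical representative inside $\tilde C_\al$ from the local $S_1$-configuration to the local $S_2$-configuration with only an $O(\M(Q \lc C_\al))$ mass defect. Summing over cells, the deviation of $\M(\tau_j)$ from the fixed reference $\max(\M(S_1), \M(S_2)) \leq L$ is at most $C \,\M(Q) \leq C C_M \ep$, and choosing $\ep = \ep(L, \de, W, \tau)$ so small that $C C_M \ep \leq \de$ gives (ii). A subtle technical point is that the local correction inside a cell touching $\partial M$ must again be realized by a cone in Fermi coordinates so that the resulting $\tau_j$ remains a valid element of $\Z_k(M, \partial M)$, which is why the relatively convex Fermi half-cubes rather than geodesic balls in an ambient extension $\tilde M$ are essential.
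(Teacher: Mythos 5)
Your central estimate for condition (ii) does not hold. When you swap the local data on a cell $C_\al$ from the $\sigma_1$-configuration to the $\sigma_2$-configuration, the resulting change in mass is $\M(S_2\lc C_\al)-\M(S_1\lc C_\al)$, which is bounded by $\|S_2-S_1\|(C_\al)\approx\|\partial Q\|(C_\al)$ and \emph{not} by $\M(Q\lc C_\al)$. The $\F$-isoperimetric lemma gives $\M(Q)\le C_M\ep$ but provides no control on $\M(\partial Q)\approx\M(S_2-S_1)$, which can be of order $2L$ no matter how small $\F(\sigma_1-\sigma_2)$ is. Consequently the running partial sums over cells can drift away from $L$ by $O(L)$, and shrinking $\ep$ does not cure this; the assertion that the deviation is at most $C\,\M(Q)$ is simply false. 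The same issue invalidates your application of Lemma~\ref{L:isoperimetric-M} to the local pair $\sigma_1\lc C_\al$, $\sigma_2\lc C_\al$: the hypothesis $\M(\tau_2-\tau_1)<\ep_M$ is not available, because smallness of the flat distance between $\sigma_1$ and $\sigma_2$ does not imply smallness of the local mass difference. You also never address the case in which a mass concentration larger than $\alpha=\de/5$ is present inside $W$, which blocks the cell decomposition with uniformly small local masses.

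The paper circumvents precisely this obstruction by not attempting a direct construction. It first reduces Lemma~\ref{L:interpolation} to a pre-interpolation lemma (Lemma~\ref{L:pre-interpolation-lemma}) in which one endpoint $\sigma_0$ is held fixed and a covering argument supplies the uniform $\ep$, and then argues by contradiction: a hypothetical failing sequence $\sigma_j\to\sigma_0$ in $\F$ is extracted, and after passing to a subsequence the canonical representatives converge as varifolds, $|S_j|\to V$. Lower semicontinuity gives $\|S_0\|\le\|V\|$, and weak convergence of $\|S_j\|\to\|V\|$ lets one choose the cutting balls $\tBcal_{r_i}(p_i)$ so that both $\|V\|(\Clos(\tBcal_{r_i}(p_i)))<2\alpha$ and, for $j$ large, $(\|S_0\|-\|S_j\|)(\Clos(\tBcal_{r_i^j}(p_i)))\le\alpha/m$ simultaneously on every ball. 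This last estimate is exactly the quantitative ingredient your direct construction is missing and cannot be extracted from $\M(Q)$ being small. The residual case of a point mass exceeding $\alpha$ is then handled separately (Case~2) by the Fermi cone-replacement lemma (Lemma~\ref{L:replace-cone}), which reduces it to the no-concentration case. Your observations about Fermi coordinates and the cone construction are correct and do appear in the paper's argument, but only as a subroutine inside this compactness framework, not as the backbone of a direct interpolation.
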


\begin{remark}
The Interpolation Lemma says that if two equivalence classes of relative cycles are close enough in the $\F$-norm, we can find an interpolation sequence between them which is continuous in the $\M$-norm, and at the same time keeping their masses not increased by too much and their supports not changed by too much.
\end{remark}

The rest of this appendix is devoted to the proof of Lemma \ref{L:interpolation}, which is a rather delicate adaptation of the proof of \cite[Lemma 3.7, Lemma 3.8]{Pitts}. The major difference here is that we are using equivalence classes of relative cycles (see Definition \ref{D:relative-cycles}). Moreover, we also need to deal with the extra boundary terms. 

We observe that Lemma \ref{L:interpolation} follows by a straightforward covering argument as in \cite[page 124]{Pitts} from the lemma below, which is Lemma \ref{L:interpolation} with one of the $\si_i$ fixed.

\begin{lemma}[Pre-interpolation Lemma]
\label{L:pre-interpolation-lemma}
Given $L,\de, W$ and $\tau$ as in Lemma \ref{L:interpolation}, for each $\si_0\in\Z_k(M, \partial M)$ satisfying
\[ \spt(\si_0-\tau) \subset W \quad \text{ and } \quad \M (\si_0)\leq L,\]
there exists 
\[ \ep=\ep(L, \de, W, \tau, \si_0)>0,\]
such that if $\si\in \Z_k(M, \partial M)$ satisfies
\begin{itemize}
\item $\spt(\si-\tau) \subset W,$ 
\item $\M(\si)\leq L$,
\item $\F(\si-\si_0)\leq \ep,$
\end{itemize}
then there is a sequence as in Lemma \ref{L:interpolation} with $\si_1, \si_2$ replaced by $\si, \si_0$.
\end{lemma}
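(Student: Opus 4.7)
The plan is to adapt Pitts's cone-and-slice construction from the proof of \cite[Lemma 3.8]{Pitts}, making two essential modifications: (a) use the relative $\F$-isoperimetric lemma (Lemma \ref{L:isoperimetric-F}) in place of the classical isoperimetric inequality, and (b) replace Euclidean/geodesic coordinates near $\partial M$ by Fermi coordinates, so that cones from a point on $\partial M$ remain inside $M$. First I would carry out a localization step: cover a small neighborhood of $\spt(\sigma_0 - \tau)$ in $U$ by finitely many charts $\{C_i\}_{i=1}^N$, each either a geodesic ball $\tBcal_{r_i}(p_i) \subset U \setminus \partial M$ (interior case) or a Fermi half-ball $\tBcal^+_{r_i}(p_i)$ centered at $p_i \in \partial M$ with closure in $U$ (boundary case), where $r_i$ is chosen small enough that Lemma \ref{L:Fermi-convex} applies. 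The crucial property is that each Fermi half-ball is relatively convex in $M$ and star-shaped with respect to its center via the Fermi exponential map, so that all cone constructions remain inside $M$ --- precisely the advantage over geodesic normal coordinates emphasized in the introduction.

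Next, given $\sigma$ with $\F(\sigma - \sigma_0) < \epsilon$ (with $\epsilon$ to be chosen small), I would apply Lemma \ref{L:isoperimetric-F} to produce an integral $(k+1)$-current $Q$ with $\spt(Q) \subset M$, $T_\sigma - T_{\sigma_0} - \partial Q$ supported on $\partial M$, and $\M(Q) \leq C_M \epsilon$. Choosing $\epsilon$ sufficiently small, one may further arrange $\spt(Q) \subset U$. I would then decompose $Q = Q_1 + \cdots + Q_N$ with $\spt(Q_i) \subset \Clos(C_i)$ by a standard cut-and-paste using slicing by a subordinate partition of unity, introducing auxiliary $k$-dimensional ``seam'' currents whose total mass is $O(\epsilon)$. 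Within each chart $C_i$, define the Lipschitz radial function $\rho_i = \dist(\cdot, p_i)$ (using Fermi distance in the boundary case) and slice $Q_i$ by level sets $\{\rho_i = t\}$: by the coarea formula, for a.e.\ choice of levels $0 = t_0 < t_1 < \cdots < t_{m_i} = r_i$ with spacings comparable to $\delta/\M(Q_i)$, the slices $\langle Q_i, \rho_i, t_j \rangle$ are integer rectifiable and their cumulative additions to $\sigma$ (modulo pieces on $\partial M$) produce a discrete interpolation within $C_i$ with consecutive $\M$-differences bounded by $\delta$. Gluing these local interpolations sequentially over $i = 1, \ldots, N$ and passing to equivalence classes yields the desired sequence $\sigma = \tau_0, \tau_1, \ldots, \tau_m = \sigma_0$.

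The main obstacle will be the mass bound (ii): $\M(\tau_j) \leq L + \delta$. Each intermediate $\tau_j$ equals (the class of) $\sigma$ plus a ``filling piece'' arising from slicing $Q$, which has mass $O(\epsilon) + O(\M(Q)/r_i)$, plus a piece supported on $\partial M$ coming from the boundary term in Lemma \ref{L:isoperimetric-F}. The $\partial M$-piece becomes harmless after passing to equivalence classes (by Lemma \ref{L:can-rep}, the mass $\M(\tau_j)$ equals the mass of the canonical representative, which has no part on $\partial M$), but only if the relevant comparison current is integer rectifiable off $\partial M$; this is arranged by a preliminary application of the slicing trick from the proof of Lemma \ref{L:integral} to replace $\sigma$ and $\sigma_0$ by integral-current representatives with $\M$-loss at most $\delta/4$. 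For fixed $L, \delta, W, \tau, \sigma_0$, once the cover $\{C_i\}$ (hence the lower bound on $r_i$, the isoperimetric constant $C_M$, and $N$) is fixed, one chooses $\epsilon$ small enough that every extra contribution is at most $\delta/2$ and then chooses the slicing to be fine enough that $\M(\tau_j - \tau_{j+1}) \leq \delta$. Verification of (i), $\spt(\tau_j - \tau) \subset U$, is automatic from $\Clos(C_i) \subset U$ and the fact that each local cone construction stays inside $C_i$ by the star-shaped/convexity property provided by Lemma \ref{L:Fermi-convex}.
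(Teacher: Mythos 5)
Your outline correctly identifies the two main ingredients ($\F$-isoperimetric lemma and Fermi coordinates) but the argument as sketched has a structural gap: you present it as a \emph{direct} construction (fix a cover adapted to $\sigma_0$, then choose $\ep$ small), whereas the paper's proof is a \emph{compactness/contradiction} argument, and this difference is not cosmetic. The key quantities you need to control are $\M(\tau_j) \leq L + \delta$ for the intermediate classes and $\M(\tau_j - \tau_{j+1}) \leq \delta$, and both involve terms of the form $\|T_\sigma\|(\text{annulus})$ and $\big(\|T_{\sigma_0}\| - \|T_\sigma\|\big)(\text{ball})$ --- not merely $\M(Q)$ and its slices. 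These are absolute mass quantities, not flat-distance quantities, and there is no way to make them small by shrinking $\ep = \F(\sigma - \sigma_0)$. The paper circumvents this by arguing by contradiction with a failing sequence $\sigma_j$: passing to the varifold limit $V = \lim |S_j|$, and using lower semicontinuity plus $\|S_j\| \to \|V\|$ on a cover adapted to $V$ (not to $\sigma_0$) to conclude $(\|S_0\| - \|S_j\|)(\Clos\tBcal_{r_i^j}(p_i)) \leq \alpha/m$ for $j$ large. That estimate is where all the subtlety lives and it is exactly what your $O(\epsilon)$ bookkeeping fails to capture.

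Moreover, even the compactness route breaks if the varifold limit $V$ carries an atom of mass $> \delta/5$ in $W$: then no finite cover by balls with $\|V\|$-small closures exists near that point, and the Case~1 argument cannot even start. This is the entire content of the paper's Case~2, which uses the Fermi cone construction (Lemma~\ref{L:replace-cone}) to replace $S_j$ near the concentration point $q \in \partial M$ with the cone $(\tE_{p_j})_\sharp(\delta_0 \ttimes (\tE_{p_j}^{-1})_\sharp \langle S_j, \tir_{p_j}, r_j\rangle)$ --- strictly \emph{reducing} the mass by at least $\alpha/2$ --- and then proceeds by induction on the number of atoms. You mention ``cone-and-slice'' in passing and note that Fermi half-balls are star-shaped, but you never actually deploy the cone as a mass-reducing device to handle concentration, and without it the argument does not close. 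You should reorganize around the contradiction-and-limit structure, split into the two cases by whether $V$ has large point masses, and invoke Lemma~\ref{L:replace-cone} (via Lemmas~\ref{L:good-nbd}--\ref{no mass on spheres}) for the boundary concentration case.
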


\begin{proof}
Suppose the contrary that the lemma is false, then there exist a sequence $\si_j\in\Z_k(M, \partial M)$, $j=1,\cdots$, satisfying
\begin{itemize}
\item $\spt(\si_j-\tau)\subset W,$
\item $\M(\si_j)\leq L,$
\item $\lim_{j\rightarrow\infty} \F(\si_j - \si_0)=0$,
\end{itemize}
but none of the $\si_j$ can be connected to $\si_0$ by a sequence as in Lemma \ref{L:interpolation} satisfying (i)-(iii) with $\si_1, \si_2$ replaced by $\si_j, \si_0$. To prove our lemma, we will explicitly construct such a sequence connecting $\si_j$ to $\si_0$ for $j$ sufficiently large, hence obtain a contradiction.

Let $S_j \in \si_j$ be the canonical representative. Since the sequence $S_j$ have uniformly bounded mass as $\M(S_j)=\M(\si_j) \leq L$, after passing to a subsequence, the associated varifolds $|S_j|$ converges as varifolds to some $V \in \V_k(M)$, i.e.
\[ V=\lim_{j\rightarrow\infty}|S_j|. \]
By the lower semi-continuity of $\M$ (Lemma \ref{L:lower-semicts}), we have
\begin{equation}
\label{E:S_0-V}
\|S_0\| (A) \leq \|V\| (A) \quad \text{ for all Borel $A \subset \R^L$}. 
\end{equation}
Let $\al=\de/5$. We will divide our construction into two cases, depending on whether the varifold $V$ has point mass in $W$ larger than $\al$:
\begin{itemize}
\item {\bf Case 1:} $\|V\|(\{q\})\leq \al$ for all $q\in W$;
\item {\bf Case 2:} $\{q\in W: \|V\|(\{q\})> \al\}\neq \emptyset$.
\end{itemize}

We now handle {\bf Case 1} first. Fix a finite collection of geodesic balls 
$\{\tBcal_{r_i}(p_i)\}_{i=1}^m$ in $\tM$ with centers $p_i \in \spt \|V\| \cap W$ such that $\Clos(\tBcal_{r_i}(p_i)) \cap M \subset U$ are mutually disjoint and satisfy the following: 
\begin{itemize}
\item $\|V\| (\tScal_{r_i}(p_i) \cap M)=0$;
\item $\|V\| (\Clos(\tBcal_{r_i}(p_i)))  <2\al$;
\item $\|V\| (W \setminus \cup_{i=1}^m \Clos(\tBcal_{r_i}(p_i)) ) <2\al$.
\end{itemize}
This is possible because 
all point mass of $\|V\|$ is at most $\alpha$ by the assumption of \textbf{Case 1}. 
Note that by (\ref{E:S_0-V}), $S_0$ also satisfies all the inequalities above.
Since $\si_j$ converges to $\si_0$ in the $\F$-topology, we can apply the $\F$-isoperimetric lemma (Lemma \ref{L:isoperimetric-F}) so that for each $j$ large enough, there exists an integal $(k+1)$-currents $Q_j$ such that
\begin{itemize}
\item $\spt(Q_j) \subset M$;
\item $\spt(S_j-S_0-\partial Q_j)\subset \partial M$;
\item $\lim_{j\rightarrow\infty}\M(Q_j)=0$.
\end{itemize}
Moreover, for $j$ large enough, using the slicing theorem \cite[\S 28]{Simon} and the lower-semicontinuity of measures under weak convergence, we can find sequences of radii $r^j_i\searrow r_i$, $i=1, \cdots, m$, such that: 
\begin{itemize}
\item the geodesic balls $\{\tBcal_{r_i^j}(p_i)\}_{i=1}^m$ have pairwise disjoint closures;
\item $\cup_{i=1}^m \tBcal_{r_i^j}(p_i) \cap M \subset U$;
\item The slice $\lan Q_j, d^{\tM}_{p_i}, r^j_i\ran$ of $Q_j$ by $d^{\tM}_{p_i}:=\dist_{\ti{M}}(p_i, \cdot)$ at $r^j_i$ is an integer rectifiable current \cite[28.4]{Simon} with (see \cite[28.5(1)]{Simon})
\[ \sum_{i=1}^m \M (\lan Q_j, d^{\tM}_{p_i}, r^j_i\ran) <\al; \]
\item $\|S_j\| ( \cup_{i=1}^m \tScal_{r^j_i}(p_i)) = \|Q_j\| ( \cup_{i=1}^m \tScal_{r^j_i}(p_i)) =0$;
\item $\|S_j\| (\Clos(\tBcal_{r_i^j}(p_i)) \leq 2\al$ and $\|S_j\| (W \setminus \cup_{i=1}^m \Clos(\tBcal_{r_i^j}(p_i))) \leq 2\al$;
\item $\|S_0\| (\Clos(\tBcal_{r_i^j}(p_i)) \leq 2\al$ and $\|S_0\| (W \setminus \cup_{i=1}^m \Clos(\tBcal_{r_i^j}(p_i))) \leq 2\al$;
\item $(\|S_0\|-\|S_j\|) (\Clos(\tBcal_{r_i^j}(p_i)))\leq \frac{\al}{m}$;
\item $(\|S_0\|-\|S_j\|) (W \setminus \cup_{i=1}^m \Clos(\tBcal_{r_i^j}(p_i))) \leq \al$.
\end{itemize}
Now we construct a sequence of currents 
\[ S_j=R^j_0, R^j_1,\cdots,R^j_m, R^j_{m+1}=S_0 \in Z_k(M,\partial M) \] 
where for $i=1,2,\cdots,m$,
\begin{equation}
\label{currents connecting S_j to S_0}
R^j_i:=S_j-\sum_{s=1}^i \left( \partial [Q_j\lc \Clos(\tBcal_{r_s^j}(p_s))]-(\partial Q_j) \lc [ \Clos(\tBcal_{r_s^j}(p_s)) \cap \partial M] \right).
\end{equation}
Note that by \cite[28.5(2)]{Simon} and our choice of $Q_j$,
\begin{equation*}
\begin{split}
& \ \ \ \partial [Q_j\lc \Clos(\tBcal_{r_s^j}(p_s))]-(\partial Q_j) \lc [ \Clos(\tBcal_{r_s^j}(p_s)) \cap \partial M] \\
& = \lan Q_j, d^{\tM}_{p_s}, r^j_s\ran + (\partial Q_j) \lc \Clos(\tBcal_{r^j_s}(p_s)) -(\partial Q_j)\lc [\Clos(\tBcal_{r^j_s}(p_s)) \cap \partial M]\\
& = \lan Q_j, d^{\tM}_{p_s}, r^j_s\ran + (\partial Q_j-\partial Q_j \lc \partial M) \lc \Clos(\tBcal_{r^j_s}(p_s))\\
& = \lan Q_j, d^{\tM}_{p_s}, r^j_s\ran + S_j\lc \Clos(\tBcal_{r^j_s}(p_s))- S_0\lc \Clos(\tBcal_{r^j_s}(p_s)).
\end{split}
\end{equation*}
We now show that the sequence $\tau^j_i:=[R^j_i] \in \Z_k(M,\partial M)$ satisfies all the properties (i)-(iii) in Lemma \ref{L:interpolation} with $\si_1, \si_2$ replaced by $\si_j, \si_0$. For (i), let $T \in \tau$ be the canonical representative, by Lemma \ref{L:can-rep} and our assumption on $\si_j$, it is easily seen that $\spt(\tau^j_i-\tau)\subset \spt(R^j_i-T)\subset U$ since each $\Clos(\tBcal_{r_i^j}(p_i)) \cap M \subset U$.  For (ii), it follows from
\begin{eqnarray*}
\M(R^j_i) & \leq & \M(S_j)+\sum_{s=1}^i(\|S_0\|-\|S_j\|)( \Clos(\tBcal_{r^j_s}(p_s))) +\sum_{s=1}^i \M(\lan Q_j, d^{\tM}_{p_s}, r^j_s\ran)\\
                   & \leq &  L+i\cdot\frac{\al}{m}+\al\leq L+2\al<L+\de.
\end{eqnarray*}
Finally, (iii) follows from the inequalities below: for each $i=1,\cdots,m+1$,
\begin{eqnarray*}
\M(R^j_i-R^j_{i-1}) & \leq & \M(\lan Q_j, d^{\tM}_{p_i}, r^j_i\ran) +\M (S_j\lc \Clos(\tBcal_{r_i^j}(p_i)))+ \M (S_0\lc\Clos(\tBcal_{r_i^j}(p_i)))\\
                                     & \leq & 5\al=\de; 
\end{eqnarray*}
and
\begin{eqnarray*}
\M(S_0-R^j_m) &\leq & \M\left(\sum_{i=1}^m\lan Q_j, d^{\tM}_{p_i}, r^j_i\ran \right) +\M\left((S_0-S_j)\lc (W\setminus \cup_{i=1}^m \tBcal_{r_i^j}(p_i))\right)\\
                              & \leq & 5\al=\de.
\end{eqnarray*}
This finishes the argument for \textbf{Case 1}.

For \textbf{Case 2}, first note that $\{q\in W: \|V\|(\{q\})> \al\}$ must be a finite set as $\|V\|(W)\leq L$.  We will assume that 
\[ \{q\in W: \|V\|(\{q\})> \al\}=\{q\},\]
and the general case follows by induction on the number of elements in the subset.  

If $q\in W \setminus \partial M$, then by the same argument as in \cite[Lemma 3.7, page 118-121]{Pitts}, we can construct sequences 
\[ S_j=R^j_0, R^j_1, \cdots, R^j_{m_j} \in Z_k(M,\partial M)\]
such that the following hold for each $s=1,\cdots,m_j$:
\begin{itemize}
\item $\spt(R^j_s-S_j)$ is contained in a neighborhood of $q$ inside $U$;
\item $\M(R^j_s)\leq M(S_j)+\de$;
\item $\M(R^j_s-R^j_{s-1})\leq \de$.
\end{itemize}
Moreover, we also have
\begin{itemize}
\item $\M(R^j_{m_j})\leq \M(S_j)$;
\item $\lim_{j\rightarrow \infty}R^j_{m_j}=\lim_{j\rightarrow \infty}S_j\in \si_0$ as currents;
\item $\lim_{j\rightarrow \infty}|R^j_{m_j}|=\|V\|\lc (M\setminus\{q\})$ as varifolds.
\end{itemize}
Therefore we can use the results in {\bf Case 1} to connect $R^j_{m_j}$ to $S_0$, thus obtain a contradiction.

Suppose now $q\in W\cap\partial M$.  We can also find sequences as above by adapting \cite[Lemma 3.7, page 118-121]{Pitts} to our case using Lemma \ref{L:replace-cone} in place of \cite[Lemma 3.5]{Pitts}. We list the necessary details for the sake of completeness.  

Take $\ep>0$, such that $2\ep\|V\|(U)<k \cdot \frac{\al}{2}$. Now choose a neighborhood $Z$ of $q$ as given in Lemma \ref{L:good-nbd} corresponding to the chosen $\ep$. 
By the basic properties of Radon measures (\cite[2.6(2)(d)]{Allard72}), we have (recall Definition \ref{D:annulus})
\begin{itemize}
\item $\lim_{r\rightarrow 0}\|V\| (\Clos(\tBcal^+_r(q)) \setminus\{q\})=0,$
\item $\lim_{j\rightarrow\infty}\|S_j\|\lc \Clos(\A_{s,r}(p))=\|V\|\lc \Clos(\A_{s,r}(p))$
\end{itemize}
 for any $0<s<r$ such that $\|V\| (\partial \A_{s,r}(p))=0$ (which holds for $\mH^n$-a.e. $p \in U \cap \partial M$ by Lemma \ref{no mass on spheres}).
Similar to the arguments in \cite[Lemma 3.7, page 119]{Pitts}, for $j$ large enough, we can find $p_j \in U \cap \partial M$ converging to $q$ and $s_j,r_j>0$ with 
\[ 0<\frac{s_j}{2}<r_j\leq s_j \quad  \text{ and }  \quad \lim_{j\rightarrow\infty}s_j=0 \]
such that all the following hold (using Lemma \ref{no mass on spheres} in place of \cite[Lemma 3.6]{Pitts} and $\tir_{p_j}$ in place of $u(p_j)$ in \cite[page 119]{Pitts}):
\begin{itemize}
\item $\tBcal^+_{r_j/4}(q) \subset \tBcal^+_{r_j/2}(p_j)\subset \tBcal^+_{2s_j}(p_j)$;
\item $\|S_j\| (\tScal_t(p_j))=0$, for any $0\leq t\leq r_j$;
\item $\lan S_j, \tir_{p_j}, r_j\ran$ is an integer rectifiable current \cite[\S 28]{Simon};
\item $\lim_{j\rightarrow\infty}\|S_j\| (\Clos(\A_{s_j/2,2s_j}(p_j)))=0$;
\item $16\|S_j\| (\Clos(\A_{s_j/2,2s_j}(p_j)) )\geq 3 r_j\M (\lan S_j, \tir_{p_j}, r_j\ran)$;
\item $\ep\|S_j\|(U)\leq k \cdot \frac{\al}{2}$  and $\|S_j\|(\Clos(\tBcal^+_{r_j}(p_j)))\geq \al$;
\item $2\M (\lan S_j, \tir_{p_j}, r_j\ran) \cdot \frac{r_j}{k}+\frac{\al}{2}\leq \|S_j\|(\Clos(\tBcal^+_{r_j}(p_j)))$;
\item $\lim_{j\rightarrow\infty}|S_j|\lc (M\setminus \Clos(\tBcal^+_{r_j}(p_j)))=V\lc (M\setminus\{q\})$ as varifolds.
\end{itemize}
Define
\[ \tS_j:=S_j \lc (M\setminus \Clos(\tBcal^+_{r_j}(p_j)))+(\tE_{p_j})_{\sharp}\big(\de_0\ttimes (\tE_{p_j}^{-1})_{\sharp}\lan S_j, \tir_{p_j}, r_j\ran\big).\]
Note that by \cite[28.5(2)]{Simon} (using notions in Lemma \ref{L:local-scaling}),
\[ \partial_1 (S_j\lc \Clos(\tBcal^+_{r_j}(p_j)))=\lan S_j, \tir_{p_j}, r_j\ran.\]
Applying Lemma \ref{L:replace-cone} for $k, \al/2, r_j, \ep, S_j\lc \Clos(\tBcal^+_{r_j}(p_j))$, we can connect $S_j$ to $\tS_j$ by a finite sequence $R^j_i\in Z_k(M, \partial M)$ with $\be=\de$. Therefore, it is easy to check that the sequence $\tau^j_i:=[R^j_i] \in \Z_k(M,\partial M)$ satisfies all the properties (i)-(iii) in Lemma \ref{L:interpolation} with $\si_1, \si_2$ replaced by $\si_j, [\tS_j]$.
Finally, since $\M(\tS_j)\leq \M(S_j)-\frac{\al}{2}$ and $|\tS_j|$ converges to $V$ in $M\setminus\{q\}$ as varifolds, 
we can carry out an induction argument to finish the proof.
\end{proof}


We will show that the Fermi exponential map $\tE_p$ defined in Definition \ref{D:Fermi} satisfies similar local properties as the usual exponential map listed in \cite[3.4(4)(7)]{Pitts}. These properties will be used in Lemma \ref{L:replace-cone}.


\begin{lemma}
\label{L:good-nbd}
Given $q\in \partial M$, and $\ep\in (0, 1)$, there exists a small neighborhood $Z$ of $q$ in $M$, such that all the properties in \cite[3.4(4)]{Pitts} (see also \cite[\S 7.2(a)$\cdots$(f)]{Zhou17}) are satisfied with the usual exponential map $\exp_p$ replaced by $\tE_p$, and the usual distance function $u(p)$ replaced by $\tir_p$ for any $p\in Z \cap \partial M$. In particular, if $p\in Z\cap\partial M$, $W=\tE_p^{-1}(Z)\subset T_p M\cong\R_+^{n+1}$, and $E=\tE_p|_W$, then the following properties hold:
\begin{itemize}
\item[$(a)$] $E$ is a $C^2$ diffeomorphism onto $Z$;

\item[$(b)$] $Z$ is strictly relatively convex;

\item[$(c)$] $(Lip E)^k(Lip E^{-1})^k\leq 2$;

\item[$(d)$] $Lip(\tir_p|_Z)\leq 2$;

\item[$(e)$] If $x\in Z$ and $0\leq \la\leq 1$, then $E\circ \bmu_\la \circ E^{-1}(x)\in Z$;

\item[$(f)$] if $x\in Z$, $0\leq \la\leq 1$, and $\om\in \La_k T_x \tM$ ($k$-th wedge product of $T_x\tM$ \cite[\S 25]{Simon}), then
\begin{equation}
\label{E:shrinking-estimates}
\|D\big( E\circ \bmu_\la \circ E^{-1}\big)_* \om\|\leq \la^k\big(1+\ep(1-\la)\big)\|\om\|.
\end{equation}
Also $\la^k\big(1+\ep(1-\la)\big)\leq 1$ for all $0\leq \la\leq 1$.
\end{itemize}
\end{lemma}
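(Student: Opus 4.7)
The plan is to take $Z := \tBcal^+_r(q)$ for $r > 0$ chosen small in terms of $\ep$, and deduce all six properties from the Fermi-coordinate estimates in Lemma \ref{L:Fermi} and Lemma \ref{L:r-tilde}. For each $p \in Z \cap \partial M$ the Fermi coordinate chart $\tE_p$ is defined on a uniform half-ball around the origin in $T_pM \cong \R^{n+1}_+$, where $g_{tt} \equiv 1$, $g_{it} \equiv 0$, $|g_{ij} - \delta_{ij}| \leq C\tir_p$, and all Christoffel symbols are uniformly bounded. This uniform $C^2$-closeness of the Fermi metric to the flat half-space metric will drive every estimate below.

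Properties (a)--(d) follow rather directly. Writing $E := \tE_p|_W$, its differential at the origin is the identity, so by the inverse function theorem $E$ is a $C^2$ diffeomorphism onto $Z$ once $r$ is small enough uniformly in $p$, giving (a). Property (b) is contained in Lemma \ref{L:Fermi-convex} since $Z$ is a Fermi half-ball of radius $r < r_{\mathrm{Fermi}}$. For (c), the bounds $\|dE - \mathrm{id}\|, \|dE^{-1} - \mathrm{id}\| \leq C\tir_p \leq Cr$ make both Lipschitz constants arbitrarily close to $1$, so their $k$-th powers have product at most $2$. Finally (d) is immediate from Lemma \ref{L:r-tilde}(i), which gives $\|\nabla\tir_p\|_g \leq 1 + Cr \leq 2$.

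For property (e), in Fermi coordinates at $p$ the map $\Phi_\la := E \circ \bmu_\la \circ E^{-1}$ is the Euclidean dilation $y \mapsto \la y$, so it suffices to show that $W = \tE_p^{-1}(Z)$ is star-shaped with respect to $0$. But $W$ is the sublevel set $\{y : \tir_q(\tE_p(y)) < r\}$, and the composed function $y \mapsto \tir_q(\tE_p(y))$ is a smooth near-Euclidean distance function; by Lemma \ref{L:r-tilde}(ii) its Hessian is positive definite on such sublevel sets for $r$ sufficiently small, so these sublevel sets are convex and hence star-shaped with respect to any interior point, in particular $0$.

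Property (f) is the heart of the matter. Fix $x \in Z$, set $y = E^{-1}(x)$, and let $\om = v_1 \wedge \dots \wedge v_k \in \La_k T_xM$. In the Fermi coordinate frame at $p$, the differential of $\Phi_\la$ at $x$ sends each $v_i$ to $\la v_i$ (Euclidean scaling), based now at $\tE_p(\la y)$; hence $(D\Phi_\la)_*\om$ equals $\la^k$ times the same coordinate $k$-vector, re-interpreted at the new base point. The discrepancy between its Riemannian norm at $\la y$ and that of $\om$ at $y$ is then entirely due to the variation of the Gram matrix. Applying the mean value inequality to $s \mapsto g_{ab}(sy)$ on $[\la, 1]$ together with Lemma \ref{L:Fermi} yields $|g(\la y) - g(y)| \leq C(1-\la)|y| \leq C(1-\la)\tir_p(x)$, which upgrades to
\[ \|(D\Phi_\la)_*\om\|_{g(\la y)} \leq \la^k\bigl(1 + C'(1-\la)\tir_p(x)\bigr)\|\om\|_{g(y)}. \]
Choosing $r < \ep/C'$ then yields (\ref{E:shrinking-estimates}). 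The elementary secondary inequality $\la^k(1+\ep(1-\la)) \leq 1$ on $[0,1]$ is verified by observing that its derivative $\la^{k-1}[k(1+\ep) - \ep(k+1)\la]$ is nonnegative on $[0,1]$ whenever $\ep < k$, so the maximum equals the value $1$ at $\la = 1$. The main obstacle is carrying out the Gram-matrix comparison with constants uniform in $p \in Z \cap \partial M$ and all $k$-vectors $\om$, which is made tractable by the explicit bounds in Lemma \ref{L:Fermi}.
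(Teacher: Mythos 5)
Your proposal is essentially the same argument the paper sketches: take $Z$ to be a small Fermi half-ball, and read off all six properties from the Fermi-coordinate estimates of Lemma \ref{L:Fermi} and Lemma \ref{L:r-tilde}, with (f) controlled exactly by the Lipschitz bound $|\partial g| \leq C$ — which is precisely the remark the authors make when they say Pitts's argument for (\ref{E:shrinking-estimates}) ``only uses the fact that the Lipschitz constant of $E^*g$ is bounded.'' Your expansion of the details is sound, including the elementary check that $\la^k(1+\ep(1-\la)) \leq 1$ for $\ep < 1 \leq k$.

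One phrase in your treatment of (e) is imprecise: the Hessian of $\tir_q\circ\tE_p$ is \emph{not} positive definite — like any radial-distance-type function it degenerates along the radial direction (Lemma \ref{L:r-tilde}(ii) gives $\mathrm{Hess}\,\tir \approx \tfrac{1}{\tir}g_{\tir}$, and the round metric $g_{\tir}$ vanishes on the radial vector). What is true, and what you actually need, is that the Hessian restricted to directions tangent to the level sets $\{\tir_q\circ\tE_p = s\}$ is $\gtrsim 1/s - C \geq 1/r - C > 0$; combined with the fact that the Christoffel-symbol correction to the Euclidean Hessian is $O(1)$ (Lemma \ref{L:Fermi}) and that the flat boundary piece of $W$ on $\{t=0\}$ is automatically convex, this gives convexity of the sublevel sets $W$, hence star-shapedness with respect to $0 \in W$. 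The conclusion you state is correct; only the phrase ``positive definite'' should be tightened.
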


\begin{proof}
These properties follow easily using Lemma \ref{L:Fermi} and \ref{L:r-tilde} and similar arguments as \cite[3.4(4)]{Pitts}. In particular, we want to point out that the proof of (\ref{E:shrinking-estimates}) in \cite[3.4(4)]{Pitts} only uses the fact that the Lipschitz constant of $E^*g$ is bounded, which is satisfied by Lemma \ref{L:Fermi}.
\end{proof}

\begin{lemma}
\label{L:local-scaling}
Using the same notations as in Lemma \ref{L:good-nbd}, assume that $\tBcal^+_r(p) \subset \subset Z$ and $S \in Z_k \big[ \Clos(\tBcal^+_r(p)), \partial \Clos(\tBcal^+_r(p)) \big]$. Denote $\partial_1 S=\partial S\lc (\tScal^+_r(p))$.  Then all the properties in \cite[3.4(7)]{Pitts} (see also \cite[\S 7.2(j)(k)]{Zhou17}) are satisfied with $T, \partial T$ replaced by $S, \partial_1 S$. In particular,
\begin{itemize}
\item[$(g)$] Given $r>0$, $0\leq \la\leq 1$, then by (f),
\[ \M\big((E\circ \bmu_\la \circ E^{-1})_{\#}S\big)\leq \la^k(1+\ep(1-\la))\M(S)\leq \M(S);\]

\item[$(h)$] Denote $S_{\la}=E_{\#}\big(\de_0\ttimes\big[E^{-1}_{\#}(\partial_1 S)-(\bmu_\la \circ E^{-1})_{\#}(\partial_1 S)\big]\big)$, then,
\[ \partial S_{\la}-\big[\partial_1 S-(E\circ \bmu_\la \circ E^{-1})_{\#}\partial_1 S\big] \in Z_{k-1}(Z\cap\partial M, Z\cap\partial M), \]
\[ spt(S_{\la})\subset \A_{\la r, r}(p), \]
\[ \M(S_{\la})\leq (Lip E)^k(Lip E)^{-k}r k^{-1}(1-\la^k)\M(\partial_1 S)\leq 2 r k^{-1}(1-\la^k)\M(\partial_1 S). \]
\end{itemize}
\end{lemma}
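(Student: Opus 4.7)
The proof follows \cite[3.4(7)]{Pitts} with one simplifying observation: by the very definition of the Fermi exponential map $E = \tE_p$, the Fermi distance $\tir_p$ pulls back under $E$ to the Euclidean distance from the origin on $T_pM \cong \R^{n+1}_+$, so that $E^{-1}(\tBcal^+_r(p)) = B_r(0) \cap \R^{n+1}_+$ and $E^{-1}(\tScal^+_r(p)) = \partial B_r(0) \cap \R^{n+1}_+$. This reduces the cone construction in (h) to a genuinely radial Euclidean one, with the intrinsic geometry entering only through the Lipschitz bounds of Lemma~\ref{L:good-nbd}(c) and (f).

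Part (g) is immediate: apply the pointwise Jacobian bound from Lemma~\ref{L:good-nbd}(f) in the area formula for pushforwards. The $k$-dimensional Jacobian of $E \circ \bmu_\la \circ E^{-1}$ is controlled by $\la^k(1 + \ep(1-\la))$, and integrating against $\|S\|$ yields the stated mass decrease.

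For (h), set $C := E^{-1}_\sharp \partial_1 S - (\bmu_\la \circ E^{-1})_\sharp \partial_1 S$ so that $S_\la = E_\sharp(\de_0 \ttimes C)$. The standard cone boundary identity $\partial(\de_0 \ttimes C) = C - \de_0 \ttimes \partial C$ gives
\[
\partial S_\la = \bigl[ \partial_1 S - (E \circ \bmu_\la \circ E^{-1})_\sharp \partial_1 S \bigr] - E_\sharp(\de_0 \ttimes \partial C).
\]
Decomposing $\partial S = \partial_1 S + \partial_2 S$ with $\partial_2 S := \partial S - \partial_1 S$ supported on $\partial M$, and using $\partial \partial S = 0$, one finds $\spt(\partial \partial_1 S) \subset \tScal^+_r(p) \cap \partial M$; hence $\spt(\partial C) \subset \partial \R^{n+1}_+$. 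Since $0 \in \partial \R^{n+1}_+$ and straight-line cones from the origin to points in $\partial \R^{n+1}_+$ remain in that half-plane, the correction $E_\sharp(\de_0 \ttimes \partial C)$ is supported on $Z \cap \partial M$, yielding the first bullet of (h).

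For the support and mass estimates, the homotopy identity $\de_0 \ttimes T - \de_0 \ttimes (\bmu_\la)_\sharp T = H_\sharp([\la, 1] \times T)$ with $H(s, x) = sx$ (where $[\la,1]$ carries the standard orientation as a $1$-current) realises $\de_0 \ttimes C = H_\sharp([\la,1] \times E^{-1}_\sharp \partial_1 S)$, whose support lies in $\{sx : s \in [\la, 1],\ x \in \partial B_r(0) \cap \R^{n+1}_+\} = (\Clos(B_r(0)) \setminus B_{\la r}(0)) \cap \R^{n+1}_+$; applying $E$ and invoking the Fermi-Euclidean identification yields $\spt(S_\la) \subset \Clos(\A_{\la r, r}(p))$. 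The $k$-dimensional Jacobian of $H$ on $[\la, 1] \times (\partial B_r(0) \cap \R^{n+1}_+)$ equals $s^{k-1}|x| = s^{k-1} r$, hence
\[
\M(\de_0 \ttimes C) \leq r \int_\la^1 s^{k-1}\, ds \cdot \M(E^{-1}_\sharp \partial_1 S) = \frac{r(1-\la^k)}{k}\, \M(E^{-1}_\sharp \partial_1 S);
\]
pushing forward by $E$ and absorbing the distortion via $(\operatorname{Lip} E)^k(\operatorname{Lip} E^{-1})^k \leq 2$ from Lemma~\ref{L:good-nbd}(c) closes the estimate. The main delicacy throughout (h) is the boundary bookkeeping—one must verify that $\partial \partial_1 S$ really lives on $\partial M$ so that its cone from the origin remains on $\partial M$—and this transparent splitting is precisely what the half-space structure of the Fermi chart (as opposed to a generic normal chart on $\tM$) provides.
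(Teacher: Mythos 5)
Your proof is correct and takes the same approach the paper intends: the paper provides no proof of this lemma (only a remark), deferring to Pitts 3.4(7) and pointing out that the Fermi chart maps the half-ball to a Euclidean half-ball so the cone construction stays inside; your argument supplies exactly those details, with the cone/homotopy formula, the Jacobian computation, and the boundary bookkeeping via $\partial\partial_1 S = -\partial\partial_2 S$ landing on $\tScal^+_r(p)\cap\partial M$. One cosmetic note: the natural distortion factor for pushing the $(k-1)$-current $\partial_1 S$ through $E^{-1}$ and the resulting $k$-cone through $E$ is $(\operatorname{Lip}E)^k(\operatorname{Lip}E^{-1})^{k-1}$ rather than $(\operatorname{Lip}E)^k(\operatorname{Lip}E^{-1})^k$, though both are controlled by Lemma~\ref{L:good-nbd}(c) and the lemma statement itself evidently carries a typo (it literally writes $(\operatorname{Lip}E)^k(\operatorname{Lip}E)^{-k}$).
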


\begin{remark}
Note that Fermi coordinates are essentially needed here, as we use the fact that $E^{-1}[\Clos(\tBcal^+_r(p))]$ is a half-ball in $T_pM=\mathbb{R}^{n+1}_+$, so that the cone construction in $S_\la$ still lies in $E^{-1}[\Clos(\tBcal^+_r(p))]$.
\end{remark}

The following lemma is the version of \cite[Lemma 3.6]{Pitts} for manifolds with boundary.

\begin{lemma}
\label{no mass on spheres}
Given $k\in\N$, $2\leq k\leq n$, and $Z$ an open set in Lemma \ref{L:good-nbd}, and $V \in \V_k(M)$ which is rectifiable in $Z$, then for $\mH^{n}$-a.e. $p\in Z\cap\partial M$, we have $\|V\|(\tScal^+_r(p))=0$ for all $r>0$ with $\tBcal^+_r(p) \subset \subset Z$.
\end{lemma}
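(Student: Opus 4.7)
The plan is to combine the rectifiable structure of $V$ with a Fubini argument on an incidence set. Write $V\lc Z = v(M^k,\theta)$ where $M^k\subset Z$ is $k$-rectifiable and $\theta$ is a positive Borel density, so that for $\mH^k$-a.e.\ $q\in M^k$ the approximate tangent plane $T_qM^k\subset T_q\tM$ exists. I would first observe that if $\|V\|(\tScal^+_r(p))>0$ for some admissible $r$, then the subset of $q\in M^k\cap \tScal^+_r(p)$ at which $T_qM^k$ exists and satisfies $T_qM^k\subset T_q\tScal^+_r(p)$ (equivalently $\nabla\tir_p(q)\perp T_qM^k$) has positive $\mH^k$-measure. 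This is the standard rectifiable slicing fact: wherever the approximate tangent $T_qM^k$ is transverse to the smooth hypersurface $\tScal^+_r(p)$, the intersection $M^k\cap\tScal^+_r(p)$ is locally $(k-1)$-rectifiable and hence $\mH^k$-null near such points.

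This motivates introducing the incidence set
$$I:=\{(p,q)\in (Z\cap\partial M)\times M^k : T_qM^k \text{ exists and } \nabla\tir_p(q)\perp T_qM^k\}.$$
The previous paragraph shows that any center violating the conclusion lies in $\{p : \|V\|(I_p)>0\}$. I would then bound the $q$-slices $I_q=\{p\in Z\cap\partial M : (p,q)\in I\}$ using Fermi geometry viewed from $q$. For $q$ in the relative interior of $Z$, the map $\phi_q: p\mapsto \nabla\tir_p(q)$ is $C^1$ on a neighborhood of the foot of $q$ in $Z\cap\partial M$: in Fermi coordinates at $p$ the gradient is given by (\ref{E:grad-rtilde}), and a direct computation in the flat half-space model shows that $\phi_q$ is a local diffeomorphism onto an open subset of the hemisphere $\{v\in S^n\subset T_q\tM : v\cdot \nu_{\partial M}>0\}$. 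The metric estimates in Lemma \ref{L:Fermi} and Lemma \ref{L:r-tilde} propagate this local-diffeomorphism property throughout the Fermi-convex region furnished by Lemma \ref{L:Fermi-convex}. Because the condition $\nabla\tir_p(q)\perp T_qM^k$ confines $\phi_q(p)$ to the $(n-k)$-sphere $(T_qM^k)^\perp\cap S^n$, the slice $I_q$ is contained in an $(n-k)$-dimensional $C^1$ submanifold of $Z\cap\partial M$. Since $k\geq 2$, this forces $\mH^n(I_q)=0$ for $\|V\|$-a.e.\ $q$, and Tonelli yields
$$\int_{Z\cap\partial M}\|V\|(I_p)\,d\mH^n(p)=\int_{M^k}\theta(q)\,\mH^n(I_q)\,d\mH^k(q)=0,$$
so $\|V\|(I_p)=0$ for $\mH^n$-a.e.\ $p$. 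At every such $p$, the first-paragraph dichotomy shows $\|V\|(\tScal^+_r(p))=0$ for every admissible $r$.

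The main obstacle I expect is verifying that $\phi_q$ has maximal rank so that the preimage of the $(n-k)$-sphere $(T_qM^k)^\perp\cap S^n$ is genuinely an $(n-k)$-dimensional submanifold rather than accidentally larger. This is transparent in the flat half-space where $\phi_q(p)=(q-p)/|q-p|$ is explicit, and it is stable under $C^1$-small perturbations of the metric; the quantitative bounds in Lemma \ref{L:Fermi} together with the convexity from Lemma \ref{L:Fermi-convex} make the perturbation uniform on the relevant Fermi-convex region. A secondary but more routine point is the joint Borel-measurability of $I$, which follows from Borel-measurability of the approximate tangent assignment $q\mapsto T_qM^k$ (a standard rectifiability fact) together with smoothness of $(p,q)\mapsto \nabla\tir_p(q)$ on its natural domain.
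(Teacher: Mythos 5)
Your overall strategy is the same Fubini reduction the paper uses: reduce to showing that for $\|V\|$-a.e.\ $q$, the set $\{p\in Z\cap\partial M : \nabla\tir_p(q)\perp T_qV\}$ has $\mH^n$-measure zero, then conclude by Fubini/Tonelli. The difference is in how the dimension of that slice is bounded, and there is a genuine gap in your version.

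Your key claim is that $\phi_q\colon p\mapsto\nabla\tir_p(q)$ is a local diffeomorphism from (an open piece of) $Z\cap\partial M$ onto an open subset of the upper hemisphere $\{v\in S^n\subset T_q\tM : v\cdot\nu_{\partial M}>0\}$. This is false whenever $q\in\partial M$. From the formula (\ref{E:grad-rtilde}), writing $(x,t)$ for the Fermi coordinates of $q$ centered at $p$, the $\partial_t$-component of $\nabla\tir_p(q)$ is $t/\tir$; when $q\in\partial M$ we have $t=0$, so $\nabla\tir_p(q)$ is tangent to $\partial M$ and $\phi_q$ lands in the equator $S^{n-1}\subset T_q(\partial M)$, not the open hemisphere. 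An $n$-dimensional domain cannot be a local diffeomorphism onto an $(n-1)$-dimensional target; already in the flat half-space model $\phi_q(p)=(q-p)/|q-p|$ with $q\in\R^n\times\{0\}$ is a submersion with one-dimensional fibers. This is not a measure-zero wrinkle that can be ignored: $V$ may concentrate a large (or all) fraction of its mass on $\partial M$ (e.g.\ $V$ could be a multiple of $\partial M$ itself), so the slices with $q\in\partial M$ carry weight in the Tonelli integral. The argument can be repaired by treating $q\in\partial M$ separately — there the fibration picture gives $I_q$ dimension at most $n+1-k$ rather than $n-k$, which still beats $n$ since $k\geq 2$ — but as stated the proposal does not do this.

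The paper sidesteps the case distinction entirely. Instead of the ambient Gauss-type map $\phi_q$, it uses the Pythagorean identity $\tir_p^2(x)=(\tir_p\circ\pi)^2(x)+t^2(x)$, where $\pi$ is the nearest-point projection to $\partial M$, to transfer the perpendicularity constraint $\nabla\tir_p(x)\in(T_xV)^\perp$ into a constraint on the intrinsic gradient $\nabla^{\partial M}\tir_p(\pi(x))$. Since $t(x)\nabla t(x)$ is independent of $p$, the constraint lives in an affine $(n+1-k)$-plane, and the injectivity of the linear map $\nabla^{\partial M}\tir_p(\pi(x))\mapsto\nabla(\tir_p\circ\pi)(x)$ (verified via (\ref{E:g-ij-b})) pulls this back to $T_{\pi(x)}(\partial M)$. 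Because $\tir_p|_{\partial M}$ is the intrinsic distance on $\partial M$, this places the bad $p$'s on a submanifold of dimension at most $n+1-k\leq n-1$, uniformly in whether $x$ is interior or on $\partial M$. You may also wish to note that the paper states the estimate as $n+1-k$, not $n-k$; both suffice, but the tighter bound $n-k$ in your count is exactly what is lost at boundary $q$.
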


\begin{proof}
Given $p\in Z\cap \partial M$ and $\tBcal^+_r(p) \subset \subset Z$, since $V$ is rectifiable in $Z$, the approximate tangent space $T_x V$ (see \cite[15.3]{Simon}) is a $k$-dimensional subspace of $T_x(\tScal^+_r(p))$ for $\|V\|$-a.e. $x\in  \tScal^+_r(p)$. Hence, we have
\[ \|V\|(\tScal^+_r(p))=\|V\|\{x\in \tScal^+_r(p) : T_x V \subset T_x(\tScal^+_r(p))\}.\]
Therefore to prove the lemma it suffices to show that for $\mathcal{H}^n$-a.e. $p \in Z \cap \partial M$,
\[ \|V\|\{x\in Z :  T_xV\subset T_x (\tScal^+_{\tir_p(x)}(p))\}=0.\]
By Fubini's theorem,
\begin{displaymath}
\begin{split}
&\ \ \ \int_{p\in Z\cap\partial M}\|V\|\{x\in Z :  T_xV\subset T_x (\tScal^+_{\tir_p(x)}(p))\} \; d\mH^n(p)\\
&=\int_{x\in Z}\mH^n \{p\in Z\cap\partial M :T_xV\subset T_x (\tScal^+_{\tir_p(x)}(p))\} \; d\|V\|(x)\\
\end{split}
\end{displaymath}
Therefore, we just have to show that for $\|V\|$-a.e. $x\in Z$, 
\begin{equation}
\label{E:measure-zero}
\mH^n\{p\in Z\cap\partial M :  T_xV \perp \nabla \tir_p(x)\}=0.
\end{equation}
Since $T_xV$ is a $k$-dimensional subspace in $T_x \tM$, the condition $T_xV\perp \nabla \tir_p(x)$ implies $\nabla \tir_p(x)$ lies in the $(n+1-k)$-dimensional complement $(T_x V)^{\perp} \subset T_x \tM$. Let $\pi(x)$ be the nearest projection of $x$ to $\partial M$. 

\vspace{0.3cm}
\textit{Claim: $\nabla^{\partial M} \tir_p (\pi(x))$ lies in an affine subspace of dimension at most $n+1-k$.}

\textit{Proof of Claim:} Note that by (\ref{E:rtilde}), 
\[ \tir_p^2(x)=(\tir_p\circ\pi)^2(x)+t^2(x). \] 
So $\tir_p(x)\nabla \tir_p(x)=\tir_p(\pi(x))\nabla (\tir_p\circ\pi)(x)+t(x)\nabla t(x)$. Since $t(x)\nabla t(x)$ is independent of $p$, we know that $\tir_p(\pi(x))\nabla (\tir_p\circ\pi)(x)$ lies in an affine subspace in $T_x\tM$ of dimension at most $n+1-k$. It is easily seen that the linear map $\nabla^{\partial M} \tir_p (\pi(x))\to \nabla (\tir_p\circ\pi)(x)$ is injective when $p$ varies. In fact, $\nabla^{\partial M} \tir_p (\pi(x))=\sum_{i, j}g^{ij}(x, 0)\frac{x_j}{\tir_p(\pi(x))}\frac{\partial}{\partial x_i}|_{\pi(x)}$, and $\nabla (\tir_p\circ\pi)(x)=\sum_{i, j}g^{ij}(x, t)\frac{x_j}{\tir_p(\pi(x))}\frac{\partial}{\partial x_i}|_{x}$, so the map is given by $\sum_{i}v^i\frac{\partial}{\partial x_i}|_{\pi(x)}\to \sum_{i, j, k}g^{ij}(x, t)g_{jk}(x, 0)v^k\frac{\partial}{\partial x_i}|_{x}$, which is linear and injective by (\ref{E:g-ij-b}). The claim is now proved.

\vspace{0.3cm}
Note that $\tir_p(\cdot)|_{Z\cap \partial M}$ is the distance function to $p$ on $\partial M$. Therefore, the set of all such $p$ lie inside a submanifold of $Z \cap \partial M$ of dimension at most $n+1-k\leq n-1$, which clearly implies (\ref{E:measure-zero}).
\end{proof}

\begin{lemma}
\label{L:replace-cone}
Given $k\geq 2$, $\de>0$, $r>0$, $\ep \in (0,1)$, $Z$ and $S$ as in Lemma \ref{L:good-nbd} and \ref{L:local-scaling}, assume furthermore the following:
\begin{itemize}
\item $\ep\M(S)<k\de$ and $\frac{2r}{k} \M (\partial_1 S )+\de\leq \M(S)$,
\item $\|S\|(\tScal^+_s(p))=0$, for all $0\leq s\leq r$.
\end{itemize}
Then for any $\be>0$, we can find a sequence of integer rectifiable currents:
\[ S=R_0, R_1, \cdots, R_m \in Z_k (\Clos(\tBcal^+_r(p)), \partial \Clos(\tBcal^+_r(p)))\]
such that for every $i$, we have
\begin{itemize}
\item $\partial R_i\lc \tScal^+_r(p)=\partial_1 S$;
\item $\M(R_i)\leq \M(S)+\be$;
\item $\M(R_i-R_{i-1})\leq \be$;
\item $R_m=(\tE_p)_{\sharp}\big(\de_0\ttimes (\tE_p^{-1})_{\sharp}\partial_1 S\big)$, 
\item $\M(R_m)\leq \frac{2r}{k}\M(\partial_1 S) \leq \M(S)-\delta$.
\end{itemize}
\end{lemma}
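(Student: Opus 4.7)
The plan is to construct an explicit one-parameter family $\{T_\lambda\}_{\lambda\in[0,1]}$ in $Z_k(\Clos(\tBcal^+_r(p)),\partial\Clos(\tBcal^+_r(p)))$ that interpolates between $T_1=S$ and $T_0=(\tE_p)_\sharp\bigl(\de_0\ttimes(\tE_p^{-1})_\sharp\partial_1 S\bigr)$, is continuous in the mass norm, and whose total mass never exceeds $\M(S)$; the $R_i$'s are then obtained by discretizing. Using the Fermi exponential $E=\tE_p|_W$ from Lemma \ref{L:good-nbd}, I set $\Phi_\lambda:=E\circ\bmu_\lambda\circ E^{-1}$ (the Fermi radial contraction by factor $\lambda$) and define
\begin{equation*}
T_\lambda:=\Phi_{\lambda\sharp}S+S_\lambda,
\end{equation*}
where $S_\lambda$ is the ``mantle'' current in the Fermi annulus $\A_{\lambda r,r}(p)$ produced by Lemma \ref{L:local-scaling}(h). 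Endpoints are immediate: $\Phi_1=\mathrm{id}$ and $S_1=0$ give $T_1=S$; while $\Phi_{0\sharp}S=0$ (for dimensional reasons) and $S_0$ is precisely the cone $R_m$. The relation $\partial T_\lambda\lc\tScal^+_r(p)=\partial_1 S$ follows from the identity in Lemma \ref{L:local-scaling}(h): on $\tScal^+_{\lambda r}(p)$ the contribution $\Phi_{\lambda\sharp}\partial_1 S$ from $\partial(\Phi_{\lambda\sharp}S)$ cancels against the $-\Phi_{\lambda\sharp}\partial_1 S$ coming from $\partial S_\lambda$, leaving $\partial_1 S$ on $\tScal^+_r(p)$ and a remainder supported on $\Clos(\tBcal^+_r(p))\cap\partial M$. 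The hypothesis $\|S\|(\tScal^+_s(p))=0$ for all $s\in[0,r]$ ensures via slicing that $\partial_1 S$ (and every $\Phi_{\lambda\sharp}\partial_1 S$) is integer rectifiable, so the construction stays in the prescribed class.

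The critical step is the uniform mass estimate. Combining Lemma \ref{L:local-scaling}(g) and (h) gives
\begin{equation*}
\M(T_\lambda)\leq \lambda^k\bigl(1+\ep(1-\lambda)\bigr)\M(S)+\tfrac{2r}{k}(1-\lambda^k)\M(\partial_1 S).
\end{equation*}
Using the two hypotheses $\ep\M(S)<k\de$ and $\tfrac{2r}{k}\M(\partial_1 S)\leq\M(S)-\de$, this reduces to
\begin{equation*}
\M(T_\lambda)\leq \M(S)+\de\bigl[k\lambda^k(1-\lambda)-(1-\lambda^k)\bigr].
\end{equation*}
Since $1-\lambda^k=(1-\lambda)\sum_{i=0}^{k-1}\lambda^i\geq k(1-\lambda)\lambda^{k-1}$ (each $\lambda^i\geq\lambda^{k-1}$ for $\lambda\in[0,1]$, $0\leq i\leq k-1$), the bracket is bounded above by $-k\lambda^{k-1}(1-\lambda)^2\leq 0$. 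Therefore $\M(T_\lambda)\leq\M(S)$ throughout, and at $\lambda=0$ one recovers $\M(T_0)\leq\tfrac{2r}{k}\M(\partial_1 S)\leq\M(S)-\de$, which supplies the two required mass bounds on $R_m$.

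It remains to discretize. For $\lambda\in(0,1]$ the maps $\Phi_\lambda$ form a smooth family and the standard homotopy formula applied to $\Phi_\mu^{-1}\Phi_\lambda$, combined with the uniform Lipschitz control on $E$ and $E^{-1}$ from Lemma \ref{L:good-nbd}(c), yields $\M(\Phi_{\lambda\sharp}S-\Phi_{\mu\sharp}S)\to 0$ as $\mu\to\lambda$; continuity at $\lambda=0$ follows since $\M(\Phi_{\lambda\sharp}S)\leq 2\lambda^k\M(S)\to 0$. The mantle $S_\lambda$ is similarly continuous in $\lambda$ directly from its cone formula. Hence $\lambda\mapsto T_\lambda$ is uniformly continuous on $[0,1]$ in the mass norm, and any sufficiently fine partition $1=\lambda_0>\lambda_1>\cdots>\lambda_m=0$ with $\M(T_{\lambda_{i-1}}-T_{\lambda_i})\leq\be$ yields the desired sequence $R_i:=T_{\lambda_i}$. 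I expect the principal technical obstacle to be precisely this mass-continuity near $\lambda=0$, where the radial contraction degenerates and the pushforward $\Phi_{\lambda\sharp}S$ collapses onto $p$; it is addressed by invoking the homotopy formula together with the uniform Lipschitz bounds on the Fermi exponential map given by Lemma \ref{L:good-nbd}.
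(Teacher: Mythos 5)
Your construction is the right one, and your algebra is clean: setting $T_\lambda=\Phi_{\lambda\sharp}S+S_\lambda$ with $\Phi_\lambda=E\circ\bmu_\lambda\circ E^{-1}$, the endpoints, the boundary identity, and the uniform mass bound $\M(T_\lambda)\leq\M(S)$ (via the estimate $k\lambda^k(1-\lambda)-(1-\lambda^k)\leq -k\lambda^{k-1}(1-\lambda)^2$) are all correct and match the structure of Pitts' Lemma 3.5, which the paper cites as its source.

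The gap is in the discretization step, where you assert that $\lambda\mapsto T_\lambda$ is uniformly continuous in the mass norm because ``the standard homotopy formula applied to $\Phi_\mu^{-1}\Phi_\lambda$ \dots\ yields $\M(\Phi_{\lambda\sharp}S-\Phi_{\mu\sharp}S)\to0$.'' The homotopy formula controls only the flat norm of $\Phi_{\lambda\sharp}S-\Phi_{\mu\sharp}S$ (and, separately, one can check that $\lambda\mapsto\M(\Phi_{\lambda\sharp}S)$ is continuous), but neither fact gives $\M$-continuity of the family. In fact $\M$-continuity generically fails: for any $S$ that is not itself a cone from $p$ (e.g.\ a graphical perturbation of a flat disk), $\Phi_{\lambda\sharp}S$ and $\Phi_{\mu\sharp}S$ are two distinct surfaces whose intersection has $\mathcal{H}^k$-measure zero, so $\M(\Phi_{\lambda\sharp}S-\Phi_{\mu\sharp}S)\approx\M(\Phi_{\lambda\sharp}S)+\M(\Phi_{\mu\sharp}S)$, which is bounded away from zero no matter how close $\lambda$ and $\mu$ are. (In the trivial case where $S$ is already the cone, one computes $T_\lambda\equiv S$ and the issue disappears, which is exactly why this failure is easy to overlook.) Thus you cannot obtain the discrete sequence simply by sampling $T_\lambda$ along a fine partition of $[0,1]$.

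This is precisely the difficulty that Pitts' Lemma 3.5 (and hence the paper's straightforward adaptation of it using Lemmas \ref{L:Fermi}, \ref{L:r-tilde}, \ref{L:good-nbd}, \ref{L:local-scaling}) is designed to handle: the passage from $T_{\lambda_{i-1}}$ to $T_{\lambda_i}$ is broken into a string of additional intermediate currents, constructed by slicing and local cone operations in thin Fermi annular shells, each of which moves only an amount of mass controlled by $\be$. The hypothesis $\|S\|(\tScal^+_s(p))=0$ for all $s$ (which you invoke only to get integer rectifiability of the slices) is actually what makes the slicing estimates in those intermediate steps work. Repairing your argument requires reproducing that iterative shell-by-shell construction, not invoking $\M$-continuity of $\lambda\mapsto T_\lambda$.
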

\begin{proof}
The proof can be adapted in a straightforward manner from \cite[Lemma 3.5]{Pitts} using Lemma \ref{L:Fermi}, \ref{L:r-tilde}, \ref{L:good-nbd} and \ref{L:local-scaling} in place of \cite[3.4(4)(7)]{Pitts}.
\end{proof}

\bibliographystyle{amsplain}
\bibliography{references}

\end{document}